\def\ben#1{\begin{equation}#1\end{equation}}
\def\al#1{\begin{align*}#1\end{align*}}
\def\aln#1{\begin{align}#1\end{align}}
\newsavebox{\toy}
\savebox{\toy}{\framebox[0.65em]{\rule{0cm}{1ex}}}
\newcommand{\QED}{\usebox{\toy}\end{demo}}
\numberwithin{equation}{section}
\newtheorem{theorem}{Theorem}[section]
\newtheorem{lemma}[theorem]{Lemma}
\newtheorem{proposition}[theorem]{Proposition}
\newtheorem{cor}[theorem]{Corollary}
\newtheorem{rem}[theorem]{Remark}
\newtheorem{notation}[theorem]{Notation}
\newtheorem{example}[theorem]{Example}
\newcommand{\bd}{\begin{displaymath}}
\newcommand{\ed}{\end{displaymath}}
\newcommand{\N}{{\mathbb{N}}}
\newcommand{\R}{{\mathbb{R}}}
\newcommand{{\rd}}{\R^d}
\newcommand{\IP}{{\mathbb P}}
\newcommand{\IE}{\mathbb E}
\renewcommand{\P}{{\mathbb P}}
\newcommand{\E}{\mathbb E}
\newcommand{\DP}{{\mathrm P}}
\newcommand{\DE}{{\mathrm E}}
\newcommand{\lan}{\langle}
\newcommand{\ran}{\rangle}
\renewcommand{\b}{\beta}
\newcommand{\gm}{\gamma}
\newcommand{\D}{\Delta}
\newcommand{\e}{\varepsilon}
\newcommand{\ve}{\epsilon}
\newcommand{\kU}{\mathscr{U}}
\newcommand{\kV}{\mathscr{V}}
\newcommand{\kW}{\mathcal{W}}
\newcommand{\kM}{\mathcal{M}}
\newcommand{\dd}{\text{\rm d}}             
\newcommand{\dB}{\xi}
\newcommand{\sZ}{{\bf{\mathcal{Z}}}}
\newcommand{\ssup}[1] {{\scriptscriptstyle{({#1}})}}
\newcommand{\dis}{\displaystyle}
\newcommand{\cvlaw}{\stackrel{(d)}{\longrightarrow}}
\newcommand{\cvIP}{\stackrel{\IP}{\longrightarrow}}
\newcommand{\cvLone}{\stackrel{L^1}{\longrightarrow}}
\newcommand{\cvLtwo}{\stackrel{L^2}{\longrightarrow}}
\newcommand{\eqlaw}{\stackrel{(d)}{=}}
\newcommand{\SN}{\color{red}}
\newcommand{\MN}{\color{magenta}}
   \def\MR#1{}  }
\author{
Shuta Nakajima\footnote{Department of Mathematics and Computer Science of the University of Basel. \texttt{shuta.nakajima@unibas.ch}}
   \and
   Makoto Nakashima\footnote{Graduate School of Mathematics, Nagoya University. \texttt{nakamako@math.nagoya-u.ac.jp}}
}
\begin{document}


%
%
%
\title{Fluctuations of two-dimensional stochastic heat equation and KPZ equation  in subcritical regime for general initial conditions}


%

\date{}
\maketitle
\begin{center}
\date{\today}
\end{center}
\begin{abstract}
  
The solution of Kardar-Parisi-Zhang equation (KPZ equation) is solved formally via Cole-Hopf transformation $h=\log u$, where $u$ is the solution of multiplicative stochastic heat equation(SHE). In \cite{CD20,CSZ20,G20}, they consider the solution of two dimensional KPZ equation via  the solution $u_\e$ of SHE with flat initial condition and with noise which is mollified in space on scale in $\e$ and its strength is weakened as $\b_\e=\hat{\b} \sqrt{\frac{2\pi \e}{-\log \e}}$,  and they prove that when $\hat{\b}\in (0,1)$, $\frac{1}{\b_\e}(\log u_\e-\IE[\log u_\e])$ converges in distribution to a solution of Edward-Wilkinson model as a random field. 

In this paper, we consider a stochastic heat equation $u_\e$ with general initial condition $u_0$ and its transformation $F(u_\e)$ for $F$ in a class of functions $\mathfrak{F}$, which contains $F(x)=x^p$ ($0<p\leq 1$) and $F(x)=\log x$. Then, we prove that  $\frac{1}{\b_\e}(F(u_\e(t,x))-\IE[F(u_\e(t,x))])$ converges in distribution to Gaussian random variables  jointly in finitely many $F\in \mathfrak{F}$, $t$, and  $u_0$. In particular, we obtain  the fluctuations of solutions of stochastic heat equations and KPZ equations jointly converge to solutions of SPDEs which depends on $u_0$.

Our main tools are It\^o's formula, the martingale central limit theorem, and the homogenization argument as in \cite{CNN20}. To this end, we also prove the local limit theorem for the partition function of intermediate $2d$-directed polymers



\end{abstract}

\noindent \textbf{Keywords}: KPZ equation, Stochastic heat equation, Edwards-Wilkinson equation,  Local limit theorem for polymers, Stochastic calculus.
\\
\noindent \textbf{AMS 2010 subject classifications}: Primary 60K37. Secondary 60F05, 60G42, 82D60.

      \setcounter{tocdepth}{2}
 \tableofcontents

{\footnotesize 

%
%
%


\section{Introduction and Main result}
KPZ equation is an SPDE formally given by \begin{equation} \label{eq:formalKPZ}
\frac{\partial}{\partial t} h(t,x) = \frac12 \D h(t,x) +  \frac12   |\nabla h(t,x) |^2+
 \b  \dot\xi(t,x),
\end{equation}
where $\dB$ is a time-space white noise on $[0,\infty)\times \mathbb{R}^d$. This SPDE is  ill-posed due to the non-linear term $\nabla h$ which should be a generalized function. 

For $d=1$, Bertini and Giacomin formulated the solution of \eqref{eq:formalKPZ} via Cole-Hopf solution $h=\log u$ \cite{BG97}, where  $u$ is the solution of stochastic heat equation \begin{align}\label{eq:formalSHE}
\frac{\partial }{\partial t}u(t,x)=\frac{1}{2}\D u(t,x)+\b u(t,x)\dot{\dB}(t,x).
\end{align}

In  dimension $d=2$, we consider a space-regularized multiplicative stochastic heat equation but we need to scale  the disorder strength:  \begin{align}
\frac{\partial u_\e}{\partial t}=\frac{1}{2}\D u_\e+\b_\e u_\e \dot\xi_\e,\quad u_\e(0,x)=u_0(x)
\end{align}
where $\beta_\e=\hat{\b}\sqrt{\frac{2\pi}{-\log \e}}$ with $\hat{\b}\geq 0$, and  $\xi_\e$ is a mollification in space of $\xi$ such that $\xi_\e \Rightarrow \xi$ as $\e\to 0$, i.e.,\
\[
\dot\xi_{\e}(t,x) = (\dot\xi(t,\cdot)\star \phi_\e)(x)=  \int \phi_\e(x - y) \dot{\xi}(t,y) \dd y,
\]
with $\phi_\e({ x}) = \e^{-2} \phi(\e^{-1} x)$ and $\phi$ being a smooth, non-negative, compactly supported, symmetric function on $\mathbb R ^2$, so that $\int \phi(x) \dd x =1$, and $\phi_\e$ converges in distribution to the Dirac mass $\delta_0$.  Let $h_\e=\log u_\e$. Then,  we find by It\^{o}'s formula that $h_\e$ satisfies the SPDE\begin{equation}\label{eq:KPZe}
 \frac{\partial}{\partial t} h_{\e} = \frac12 \D h_{\e} +  \bigg[\frac12   |\nabla h_\e |^2  - C_\e\bigg]+
 \b_\e   \dot\xi_{\e} \;,\quad\,\,   h_{\e}(0,x) = h_0(x),
\end{equation}
where $C_\e$ is a diverging parameter: 
\begin{equation}\label{C-eps}
C_\e= \frac{\b_\e^2 V(0) }{2\e^2},
\end{equation}
where $V(x)=\int_{\R^2}\phi(x-y)\phi(y)\dd y $.
Caravenna, Sun, and Zygouras proved that if the initial condition is flat, that is $h_\e(0,x)= h_0(x)\equiv 0$ and $\hat{\b}\in (0,1)$,  then $\dis {\b_\e^{-1}}\left(h_\e-\IE[h_\e]\right)$ converges in distribution to the solution to Edward-Wilkinson equation as a random field \cite{CSZ20}. We should remark Chatterjee and Dunlap addressed the tightness of ${\b_\e^{-1}}\left(h_\e-\IE[h_\e]\right)$ \cite{CD20} and Gu obtained Edward-Wilkinson limit in $\hat{\b}\in (0,\beta_0)$ for some $\beta_0\leq 1$ \cite{G20}.

On the other hand, the one-point distribution $h_\e(t,x)$ converges to a random variable as follows: 
\begin{theorem}\label{thm:CSZ17b}{\cite[Theorem 2.15]{CSZ17b}} 
For any $t>0$ and $x\in \R^2$, \begin{align*}
h_\e(t,x)\Rightarrow \begin{cases}
X_{\hat{\beta}}:=\sigma(\hat{\b})Z-\frac{1}{2}\sigma^2(\hat{\b}),\quad &0\leq \hat{\b}<1\\
0,&\hat{\b}\geq 1
\end{cases},
\end{align*}
where $Z$ is a random variable with standard normal distribution and $\sigma(\hat{\b})=\sqrt{\log \frac{1}{1-\hat{\b}^2}}$.
\end{theorem}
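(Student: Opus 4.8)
This statement is quoted from \cite{CSZ17b}; I would reproduce its argument, whose engine is the Wiener--It\^o chaos expansion of $u_\e$ together with a renewal resummation governed by the two--dimensional heat kernel. With the flat initial condition $u_0\equiv 1$, Duhamel's principle gives $u_\e(t,x)=\sum_{k\ge 0}u_\e^{(k)}(t,x)$, where $u_\e^{(k)}$ is the $k$--fold iterated stochastic integral of $\xi_\e$ against a product of mollified heat kernels over the time simplex $\{0<t_1<\dots<t_k<t\}$; the $u_\e^{(k)}$ are $L^2$--orthogonal with $\E[u_\e^{(0)}]=1$ and $\E[u_\e^{(k)}]=0$ for $k\ge 1$. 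Second moments reduce to $\beta_\e^{2k}$ times a simplex integral of products of heat kernels at the origin, regularized at the diffusive scale $\e^2$; using $g_{2s}(0)=\tfrac1{4\pi s}$ and $\beta_\e^2=\hat\beta^2\,2\pi/(-\log\e)$ one gets $\beta_\e^2\int_{\e^2}^{t}g_{2s}(0)\,ds\to\hat\beta^2$, and a renewal--type estimate then yields $\E[(u_\e^{(k)})^2]\to\hat\beta^{2k}$ together with, for $\hat\beta<1$, the uniform tail control $\lim_{K\to\infty}\sup_\e\sum_{k>K}\E[(u_\e^{(k)})^2]=0$ (a geometric series in $\hat\beta^2$). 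In particular $\E[u_\e^2]\to(1-\hat\beta^2)^{-1}$, which is the second moment of the candidate log--normal $e^{\sigma(\hat\beta)Z-\sigma^2(\hat\beta)/2}$.

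The substantive step is the convergence in law and the identification of the limit. Since a log--normal law is not determined by its moments, the method of moments is unavailable; instead one shows that the dominant part of each $u_\e^{(k)}(t,x)$ comes from configurations in which all time increments $t_j-t_{j-1}$ are microscopic, and that resumming over the number and positions of these near--collisions forces the whole chaos vector $(u_\e^{(1)},\dots,u_\e^{(K)})$ to converge jointly in law to explicit functions of a \emph{single} standard Gaussian $Z$. Combined with the uniform tail bound this gives $u_\e(t,x)=\sum_{k\ge0}u_\e^{(k)}(t,x)\Rightarrow F(\hat\beta,Z)$; identifying $F$ from the first two moments and the generating function of the resummation gives $F(\hat\beta,Z)=e^{\sigma(\hat\beta)Z-\sigma^2(\hat\beta)/2}$ with $\sigma(\hat\beta)=\sqrt{\log\frac1{1-\hat\beta^2}}$. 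Since $u_\e(t,x)>0$ almost surely and the limit is supported on $(0,\infty)$, the continuous mapping theorem applied to $\log(\cdot)$ yields $h_\e(t,x)=\log u_\e(t,x)\Rightarrow\sigma(\hat\beta)Z-\tfrac12\sigma^2(\hat\beta)=X_{\hat\beta}$ for $0\le\hat\beta<1$. For $\hat\beta\ge 1$ the resummation diverges, and a separate, more elementary truncated second--moment argument applies to identify the degenerate limit.

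The main obstacle is the collapse described above: proving joint convergence of the chaos vector and, in particular, that the limit is a function of a \emph{single} Gaussian amounts to a central limit theorem for a polynomial chaos whose limit is not an element of any fixed chaos, so it lies beyond the Fourth--Moment/Lindeberg toolbox and requires the sharp renewal asymptotics of the $2d$ heat kernel together with tight control of the off--diagonal terms; this is where the bulk of the work in \cite{CSZ17b} is spent. Alternatively, one can recover the one--point statement within the framework of the present paper by applying It\^o's formula to $h_\e=\log u_\e$ and the martingale central limit theorem, treating the Hopf--Cole term $\tfrac12\Delta u_\e/u_\e$ via the homogenization argument; this is in fact the route used here to upgrade to the joint statements in $F$, $t$, and $u_0$.
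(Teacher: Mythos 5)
The paper does not prove this theorem; it is quoted verbatim from \cite[Theorem~2.15]{CSZ17b} and used as an input elsewhere (for example in defining $I^{(t,F,\hat\beta,u_0)}$), so there is no proof in the present paper to compare against. Your sketch is a faithful high-level summary of the actual argument in \cite{CSZ17b}: the Wiener--It\^o chaos expansion $u_\e=\sum_k u_\e^{(k)}$, the second-moment asymptotics $\beta_\e^2\int_{\e^2}^t g_{2s}(0)\,\mathrm ds\to\hat\beta^2$ (which you compute correctly), the geometric tail control for $\hat\beta<1$, and the ``collapse'' of the chaos vector $(u_\e^{(1)},\dots,u_\e^{(K)})$ onto Wick powers of a single Gaussian, from which the log-normal limit and then, by continuous mapping, the limit of $h_\e=\log u_\e$ follow.

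Two small points are worth flagging. First, the phrase ``the dominant part of $u_\e^{(k)}$ comes from configurations in which all time increments $t_j-t_{j-1}$ are microscopic'' is not quite right: the increments can be of any magnitude between $\e^2$ and $t$, and the crucial feature in the marginally relevant $d=2$ case is that the \emph{logarithms} of the increment ratios are spread roughly uniformly, giving the Dickman-type renewal structure that drives the collapse. Second, the closing suggestion that the one-point law could be ``recovered within the framework of the present paper'' by It\^o plus the martingale CLT is plausible in spirit -- one would need to show that $\langle\log\sZ\rangle_{tT_\e}$ concentrates at $\sigma^2(\hat\beta)$ -- but it overstates what the paper actually does: Proposition~\ref{prop:mainpropKPZ} and Proposition~\ref{prop:VanishBracket} prove the $\b_\e^{-1}$-rescaled averaged fluctuation field (Theorem~\ref{th:EWlimit}), which is a complementary, not equivalent, statement to the unrescaled one-point law of Theorem~\ref{thm:CSZ17b}; the latter is used as an external input here, not re-derived.
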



We will look at the fluctuation of $u_\e$ for general initial conditions in our main results. Let $\mathfrak{C}$  be a set of continuous functions which satisfies \begin{equation}\label{eq:u0cond}
0<\inf_{x\in \mathbb{R}^2}u_0(x)\leq \sup_{x\in\mathbb{R}^2}u_0(x)<\infty,
\end{equation}
or equivalently
\begin{align*}
\|\log u_0\|_\infty<\infty.
\end{align*}


{ Let $\mathfrak{F}$ be a set of functions $F\in C^3((0,\infty))$ such that there exists a constant $C=C_F>0$  such that for any $x\in (0,\infty)$\begin{align}
&|F'(x)|\leq C(x^{-1}+1),\quad 
|F''(x)|\leq C(x^{-2}+1),\quad |F'''(x)|\leq C(x^{-3}+1).\label{eq:Fass}
\end{align}
Then, $\mathfrak{F}$ contains $x^p$ ($0<p\leq 1$), $\log x$, $\sin x$, $\cos x$, $e^{-x}$ $\dots$ In this paper, we focus on the fluctuation of $F(u_\e)$. 
\begin{example}
  In particular cases, we find  that  $u_\e^{(F)}=F(u_\e)$ satisfies SPDEs.
  \begin{itemize}
\item If $F(x)=x^{p}$ $(0<p\leq 1)$, then $u_\e^{(F)}(0,x)=u_0(x)^p$ and \begin{align*}
\partial_t u_\e^{(F)}(t)=\frac{1}{2}\Delta u_\e^{(F)}-\frac{(p-1)}{2p}\frac{|\nabla u_\e^{(F)}|^2}{u_\e^{(F)}}+\frac{\b_\e^2V(0)p(p-1)u_\e^{(F)}}{2\e^2}+\b_\e u_\e^{(F)}\dot\xi_\e.
\end{align*}
\item If $F(x)=\log x$ $($KPZ equation$)$, then $u_\e^{(F)}(0,x)=\log u_0(x)$ and \begin{align*}
\partial_t u_\e^{(F)}(t)=\frac{1}{2}\Delta u_\e^{(F)}+\frac{1}{2}{|\nabla u_\e^{(F)}|^2}-\frac{\b_\e^2V(0)}{2\e^2}+\b_\e \dot\xi_\e. 
\end{align*}
\end{itemize}
\end{example}

We remark that $u_\e$ is a process indexed by $u_0$ and $\hat{\b}$ so we should write $u_\e=u_\e^{(\hat{\b},u_0)}$ and  $u_\e^{(F)}=u_\e^{(F,\hat{\b},u_0)}$. However, we omit $\hat{\b}$ and $u_0$ for simplicity of notation when it is clear from the context.
}

We denote by $\mathcal C^\infty_c$ the set of infinitely differentiable, compactly supported functions on $\mathbb R^2$.
\begin{theorem}\label{th:EWlimit} Suppose $u_0^{(1)},\cdots,u_0^{(n)}\in \mathfrak{C}$, $\hat{\b}_1,\cdots,\hat{\b}_n\in (0,1)$ and $F_1,\cdots,F_n\in \mathfrak{F}$.
For $t_{1}, \cdots, t_{n}\geq 0$ and $f_{1},\cdots,f_{n}\in\mathcal C^\infty_c$, the following convergence holds jointly  as $\e\to 0$,
\begin{align*} 
&\b_\e^{-1}\int_{\R^2} f_{i}(x) \left(u_\e^{(F_i,u_0^{(i)},\hat{\b}_i)}(t_{i},x) - \IE\left[{u}_\e^{(F_i,u_0^{(i)},\hat{\b}_i)}(t_{i},x)\right]  \dd x\right)\\
&\hspace{4em} \cvlaw \kU_{t_i}(f_i,F_i,\hat{\b}_i,u_0^{(i)}) \;,
\end{align*}
where $\dis \left\{\kU_{t_i}(f_i,F_i,\hat{\b}_i,u_0^{(i)})\right\}_{i=1}^n$ is centered Gaussian random variables  
with covariance {\begin{align}
& \frac{1}{1-\hat{\b}_i\hat{\beta}_j}\int_{0}^{{t_i}\wedge t_j}\dd \sigma\int\dd x\dd yf_i(x)f_j(y)
I^{(i)}(x)I^{(j)}(y)
\int\dd z\rho_\sigma(x,z)\rho_\sigma(y,z)\bar{u}^{\ssup i}(t_i-\sigma,z)\bar{u}^{\ssup j}(t_j-\sigma,z),\label{eq:CovSt}
  \end{align}
with  \begin{align*}
I^{(t,F,\hat{\b},u_0)}(x)=I(x)=
\IE\left[
F'\left(e^{X_{\hat{\b}}}
\bar{u}(t,x)\right)
e^{X_{\hat{\b}}}\right]=
\IE\left[F'\left(e^{X_{\hat{\b}}+{\sigma^2(\hat{\b})}}
\bar{u}(t,x)\right)
\right],
\end{align*}
{  $X_{\hat{\b}}$ is a Gaussian random variable defined in Theorem \ref{thm:CSZ17b}, $\rho_t(x)=(2\pi t)^{-1}e^{-\frac{|x|^2}{2t}}$ is the heat kernel and $\rho_t(x,y)=\rho_t(x-y)$, and $\bar{u}(t,x)=\int \rho_t(x,y)u_0(y)\dd y$}. We write $I^{(i)}$ for $I^{(t_i,F_i,\hat{\b}_i,u_0^{(i)})}(x)$ to make notation simple.
}
\end{theorem}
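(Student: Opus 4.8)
The plan is to realise, for each index $i$, the rescaled fluctuation as the terminal value of a continuous martingale driven by the white noise $\xi$, to compute the joint quadratic variation via a homogenisation / local limit theorem for the intermediate $2d$ polymer, and to conclude with the multidimensional martingale central limit theorem; this follows the strategy of \cite{CNN20}.

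\textbf{Step 1 (Martingale representation and the $\b_\e$ cancellation).} Fix $i$, write $\b_\e=\b_{\e,i}$ and $G^{(i)}_\e:=\b_\e^{-1}\int_{\R^2}f_i(x)\big(u_\e^{(F_i,u_0^{(i)},\hat\b_i)}(t_i,x)-\IE[u_\e^{(F_i,u_0^{(i)},\hat\b_i)}(t_i,x)]\big)\dd x$. Because $F_i\in\mathfrak F$ satisfies \eqref{eq:Fass} and $u_0^{(i)}\in\mathfrak C$, the Wiener chaos expansion of $u_\e$ together with uniform positive- and negative-moment bounds on $u_\e$ (available precisely when $\hat\b_i<1$ and $\|\log u_0^{(i)}\|_\infty<\infty$) show that $F_i(u_\e(t_i,x))$ is Malliavin differentiable with $D_{s,y}F_i(u_\e(t_i,x))=F_i'(u_\e(t_i,x))\,D_{s,y}u_\e(t_i,x)$. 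The structural point, read off from the mild/Duhamel formulation of the mollified SHE (equivalently via It\^o's formula, which also gives the SPDE displayed in the Example), is that $D_{s,y}u_\e(t_i,x)=\b_\e\,v^{(i),s,y}_\e(t_i,x)$, where $v^{(i),s,y}_\e$ solves the mollified SHE on $[s,\infty)$ started at time $s$ from $\phi_\e(\cdot-y)\,u_\e^{(i)}(s,\cdot)$; hence the prefactor $\b_\e^{-1}$ cancels, and by the Clark--Ocone formula
\[
G^{(i)}_\e=\mathcal M^{(i)}_\e(t_i),\qquad \mathcal M^{(i)}_\e(\tau):=\int_0^\tau\!\!\int_{\R^2}\Psi^{(i)}_\e(s,y)\,\xi(\dd s,\dd y),\qquad
\Psi^{(i)}_\e(s,y):=\int_{\R^2} f_i(x)\,\IE\!\big[F_i'(u_\e(t_i,x))\,v^{(i),s,y}_\e(t_i,x)\,\big|\,\mathcal F_s\big]\dd x,
\]
so $\mathcal M^{(i)}_\e$ is a continuous $L^2$-martingale with $\mathcal M^{(i)}_\e(0)=0$, terminal value $G^{(i)}_\e$, and constant after time $t_i$.

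\textbf{Step 2 (Homogenisation, the limit of $\Psi^{(i)}_\e$, and the bracket).} As $\e\to0$ the mollifier localises, so $v^{(i),s,y}_\e(t_i,x)$ is asymptotically $u_\e^{(i)}(s,y)$ times the point-to-point partition function of the intermediate polymer from $(s,y)$ to $(t_i,x)$; the local limit theorem for that partition function --- which the paper establishes separately --- replaces it by $\rho_{t_i-s}(x-y)\,\mathcal W_\e(t_i,x)$ up to a negligible error, and, applied jointly with the point-to-plane weight, gives $u_\e^{(i)}(t_i,x)\approx\big(\rho_{t_i-s}\!\ast u_\e^{(i)}(s,\cdot)\big)(x)\,\mathcal W_\e(t_i,x)$ with the \emph{same} environment $\mathcal W_\e(t_i,x)\cvlaw e^{X_{\hat\b_i}}$ (cf.\ Theorem \ref{thm:CSZ17b}). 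Since $\big(\rho_{t_i-s}\!\ast u_\e^{(i)}(s,\cdot)\big)(x)\to\bar u^{(i)}(t_i,x)$ in probability (the two-point function of $u_\e$ at macroscopic separation tends to $1$), using \eqref{eq:Fass} and the moment bounds to pass the expectation to the limit yields
\[
\Psi^{(i)}_\e(s,y)\;\approx\; u_\e^{(i)}(s,y)\int_{\R^2} f_i(x)\,\rho_{t_i-s}(x-y)\,I^{(i)}(x)\,\dd x ,\qquad I^{(i)}(x)=\IE\!\big[F_i'(\bar u^{(i)}(t_i,x)e^{X_{\hat\b_i}})e^{X_{\hat\b_i}}\big].
\]
As all $\mathcal M^{(i)}_\e$ are driven by the common noise $\xi$, the joint convergence follows from the continuous martingale CLT once the bracket matrix $\langle\mathcal M^{(i)}_\e,\mathcal M^{(j)}_\e\rangle_\infty=\int_0^{t_i\wedge t_j}\!\int_{\R^2}\Psi^{(i)}_\e(s,y)\Psi^{(j)}_\e(s,y)\,\dd y\,\dd s$ converges in probability to the covariance \eqref{eq:CovSt} (the limit being deterministic, the limiting vector is then Gaussian). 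Inserting the asymptotics above and using the replica identity $\IE[u_\e^{(i)}(s,y)u_\e^{(j)}(s,y)]\to(1-\hat\b_i\hat\b_j)^{-1}\bar u^{(i)}(s,y)\bar u^{(j)}(s,y)$ --- whose $k$-th chaos contributes $(\hat\b_i\hat\b_j)^k$, summing to the geometric factor --- produces \eqref{eq:CovSt} for the mean of the bracket, while a fourth-moment estimate (after chaos expansion, a bound on intersection local times of two independent $2d$ random walks, as in \cite{CNN20}) shows its variance vanishes.

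\textbf{Main obstacle.} The crux is Step 2: proving the local limit theorem for the partition function of the intermediate $2d$ polymer and, more delicately, establishing it \emph{jointly} for the point-to-point weight appearing inside $D_{s,y}u_\e$ and the point-to-plane weight defining $u_\e(t_i,x)$, with a common limiting environment $\mathcal W_\e(t_i,x)$, together with enough uniform integrability --- in particular control of the negative moments of $u_\e$, which is exactly where $\hat\b_i<1$ and $u_0^{(i)}\in\mathfrak C$ are needed --- to interchange the limit with $\IE[F_i'(\cdot)\,(\cdot)]$ for every $F_i\in\mathfrak F$. A secondary difficulty is controlling the homogenisation error in $\Psi^{(i)}_\e$ uniformly over $s\in[0,t_i\wedge t_j]$, especially in the boundary layer $t_i-s\lesssim\e^2$ where the polymer has not yet homogenised; as in \cite{CNN20}, this layer is split off and estimated separately and shown to contribute negligibly to the bracket.
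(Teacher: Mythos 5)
Your route is genuinely different from the paper's. The paper does not use Malliavin calculus at all: it rewrites $u_\e(t,x)$ through the Feynman--Kac polymer martingale $\kW_s(\sqrt T x)$, applies It\^o's formula to $F(\kW_s)$, and then must (i) prove the CLT for the martingale part after replacing its increments by the homogenized martingale $\kM$ built from the local limit theorem (Propositions \ref{prop:MwidetildeIsGaussianFlat} and \ref{prop:replaceByM}), and (ii) separately show that the fluctuations of the It\^o correction vanish (Proposition \ref{prop:VanishBracket}). Your Clark--Ocone representation yields a martingale whose terminal value is \emph{exactly} the centered functional, so step (ii) disappears and the $\b_\e^{-1}$ cancellation is automatic; the price is that the integrand is a conditional expectation $\IE[F_i'(u_\e(t_i,x))\,D_{s,y}u_\e(t_i,x)\mid\mathcal F_s]$ of a nonlinear functional of the terminal time, so the entire burden shifts to the factorization inside this conditional expectation: a joint local limit theorem for the propagator from $(s,y)$ and for the point-to-plane weight at $(t_i,x)$ sharing a common backward environment, together with negative-moment control (Lemma \ref{lem:negativemoment}) and $L^p$ bounds (Lemma \ref{lem:lp}) to move limits through $F_i'$. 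You correctly name this as the main obstacle, but it is precisely the analogue of the paper's Sections on Lemmas \ref{lem:approxPsi}--\ref{lem:approxFF} and Proposition \ref{prop:replaceByM}, and it is only sketched. Note also that your asymptotics give the bracket limit $\frac{1}{1-\hat\b_i\hat\b_j}\int_0^{t_i\wedge t_j}\dd s\int\dd z\,\bar u^{(i)}(s,z)\bar u^{(j)}(s,z)\big(\int\dd x f_i(x)I^{(i)}(x)\rho_{t_i-s}(x,z)\big)\big(\int\dd y f_j(y)I^{(j)}(y)\rho_{t_j-s}(y,z)\big)$; this is what the paper's own multidimensional-in-time computation (Proposition \ref{lm:CVLone}) produces as well, and it coincides with \eqref{eq:CovSt} via $\sigma=t_i-s$ when $t_i=t_j$, so your covariance is consistent with the proof even where the displayed formula \eqref{eq:CovSt} is only stated in its equal-time form.

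The one step that would fail as written is your variance control for the bracket: you invoke ``a fourth-moment estimate (after chaos expansion, a bound on intersection local times \dots)''. In $d=2$ the intermediate-disorder partition function does \emph{not} have bounded fourth moments throughout the subcritical regime: Lemma \ref{lem:lp} only guarantees $L^p$-boundedness for some $p<p_{\hat\b}$ with $p_{\hat\b}>2$, and $p_{\hat\b}<4$ once $\hat\b$ is close to $1$, so a raw fourth-moment bound proves the homogenization of $\Psi^{(i)}_\e\Psi^{(j)}_\e$ (equivalently, of $u^{(i)}_\e(s,\cdot)u^{(j)}_\e(s,\cdot)$ and of the $F'$-weighted backward weights) only for small $\hat\b$, not up to the critical point claimed in the theorem. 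The paper circumvents exactly this by localizing the partition functions to balls of radius $\ell'(\sigma,T)$, capping them at level $\ell(T)^{-1/2}$, using exact independence at distances larger than the localization scale, and appealing to the uniform-integrability lemma of \cite{CN19} (Steps 2--3 in the proof of Lemma \ref{lem:approxPsi}); your argument needs this (or an equivalent truncation) device to cover all $\hat\b\in(0,1)$. With that replacement, and with the Malliavin differentiability and chain rule for $F\in\mathfrak F$ (including $F=\log$) justified through the positive- and negative-moment bounds, your plan is a viable alternative proof of the theorem.
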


\begin{rem}
The centered Gaussian field $\dis \left\{\kU_{t}(f,F,\hat{\b},u_0):t\in [0,\infty),F\in \mathfrak{F},\hat{\b}\in [0,1),u_0\in \mathfrak{C}\right\}$ with covariance \eqref{eq:CovSt} can be constructed explicitly.  Let $\left\{\kV^{(\hat{\b},u_0)}(t,x):(t,x)\in  [0,\infty)\times \R^2\right\}$ be solutions of the following SPDE: $\kV^{(\hat{\b},u_0)}(0,x)\equiv 0$ and  \begin{align*}
\partial_t \kV^{(\hat{\b},u_0)}(t,x)=\frac{1}{2}\Delta \kV^{(\hat{\b},u_0)}(t,x)+{\bar{u}(t,x)}\dot{\xi}_{\hat{\b}}(t,x),
\end{align*}
where $\xi_{\hat{\b}}=\dis \sum_{n=0}^\infty \hat{\b}\xi^{(n)}$ with an independent sequence  time-space white noises $\{\xi^{(n)}\}$ for $\hat\b\in (0,1)$. We remark that $\xi_\b$ is a time-space white noise with strength $\dis \frac{1}{1-\b^2}$ and  $\dis \IE\left[\dot{\xi}_{{\b}}(t,x)\dot{\xi}_{\b'}(t',x')\right]=\frac{1}{1-\b\b'}\delta_{t,t'}\delta_{x,x'}$.
Then, by Duhamel's principle, $\kV^{(\hat{\b}_i,u_0^{(i)})}$ is given by
\begin{align}\label{explicit construction}
\kV^{(\hat{\b}^{(i)},u_0^{(i)})}(t,x)=\int_0^t \int_{\R^2}\rho_{t-s}(x,y)\bar{u}^{(i)}(s,y)\xi_{\hat{\b}_i}(\dd s,\dd y)
\end{align}
and the centered Gaussian field given by\begin{align*}
\kV_{t_i}(f_i,F_i,\hat{\b}_i,u_0^{(i)})=\int_{\R^2}\dd xI^{\left(i\right)}(x)f_i(x)F_i'(\bar{u}^{(i)}(t_i,x))\kV^{(\hat{\b}_i,u_0^{(i)})}(t,x)
\end{align*}
has the covariance structure \eqref{eq:CovSt}.
\end{rem}


\begin{rem} When $F$ is a power function or the logarithm, the limit 
$\kU^{(F,u_0,\hat{\b})}$ is a solution of an SPDE:
\begin{itemize}
\item If $F(x)=x^p$, then $\kU^{(F,u_0,\hat{\b})}(0,x)\equiv 0$ and \begin{align*}
\partial_t \kU=\frac{1}{2}\Delta \kU+\frac{p(p-1)}{2}|\nabla \log \bar{u}|^2\kU+(1-p)\nabla \log \bar{u}\cdot \nabla\kU+\frac{\bar{u}(t,x)}{(1-\hat{\b}^2)^{p^2-p+1}}\dot{\xi}(t, x).
\end{align*}
\item If $F(x)=\log x$, then $\kU^{(F,u_0,\hat{\b})}(0,x)\equiv 0$ and \begin{align*}
\partial_t \kU=\frac{1}{2}\Delta \kU+\nabla \log \bar{u}\cdot \nabla\kU+\frac{\bar{u}(t,x)}{(1-\hat{\b}^2)}\dot{\xi}(t, x).
\end{align*}
\end{itemize}
\end{rem}
From the above remarks, we have the following.
\begin{cor}
  Suppose $\hat{\beta}<1$. As $p\to 0$, $\kU_{t}(f,x^p,\hat{\b},u_0)$ converges to  $\kU_{t}(f,\log{x},\hat{\b},u_0)$.
  \end{cor}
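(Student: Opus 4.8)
The plan is to exploit that both limiting fields are centered Gaussian and that the covariance functional~\eqref{eq:CovSt} is already completely explicit, so that the corollary reduces to a short computation of $\lim_{p\to 0}I^{(t,\,\cdot\,,\hat{\b},u_0)}$ together with a dominated-convergence step. Throughout I write $x^p$ for the normalized family $F_p(x):=p^{-1}(x^p-1)$, which converges to $\log x$ as $p\to 0$; since $\kU_{\cdot}(f,\cdot,\hat{\b},u_0)$ depends on its function argument only through its derivative (it enters~\eqref{eq:CovSt} solely via $I$, which involves $F'$) and $F_p'(x)=x^{p-1}$, this is the natural choice that makes the statement non-degenerate. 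By Theorem~\ref{th:EWlimit}, $\kU_t(f,F_p,\hat{\b},u_0)$ and $\kU_t(f,\log x,\hat{\b},u_0)$ are centered Gaussian, and a sequence of centered Gaussians converges in distribution to a fixed centered Gaussian law if and only if the variances converge; applying Theorem~\ref{th:EWlimit} with $n=2$, $(F_1,F_2)=(F_p,\log x)$, $t_1=t_2=t$, $u_0^{(1)}=u_0^{(2)}=u_0$, $\hat{\b}_1=\hat{\b}_2=\hat{\b}$, $f_1=f_2=f$ shows in addition that the pair is jointly centered Gaussian with covariances given by~\eqref{eq:CovSt}. Hence it suffices to show that each entry of the $2\times 2$ covariance matrix converges, as $p\to 0$, to $\mathrm{Var}\big(\kU_t(f,\log x,\hat{\b},u_0)\big)$; for the distributional convergence in the statement the variance $\mathrm{Var}\big(\kU_t(f,F_p,\hat{\b},u_0)\big)$ alone suffices, while convergence of the full matrix moreover yields $L^2$-convergence on the common probability space carrying the jointly Gaussian family of Theorem~\ref{th:EWlimit}.

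The computation of $I$ is immediate. For $F(x)=\log x$ one has $F'(y)=1/y$, so the first representation of $I$ in Theorem~\ref{th:EWlimit} gives $I^{(t,\log x,\hat{\b},u_0)}(x)=\IE\big[(e^{X_{\hat{\b}}}\bar u(t,x))^{-1}e^{X_{\hat{\b}}}\big]=1/\bar u(t,x)$. For $F_p$ one has $F_p'(y)=y^{p-1}$, so $I^{(t,F_p,\hat{\b},u_0)}(x)=\IE\big[(e^{X_{\hat{\b}}}\bar u(t,x))^{p-1}e^{X_{\hat{\b}}}\big]=\bar u(t,x)^{p-1}\,\IE\big[e^{pX_{\hat{\b}}}\big]=\bar u(t,x)^{p-1}(1-\hat{\b}^2)^{(p-p^2)/2}$, using that $X_{\hat{\b}}=\sigma(\hat{\b})Z-\tfrac12\sigma^2(\hat{\b})$ with $Z$ standard normal and $\sigma^2(\hat{\b})=\log\tfrac{1}{1-\hat{\b}^2}$, whence $\IE[e^{pX_{\hat{\b}}}]=e^{(p^2-p)\sigma^2(\hat{\b})/2}=(1-\hat{\b}^2)^{(p-p^2)/2}$. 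Since $u_0\in\mathfrak{C}$ forces $0<\inf_{\R^2}u_0\le\bar u(t,\cdot)\le\sup_{\R^2}u_0<\infty$, it follows that $I^{(t,F_p,\hat{\b},u_0)}\to 1/\bar u(t,\cdot)=I^{(t,\log x,\hat{\b},u_0)}$ uniformly on $\R^2$ as $p\to 0$, and $\sup_{p\in(0,1]}\sup_{\R^2}|I^{(t,F_p,\hat{\b},u_0)}|\le C$ and $\sup_{\R^2}|I^{(t,\log x,\hat{\b},u_0)}|\le C$ with $C:=\max\big(1,(\inf_{\R^2}u_0)^{-1}\big)$. (This is consistent with the coefficientwise limit, as $p\to0$, of the Edwards--Wilkinson SPDE for $x^p$ recorded in the remark preceding this corollary towards that for $\log x$: $\tfrac{p(p-1)}{2}\to 0$, $1-p\to 1$, $(1-\hat{\b}^2)^{p^2-p+1}\to 1-\hat{\b}^2$.)

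It then remains to pass the limit through the $(\sigma,x,y)$-integration in~\eqref{eq:CovSt}, which I would do by dominated convergence. Using $\int_{\R^2}\rho_\sigma(x,z)\rho_\sigma(y,z)\,\dd z=\rho_{2\sigma}(x-y)$ and $\bar u\le\sup_{\R^2}u_0$, every integrand arising (for each $F,G\in\{F_p,\log x\}$) is bounded in modulus by $\frac{C^2(\sup_{\R^2}u_0)^2}{1-\hat{\b}^2}\,|f(x)|\,|f(y)|\,\rho_{2\sigma}(x-y)$, whose integral over $(0,t]\times\R^2\times\R^2$ is finite (it is at most $\frac{C^2(\sup_{\R^2}u_0)^2}{1-\hat{\b}^2}\,t\,\|f\|_1\,\|f\|_\infty$ since $\int\rho_{2\sigma}=1$), while the integrand converges pointwise because the sole $p$-dependent factor is the product of the two $I$'s, which converges. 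Dominated convergence then yields convergence of all entries of the covariance matrix to $\mathrm{Var}\big(\kU_t(f,\log x,\hat{\b},u_0)\big)$, which is exactly $\kU_t(f,x^p,\hat{\b},u_0)\cvlaw\kU_t(f,\log x,\hat{\b},u_0)$ (and $L^2$-convergence on the common probability space of the jointly Gaussian family). I do not expect any substantial obstacle: the entire content sits in the already-established identity~\eqref{eq:CovSt}, and the only points requiring (minor) care are the identification of the right normalization $x^p\leftrightarrow p^{-1}(x^p-1)$ and the elementary uniform bound above, which is immediate from $u_0\in\mathfrak{C}$ and $f\in\mathcal{C}_c^\infty$.
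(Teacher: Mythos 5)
Your argument is correct, and it follows a genuinely different (and more explicit) route than the paper: the paper gives no computation at all for this corollary, deducing it in one line ``from the above remarks,'' i.e.\ from the coefficientwise convergence, as $p\to 0$, of the limiting SPDE recorded for $F(x)=x^p$ to the one recorded for $F(x)=\log x$. You instead work directly with the covariance formula \eqref{eq:CovSt}: the computations $I^{(t,\log x,\hat{\b},u_0)}=1/\bar{u}(t,\cdot)$ and $\IE\big[e^{pX_{\hat{\b}}}\big]=(1-\hat{\b}^2)^{(p-p^2)/2}$ are right, the uniform bounds coming from $u_0\in\mathfrak{C}$ and the domination by $|f(x)|\,|f(y)|\,\rho_{2\sigma}(x-y)$ are exactly what is needed for dominated convergence, and since the pair is jointly centered Gaussian (Theorem~\ref{th:EWlimit} with $n=2$, or the explicit construction through $\kV$), convergence of the covariance matrix does give convergence in law and even $L^2$-convergence on the common probability space. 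What your route buys is a quantitative verification in place of a formal comparison of SPDE coefficients.

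The one place where you depart from the literal statement is the normalization, and you are right to do so: with $F(x)=x^p$ taken literally, $F'(x)=px^{p-1}$, hence $I^{(t,x^p,\hat{\b},u_0)}(x)=p\,\bar{u}(t,x)^{p-1}(1-\hat{\b}^2)^{(p-p^2)/2}\to 0$, and \eqref{eq:CovSt} then forces $\mathrm{Var}\big(\kU_t(f,x^p,\hat{\b},u_0)\big)\to 0$, so the unnormalized field converges to $0$ rather than to $\kU_t(f,\log x,\hat{\b},u_0)$. The statement therefore holds only after dividing by $p$, equivalently for $F_p(x)=p^{-1}(x^p-1)$ as you chose (only $F'$ enters, so the additive constant is immaterial); this matches the heuristic $x^p=e^{p\log x}\approx 1+p\log x$, so the fluctuations of $u_\e^p$ are of size $p$ times those of $\log u_\e$. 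Note that this factor $p$ is absent from the noise coefficient in the SPDE displayed in the paper's remark for $F(x)=x^p$, which is not consistent with the covariance \eqref{eq:CovSt} dictated by Theorem~\ref{th:EWlimit}; the paper's one-line deduction of the corollary inherits that imprecision, whereas your computation identifies and corrects it. In short: your proof is correct for the (necessarily) normalized family, by a different and more rigorous argument than the paper's.
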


\begin{rem}
In \cite{DGRZ20}, they study the fluctuations of the transformation $F(u_\e)$ for higher dimensional case $d\geq 3$ with $F$, its derivative and second derivative growing at most $x^{-p}+x^p$. They proved that there exists a constant $\beta_p$ such that the Gaussian fluctuation holds for $\hat{\beta}\in (0,\b_p)$. Our assumption on $F$ is slightly different from theirs but we can show the Gaussian fluctuations for all $\hat{\b}$ up to critical point.
\end{rem}

To analyze $u_\e$, we use the Feynman-Kac representation given  in \cite[Section 2]{BC95} where they considered the case $d= 1$ but it is easy to be modified  for $d\geq 2$:\begin{align*}
u_\e(t,x)=\DE_x\left[\exp\left(\b_\e\int_0^t \int_{\R^2}\phi_\e(B_s-y)\dot{\xi}(t-s,\dd y)\dd s-\frac{\b_\e^2tV(0)}{2\e^2}\right)u_0\left(B_t\right)\right],
\end{align*}
where  we denote by $\DP_x$ and $\DE_{{ x}}$ the law and the expectation with respect to two dimensional Brownian motion $B=\{B_t\}_{t\geq 0}$ starting from $x$.

Due to the time-reverse invariance and scale invariance of time-space white noise and the scaling invariance of Brownian motion, $\e B_{\e^{-2} s} \overset{d}{=} B_s$,  $\{u_\e(t,x):x\in\mathbb{R}^2\}$ has the same distribution as
\begin{align}
&\DE_x\left[\exp\left(\b_\e\int_0^t \int_{\R^2}\phi_\e(B_s-y)\dot{\xi}(s,\dd y)\dd s-\frac{\b_\e^2tV(0)}{2\e^2}\right)u_0\left(B_t\right)\right]\notag\\
&=\DE_{\frac{x}{\e}}\left[\exp\left(\b_\e\int_0^{\frac{t}{\e^2}} \int_{\R^2}\phi\left(B_s-\frac{y}{\e}\right)\dot{\xi}(s,\dd y)\dd s-\frac{\b_\e^2tV(0)}{2\e^2}\right)u_0\left({ \e}\,B_{\frac{t}{\e^2}}\right)\right],\label{eq:FKform}
\end{align}
where $(B_s)_{s\geq 0}$ is a Brownian motion path and $\DE_x$ the expectation associated to Brownian motion started at $x\in\mathbb R^2$,  $\b\geq 0$. In particular, for the flat initial condition, $u_\e$ has the same distribution as  {\it partition function} $\sZ_{\frac{t}{\e^2}}\left(\frac{x}{\e}\right)$   of continuum directed polymers, where $\sZ_t(x)$ is given by \begin{align*}
\sZ_t(x)&=\DE_x\left[\Phi_t^{\b_\e}\right]\\
&=\DE_{{x}}\left[\exp\left(\b_\e\int_0^{t} \int_{\R^2}\phi\left(B_s-{y}\right)\dot{\xi}(s,\dd y)\dd s-\frac{\b_\e^2tV(0)}{2}\right)\right],
\end{align*}
where \begin{align*}
\Phi_t^\b=\Phi_t^\b(B,\xi):=\exp\left(\beta \int_0^t \int_{{\mathbb R}^2} \phi(y-B_s) \ \xi(\dd s,\dd y)-\frac{\b^2 V(0)t}{2}\right)
\end{align*}
for $t\geq 0$, $\b\in (0,\infty)$. Thus, we can reduce the problem on the laws of $u_\e$ to the {partition function} of continuum directed polymers. Such connections between SHE (and KPZ equation) and directed polymers have been already pointed out in \cite{KPZ} and used in a lot of researches on SHE and KPZ equation \cite{BC95,BG97,MSZ16,GRZ18,MU17,DGRZ20,CSZ17a,CSZ17b,CSZ19a,CSZ19b,CSZ20,CCM20,CCM19b,CNN20,LZ20}.

\begin{rem} We give our contribution to the problem shortly. Edward-Wilkinson type fluctuations for KPZ equation for $d=2$ have been obtained in \cite{CSZ20} and \cite{G20} with the flat initial condition. On the other hand, we obtain the Gaussian fluctuations for the general initial conditions and multi-dimensional parameters. Also, our proof uses a less technical method, ``{\it martingale CLT}" (see Theorem \ref{thm:JS}) { via It\^o's lemma and homogenization argument.}, with onerous  calculation.
In \cite{CSZ20}, the problem was reduced to the case in the solution of SHE via approximating $\log u_\e$ by $``u_\e-1"$. In \cite{G20}, Gaussian fluctuation was obtained by Malliavin calculus and the second order Poincar\'e inequality. 
\end{rem}


\begin{rem} The Gaussian fluctuations for partition functions \cite{LZ20} and solutions of SHE \cite{MSZ16,GRZ18,DGRZ18b,CNN20} and KPZ equation \cite{MU17,DGRZ20,CNN20} in $d\geq 3$ have been proved as well as two dimensional case, where the disorder strength is given by  $\hat{\b}\e^{\frac{d-2}{2}}$  for $d\geq 3$. Also, the Gaussian fluctuations for a nonlinear stochastic heat equation with Gaussian multiplicative noise that is white in time and smooth in space \cite{GL20} and the counterpart for $d=2$ is stated in \cite{DG20} without detailed proof.
\end{rem}

\textbf{Note:} Throughout the paper and if clear from the context, the constant $C$ that appears in successive upper-bounds may take different values.

\textbf{Organization of the article} The main idea of Gaussian fluctuation is the same as in \cite{CNN20}. Section \ref{sec2} is devoted to proving key properties of partition functions of directed polymers, $L^2$-boundedness, boundedness of negative moments, and local limit theorem. Section \ref{sec:proofEWKPZ} is dedicated to the proof of Theorem \ref{th:EWlimit}. In subsection \ref{IdeaG}, we give a rough proof strategy and explain a heuristic idea of  Gaussian fluctuation. The rigorous proof starts from subsection \ref{sub:4.2}.

\section{Some Estimates for Partition Functions}\label{sec2}
{ In this section, we discuss some properties of partition functions of directed polymers in random environment.}

Hereafter, we set
\begin{equation}\label{Def: beta gamma T}
  \begin{split}
&T=T_\e:=\e^{-2}, \\
&\b=\b_\e:=\hat{\b}\sqrt{\frac{2\pi}{\log \e^{-1}}}=\hat{\b}\sqrt{\frac{4\pi}{\log T}}, \text{ and } \\
    &\gamma=\gamma_\e=\hat{\gamma}\sqrt{\frac{2\pi}{\log \e^{-1}}}=\hat{\gamma}\sqrt{\frac{4\pi}{\log T}}.
   \end{split}
\end{equation}

{ Throughout the paper, we write the subscript $\e$ in $T_\e$ and $\b_\e$ in each statement to emphasize its dependence but we often omit the subscript $\e$ in the proofs for simplicity.}

\subsection{$L^p$-bound of partition functions}

First, we remark that  for $x,y\in \R^2$
\begin{align}
\IE \left[\DE_x[\Phi^\b_{tT}]\DE_y[\Phi^\gamma_{tT}]\right]
&
=1+\sum_{n=1}^\infty \b^{n}\gamma^n\int_{0<s_1<\cdots<s_n<tT}\int_{\left(\R^2\right)^n}\prod_{i=1}^n\left(V(\sqrt{2}x_i)\rho_{s_i-s_{i-1}}(x_{i-1},x_{i})\right)\dd \mathbf{s} \dd\mathbf{x}\label{eq:L2decomn}
\end{align}
where   $\IE$ and $\IP$ denote the expectations and probability with respect to the white noise $\xi$ and we set $x_0=\frac{x-y}{\sqrt{2}}$, $s_0=0$ and $\dd\mathbf{s}=\dd s_1\cdots \dd s_n$, $\dd \mathbf{x}=\dd x_1\cdots \dd x_n$.
This representation is obtained from  the general property of the white noise:
\begin{align}
\IE\left[ \exp{\left(\int_{0}^t \int_{\R^2} f(t,x)\xi(\dd s, \dd x) \right)}\right]= \exp{\left(\frac{1}{2}\int_{0}^t\int_{\R^2} f(t,x)^2 \dd s \dd x \right)}.\label{eq:L2white}
\end{align}
Indeed,  we have
\aln{
  \IE \left[\DE_x[\Phi^\b_{tT}]\DE_y[\Phi^\gamma_{tT}]\right]&= \DE_{x}\otimes \DE_{y}\left[\exp\left(\int_0^t ( \b\phi(B_s-y)+\gamma\phi(\tilde{B}_s-y))^2 \dd s \dd y -  \frac{(\b^2+\gamma^2) V(0)t}{2}\right)\right] \notag\\
  &=\DE_x\otimes \DE_y\left[\exp\left(\b\gamma\int_0^t V(B_s-\tilde{B}_s)\dd s\right)\right]\notag\\
  &= \DE_{\frac{x-y}{\sqrt{2}}}\left[\exp\left(\b \gamma\int_0^t V(\sqrt{2} B_s)\dd s\right)\right],\notag
}
and  the Taylor expansion $e^x=\sum_{n=0}^\infty \frac{x^n}{n!}$ gives \eqref{eq:L2decomn}

\begin{lemma}\label{lem:L2bdd}
 Suppose $\hat{\b},\hat{\gamma}\in (0,1)$ and fix $t>0$. Then,
  \begin{align}
&\lim_{{ \e\to 0}}\IE \left[\DE[\Phi_{tT_\e}^{{ \b_\e}}]\DE[\Phi_{tT_\e}^{{\gamma_\e}}]\right]=\frac{1}{1-\hat{\b}\hat{\gamma}},\label{eq:L2bdd}\\
&\sup_{\e\leq 1}\sup_{s\leq t T_\e}\IE \left[\DE_{0,0}^{s,0}\left[\Phi^{{ \b_\e}}_s\right]^2\right]<\infty,\label{eq:L2ppbdd}
\end{align}
where $\DP_{0,x}^{t,y}$ and $\DE_{0,x}^{t,y},$ denote the probability measure and expectation of the Brownian bridge from $(0,x)$ to $(t,y)$ in $\R^2$.
\end{lemma}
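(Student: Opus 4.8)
For both identities the plan is to Taylor‑expand the exponential moments and estimate the resulting series of time‑ordered integrals, using the chaos representation \eqref{eq:L2decomn} and its Brownian‑bridge analogue. The one elementary input needed throughout is a two‑sided control of the space integral $\int_{\R^2}V(\sqrt2 x)\rho_a(z,x)\,\dd x$: from $\rho_a\le(2\pi a)^{-1}$ together with $\int_{\R^2}V(\sqrt2 x)\,\dd x=\tfrac12$, and separately from $\int\rho_a(z,x)\,\dd x=1$ with $V\le\|V\|_\infty$, one gets
\[
\int_{\R^2}V(\sqrt2 x)\rho_a(z,x)\,\dd x\ \le\ g(a):=\min\Big\{\|V\|_\infty,\ \tfrac1{4\pi a}\Big\},
\]
while on the compact support of $V(\sqrt2\,\cdot)$ there is the matching lower bound $\rho_a(z,x)\ge(2\pi a)^{-1}e^{-c/a}$ with $c$ depending only on $\phi$. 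The function $g$ is bounded and $\int_0^{tT_\e}g(a)\,\dd a=\frac{\log T_\e}{4\pi}+O(1)$, which is exactly the quantity that matches $\b_\e\gamma_\e=\hat{\b}\hat{\gamma}\,\frac{4\pi}{\log T_\e}$.

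For \eqref{eq:L2bdd}, by \eqref{eq:L2decomn} one has $\IE[\DE[\Phi_{tT_\e}^{\b_\e}]\DE[\Phi_{tT_\e}^{\gamma_\e}]]=\sum_{n\ge0}(\b_\e\gamma_\e)^nI_n$ with $I_n=\int_{0<s_1<\dots<s_n<tT_\e}\DE_0\big[\prod_{i=1}^nV(\sqrt2 B_{s_i})\big]\,\dd\mathbf s$. Integrating out the space variables one at a time gives $\DE_0[\prod_iV(\sqrt2 B_{s_i})]\le\prod_{i=1}^ng(s_i-s_{i-1})$, so, passing to the gap variables $r_i=s_i-s_{i-1}$, $(\b_\e\gamma_\e)^nI_n\le\big(\b_\e\gamma_\e\int_0^{tT_\e}g\big)^n=(\hat{\b}\hat{\gamma}(1+o(1)))^n$; since $\hat{\b}\hat{\gamma}<1$ this is a summable geometric bound, uniform for $\e$ small. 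For the matching lower bound at fixed $n$, restrict the simplex to $r_i\in[L,\,tT_\e/(n+1)]$, where the space integral is $\ge e^{-cn/L}\prod_i(4\pi r_i)^{-1}$, whence $\liminf_{\e\to0}(\b_\e\gamma_\e)^nI_n\ge e^{-cn/L}(\hat{\b}\hat{\gamma})^n$; letting $L\to\infty$ gives $(\b_\e\gamma_\e)^nI_n\to(\hat{\b}\hat{\gamma})^n$. Dominated convergence for series then yields the limit $\sum_{n\ge0}(\hat{\b}\hat{\gamma})^n=(1-\hat{\b}\hat{\gamma})^{-1}$.

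For \eqref{eq:L2ppbdd}, the same white‑noise identity \eqref{eq:L2white} (and the fact that the difference of two independent Brownian bridges from $0$ to $0$ equals $\sqrt2$ times one such bridge in law) gives $\IE[\DE_{0,0}^{s,0}[\Phi_s^{\b_\e}]^2]=\DE_{0,0}^{s,0}\big[\exp(\b_\e^2\int_0^s V(\sqrt2 W_u)\,\dd u)\big]=\sum_{n\ge0}\b_\e^{2n}q_n^{(s)}$, where $q_n^{(s)}=\frac1{n!}\DE_{0,0}^{s,0}\big[(\int_0^sV(\sqrt2 W_u)\,\dd u)^n\big]$ equals $\rho_s(0,0)^{-1}=2\pi s$ times a chain integral carrying two extra boundary kernels $\rho_{u_1}(0,x_1)$ and $\rho_{s-u_n}(x_n,0)$. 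The bulk of the chain is again bounded by a product of $g$'s; the new point is to dispose of the normalisation $2\pi s$ and the two boundary kernels. One does this by integrating the first space variable together with the first two heat kernels — $\int V(\sqrt2 x_1)\rho_{u_1}(0,x_1)\rho_{u_2-u_1}(x_1,x_2)\,\dd x_1\le\|V\|_\infty\rho_{u_2}(0,x_2)$ — and symmetrically the last space variable with the last two kernels; since the boundary positions are $O(1)$ on the support of $V(\sqrt2\,\cdot)$, this confines the first and last collision times and absorbs the factor $2\pi s$, the remaining interior steps contributing $\le\int_0^{tT_\e}g=\frac{\log T_\e}{4\pi}(1+o(1))$ each. (Equivalently, conditioning the bridge at its midpoint exhibits $\int_0^s V(\sqrt2 W_u)\,\dd u$ as a sum of two near‑independent contributions of free‑polymer type, one per endpoint.) The upshot is $q_n^{(s)}\le C(n+1)\big(\tfrac{\log T_\e}{4\pi}(1+o(1))\big)^n$ uniformly in $s\le tT_\e$, the extra factor $n+1$ compared with \eqref{eq:L2bdd} reflecting the two endpoints; hence $\b_\e^{2n}q_n^{(s)}\le C(n+1)(\hat{\b}^2(1+o(1)))^n$, summable uniformly for $\e$ small because $\hat{\b}^2<1$, which gives \eqref{eq:L2ppbdd}.

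The main obstacle in both parts is the non‑integrability of $a\mapsto(2\pi a)^{-1}$ near $a=0$: bounding every segment by the bare heat kernel fails, while bounding every segment by $\|V\|_\infty$ produces the useless estimate $\exp(\b_\e^2\|V\|_\infty\,tT_\e)$, which diverges; the cure is the interpolated kernel $g$, and this is exactly where the hypotheses on $\phi$ (bounded, compactly supported) enter. I expect the genuinely delicate point to be the bookkeeping in \eqref{eq:L2ppbdd} needed to remove the $\rho_s(0,0)^{-1}$ normalisation against the boundary time‑gaps without producing a spurious power of $s$; note however that there only a uniform upper bound is required (not an exact limit, as in \eqref{eq:L2bdd}), and the two‑endpoint decomposition makes the mechanism — each endpoint behaving like the free polymer, whose exponential moment stays finite precisely up to $\hat{\b}^2=1$ — transparent.
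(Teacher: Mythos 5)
Your proof of \eqref{eq:L2bdd} is correct, and its lower bound is arguably cleaner than the paper's: instead of the paper's exact decomposition $V(\sqrt2 x_i)\rho_{r_i}=(2\pi r_i)^{-1}V(\sqrt2 x_i)-\bar r_{r_i}$ followed by an explicit control of the error series $A_\e^n$, you keep the approximation loss multiplicative via the pointwise Gaussian lower bound $\rho_{r}(x_{i-1},x_i)\ge(2\pi r)^{-1}e^{-c/r}$ on the (compact) support of $V(\sqrt2\,\cdot)$, localize to gaps $r_i\ge L$, and then send $L\to\infty$ after $\e\to0$. Both routes use the same key interpolated kernel $g(a)=\min\{\|V\|_\infty,(4\pi a)^{-1}\}$ for the upper bound, and yours has fewer terms to track on the lower bound side.

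Your proposed proof of \eqref{eq:L2ppbdd}, however, has a genuine gap exactly where you anticipated delicacy. After integrating out $x_1$ with $V(\sqrt2 x_1)\le\|V\|_\infty$ and $\rho_{u_1}*\rho_{u_2-u_1}=\rho_{u_2}$, and symmetrically $x_n$, the normalisation is \emph{not} absorbed: what remains in front of the interior chain is $\|V\|_\infty^2\,\rho_{u_2}(0,x_2)\rho_{s-u_{n-1}}(x_{n-1},0)/\rho_s(0)$. Using $x_2,x_{n-1}\in D_V$ only gives the bound $\rho_{u_2}(0,x_2)\rho_{s-u_{n-1}}(x_{n-1},0)/\rho_s(0)\le s/(2\pi u_2(s-u_{n-1}))$, and the heat kernel $\rho_{u_2}(0,x_2)$ for bounded $x_2$ is of order $(2\pi u_2)^{-1}$ uniformly in $u_2$ large, so the first and last collision times are \emph{not} confined to bounded ranges. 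If you integrate $u_1\in(0,u_2)$ and $u_n\in(u_{n-1},s)$ (yielding $u_2$ and $s-u_{n-1}$, which cancel the two singular factors) and then bound each interior space integration by $g$, the remaining time integral over $\{0<u_2<\dots<u_{n-1}<s\}$ contains a free $u_2$-marginal which contributes another factor of $s$, on top of the explicit $s/(2\pi)$; the resulting estimate $q_n^{(s)}\lesssim s^2(\log s)^{n-3}$ is off by $T^2$, and the series diverges. The parenthetical midpoint-conditioning idea does not repair this: Cauchy--Schwarz across the two halves doubles the exponent $\b_\e^2$, which takes you out of the subcritical window $\hat\b^2<1$, and without it the two halves remain bridges (not free polymers), so no reduction is achieved.

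The paper avoids the issue by never separating the two boundary kernels from the bulk. It peels the space variables one at a time from the right using the single inequality
\begin{align*}
\int_{\R^2} V(\sqrt{2}y)\,\rho_a(x,y)\,\rho_b(y)\,\dd y\ \le\ \frac{1}{4\pi}\Big(C\wedge\frac{a+b}{ab}\Big)\rho_{a+b}(x),
\end{align*}
whose crucial feature is the running heat kernel $\rho_{a+b}(x)$ on the right-hand side: the time argument grows at each step, and after $n$ iterations one is left with exactly $\rho_s(0)$, which cancels the normalisation $\rho_s(0)^{-1}$ with no stray power of $s$. What then remains is the purely temporal integral $\int_{0<u_1<\cdots<u_n<s}\prod_{i=1}^n\big(C\wedge(\tfrac{1}{u_i-u_{i-1}}+\tfrac{1}{s-u_i})\big)\,\dd\mathbf{u}$, which the paper evaluates by an iterated computation with ansatz $(C_1+\log_+(s-u))^k$ to obtain $(n+1)(C_1+\log s)^n$, whence $\sum_n(n+1)\hat\b^{2n}<\infty$. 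The factor $n+1$ does indeed reflect the two endpoints of the bridge, as you guessed, but the mechanism is the $\tfrac{1}{u_i-u_{i-1}}+\tfrac{1}{s-u_i}$ structure inside the recursion (each step sees both the past gap and the remaining time), not a separate treatment of the two boundary kernels.
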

 \begin{rem}
The proof of Theorem 2.15 in \cite{CSZ17b}, where \eqref{eq:L2bdd} for $\b_\e=\gamma_\e$ was proved by reducing the problem to discrete directed polymers in random environment, can be mollified for $\b_\e\not=\gamma_\e$, but we will give a direct proof in this paper.
\end{rem}

\begin{rem}
We call \begin{align*}
\sZ_{0,x}^{t,y}={\sZ_{0,x;t,y}^{(\b)}:=}\DE_{0,x}^{t,y}\left[\Phi_t^{\b}\right]
\end{align*} 
the point-to-point partition function of continuum directed polymers.
\end{rem}
\begin{proof}[Proof of \eqref{eq:L2bdd}]
  We have from
\eqref{eq:L2decomn}
\begin{align}
&\IE \left[\DE\left[\Phi_{tT}^\b\right]\DE\Big[\Phi_{tT}^\gamma\Big]\right]= 1+\sum_{n=1}^\infty \b^{n}\gm^n\int_{0<s_1<\cdots<s_n<tT}\int_{\left(\R^2\right)^n}\prod_{i=1}^n\left(V(\sqrt{2}x_i)\rho_{s_i-s_{i-1}}(x_{i-1},x_{i})\right)\dd \mathbf{s} \dd\mathbf{x},\label{white noise exp}
\end{align}
with $x_0=0$. 
  We first consider the upper bound: $$\varlimsup_{\e\to 0}\IE \left[\DE[\Phi_{tT_\e}^\b]\DE[\Phi_{tT}^\gamma]\right]\leq \frac{1}{1-\hat{\b}\hat{\gamma}}.$$
Let us consider the function
\ben{\label{Def: rs}
  r_s=\sup_{x\in \R^2}\int_{\R^2}V(\sqrt{2}y)\rho_{s}(x,y)\dd y\geq 0.
  }Since $\int_\R \rho_s(x,y)\dd y=1$, $\sup_{s> 0} |r_s|\leq \|V\|_{\infty}$. Moreover, using $\int V(x) \dd x=1$, we obtain
{
\begin{align}
  r_s&= \frac{1}{4\pi s } \sup_{x\in \R^2}\int_{\R^2}V(y) e^{-\frac{|x-y|^2}{4s}}\dd y\leq \frac{1}{4\pi s } \sup_{x\in \R^2} \int_{\R} V(y)\dd y= \frac{1}{4\pi s }.\label{eq:rorder}
\end{align}
}

Hence, \eqref{white noise exp} is bounded from above by
\begin{align*}
  &
  1+\sum_{n=1}^\infty \b^{n}\gm^n \left(\int_0^{tT}r_s\dd s\right)^{n}\\
  &\leq 1+\sum_{n=1}^\infty \b^{n}\gm^n \left(\frac{\log (tT)}{4\pi  }+\|V\|_{\infty}\right)^{n}=1+\sum_{n=1}^\infty \hat{\b}^{n}\hat{\gm}^n \left(1+\frac{\log{t}+4\pi \|V\|_{\infty}}{\log{T}  }\right)^{n}\notag\\
  & \to \sum_{n=0}^\infty  (\hat{\b}\hat{\gm})^n= \frac{1}{1-\hat{\beta}\hat{\gamma}},
\end{align*}
as $\e\to 0$, where the convergence is absolute since $\hat{\b}\hat{\gamma}<1$.




Next, we consider the lower bound. Let 

$$\mathbf{T}_n=\{0<s_1<\cdots<s_n<tT,\, s_i-s_{i-1}>1,\,\forall i\in\{1,\cdots, n\} \}.$$ Then, since each term is non-negative, it is enough to show for fixed $L\in\N$,
\begin{align}
 \varliminf_{\e\to 0} \left(1+\sum_{n=1}^L \b^{n}\gm^n\int_{\mathbf{T}_n}\int_{\left(\R^2\right)^n}\prod_{i=1}^n\left(V(\sqrt{2}x_i)\rho_{s_i-s_{i-1}}(x_{i-1},x_{i})\right)\dd \mathbf{s} \dd\mathbf{x}\right)\geq  \sum_{n=0}^L (\hat{\b}\hat{\gamma})^n.\label{eq:l2tildeT}
\end{align}

For fixed $n\in\N$, we can find that
\begin{align}
&\int_{\mathbf{T}_n}\int_{\left(\R^2\right)^n}\prod_{i=1}^n\left(V(\sqrt{2}x_i)\rho_{s_{i}-s_{i-1}}(x_{i-1},x_{i})\right)\dd \mathbf{s} \dd\mathbf{x}\notag\\
&=\int_{\mathbf{T}_n}\int_{\left(\R^2\right)^n}\prod_{i=1}^n\left(\frac{1}{2\pi (s_i-s_{i-1})}V(\sqrt{2}x_i)-\bar{r}_{s_i-s_{i-1}}(x_{i-1},x_{i})\right)\dd \mathbf{s}\dd \mathbf{x}\notag\\
&=\int_{\mathbf{T}_n}\prod_{i=1}^n \frac{1}{4\pi (s_i-s_{i-1})}\dd \mathbf{s}+A_\e^n,\label{poppoppo}
\end{align}
where 
\begin{align*}
\bar{r}_s(x,y)&=\frac{1}{2\pi s}V(\sqrt{2}y)-V(\sqrt{2}y)\rho_{s}(x,y)\\
&=\frac{1}{2\pi s }\left(1-\exp\left(-\frac{|y-x|^2}{2s}\right)\right)V(\sqrt{2}y)\geq 0,
\end{align*}
\begin{align*}
  A_\e^n&= \sum_{k=1}^n (-1)^k \sum_{j_1<j_2<\cdots<j_k}\int_{\mathbf{T}_n}\int_{\left(\R^2\right)^n}\prod_{i\not=j_1,\cdots,j_k}\frac{V(\sqrt{2}x_i)}{4\pi (s_i-s_{i-1})} \prod_{j=j_1,\cdots,j_k}\bar{r}_{s_{j}-s_{j-1}}(x_{j-1},x_j)\dd \mathbf{s} \dd \mathbf{x}.
  \end{align*}
Let $D_V=\overline{\{x\in \R^d:~V(\sqrt{2}x)\neq 0\}}$, which is compact. Note that
\al{
  \sup_{x\in D_V}\int_{\R^2}\bar{r}_s(x,y)\dd y&= (2\pi s)^{-1} \sup_{x\in D_V}\int_{\R^2} \left(1-\exp\left(-\frac{|y-x|^2}{2s}\right)\right)V(\sqrt{2}y) \dd y\\
  &\leq (2\pi s)^{-1} |D_V| \sup_{x,y \in D_V} \left(1-\exp\left(-\frac{|y-x|^2}{2s}\right)\right) \\
  &\leq C(s^{-2}\wedge 1),
}
with some $C=C(V)\geq 1\lor \|V\|_\infty$. In particular, we have \begin{align*} 
|A_\e^n|
  &\leq C^{n+1}\sum_{k=1}^n \sum_{j_1<j_2<\cdots<j_k}\int_{\mathbf{T}_n}\int_{\left(\R^2\right)^n}\prod_{i\not=j_1,\cdots,j_k}\frac{\dd s_i}{s_i-s_{i-1}} \\
&\leq C^{n+1} \sum_{k=1}^n n^k \left(\log{( { tT})}\right)^{n-k}\\
&\leq (Cn)^{n+1}(\log{( { tT})})^{n-1}.
\end{align*}
Thus, we have for any fixed $L>0$\begin{align}
\sum_{n=1}^L \beta^{n}\gm^nA_\e^n= \sum_{n=1}^L \hat{\beta}^{n}\hat{\gm}^n \left(\frac{4\pi}{\log{T}}\right)^n A_\e^n \to 0,\label{eq:L2bdderror}
\end{align}
as $\e \to 0$. Also,
\begin{align*}
\int_{\mathbf{T}_n}\prod_{i=1}^n \frac{1}{4\pi (s_i-s_{i-1})}\dd \mathbf{s}\geq \left(\int_{1}^{ \frac{tT}{n}}\frac{1}{4\pi s}\dd s\right)^n=\left(\frac{\log{(tT/n)}}{4\pi}\right)^n,  
\end{align*}
and hence we have \begin{align}
\varliminf_{\e\to 0} \sum_{n=1}^L \b^{n}\gm^n\int_{\mathbf{T}_n}\prod_{i=1}^n \frac{1}{4\pi (s_i-s_{i-1})}\dd \mathbf{s}\geq \sum_{n=1}^L \hat{\b}^n\hat{\gm}^n.\label{eq:L2bddlead}
\end{align}
Then, \eqref{poppoppo}, \eqref{eq:L2bdderror} and \eqref{eq:L2bddlead} yield \eqref{eq:l2tildeT}.




\end{proof}
\begin{proof}[Proof of \eqref{eq:L2ppbdd}]
  We obtain by  the same manner as \eqref{eq:L2decomn} that
\begin{align}
&\IE \left[\DE_{0,0}^{tT,0}\left[\Phi^\b_{tT}\right]^2\right]= \DE_{0,0}^{tT,0}\left[ \exp{\left( \b^2\int_0^{t T} V(\sqrt 2  B_u) \dd u\right)}\right] \notag\\
&=1+\sum_{n=1}^\infty \b^{2n}\int_{0<s_1<\cdots<s_n<s}\int_{\left(\R^2\right)^n}\left(\prod_{i=1}^nV(\sqrt{2}x_i)\rho_{s_{i}-s_{i-1}}(x_{i-1},x_{i})\right)\frac{\rho_{s-s_n}(x_n)}{\rho_{tT}(0)}\dd \mathbf{s}\dd \mathbf{x},\label{eq:L2ppdecom}
\end{align}
where we  use the orthogonal transformation invariance  of Brownian bridges.  We have for $s,t>0$, by Markov property of Brownian motions,
\al{
   \int_{\R}\rho_{s}(x,y) \rho_t(y) V(\sqrt{2} y)\dd y&\leq \|V\|_\infty  \int_{\R}\rho_{s}(x,y) \rho_t(y) \dd y\leq \|V\|_{\infty}  \rho_{s+t}(x),  
}
 and  by $\int_\R V(\sqrt{2} y)\dd y=1/2$,
\aln{
  \int_{\R}\rho_{s}(x,y) \rho_t(y) V(\sqrt{2} y)\dd y&= \frac{1}{4\pi s t}\int_\R V(\sqrt{2} y) \exp{\left(-\frac{|x-y|^2}{2s}-\frac{|y|^2}{2t}\right)}\notag\\
  &=\rho_{s+t}(x)\,\frac{s+t}{2\pi s t} \int_\R V(\sqrt{2} y) \exp{\left(-\frac{(s+t)|y+*|^2}{2st}\right)}\dd y\notag\\
  &\leq \rho_{s+t}(x)\,\frac{s+t}{2\pi s t} \int_\R V(\sqrt{2} y) \dd y=\frac{s+t}{4\pi st} \rho_{s+t}(x).\label{pp recursive inequality}
  }
Putting things together with $C=4\pi \|V\|_\infty$, we have
\al{
  \int_{\R}\rho_{s}(x,y) \rho_t(y) V(\sqrt{2} y)\dd y\leq \frac{1}{4\pi }\left(C \land \frac{s+t}{st} \right)  \rho_{s+t}(x).
}
 Using this successively, we can bound each term of \eqref{eq:L2ppdecom} as
\aln{
 &  \b^{2n}\int_{0<s_1<\cdots<s_n<{tT}}\int_{\left(\R^2\right)^n}\left(\prod_{i=1}^nV(\sqrt{2}x_i)\rho_{s_{i}-s_{i-1}}(x_{i-1},x_{i})\right)\frac{\rho_{s-s_n}(x_n)}{\rho_{tT}(0)}\dd \mathbf{s}\dd \mathbf{x}\notag\\
  &\leq \left(\frac{\b^{2}}{4\pi}\right)^n \int_{0<s_1<\cdots<s_n<{tT}}\prod_{i=1}^n\left(C \land \frac{{tT}-s_{i-1}}{(s_i-s_{i-1})({tT}-s_{i})}\right) \dd \mathbf{s}\notag\\
    &= \left(\frac{\b^{2}}{4\pi}\right)^n \int_{0<s_1<\cdots<s_n<{tT}} \prod_{i=1}^n\left(C\land \left(\frac{1}{s_i-s_{i-1}}+\frac{1}{{tT}-s_{i}}\right)\right) \dd \mathbf{s},\label{PP partition estimate}
}
where we set $s_0=0$ and $s_{n+1}={tT}$ and we have used $\frac{1}{{tT}-s_i}+\frac{1}{s_i-s_{i-1}}=\frac{{tT}-s_{i-1}}{(s_i-s_{i-1})({tT}-s_{i})}$ in the last line.
We write $\log_+(x)=\log{x}\lor 0$ and $C_1=2C$. We use the following integral estimate: for $s<{tT}$ and $k\geq 0$,
  \al{
   & \int_{s}^{tT} (C_1+\log_+{({tT}-t)})^k \left(C\land \left(\frac{1}{t-s}+\frac{1}{{tT}-t}\right)\right) \dd t\\
    &\leq 2 C(C_1+\log_+{({tT}-s)})^k+ \int_{s+1}^{{tT}-1} (C_1+\log{({tT}-t)})^k  \left(\frac{1}{t-s}+\frac{1}{{tT}-t}\right) \dd t\\
    &\leq C_1(C_1+\log_+{({tT}-s)})^k+ (C_1+\log_+{({tT}-s)})^k \int_{s+1}^{{tT}-1}  \frac{1}{t-s} \dd t - (k+1)^{-1}\left[ (C_1+\log_+{({tT}-t))^{k+1}}\right]_{s+1}^{{tT}-1}\\
    &\leq  C_1(C_1+\log_+{({tT}-s)})^k+ (C_1+\log_+{({tT}-s)})^k \log_+{({tT}-s)}+ (k+1)^{-1} (C_1+\log_+{({tT}-s))^{k+1}}\\
    &= \frac{k+2}{k+1}(C_1+\log_+{({tT}-s)})^{k+1}.
    }
  Using this, \eqref{PP partition estimate} can be successively bounded from above as
  \al{
    &\quad \int_{0<s_1<\cdots<s_n<{tT}} \prod_{i=1}^n\left(C\land \left(\frac{1}{s_i-s_{i-1}}+\frac{1}{{tT}-s_{i}}\right)\right) \dd \mathbf{s}\leq (n+1)(C_1+\log{{tT}})^n.
  }
  Together with  \eqref{eq:L2ppdecom} and  \eqref{PP partition estimate}, using $\b=\hat{\beta}\sqrt{\frac{4\pi}{\log{T}}}$ with $\hat{\beta}<1$, we have
  \al{
 \varlimsup_{\e\to 0}   \IE \left[\DE_{0,0}^{{tT},0}\left[\Phi^\b_{tT}\right]^2\right]& \leq  \varlimsup_{\e\to 0} \sum_{n=0}^\infty \left(\frac{\beta^2}{4\pi}\right)^n (n+1)(C_1+\log{{tT}})^n= \sum_{n=0}^\infty (n+1)\hat{\beta}^{2n}<\infty.
  }
\end{proof}

\begin{lemma}\cite[(5.11)]{CSZ20}\label{lem:lp}
Fix $\hat{\b}\in (0,1)$. Then, there exists $p_{\hat{\b}}>2$ such that for any $2\leq p<p_{\hat{\b}}$  and for $t\geq 0$\begin{align*}
\varlimsup_{\e\to 0}\IE\left[\DE_x\left[\Phi_{tT}^{{ \b_\e}}\right]^p\right]<\infty.
\end{align*}
\end{lemma}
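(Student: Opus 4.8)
The plan is to derive the $L^p$-bound from the $L^2$-bound already established in Lemma \ref{lem:L2bdd}, by feeding it into hypercontractivity of the Wiener chaos. First I would remove the dependence on the starting point: by the translation invariance of the space-time white noise $\xi$ one has $\IE\big[\DE_x[\Phi_{tT_\e}^{\b_\e}]^p\big]=\IE\big[\DE_0[\Phi_{tT_\e}^{\b_\e}]^p\big]$, so it suffices to treat $x=0$. Write $\sZ_{tT}:=\DE_0[\Phi_{tT}^{\b_\e}]$, the case $t=0$ being trivial since $\sZ_0=1$.

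Next I would set up the Wiener chaos decomposition. Since $\sZ_{tT}$ is a square-integrable functional of $\xi$ by Lemma \ref{lem:L2bdd}, it expands as $\sZ_{tT}=\sum_{n\ge0}P_n\sZ_{tT}$ in $L^2(\IP)$, where $P_n$ is the orthogonal projection onto the $n$-th homogeneous chaos, and $\IE[\sZ_{tT}^2]=\sum_{n\ge0}\|P_n\sZ_{tT}\|_{L^2}^2$. Expanding the stochastic (Wick) exponential $\Phi_{tT}^\b=\mathcal E\big(\b\,\phi(\cdot-B_\cdot)\mathbf 1_{[0,tT]}\big)$ into multiple Wiener--It\^o integrals and taking $\DE_0$ shows that $P_n\DE_0[\Phi_{tT}^\b]$ depends on $\b$ only through the scalar prefactor $\b^n$; comparing with the expansion \eqref{eq:L2decomn} at $x=y=0$ then identifies
\be{
\|P_n\sZ_{tT}\|_{L^2}^2=\b_\e^{2n}\int_{0<s_1<\cdots<s_n<tT}\int_{(\R^2)^n}\prod_{i=1}^n\big(V(\sqrt2 x_i)\rho_{s_i-s_{i-1}}(x_{i-1},x_i)\big)\,\dd\mathbf{s}\,\dd\mathbf{x}
}
with $x_0=0$. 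The term-by-term bound already used in the proof of \eqref{eq:L2bdd} (integrating out the spatial variables one at a time against $r_s$ from \eqref{Def: rs}, then $\int_0^{tT}r_s\,\dd s\le \tfrac{\log tT}{4\pi}+\|V\|_\infty$) gives $\|P_n\sZ_{tT}\|_{L^2}^2\le\hat\b^{2n}\theta_\e^{\,n}$, where $\theta_\e:=1+(\log t+4\pi\|V\|_\infty)/\log T_\e\to1$ as $\e\to0$.

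Then I would apply Nelson's hypercontractivity estimate for the Wiener chaos: for every real $p\ge2$ and every $X$ in the $n$-th chaos, $\|X\|_{L^p}\le(p-1)^{n/2}\|X\|_{L^2}$. Combining this with Minkowski's inequality and the previous bound,
\be{
\|\sZ_{tT}\|_{L^p}\le\sum_{n\ge0}(p-1)^{n/2}\|P_n\sZ_{tT}\|_{L^2}\le\sum_{n\ge0}\big((p-1)\,\theta_\e\,\hat\b^2\big)^{n/2},
}
which at the same time shows that the partial sums are Cauchy in $L^p$, so the bound is legitimate and $\sZ_{tT}\in L^p$ whenever the series converges. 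I would then set $p_{\hat\b}:=1+\hat\b^{-2}$, which is strictly larger than $2$ since $\hat\b\in(0,1)$: for $2\le p<p_{\hat\b}$ one has $(p-1)\hat\b^2<1$, so choosing $\e_0>0$ with $(p-1)\theta_\e\hat\b^2<1$ for all $\e\le\e_0$ makes the geometric series bounded by a finite constant uniformly in $\e\le\e_0$, whence $\varlimsup_{\e\to0}\IE[\sZ_{tT}^p]<\infty$.

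The only step that is not pure bookkeeping is the identification of $\|P_n\sZ_{tT}\|_{L^2}^2$ with the $n$-th term of the series \eqref{eq:L2decomn}, i.e.\ verifying that the chaos components of $\DE_0[\Phi^\b_{tT}]$ scale as $\b^n$ and recognizing the resulting kernel norm; everything else is the $L^2$-computation of Lemma \ref{lem:L2bdd} reused term by term, plus the standard hypercontractivity inequality. This is in essence the argument behind \cite[(5.11)]{CSZ20}, carried out there through the analogous hypercontractivity bound for polynomial chaos in the discrete polymer model; the version above has the side benefit of producing the explicit (presumably non-optimal) threshold $p_{\hat\b}=1+\hat\b^{-2}$.
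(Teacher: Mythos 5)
Your proof is correct, and it is essentially the same argument as the one cited by the paper. The paper does not give its own proof of this lemma; it cites \cite[(5.11)]{CSZ20}, where exactly this hypercontractivity strategy is used (there for the discrete polynomial chaos of the lattice polymer). Your continuous-Wiener-chaos version, with the identification of $\|P_n\sZ_{tT}\|_{L^2}^2$ via \eqref{eq:L2decomn} and Nelson's bound $\|X\|_{L^p}\le (p-1)^{n/2}\|X\|_{L^2}$, is the standard transcription, and the explicit threshold $p_{\hat\b}=1+\hat\b^{-2}>2$ is a legitimate (if not optimal) choice.
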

\begin{lemma}\label{lem:p2pge}
Suppose $\hat{\b}\in (0,1)$ and fix $t>0$. Then, \begin{align}
&\sup_{\e \leq 1}\sup_{x\in \R^2}\sup_{s\leq tT_\e}\DE_{0,0}^{s,x}\left[\exp\left(\beta_\e^2\int_0^s V(\sqrt{2}B_u)\dd u\right)\right]<\infty.\label{eq:p2pge}
\end{align}
\end{lemma}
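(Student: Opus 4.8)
The plan is to reduce \eqref{eq:p2pge} to the point-to-point second moment bound \eqref{eq:L2ppbdd} already established in Lemma \ref{lem:L2bdd}, using only the elementary relationship between a Brownian bridge ending at an arbitrary point $x$ and a Brownian bridge returning to $0$. The key observation is that the functional inside the expectation, $\exp(\beta_\e^2\int_0^s V(\sqrt2 B_u)\,\dd u)$, involves $V$, which is supported in the compact set $D_V$. So the contribution to the integral only comes from times when $B_u$ lies near the origin; in particular, when the endpoint $x$ is far from $D_V$ the bridge must travel back near the origin, and the Gaussian cost of doing so can only help (up to a bounded constant) when we compare against the $x=0$ bridge.

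Concretely, I would write out the chaos/Feynman--Kac expansion of $\DE_{0,0}^{s,x}[\exp(\beta^2\int_0^s V(\sqrt2 B_u)\,\dd u)]$ exactly as in \eqref{eq:L2ppdecom}, obtaining
\[
1+\sum_{n=1}^\infty \b^{2n}\int_{0<s_1<\cdots<s_n<s}\int_{(\R^2)^n}\Big(\prod_{i=1}^n V(\sqrt2 x_i)\rho_{s_i-s_{i-1}}(x_{i-1},x_i)\Big)\frac{\rho_{s-s_n}(x_n-x)}{\rho_s(x)}\,\dd\mathbf s\,\dd\mathbf x,
\]
with $x_0=0$. The only difference from the $x=0$ case is the last factor $\rho_{s-s_n}(x_n-x)/\rho_s(x)$ in place of $\rho_{s-s_n}(x_n)/\rho_s(0)$. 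Since $x_n\in D_V$ is bounded, I claim there is a constant $C_V$, depending only on $V$, with $\rho_{s-s_n}(x_n-x)/\rho_s(x)\le C_V\,\rho_{s-s_n}(x_n)/\rho_s(0)$ uniformly in $s\le tT_\e$, $x\in\R^2$, $s_n<s$, and $x_n\in D_V$. Granting this, each term of the expansion for general $x$ is dominated by $C_V$ times the corresponding term for $x=0$, so the whole sum is bounded by $C_V\cdot\sup_{s\le tT_\e}\IE[\DE_{0,0}^{s,0}[\Phi_s^{\b}]^2]$, which is finite uniformly in $\e$ by \eqref{eq:L2ppbdd}. (One should take a little care that \eqref{eq:L2ppbdd} is stated for the bridge of length $s$ returning to $0$, which is exactly what appears here.)

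The main obstacle is therefore the ratio estimate $\rho_{s-s_n}(x_n-x)/\rho_s(x)\le C_V\,\rho_{s-s_n}(x_n)/\rho_s(0)$ for bounded $x_n$. Writing $\rho_u(z)=(2\pi u)^{-1}e^{-|z|^2/(2u)}$, the prefactors cancel, so the claim is equivalent to
\[
\frac{|x_n-x|^2}{s-s_n}-\frac{|x|^2}{s}\;\ge\;\frac{|x_n|^2}{s-s_n}-C'_V,
\]
i.e. $|x_n-x|^2/(s-s_n)+|x|^2/(s_n\text{-complement})\ge\ldots$; expanding $|x_n-x|^2=|x_n|^2-2x_n\cdot x+|x|^2$ and using $|x|^2/(s-s_n)-|x|^2/s=|x|^2 s_n/(s(s-s_n))\ge 0$, it reduces to controlling the cross term $-2x_n\cdot x/(s-s_n)$, which is bounded below by $-2|x_n|\,|x|/(s-s_n)$, and this is absorbed by the nonnegative quadratic terms via $2ab\le a^2+b^2$ together with $|x_n|\le\operatorname{diam}(D_V)$. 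I expect this to go through with $C'_V$ depending only on $\operatorname{diam}(D_V)$; assembling the pieces then gives \eqref{eq:p2pge} with the supremum over $\e\le 1$ coming directly from \eqref{eq:L2ppbdd}.
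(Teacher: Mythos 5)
Your reduction to \eqref{eq:L2ppbdd} aims at the right target, but the pointwise heat-kernel estimate on which the whole term-by-term comparison rests is false. You claim that
$\rho_{s-s_n}(x_n-x)/\rho_s(x)\le C_V\,\rho_{s-s_n}(x_n)/\rho_s(0)$ uniformly in $x\in\R^2$, $0<s_n<s\le tT_\e$ and $x_n\in D_V$. After the prefactors cancel, this is equivalent to
\begin{align*}
\frac{x_n\cdot x}{s-s_n}-\frac{|x|^2\,s_n}{2s(s-s_n)}\;\le\;\log C_V\qquad\text{for all }x\in\R^2,
\end{align*}
and the left-hand side, as a function of $x$, is maximized at $x=\tfrac{s}{s_n}x_n$, where it equals $\tfrac{|x_n|^2 s}{2 s_n(s-s_n)}\ge \tfrac{|x_n|^2}{2s_n}$. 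Taking any $x_n\in D_V$ with $|x_n|>0$ (such points exist since $V$ is continuous with $V(0)>0$) and letting $s_n\to 0$ with $s$ of order $T$, this blows up, so no constant $C_V$ depending only on $V$ can work. This is exactly where your absorption step breaks: the only nonnegative term available is $\tfrac{|x|^2 s_n}{2s(s-s_n)}$, whose coefficient degenerates as $s_n\to 0$, so absorbing the cross term via $2ab\le a^2+b^2$ produces the factor $\exp\bigl(\tfrac{|x_n|^2 s}{2s_n(s-s_n)}\bigr)$ rather than a constant depending on $\operatorname{diam}(D_V)$. (The integrated, term-by-term comparison you are after is in fact true, because in the integral the blow-up is compensated by the factor $\prod_i\rho_{s_i-s_{i-1}}(x_{i-1},x_i)$, which is exponentially small precisely when $x_n\in D_V$ is reached by time $s_n$; but your argument does not track this cancellation, and without it the proof does not close.)

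The efficient repair avoids kernel ratios altogether and is the route the paper takes: by \eqref{eq:L2white} (as in \eqref{eq:L2ppdecom}), one has the identity
\begin{align*}
\DE_{0,0}^{s,x}\left[\exp\left(\b_\e^2\int_0^s V(\sqrt{2}B_u)\,\dd u\right)\right]
=\IE\left[\DE_{0,0}^{s,\sqrt{2}x}\left[\Phi_s^{\b_\e}\right]\DE_{0,0}^{s,0}\left[\Phi_s^{\b_\e}\right]\right],
\end{align*}
and then Cauchy--Schwarz together with the shear invariance $\DE_{0,x}^{t,y}[\Phi_t^{\b}]\eqlaw\DE_{0,0}^{t,0}[\Phi_t^{\b}]$ bounds the right-hand side by $\IE\bigl[\DE_{0,0}^{s,0}[\Phi_s^{\b_\e}]^2\bigr]$, which is uniformly bounded by \eqref{eq:L2ppbdd}. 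In other words, the comparison with the $x=0$ bridge holds after taking the white-noise expectation (equivalently, chaos by chaos), not at the level of the integrand, and your proof needs this modification to be correct.
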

\begin{proof}
   By \eqref{eq:L2ppbdd}, \eqref{eq:L2ppdecom}, for $s\leq t T$,
   \al{
     \DE_{0,0}^{s,z}\left[ e^{\b^2\int_0^{s} V(\sqrt 2  B_u) \dd u} \right]&= \IE[\DE_{0,0}^{s,z} [\Phi^\b_{s}]\DE_{0,0}^{s,0} [\Phi^\b_{s}]]\\
     &\leq \IE[\DE_{0,0}^{s,0} [\Phi^\b_{s}]^2]= \DE_{0,0}^{s,0}\left[ e^{\b^2\int_0^{s} V(\sqrt 2  B_u) \dd u} \right]\\
     &\leq \sup_{\e \leq 1}\left[\exp\left(\beta_\e^2\int_0^{tT_\e} V(\sqrt{2}B_u)\dd u\right)\right]<\infty,
   }
   where we have used the remark below.
\end{proof}
\begin{rem}
By the shear invariance of environment, we have that \begin{align*}
\DE_{0,x}^{t,y}\left[\Phi_t^\b\right]\eqlaw \DE_{0,0}^{t,0}\left[\Phi_t^\b\right]
\end{align*}
for any $t>0$ and $x,y\in\R^2$.
\end{rem}

We end this subsection by 
 presenting the boundedness of negative moments of  partition functions:

\begin{lemma}\label{lem:negativemoment}\cite[(5.12), (5.13), (5.14)]{CSZ20}
Let $\hat{\b}\in (0,1)$ and fix $t>0$. For any $p\geq 0$   and $x\in\R^2$,
\begin{align*}
&\sup_{s\in [0,t]}\IE\left[\left({\sZ}_{sT_\e}^{{\beta_\e}}(x)\right)^{-p}\right]<\infty.
\end{align*}
\end{lemma}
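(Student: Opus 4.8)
The plan is to deduce the statement from a single uniform sub-Gaussian lower-tail estimate for $\log\sZ$, which is essentially the content of \cite[(5.12)--(5.14)]{CSZ20}; only two cosmetic adjustments are needed. First, by translation invariance of the white noise $\xi$ one has $\sZ_{sT_\e}^{\beta_\e}(x)\eqlaw\sZ_{sT_\e}^{\beta_\e}(0)$, so the point $x$ plays no role, and it is enough to work with $\sZ_\tau^{\beta_\e}(0)$, $\tau\in[0,tT_\e]$ (one then takes $\tau=sT_\e$). Second, it suffices to produce constants $C_1,c_1>0$, uniform over $\e\le1$ and $\tau\in[0,tT_\e]$, such that
\begin{align}\label{eq:negmom-tail}
\P\!\left(\log\sZ_\tau^{\beta_\e}(0)<-\lambda\right)\le C_1 e^{-c_1\lambda^2},\qquad \lambda\ge0,
\end{align}
because, writing $L=-\log\sZ_\tau^{\beta_\e}(0)$ and using $\IE[e^{pL}]=1+p\int_0^\infty e^{p\lambda}\P(L>\lambda)\,\dd\lambda$ together with \eqref{eq:negmom-tail}, one gets $\IE\big[(\sZ_\tau^{\beta_\e}(0))^{-p}\big]\le 1+C_1p\int_0^\infty e^{p\lambda-c_1\lambda^2}\,\dd\lambda<\infty$ uniformly in $\e$ and $\tau$, which is the claim.

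To prove \eqref{eq:negmom-tail} I would follow \cite{CSZ20}. For each Brownian path, $\Phi_\tau^{\beta_\e}$ is the exponential of an affine functional of $\xi$, so $\log\sZ_\tau^{\beta_\e}(0)=\log\DE_0[\Phi_\tau^{\beta_\e}]$ is a $\log$-$\int$-$\exp$ of affine functionals of $\xi$, hence convex in $\xi$. Its Malliavin derivative is $D_{u,y}\log\sZ_\tau^{\beta_\e}(0)=\beta_\e\langle\phi(B_u-y)\rangle_\tau$, where $\langle\cdot\rangle_\tau$ is the polymer measure on paths of length $\tau$, so that
\begin{align*}
\big\|D\log\sZ_\tau^{\beta_\e}(0)\big\|_{L^2([0,\tau]\times\R^2)}^2=\beta_\e^2\int_0^\tau\!\!\int_{\R^2}\langle\phi(B_u-y)\rangle_\tau^2\,\dd y\,\dd u .
\end{align*}
Writing $\langle\phi(B_u-\cdot)\rangle_\tau=\phi\star\mu_u$ with $\mu_u$ a probability density, the inner integral equals $\|\phi\star\mu_u\|_{L^2}^2\le\|\phi\|_\infty$, and on the (high-probability) event that the polymer is diffusive it is of order $1\wedge u^{-1}$, so that there $\|D\log\sZ_\tau^{\beta_\e}(0)\|_{L^2}^2\lesssim\beta_\e^2\big(1+\log(tT_\e)\big)\to\hat\beta^2<1$ uniformly in $\e,\tau$. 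Combined with convexity, a conditional Borell--TIS argument then gives concentration of $\log\sZ_\tau^{\beta_\e}(0)$ around its median with variance proxy staying $<1$. The median is bounded below uniformly: by Paley--Zygmund, $\P(\sZ_\tau^{\beta_\e}(0)>\tfrac12)\ge\tfrac14\,(\IE\sZ_\tau^{\beta_\e}(0))^2/\IE[(\sZ_\tau^{\beta_\e}(0))^2]$, and since $V\ge0$ the map $\tau\mapsto\IE[(\sZ_\tau^{\beta_\e}(0))^2]$ is nondecreasing, hence dominated by its value at $\tau=tT_\e$, which converges to $(1-\hat\beta^2)^{-1}$ by Lemma~\ref{lem:L2bdd}; thus $\mathrm{med}\big(\log\sZ_\tau^{\beta_\e}(0)\big)\ge-C$. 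Putting these together yields \eqref{eq:negmom-tail}.

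The main obstacle, and the reason I would lean on \cite{CSZ20} here rather than invoke Gaussian concentration off the shelf, is precisely the gap between the a.s. bound $\|D\log\sZ_\tau^{\beta_\e}(0)\|_{L^2}^2\le\|\phi\|_\infty\tau$, which diverges as $\e\to0$, and the $O(1)$ bound valid only on the diffusive event. Making the conditional concentration argument rigorous requires a quantitative estimate of the probability of the complementary (localized) event, which is where the $L^2$-control of Lemma~\ref{lem:L2bdd} and the companion bounds \cite[(5.12)--(5.14)]{CSZ20} enter. A minor additional point is uniformity in $s$ down to $s=0$; there it is immediate, since $sT_\e\to0$ forces $\sZ_{sT_\e}^{\beta_\e}(0)\to1$ in every $L^q$, so the negative moments are trivially bounded in that regime.
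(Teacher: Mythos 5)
The paper does not prove this lemma; it cites \cite[(5.12)--(5.14)]{CSZ20} outright, and your proposal in effect does the same thing: you correctly identify the strategy of that reference (reduce by translation invariance to $x=0$, reduce moment bounds to a uniform sub-Gaussian lower tail for $\log\sZ$, exploit convexity in $\xi$ together with a Malliavin-derivative bound that is $O(1)$ only on a diffusive event, then control the complementary event quantitatively), and you honestly defer the last, genuinely technical step to \cite{CSZ20} — which is exactly what the paper does. Two cosmetic slips worth fixing: the a.s.\ Malliavin bound should carry the factor $\b_\e^2$, i.e.\ $\big\|D\log\sZ_\tau^{\b_\e}(0)\big\|_{L^2}^2\le \b_\e^2\|\phi\|_\infty\,\tau$ (the conclusion that it diverges is unchanged); and Paley--Zygmund gives $\P(\sZ_\tau^{\b_\e}(0)>1/2)\ge c$ for some $c>0$ that need not exceed $1/2$, so the lower bound on the median does not follow directly but requires bootstrapping with the concentration estimate once it is in hand.
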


\subsection{Local limit theorem}



In this subsection, we give an estimate of local limit theorem for partition functions.

To describe the statement, we introduce the time-reversed partition function of time horizon $\ell$, $\overleftarrow{\sZ}^\b_{T,\ell}(z)$ :
\begin{equation}
\overleftarrow{\sZ}^\b_{t,\ell}(z) = \DE_{z} \left[\exp\left\{\b\int_{t-\ell}^t \int_{\mathbb R^2} \phi(B_{t-s}-y)  \xi(\dd s,\dd y) -\frac{\b^2  V(0) \ell}{2}\right\} \right].\notag
\end{equation}

\begin{theorem}[Local limit theorem for polymers] \label{th:errorTermLLT}
{ Fix $t>0$.} Let $0<\ell_{{T_\e}}^a<\ell_{{T_\e}}^b<{ L({ T_\e})}\leq t{T_\e}$ be functions with $\displaystyle \lim_{\e\to0 }\ell_{{ T_\e}}^a\to \infty$, $\displaystyle \lim_{\e\to 0}\frac{\ell_{T_{{ T_\e}}}^b}{L(T_{{ T_\e}})}=0$, { $\displaystyle \lim_{\e\to 0}\frac{\log L({ T_\e})}{\log {T_\e}}=1$}. Then,  for all ${\hat{\b}}<1$ there exists $C=C(\hat{\b})$ such that for all positive $\ell>0$ verifying $\ell_{{ T_\e}}^a\leq \ell\leq \ell_{{ T_\e}}^b$ and for all $x,y\in\mathbb R^d$,
\begin{align*}
&\IE\left(\DE_{0, 0}^{{ L({ T_\e})},x} [\Phi_{{ L({ T_\e})}}^{{ \b_\e}}] - \sZ_\ell^{{\b_\e}}( 0) \overleftarrow{\sZ}^{{ \b_\e}}_{{ L({ T_\e})},\ell}( x)\right)^2 \\
&\leq \begin{cases}
C\frac{\ell}{{ L({T_\e})}}+C\b_\e^2\left( \log \frac{{ L({T_\e})}}{\ell}+\frac{|x|\log \ell}{{ L(T)}}+\frac{|x|^2 \ell}{{L(T)}^2}\right)\quad &|x|\leq \sqrt{{ L({T_\e})}\log { L({T_\e})}}\\
C&|x|\geq \sqrt{{ L({T_\e})}\log { L({ T_\e})}}
\end{cases}.
\end{align*}
\end{theorem}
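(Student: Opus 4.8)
Throughout write $L=L(T_\e)$, $T=T_\e$, $\b=\b_\e$ and set $\sZ:=\DE_{0,0}^{L,x}[\Phi_L^{\b}]$. Since $\IE[\Phi^{\b}_\cdot]\equiv1$ pathwise, $\IE[\sZ]=\IE[\sZ^{\b}_\ell(0)\overleftarrow\sZ^{\b}_{L,\ell}(x)]=1$; moreover $\sZ^{\b}_\ell(0)$ and $\overleftarrow\sZ^{\b}_{L,\ell}(x)$ are measurable with respect to the white noise on the disjoint time windows $[0,\ell]$ and $[L-\ell,L]$ (using $\ell\le\ell^b_{T}\ll L$), hence independent. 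In the Wiener chaos expansion the product $\sZ^{\b}_\ell(0)\overleftarrow\sZ^{\b}_{L,\ell}(x)$ therefore lives entirely in the sub-chaos of configurations whose time variables all lie in $[0,\ell]\cup[L-\ell,L]$. Splitting $\sZ=R_{\rm bulk}+R'_{\rm bdry}$ according to whether or not some time variable falls in the bulk window $(\ell,L-\ell)$ and putting $R_{\rm bdry}:=R'_{\rm bdry}-\sZ^{\b}_\ell(0)\overleftarrow\sZ^{\b}_{L,\ell}(x)$, the orthogonality of chaoses with disjoint time supports gives
\[
\IE\Big[\big(\sZ-\sZ^{\b}_\ell(0)\overleftarrow\sZ^{\b}_{L,\ell}(x)\big)^2\Big]=\IE[R_{\rm bulk}^2]+\IE[R_{\rm bdry}^2].
\]
For $|x|\ge\sqrt{L\log L}$ only the bound $C$ is asserted, and this holds for every $x$: $\IE[\sZ^2]\le C$ by the shear invariance $\DE_{0,0}^{L,x}[\Phi_L^{\b}]\eqlaw\DE_{0,0}^{L,0}[\Phi_L^{\b}]$ and \eqref{eq:L2ppbdd}, while $\IE[(\sZ^{\b}_\ell(0)\overleftarrow\sZ^{\b}_{L,\ell}(x))^2]=\IE[\sZ^{\b}_\ell(0)^2]\,\IE[\overleftarrow\sZ^{\b}_{L,\ell}(x)^2]\le C$ by Lemma~\ref{lem:L2bdd} and space/time translation invariance.

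\smallskip
\noindent\emph{Bulk term.} Let $B,\widetilde B$ be two independent Brownian bridges from $(0,0)$ to $(L,x)$ and write $\DE$ also for the product of their laws. The moment identity behind \eqref{eq:L2decomn} gives
\[
\IE[R_{\rm bulk}^2]=\DE\Big[e^{\b^2\int_{[0,\ell]\cup[L-\ell,L]}V(B_s-\widetilde B_s)\,\dd s}\big(e^{\b^2\int_{\ell}^{L-\ell}V(B_s-\widetilde B_s)\,\dd s}-1\big)\Big].
\]
I would expand the last factor along its first collision time, $e^{\int_\ell^{L-\ell}f}-1=\int_\ell^{L-\ell}f(\sigma)\,e^{\int_\sigma^{L-\ell}f}\,\dd\sigma$, merge $e^{\b^2\int_{[L-\ell,L]}V}e^{\b^2\int_\sigma^{L-\ell}V}=e^{\b^2\int_\sigma^{L}V}$, and apply the Markov property of the bridge at time $\sigma$: conditionally on $(B_\sigma,\widetilde B_\sigma)$ the contributions of $[0,\sigma]$ and of $[\sigma,L]$ are independent, each being the second moment of a point-to-point partition function over a window of length $\le tT$, hence bounded by $M:=\sup_{\e\le1}\sup_{s\le tT_\e}\IE[\DE_{0,0}^{s,0}[\Phi_s^{\b_\e}]^2]<\infty$ (Lemmas~\ref{lem:L2bdd}, \ref{lem:p2pge}, together with the remark that $0$ is the worst bridge endpoint). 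This leaves
\[
\IE[R_{\rm bulk}^2]\le M^2\b^2\int_\ell^{L-\ell}\DE\big[V(B_\sigma-\widetilde B_\sigma)\big]\,\dd\sigma\le\frac{M^2\b^2}{4\pi}\int_\ell^{L-\ell}\frac{L}{\sigma(L-\sigma)}\,\dd\sigma\le C(\hat\b)\,\b^2\log\tfrac L\ell,
\]
since $B_\sigma-\widetilde B_\sigma$ is centered Gaussian with variance $\tfrac{2\sigma(L-\sigma)}{L}$ per coordinate. This produces the $\b_\e^2\log\frac L\ell$ contribution.

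\smallskip
\noindent\emph{Boundary term.} I would condition the bridge $0\to x$ over $[0,L]$ at the times $\ell$ and $L-\ell$: conditionally on $(B_\ell,B_{L-\ell})=(w,w')$ the three restrictions of the bridge to $[0,\ell]$, $(\ell,L-\ell)$, $[L-\ell,L]$ are independent Brownian bridges, and $(w,w')$ has density $q(w,w')=\rho_\ell(0,w)\rho_{L-2\ell}(w,w')\rho_\ell(w',x)/\rho_L(0,x)$. Since the no-bulk projection replaces the $(\ell,L-\ell)$ partition function by its mean $1$, $R_{\rm bdry}$ equals the integral of the deterministic kernel $q(w,w')-\rho_\ell(0,w)\rho_\ell(x,w')$ against the product of the (mutually independent) point-to-point partition function over $[0,\ell]$ from $0$ to $w$ (call it $\sZ^{[0,\ell]}_{0\to w}$) and the one over $[L-\ell,L]$ from $w'$ to $x$. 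The Chapman--Kolmogorov identity $\int\!\!\int\rho_\ell(0,w)\rho_{L-2\ell}(w,w')\rho_\ell(w',x)\,\dd w\,\dd w'=\rho_L(0,x)$ shows that this kernel integrates to $0$, so that in $\IE[R_{\rm bdry}^2]$ the constant ($=1$) part of the second moment $\IE[\sZ^{[0,\ell]}_{0\to w}\sZ^{[0,\ell]}_{0\to v}]$ drops out (and likewise on $[L-\ell,L]$), leaving $\IE[R_{\rm bdry}^2]$ controlled by the deviation of $\rho_{L-2\ell}(w,w')/\rho_L(0,x)$ from $1$ near $(w,w')=(0,x)$ and by the centered second moments, which carry at least one collision, hence a factor $\b^2$ (Lemmas~\ref{lem:L2bdd}, \ref{lem:p2pge}). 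A second-order Gaussian expansion of $\log\{\rho_{L-2\ell}(w,w')/\rho_L(0,x)\}$ about $(0,x)$ (valid since $|w|,|w'-x|\lesssim\sqrt\ell$ and $L-2\ell\sim L$), combined with the collision-density integrals $\int_0^\ell\big(\|V\|_\infty\wedge(\tfrac1s+\tfrac1{\ell-s})\big)\,\dd s\lesssim\log\ell$ as in \eqref{eq:rorder} and the proof of \eqref{eq:L2ppbdd}, then yields
\[
\IE[R_{\rm bdry}^2]\le C\Big(\frac\ell L+\b^2\frac{|x|\log\ell}{L}+\b^2\frac{|x|^2\ell}{L^2}\Big),
\]
the deterministic prefactor mismatch $\tfrac{L}{L-2\ell}-1\sim\tfrac{2\ell}{L}$ accounting for the $\tfrac\ell L$ term; the hypothesis $|x|\le\sqrt{L\log L}$ (so that $e^{|x|^2/2L}\le\sqrt L$) is used to discard the contribution of atypically large path excursions, where the heat-kernel ratio is not small.

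\smallskip
The main obstacle is this boundary estimate: extracting the sharp $|x|$-dependence requires a second-order control of the heat-kernel ratio $\rho_{L-2\ell}(w,w')/\rho_L(0,x)$ that is uniform over all $(w,w')$ --- in particular over the rare events on which $|w|$ or $|w'-x|$ much exceeds $\sqrt\ell$, which is precisely where the restriction $|x|\le\sqrt{L\log L}$ is needed --- together with a careful bookkeeping of how these corrections combine with the collision-time integrals hidden in $\IE[\sZ^{[0,\ell]}_{0\to w}\sZ^{[0,\ell]}_{0\to v}]-1$. A secondary technical point is that the manipulations with $e^{\b^2\int V}$ must be carried out by peeling off collisions one at a time and invoking the uniform second- and $L^p$-bounds of Section~\ref{sec2} (Lemmas~\ref{lem:L2bdd}, \ref{lem:lp}, \ref{lem:p2pge}), since $2\b_\e^2$ may correspond to a supercritical coupling and a naive Cauchy--Schwarz or Hölder step is unavailable; the resulting series in the number of collisions converges thanks to $\hat\b<1$.
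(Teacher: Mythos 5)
Your overall architecture is sound, and two of the three pieces are essentially right. The orthogonal splitting $\IE[(\sZ-\sZ_\ell\overleftarrow\sZ)^2]=\IE[R_{\rm bulk}^2]+\IE[R_{\rm bdry}^2]$ is legitimate, since $R_{\rm bulk}=\sZ-\IE[\sZ\mid\mathcal F_{[0,\ell]\cup[L-\ell,L]}]$ is orthogonal to every square-integrable functional of the boundary noise (this is in fact a slightly cleaner bookkeeping than the paper's additive combination of the three errors). Your bulk estimate is essentially the paper's Step 1 (Lemma \ref{lem:LLT1}): the same two-replica identity, a first-collision expansion in place of the paper's bound $1-e^{-x}\le x$, the uniform point-to-point moment bound of Lemma \ref{lem:p2pge}, and the bridge collision density $\frac{L}{4\pi\sigma(L-\sigma)}$, yielding $C\b_\e^2\log(L/\ell)$; and the regime $|x|\ge\sqrt{L\log L}$ is correctly dispatched by crude second moments via \eqref{eq:L2bdd} and \eqref{eq:L2ppbdd}.

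The genuine gap is the boundary term, which is precisely where you stop and declare the ``main obstacle''. You propose to control $R_{\rm bdry}=\int[q-\rho_\ell\otimes\rho_\ell]\,X_w\,Y_{w'}$ through a second-order expansion of $\rho_{L-2\ell}(w,w')/\rho_L(0,x)$, but the final bound is asserted rather than derived, and it is not clear that the stated sharp form comes out along this route. Writing $\IE[X_wX_v]=1+c_X(w,v)$ (similarly for $Y$), the constant part does cancel, but the surviving cross terms contain factors such as $D(w)=\rho_{L-\ell}(w,x)/\rho_L(0,x)-1$, whose dominant $x$-dependent piece is $\langle x,w\rangle/(L-\ell)\sim|x|\sqrt\ell/L$ on the bulk of $\rho_\ell(0,\cdot)$; bounding $c_X$ crudely (by a constant, or by $C\b_\e^2\log\ell$, which is its true size near the diagonal) then produces contributions of order $\min(1,\b_\e^2\log\ell)\,|x|^2\ell/L^2$, which are \emph{not} dominated by $C\bigl(\ell/L+\b_\e^2|x|\log\ell/L+\b_\e^2|x|^2\ell/L^2\bigr)$ uniformly over the admissible ranges $\ell\le\ell^b_{T_\e}$, $|x|\le\sqrt{L\log L}$. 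Recovering the theorem this way would require exploiting cancellations (e.g.\ $\int\rho_\ell(0,w)\,w\,\dd w=0$) against the spatial decay of $c_X$, and none of that bookkeeping is carried out. The paper sidesteps this entirely via the shear invariance of the noise: the pair $\bigl(\DE_{0,0}^{L,x}[\Phi_\ell\Phi_{L-\ell,L}],\,\sZ_\ell(0)\overleftarrow\sZ_{L,\ell}(x)\bigr)$ has the same law as the corresponding objects with endpoint $0$ and a drift $x/L$ added to the free polymers. This decouples the estimate into Step 2 (bridge versus free comparison at $x=0$, giving the $x$-free $C\ell/L$) and Lemma \ref{lem: estimate B} (effect of the drift $x/L$ acting only over time $\ell$, estimated by peeling off one collision and comparing $\rho_s(y-z)$ with $\rho_s\bigl(y-z+\tfrac{xs}{L}\bigr)$, which is exactly where $\b_\e^2\bigl(|x|\log\ell/L+|x|^2\ell/L^2\bigr)$ comes from). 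Either adopt that reduction or supply the detailed kernel/covariance analysis; as written, the decisive estimate is missing.
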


\begin{rem}
Theorem states that the point-to-point partition function from $(0,x)$ to $({ L(T)},y)$ is approximated by the product of partition function from $(0,x)$ with length $\ell$ and time-reversed partition function from $({ L(T)},y)$ with length $\ell$ in $L^2$-sense. For $d\geq 3$, the reader may refer to \cite{CNN20,Si95,V06}.  
\end{rem}

\begin{notation}
  Fix $R_V>0$ such that  ${\rm supp}\,V\subset B(0,R_V)$.
  \end{notation}

The proof is composed of three steps.

\begin{lemma}[Step 1]\label{lem:LLT1}  Fix $t>0$. There exists a constant $C=C(\tilde{\b})>0$ such that  for $\ell_{{ T_\e}}^a\leq \ell\leq \ell_{{ T_\e}}^b$,
\[
\sup_{x\in\mathbb R^2} \IE\left[\left(\DE_{0,0}^{{ L({ T_\e})},x} [\Phi^\b_{{ L({ T_\e})}}] - \DE_{0,0}^{{ L({ T_\e})}, x}[\Phi^\b_{\ell}\Phi^\b_{{ L({ T_\e})}-\ell,{ L({ T_\e})}}]\right)^2\right] \leq C\b_\e^2{\log \frac{{L({ T_\e})}}{\ell}}, 
\]
where \begin{align*}
\Phi_{s,t}^\b=\exp\left(\b\int_s^t\int_{\R^2}\phi(y-B_u)\xi(\dd u,\dd y)-\frac{\b^2 V(0)(t-s)}{2}\right).
\end{align*}
\end{lemma}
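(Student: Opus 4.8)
The plan is to expand the $L^2$-norm of the difference using the chaos/Feynman–Kac expansion of the partition functions along the lines of \eqref{eq:L2decomn} and \eqref{eq:L2ppdecom}, and then to identify which diagrams survive after the cancellation between $\DE_{0,0}^{L,x}[\Phi^\b_{L}]$ and $\DE_{0,0}^{L,x}[\Phi^\b_{\ell}\Phi^\b_{L-\ell,L}]$. Writing $L=L(T_\e)$ for brevity, the first partition function couples two independent Brownian bridges on the whole interval $[0,L]$, while in the second the interaction with the noise is switched off on the ``middle'' window $[\ell,L-\ell]$. Hence, after taking $\IE$ over the white noise via \eqref{eq:L2white}, the square of the difference becomes a sum, over chains of vertices $0<s_1<\dots<s_n<L$ carrying the potential $V(\sqrt2\,\cdot)$, restricted to those configurations in which \emph{at least one} vertex $s_i$ lies in the forbidden middle window $[\ell, L-\ell]$; all configurations with every $s_i$ in $[0,\ell]\cup[L-\ell,L]$ cancel exactly. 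So I would first write
\[
\IE\Big[\big(\DE_{0,0}^{L,x}[\Phi^\b_{L}] - \DE_{0,0}^{L,x}[\Phi^\b_{\ell}\Phi^\b_{L-\ell,L}]\big)^2\Big]
= \sum_{n\geq 1}\b^{2n}\!\!\int\limits_{\substack{0<s_1<\dots<s_n<L\\ \exists i:\ \ell\le s_i\le L-\ell}}\!\!\!\int_{(\R^2)^n}\prod_{i=1}^n V(\sqrt2 x_i)\,\rho_{s_i-s_{i-1}}(x_{i-1},x_i)\,\frac{\rho_{L-s_n}(x_n)}{\rho_L(0)}\,\dd\mathbf s\,\dd\mathbf x,
\]
with $x_0=0$; the ratio of heat kernels reflects the bridge conditioning as in \eqref{eq:L2ppdecom}.

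Next I would bound the spatial integrals using the one-step estimate already established in the proof of \eqref{eq:L2ppbdd}, namely $\int_{\R^2}\rho_s(x,y)\rho_t(y)V(\sqrt2 y)\,\dd y \le \tfrac{1}{4\pi}\big(C\wedge\tfrac{s+t}{st}\big)\rho_{s+t}(x)$ and the cruder $\sup_x\int V(\sqrt2 y)\rho_s(x,y)\,\dd y \le \tfrac{1}{4\pi s}\wedge\|V\|_\infty$ from \eqref{eq:rorder}. Integrating out all spatial variables collapses the $n$-th term to a time integral of a product of factors $\big(C\wedge\tfrac1{s_i-s_{i-1}}\big)$ (plus the final-gap factor), now restricted to the event that some increment straddles the middle window, i.e.\ there is an index $i$ with $s_{i-1}<L-\ell$ and $s_i>\ell$. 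Splitting this event according to which block each $s_i$ falls in, one sees that each admissible configuration must ``spend'' one $\log$-gap connecting the two ends across a window of length $\ge L-2\ell$, which is what produces a factor of order $\log(L/\ell)$ rather than the full $\log L$; the remaining $n-1$ vertices contribute at most $(C_1+\log L)^{n-1}$ by the same integral recursion used in the proof of \eqref{eq:L2ppbdd}. This yields a bound of the form $\sum_{n\geq1}\b^{2n}\,n^{O(1)}(C_1+\log L)^{n-1}\cdot\log(L/\ell)$.

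Finally I would plug in $\b_\e^2 = 4\pi\hat\b^2/\log T_\e$ together with $\log L(T_\e)/\log T_\e \to 1$, so that $\b_\e^{2n}(C_1+\log L)^{n-1}\le C\,\b_\e^2\,(\hat\b^2 + o(1))^{\,n-1}$ up to polynomial factors in $n$, and the sum over $n$ converges geometrically because $\hat\b<1$; this leaves exactly $C\b_\e^2\log(L/\ell)$, as claimed. The main obstacle I anticipate is the combinatorial bookkeeping in the middle step: correctly organizing the sum over ``which vertices lie in which of the three time-blocks'' so that one genuinely extracts only $\log(L/\ell)$ (and not $\log L$) from the straddling increment, while uniformly controlling the number of ways the other vertices can be distributed and keeping the per-$n$ constants summable. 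The $x$-dependence is comparatively harmless here: since $V$ is compactly supported the $x_i$ are confined to $D_V$, and the bridge ratio $\rho_{L-s_n}(x_n)/\rho_L(0)$ together with $|x_n|\le R_V$ only costs a bounded multiplicative constant, so the estimate is uniform in $x\in\R^2$ as stated.
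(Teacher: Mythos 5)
Your diagrammatic setup is fine as far as it goes: the opening identity (difference of the two chaos expansions = sum over diagrams with at least one vertex in $[\ell,L-\ell]$, with $x_0=0$ and the ratio $\rho_{L-s_n}(x_n)/\rho_L(0)$) is correct, but note that the disappearance of $x$ from the right-hand side rests on the fact that the difference of two independent bridges ending at $x$ is $\sqrt2$ times a bridge ending at $0$ (the shear invariance the paper invokes); your later justification of uniformity in $x$ --- that the bridge ratio ``only costs a bounded multiplicative constant'' --- is both misplaced and false, since $\rho_{L-s_n}(x_n)/\rho_L(0)\le L/(L-s_n)$ blows up as $s_n\uparrow L$ and vertices in $[L-\ell,L]$ are allowed; that factor must be carried through the recursion via the final-gap term $\frac{1}{L-s_i}$ exactly as in \eqref{PP partition estimate}, not discarded. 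More seriously, the decisive quantitative claim --- that each surviving diagram spends one factor $\log(L/\ell)$ on the straddling vertex while the remaining $n-1$ vertices contribute at most $(C_1+\log L)^{n-1}$ --- is not true configuration by configuration: if $s_j$ is the first vertex in $[\ell,L-\ell]$, its time integral is of order $\log\frac{L-\ell-s_{j-1}}{\ell-s_{j-1}}$, which is unbounded as $s_{j-1}\uparrow\ell$ and already of order $\log L$ for $s_{j-1}=O(1)$; the excess is only recovered after integrating over $s_{j-1}$ (and over vertices near $L$), and making that absorption uniform in $n$ with summable constants is precisely the bookkeeping you defer. Since this is the entire content of the lemma, the proposal has a genuine gap at its central step.

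For comparison, the paper sidesteps all of this: after the same reduction to a single $0\to 0$ bridge it uses $1-e^{-D}\le D$ with $D=\b^2\int_\ell^{L-\ell}V(\sqrt2 B_s)\,\dd s$, i.e.\ it inserts exactly \emph{one} forced vertex in the middle window at the exponential level rather than expanding everything; conditioning on $B_s=z$ splits the bridge into two point-to-point factors whose exponential moments are bounded uniformly by Lemma \ref{lem:p2pge}, and the remaining deterministic integral $\int_\ell^{L-\ell}\int V(\sqrt2 z)\,\frac{\rho_s(z)\rho_{L-s}(z)}{\rho_L(0)}\,\dd z\,\dd s\le\frac{1}{2\pi}\log\frac{L-\ell}{\ell}$ follows from \eqref{pp recursive inequality}, giving $C\b_\e^2\log(L/\ell)$ at once. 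If you insist on the fully expanded route you would effectively be re-proving Lemma \ref{lem:p2pge} order by order with one marked vertex, so the cleaner path is to perform your ``first middle vertex'' extraction at the level of the exponential, which is exactly the paper's argument.
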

\begin{proof}
Since $B_s^{(1)}-B_s^{(2)} \eqlaw \sqrt 2 B_s$ for two independent Brownian motions, by $1-e^{-x}\leq x$ for $x\geq 0$, we have
\begin{align*}
&\IE\bigg[\bigg(\DE_{0,0}^{{ L(T)}, x} [\Phi^\b_{L(T)} - \Phi^\b_{\ell}\Phi^\b_{{L(T)}-\ell,{ L(T)}}]\bigg)^2\bigg] \nonumber\\
& = \DE_{0,0}^{{ L(T)},0} \left[\mathrm e^{\b^2 \int_0^{{ L(T)}} V(\sqrt 2 B_s)ds} -\mathrm e^{\b^2 \int_0^{\ell} V(\sqrt 2 B_s)ds}\mathrm e^{\b^2 \int_{{ L(T)}-\ell}^{{ L(T)}} V(\sqrt 2 B_s)ds}\right]\nonumber\\
& \leq  \DE_{0,0}^{{ L(T)},0}\left[ e^{\b^2\int_0^{{ L(T)}} V(\sqrt 2  B_s)ds}\,\b^2\int_\ell^{{ L(T)}-\ell} V(\sqrt 2 B_s)\dd s\right]. 
\end{align*}
The last expectation equals
\begin{equation*}
  \begin{split}
 &  \b^2\int_{[\ell,L(T)-\ell]\times \mathbb R ^2} V(\sqrt 2 z)  \frac{\rho_s(z)\rho_{L(T)-s}(z)}{\rho_{L(T)}(0)} \DE_{0,0}^{s,z} \left[e^{\b^2\int_0^{s} V(\sqrt 2  B_u)\dd u} \right] \DE_{0,z}^{{ L(T)}-s,0} \left[ e^{\b^2\int_0^{tT-s} V(\sqrt 2  B_u)\dd u}\right]  \dd z \dd s\\
 & \leq \b^2\left( \sup_{s\leq { L(T)}} \sup_{|z|\leq R_V} \DE_{0,0}^{s,z}\left[ e^{\b^2\int_0^{s} V(\sqrt 2  B_u) \dd u} \right]\right)^2 \int_\ell^{{ L(T)}-\ell} \int_{\mathbb R^2} V(\sqrt 2 z)  \frac{\rho_s(z)\rho_{{L(T)}-s}(z)}{\rho_{{ L(T)}}(0)} \dd s \dd z,
  \end{split}
  \end{equation*}
where  the supremum on the last line is finite by Lemma~\ref{lem:p2pge}.
Finally, by \eqref{pp recursive inequality},{
\begin{align*}
  \int_\ell^{{L(T)}-\ell} \int_{\mathbb R^2} \frac{\rho_{s}(z)\rho_{{ L(T)}-s}(z)}{\rho_{{ L(T)}}(0)} V\left(\sqrt 2 z\right)\dd s \dd z
& \leq \frac{1}{4\pi} \int_\ell^{{ L(T)}-\ell}  \frac{{L(T)}}{s({ L(T)}-s)}\dd s \\
& \leq  \frac{1}{2\pi} \log \frac{{ L(T)}-\ell}{\ell}\leq  \log \frac{{ L(T)}}{\ell},
\end{align*}}
and the statement of the lemma follows.
\end{proof}

By translation invariance of Brownian bridge, Brownian motion and noise, we have
\begin{align*}
&\DE_{0,0}^{{ L(T)}, x}\left[\Phi^\b_{\ell}\Phi^\b_{{ L(T)}-\ell,{ L(T)}}\right]- \sZ_\ell^\b(0) \overleftarrow{\sZ}^\b_{{L(T)},\ell}( x)\\
&\eqlaw \DE_{0,0}^{{L(T)},0}\left[\Phi^\b_{\ell}\Phi^\b_{{L(T)}-\ell,{L(T)}}\right]\\
&-\DE_0\left[\Phi^\b_\ell\left(B_\cdot+\frac{ x}{{{L(T)}}}\cdot\right)\right]\DE_0\left[\exp\left(\b\int_{{ L(T)}-\ell}^{{L(T)}}\phi\left(B_{{L(T)}-s}+\frac{({L(T)}-s)x}{{{L(T)}}}-y\right)\xi(\dd s,\dd y)-\frac{\b^2 V(0)\ell}{2}\right)\right],
\end{align*}
where $\dis B_\cdot+\frac{ x}{{{L(T)}}}\cdot$ is a Brownian motion with drift $\dis \frac{x}{{ L(T)}}$.

Define 
\begin{align*}
&A_{{ L(T)},\ell} := \DE_{0,0}^{{L(T)},0}\left[\Phi^\b_{\ell}\Phi^\b_{{ L(T)}-\ell,{ L(T)}}\right] - \DE_0\left[\Phi^\b_{\ell}\right] \overleftarrow{\sZ}^\b_{{L(T)},\ell}(0),\\
&B_{{ L(T)},\ell,x}:=\DE_0\left[\Phi^\b_{\ell}\right]-\DE_0\left[\Phi^\b_\ell\left(B_\cdot+\frac{ x}{{L(T)}}\cdot\right)\right].
\end{align*}

\begin{lemma}[Step 2]
There exists a constant $C=C(\hat{\b})$ such that for all positive $\ell>0$  with $\ell_{{ T_\e}}^a\leq \ell \leq \ell_{{T_\e}}^b$,
\begin{align*}
\IE\left[A_{{ L({ T_\e})},\ell}^2\right]\leq C\frac{\ell}{{L({ T_\e})}}.
\end{align*}
\end{lemma}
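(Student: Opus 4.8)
The plan is to expand $A_{L(T),\ell}$ into a chaos (Wiener--It\^o) expansion using the white-noise exponential formula \eqref{eq:L2white}, exactly as in the derivation of \eqref{eq:L2decomn}, and to exploit the fact that $A_{L(T),\ell}$ is a \emph{difference} of two partition-function-type objects that agree to zeroth order. First I would write $\DE_{0,0}^{L(T),0}[\Phi^\b_\ell \Phi^\b_{L(T)-\ell,L(T)}]$ and $\DE_0[\Phi^\b_\ell]\,\overleftarrow{\sZ}^\b_{L(T),\ell}(0)$ as Feynman--Kac expectations and compute $\IE[A_{L(T),\ell}^2]$ by squaring and using \eqref{eq:L2white}; this produces an expression in two independent Brownian motions $B^{(1)},B^{(2)}$ (equivalently $\sqrt2\,B$ after the orthogonal change of variables, as in the proof of \eqref{eq:L2bdd} and \eqref{eq:L2ppdecom}) involving the exponential moment $e^{\b^2\int V(\sqrt2 B_u)\,du}$ integrated against the bridge density $\rho_s(z)\rho_{L(T)-s}(z)/\rho_{L(T)}(0)$. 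The key cancellation is that in $A_{L(T),\ell}$ the ``early'' chaos (times in $[0,\ell]$) and the ``late'' chaos (times in $[L(T)-\ell,L(T)]$) factor identically in both terms; what fails to cancel is precisely the cross-contribution coming from the bridge constraint coupling the two ends, i.e.\ the difference between a genuine bridge of length $L(T)$ and the product of a free forward piece and a free time-reversed backward piece.

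Concretely I expect the bound to come from estimating, after the chaos expansion, a sum of terms each of which contains a factor of the form
\[
\int_0^\ell \dd s_1 \int_{L(T)-\ell}^{L(T)} \dd s_2\,
\left(\frac{\rho_{s_2-s_1}(\,\cdot\,)}{\rho_{L(T)}(0)} - \rho_{\cdots}(\cdot)\,\rho_{\cdots}(\cdot)\right),
\]
together with the early/late kernel products $\prod V(\sqrt2 x_i)\rho_{s_i-s_{i-1}}(x_{i-1},x_i)$, all controlled by the $L^2$ and exponential-moment bounds already available (Lemma~\ref{lem:L2bdd}, \eqref{eq:L2ppbdd}, Lemma~\ref{lem:p2pge}). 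The difference of heat kernels is $O(\ell/L(T))$ uniformly, because when the two ``probe'' times $s_1\in[0,\ell]$, $s_2\in[L(T)-\ell,L(T)]$ are separated by a gap of order $L(T)$, the Brownian bridge of length $L(T)$ decouples into two nearly independent halves up to a relative error of order (gap between the probed sub-intervals)/(total length) $=O(\ell/L(T))$; this is a Gaussian computation on $\rho_s\rho_{L(T)-s}/\rho_{L(T)}$ versus $\rho_{s-s_1}$-type kernels, using $s_1\le\ell$ and $L(T)-s_2\le\ell$ and $\lim_{\e\to0}\ell^b_{T_\e}/L(T_\e)=0$. Summing the resulting bounds over the chaos order $n$ converges geometrically because $\b_\e^2 = \hat\b^2\cdot 4\pi/\log T$ kills the $(\log L(T))^n$ combinatorial factors, exactly as in \eqref{eq:L2bdderror}; the overall prefactor $\ell/L(T)$ survives, giving $\IE[A^2_{L(T),\ell}]\le C\ell/L(T)$.

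The main obstacle I anticipate is organizing the chaos bookkeeping cleanly: one must split the chaos variables into those lying in $[0,\ell]$, those in $[L(T)-\ell,L(T)]$, and (crucially) those lying in the middle interval $[\ell,L(T)-\ell]$, since the latter appear in $\DE_{0,0}^{L(T),0}[\Phi^\b_\ell\Phi^\b_{L(T)-\ell,L(T)}]$ with weight zero (the middle environment is not sampled there) while they \emph{do} appear once one squares and applies \eqref{eq:L2white}, because the square of $\Phi^\b_\ell\Phi^\b_{L(T)-\ell,L(T)}$ under $\IE$ still only sees $[0,\ell]\cup[L(T)-\ell,L(T)]$ whereas the genuine bridge moment sees all of $[0,L(T)]$. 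Keeping track of which pairings contribute, and isolating in each the $\ell/L(T)$ gain from the bridge-versus-product kernel comparison while the remaining integrals reproduce a convergent series, is the technical heart; once the correct term-by-term estimate is set up, the summation is routine given the estimates of Section~\ref{sec2}.
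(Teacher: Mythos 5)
Your core mechanism is the right one and is essentially the paper's: both terms of $A_{L(T_\e),\ell}$ sample the noise only on $[0,\ell]\cup[L(T_\e)-\ell,L(T_\e)]$, so after applying \eqref{eq:L2white} the only discrepancy between them is the path law (bridge versus product of two free pieces), and it is quantified by the factor $\frac{\rho_{L(T_\e)-2\ell}(z-w)}{\rho_{L(T_\e)}(0)}-1$, which gives the $\ell/L(T_\e)$ gain once integrated against the endpoint weights $\rho_\ell(z)\rho_\ell(w)$, the window contributions being controlled uniformly by Lemma~\ref{lem:p2pge}. The paper, however, does not expand in chaos at all: it conditions on the positions at times $\ell$ and $L(T_\e)-\ell$, so that each of $\IE[X^2]$, $\IE[XY]$, $\IE[Y^2]$ (with $X,Y$ the two terms of $A_{L(T_\e),\ell}$) factorizes into point-to-point exponential moments times Gaussian kernels, and the differences reduce exactly to the kernel-ratio factor above; your chaos bookkeeping is an avoidable complication rather than the technical heart, and the geometric summability you invoke is exactly what \eqref{eq:L2ppbdd} already encodes.

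Two corrections are needed. First, the obstacle you single out concerning chaos variables in the middle interval $[\ell,L(T_\e)-\ell]$ is a misconception: no noise from that interval enters either term of $A_{L(T_\e),\ell}$, even after squaring (your own sentence that the square ``still only sees'' the two windows contradicts the claim that such variables ``do appear''); the middle interval contributes only the bridge transition kernel $\rho_{L(T_\e)-2\ell}$, and that is precisely the source of the bound. You appear to be importing Step~1's comparison with the full $\Phi^\b_{L(T_\e)}$ into Step~2, where it plays no role. Second, the kernel discrepancy is not $O(\ell/L(T_\e))$ uniformly in the endpoints: for $|z-w|\gtrsim\sqrt{L(T_\e)}$ the ratio minus one is of order one. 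The correct statement, which your ``Gaussian computation'' remark implicitly uses, is that after integration against $\rho_\ell(z)\rho_\ell(w)$, which localizes $|z-w|^2$ at scale $\ell$, the contribution is $O(\ell/L(T_\e))$, the far tail being negligible. With these adjustments your argument closes and coincides in substance with the paper's proof.
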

\begin{proof}
\eqref{eq:L2white} yields that
\begin{align*}
&\IE\left[\DE_{0,0}^{{ L(T)},0}\left[\Phi^\b_{\ell}\Phi^\b_{{ L(T)}-\ell,{L(T)}}\right]^2\right]=\DE_{0,0}^{{L(T)},0}\otimes \DE_{0,0}^{L(T),0}\left[\exp\left(\b^2\int_0^\ell V(B_s-\widetilde{B}_s)ds+\b^2\int_{L(T)-\ell}^{L(T)}V(B_s-\widetilde{B}_s)ds\right)\right]\\
&=\int_{\R^2\times \R^2}\dd x\dd y\, \rho_{\ell}(x)\rho_\ell(y)\frac{\rho_{{L(T)}-2\ell}(y-x)}{\rho_{{L(T)}}(0)}\int_{\R^2\times \R^2}\dd z\dd w\rho_{\ell}(z)\rho_\ell(w)\frac{\rho_{{L(T)}-2\ell}(z-w)}{\rho_{{L(T)}}(0)}\\
&\hspace{2.5em}\times \DE_{0,0}^{\ell,x}\otimes \DE_{0,0}^{\ell,z}\left[\exp\left(\b^2\int_0^\ell V(B_s-\widetilde{B}_s)ds\right)\right]
\DE_{0,0}^{\ell,y}\otimes \DE_{0,0}^{\ell,w}
\left[\exp\left(\b^2\int_0^\ell V(B_s-\widetilde{B}_s)ds\right)\right],
\intertext{and}
&\IE\left[\DE_{0,0}^{{ L(T)},0}\left[\Phi^\b_{\ell}\Phi_{{L(T)}-\ell,{L(T)}}\right]\DE_0\left[\Phi^\b_{\ell}\right]\overleftarrow{\sZ}_{{L(T)},\ell}(0)\right]\\
&=\int_{\R^2\times \R^2}\dd x\dd y\, \rho_{\ell}(x)\rho_\ell(y)\frac{\rho_{{ L(T)}-2\ell}(y-x)}{\rho_{{L(T)}}(0)}\int_{\R^2\times \R^2}\dd z\dd w \rho_\ell(z)\rho_\ell(w)\\
&\hspace{2.5em}\times \DE_{0,0}^{\ell,x}\otimes \DE_{0,0}^{\ell,z}\left[\exp\left(\b^2\int_0^\ell V(B_s-\widetilde{B}_s)ds\right)\right]\DE_{0,0}^{\ell,y}\otimes \DE_{0,0}^{\ell,w}\left[\exp\left(\b^2\int_0^\ell V(B_s-\widetilde{B}_s)ds\right)\right],
\end{align*}
where $B=\{B_s^{B}:0\leq s\leq \ell\}$ and $\widetilde{B}=\{\widetilde{B}_s:0\leq s\leq \ell\}$ are independent  Brownian bridges with the law $\DP_{0,0}^{\ell,u}$ ($u=x,y,z,w$).
Then, it is easy to see that \begin{align*}
&\IE\left[\DE_{0,0}^{{L(T)},0}\left[\Phi^\b_{\ell}\Phi^\b_{{L(T)}-\ell,{L(T)}}\right]^2\right]-\IE\left[\DE_{0,0}^{{L(T)},0}\left[\Phi^\b_{\ell}\Phi_{{L(T)}-\ell,{L(T)}}\right]\DE_0\left[\Phi^\b_{\ell}\right]\overleftarrow{\sZ}_{{ L(T)},\ell}(0)\right]\\
&=\int_{\R^2\times \R^2}\dd x\dd y\, \rho_{\ell}(x)\rho_\ell(y)\frac{\rho_{{ L(T)}-2\ell}(y-x)}{\rho_{{ L(T)}}(0)}\int_{\R^2\times \R^2}\dd z\dd w \rho_\ell(z)\rho_\ell(w)\left(\frac{\rho_{{ L(T)}-2\ell}(z-w)}{\rho_{{ L(T)}}(0)}-1\right)\\
&\hspace{2.5em}\times\DE_{0,0}^{\ell,x}\otimes \DE_{0,0}^{\ell,z}\left[\exp\left(\b^2\int_0^\ell V(B_s-\widetilde{B}_s)ds\right)\right]\DE_{0,0}^{\ell,y}\otimes \DE_{0,0}^{\ell,w}\left[\exp\left(\b^2\int_0^\ell V(B_s-\widetilde{B}_s)ds\right)\right].
\end{align*}
Combining with \eqref{eq:p2pge}, \begin{align*}
\left|\IE\left[\DE_{0,0}^{{ L(T)},0}\left[\Phi^\b_{\ell}\Phi^\b_{{ L(T)}-\ell,{L(T)}}\right]^2\right]-\IE\left[\DE_{0,0}^{{ L(T)},0}\left[\Phi^\b_{\ell}\Phi_{{ L(T)}-\ell,{ L(T)}}\right]\DE_0\left[\Phi^\b_{\ell}\right]\overleftarrow{\sZ}_{{ L(T)},\ell}(0)\right]\right|\leq C\frac{\ell}{{L(T)}}.
\end{align*}
Also, the same argument holds for $\IE\left[\DE_0\left[\Phi^\b_{\ell}\right]^2 \overleftarrow{\sZ}_{{ L(T)},\ell}(0)^2\right]$.
\end{proof}

\begin{lemma}[Step 3]\label{lem: estimate B}
  Fix $t>0$. There exists a positive constant $C$ such that for all ${ \hat{\b}}< 1$ there exists  a positive constant $C=C(\hat{\b})$  such that for all positive $\ell>0$ with $\ell_{{T_\e}}^a\leq \ell\leq \ell_{{ T_\e}}^b$ and all $x\in\mathbb R^d$, 
\[
 \IE\left[B_{{ L({ T_\e})},\ell,x}^2\right]\leq  \begin{cases}
 C\b_\e^2\left( \frac{|x|\log \ell}{{ L(T)}}+\frac{|x|^2 \ell}{{ L(T)}^2}\right)
 &|x|\leq \sqrt{{ L({ T_\e})}\log { L({ T_\e})}}\\
 C&|x|\geq \sqrt{{ L({ T_\e})}\log { L({ T_\e})}}
 \end{cases}.
\]
\end{lemma}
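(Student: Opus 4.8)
Set $v:=x/L(T_\e)$, let $W$ denote a standard planar Brownian motion under $\DE$, and put $F_v:=\exp\!\big(\b_\e^2\int_0^\ell V(\sqrt2\,W_s-vs)\,\dd s\big)$. The plan is first to reduce $\IE[B_{L(T_\e),\ell,x}^2]$ to $2(\DE[F_0]-\DE[F_v])$ and then to control this difference via its chaos expansion. Writing $B_{L(T_\e),\ell,x}=\DE_0[\Phi^{\b_\e}_\ell]-\DE_0[\Phi^{\b_\e}_\ell(B_\cdot+v\cdot)]$, I would expand the square with two independent driving motions $B,\widetilde B$, take $\IE$, and use the white noise identity \eqref{eq:L2white} exactly as in the derivation of \eqref{eq:L2decomn}. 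The renormalisation $\b_\e^2V(0)\ell/2$ and the shift $v$ cancel in both diagonal terms, so that each of the three resulting terms is a $\DE_0^{\otimes2}$-expectation of an exponential functional of $B_s-\widetilde B_s$ (for the two diagonal terms) resp.\ of $B_s-\widetilde B_s-vs$ (for the cross term); since $B-\widetilde B\eqlaw\sqrt2\,W$ this yields
\[
\IE\big[B_{L(T_\e),\ell,x}^2\big]=2\big(\DE[F_0]-\DE[F_v]\big).
\]
Because $V\ge0$ one has $F_v\ge1$, so for $|x|\ge\sqrt{L(T_\e)\log L(T_\e)}$ this already finishes: $\IE[B_{L(T_\e),\ell,x}^2]\le2\DE[F_0]=2\,\IE\big[(\sZ^{\b_\e}_\ell(0))^2\big]$, which is bounded uniformly over $\e\le1$ and $\ell\le tT_\e$ because $\ell\mapsto\IE[(\sZ^{\b_\e}_\ell(0))^2]$ is nondecreasing and $\sup_{\e\le1}\IE[(\sZ^{\b_\e}_{tT_\e}(0))^2]<\infty$ by Lemma~\ref{lem:L2bdd}.

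For $|x|\le\sqrt{L(T_\e)\log L(T_\e)}$ I would expand $\DE[F_v]=1+\sum_{n\ge1}\b_\e^{2n}I_n(v)$ (valid by monotone convergence, $V\ge0$), integrate over the Brownian positions, and change variables to re-centre the coordinates. Writing $p_t(w)=(4\pi t)^{-1}e^{-|w|^2/(4t)}$ for the transition density of $B-\widetilde B$, the drifted kernels differ from the centred ones by multiplicative factors $e^{-\frac12 v\cdot w-\frac14|v|^2 t}$, and these telescope; one is left with
\[
I_n(0)-I_n(v)=\int_{0<s_1<\cdots<s_n<\ell}\int_{(\R^2)^n}\prod_{i=1}^n V(z_i)\;\prod_{i=1}^n p_{\delta_i}(z_i-z_{i-1})\;\Big(1-e^{-\frac12 v\cdot z_n-\frac14|v|^2 s_n}\Big)\,\dd\mathbf{z}\,\dd\mathbf{s},
\]
with $z_0=0$, $\delta_i=s_i-s_{i-1}$; the entire effect of the drift is carried by the single factor depending only on $(z_n,s_n)$.

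It then remains to estimate this. On the support of the integrand $|z_n|\le R_V$, while $|v|=|x|/L(T_\e)\le\sqrt{\log L(T_\e)/L(T_\e)}\to0$ in the present range of $x$; splitting $1-e^{-\frac12 v\cdot z_n-\frac14|v|^2 s_n}=(1-e^{-\frac12 v\cdot z_n})+e^{-\frac12 v\cdot z_n}(1-e^{-\frac14|v|^2 s_n})$, linearising the first bracket and using $1-e^{-a}\le a$ $(a\ge0)$ for the second, gives $|1-e^{-\frac12 v\cdot z_n-\frac14|v|^2 s_n}|\le C|v|+C|v|^2 s_n$ for $\e$ small. Integrating $z_n,\dots,z_1$ out in turn against $r_t:=\sup_z\int_{\R^2}V(y)p_t(z,y)\,\dd y\le\min\!\big(\|V\|_\infty,\tfrac{1}{4\pi t}\big)$ and using $s_n=\sum_i\delta_i$ together with the inclusion of the time simplex in $(0,\ell)^n$, one gets
\[
\b_\e^{2n}\big|I_n(0)-I_n(v)\big|\le C|v|\Big(\b_\e^2{\textstyle\int_0^\ell} r_t\,\dd t\Big)^{\!n}+C|v|^2\ell\,n\,\b_\e^2\Big(\b_\e^2{\textstyle\int_0^\ell} r_t\,\dd t\Big)^{\!n-1},
\]
where $\int_0^\ell t\,r_t\,\dd t\le C\ell$ was used. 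The decisive point — where the hypotheses on $(\ell,L(T_\e))$ and the scaling $\b_\e^2=\hat\b^2\,4\pi/\log T_\e$ enter — is that $\int_0^\ell r_t\,\dd t\le\frac{\log\ell}{4\pi}+C_V$, hence $\b_\e^2\int_0^\ell r_t\,\dd t\le\hat\b^2\frac{\log\ell}{\log T_\e}+o(1)\le\frac{1+\hat\b^2}{2}=:q<1$ for $\e$ small (using $\log\ell\le\log\ell_{T_\e}^b\le\log L(T_\e)$ and $\log L(T_\e)/\log T_\e\to1$), while the $n=1$ term of $\sum_n(\b_\e^2\int_0^\ell r_t\,\dd t)^n$ contributes only $\le C\b_\e^2\log\ell$. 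Summing the resulting geometric series and $\sum_n nq^{n-1}<\infty$ over $n$ gives $|\DE[F_0]-\DE[F_v]|\le C\b_\e^2\big(\frac{|x|\log\ell}{L(T_\e)}+\frac{|x|^2\ell}{L(T_\e)^2}\big)$, which is the claimed bound. I expect this summation to be the main obstacle: the sharp constant $1/(4\pi)$ must be retained so that $\b_\e^2\int_0^\ell r_t\,\dd t$ stays strictly below $1$ (otherwise the chaos series diverges when $\ell$ is a large power of $T_\e$), and one then has to peel the extra $\b_\e^2$, i.e.\ $\log\ell/\log T_\e$, off the first chaos while absorbing the tail into $q^n$; the drift-absorption bookkeeping in the middle step also requires care.
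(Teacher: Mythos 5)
Your proposal is correct and follows essentially the same route as the paper: reduce $\IE\big[B_{L(T_\e),\ell,x}^2\big]$ to $2\big(\DE[F_0]-\DE[F_v]\big)$ via the white-noise identity, expand the exponential into its chaos series, compare drifted and undrifted kernels on the support of $V$, and sum a series with ratio $\b_\e^2\int_0^\ell r_s\,\dd s\le \hat\b^2(1+o(1))<1$, the first-chaos factor supplying the $\b_\e^2\log\ell$ term (and the trivial $L^2$ bound handling $|x|\ge\sqrt{L\log L}$). The only difference is cosmetic: you telescope the Girsanov-type tilting factors into a single endpoint factor $1-e^{-\frac12 v\cdot z_n-\frac14|v|^2 s_n}$, whereas the paper uses the product-difference identity $\prod_i a_i-\prod_i b_i=\sum_j\big(\prod_{i<j}b_i\big)(a_j-b_j)\big(\prod_{k>j}a_k\big)$ and bounds one kernel difference per term; both give the same per-step estimate $C\big(\frac{|x|}{L(T_\e)}+\frac{|x|^2 s}{L(T_\e)^2}\big)$ and the same final bound.
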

\begin{proof}
For $|x|\geq \sqrt{{L(T)}\log { L(T)}}$, it is trivial from \eqref{eq:L2bdd}. 

Combining \eqref{eq:L2white} and transformation of Brownian motions yield that 
\begin{align*}
 \IE\left[B_{{L(T)},\ell,x}^2\right]&={ 2}E\left[\exp\left(\b^2\int_0^\ell V(\sqrt{2}B_s)ds\right)-\exp\left(\b^2\int_0^\ell V\left(\sqrt{2}B_s+\frac{xs}{{ L(T)}}\right)ds\right)\right]\\
 &={ 2}\sum_{n=1}^\infty \b^{2n}\int_{0<t_1<\cdots<t_n<\ell}\int_{{\R^2}^n}\dd \mathbf{s}\dd \mathbf{x}\prod_{i=1}^n V(\sqrt{2}x_i)\\
 &\hspace{12mm}\times \left(\prod_{i=1}^n\rho_{s_i-s_{i-1}}(x_{i-1},x_{i})-\prod_{i=1}^n \rho_{s_i-s_{i-1}}\left(x_i-x_{i-1}+\frac{x(s_i-s_{i-1})}{{ L(T)}}\right)\right),
\end{align*}
{ where we set $x_0=0$.} When we use the relation \begin{align*}
&\prod_{i=1}^na_i-\prod_{i=1}^nb_i=\sum_{j=1}^n \left(\prod_{i=1}^{j-1} b_i\right)(a_j-b_j)\left(\prod_{k=j+1}^n a_k\right),
\end{align*}
and recall the notation $r_s$ from \eqref{Def: rs}, we have \begin{align}
& \IE\left[B_{{ L(T)},\ell,x}^2\right]\notag\\
&\leq { 2}\sup_{z\in B(0,R_V)}\b^2 \int_0^\ell \int_{\R^2} V(\sqrt{2}y)\left|\rho_s(y-z)-\rho_s(y-z+\frac{xs}{{ L(T)}})\right| \dd y \dd s \notag\\
&\hspace{12mm} \times \sum_{n=1}^\infty \sum_{k=1}^{ n}
 \b^{2n-2} \left(\sup_{z\in \R}\int_{0}^{\ell} \int_{\R^2} V(\sqrt{2}y)\rho_s(y-z)\dd y \dd s\right)^{n-1}\notag\\
&\leq  { 2}\sum_{n=1}^\infty { n
 \b^{2(n-1)}  \left(\int_{0}^{\ell}r_s  \dd s\right)^{n-1}} \sup_{z\in B(0,R_V)}\b^2 \int_0^\ell \int_{\R^2} V(\sqrt{2}y)\left|\rho_s(y-z)-\rho_s\left(y-z+\frac{xs}{{ L(T)}}\right)\right| \dd y  \dd s\notag\\
 &\leq C\b^2 \sup_{z\in B(0,R_V)} \int_0^\ell \int_{\R^2} V(\sqrt{2}y)\left|\rho_s(y-z)-\rho_s\left(y-z+\frac{xs}{{ L(T)}}\right)\right| \dd y \dd s,\label{eq:Best}
\end{align}
where we have used the estimate \eqref{eq:rorder} in the last line. Also, we have that for $y,z\in B(0,R_V)$, and for $s>0$\begin{align*}
&\left|\rho_s(y-z)-\rho_s\left(y-z+\frac{xs}{{{ L(T)}}}\right)\right|\\
&=\rho_{s}(y-z)\left|1-\exp\left(-\frac{\langle y-z,x\rangle}{{L(T)}}-\frac{|x|^2s}{2{L(T)}^2}\right)\right|\\
  &\leq \rho_s(y-z)\left(\frac{R_V|x|}{{L(T)}}\exp\left(\frac{{2}R_V|x|}{{L(T)}}-\frac{|x|^2s}{2{L(T)}^2}\right)+\frac{R_V|x|}{{L(T)}}+\frac{|x|^2s}{2{L(T)}^2}\right)\\
  &\leq C  \rho_s(y-z)\left( \frac{|x|}{L(T)}+\frac{|x|^2 s}{L(T)^2}\right)
\end{align*}
where we denote by $\langle x,y\rangle$ the inner product of $x$ and $y\in \R^2$ and we use $e^x-1\leq xe^x$ if  $x\geq 0$ and $1-e^x\leq -x$ if $x<0$ in the last line.
For $|x|\leq \sqrt{{ L(T)}\log{ L(T)}}$, \begin{align*}
 \IE\left[B_{{L(T)},\ell,x}^2\right]\leq&C\b^2\left(\frac{|x|\log \ell}{{ L(T)}}+\frac{|x|^2 \ell}{{L(T)}^2}\right).
\end{align*}
\end{proof}
{ Putting things together, we conclude the proof of Theorem \ref{th:errorTermLLT}.}\\

We also use the following lemma later.
\begin{lemma}\label{lem:covpart}
  For fixed $t>0$ and $\hat{\b}\in (0,1)$, there exists a constant $C=C_{\hat{\b},t}$ such that for $x\in \R^2$ and for $1\leq \ell \leq t{ T_\e}$ with $t>0$
  \begin{align}
\IE\left[\left(\DE_x\left[\Phi_\ell^{{ \b_\e}}\right]-\DE_0\left[\Phi_\ell^{{ \b_\e}}\right]\right)^2\right]\leq \begin{cases}
\displaystyle C\b_\e^2(1+|x|^2)\quad &|x|\leq \sqrt{\log { \ell}}\\
C&|x|>\sqrt{{\SN }\log { \ell}}.
\end{cases}
\label{eq:covpart}
  \end{align}
\end{lemma}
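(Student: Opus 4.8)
The plan is first to reduce the second moment of $\DE_x[\Phi_\ell^{\beta_\e}]-\DE_0[\Phi_\ell^{\beta_\e}]$ to an explicit scalar quantity, exactly as in the Gaussian computation behind \eqref{eq:L2decomn}, and then to estimate that quantity after cutting off short times. For independent Brownian motions $B^{(1)},B^{(2)}$ one has $B^{(1)}_s-B^{(2)}_s\eqlaw\sqrt 2\,B_s$, so \eqref{eq:L2white} gives, for all $x,y\in\R^2$,
\[
\IE\!\left[\DE_x[\Phi_\ell^{\beta_\e}]\,\DE_y[\Phi_\ell^{\beta_\e}]\right]=\psi_\ell(x-y),
\qquad
\psi_\ell(z):=\DE_{z/\sqrt 2}\!\left[\exp\!\left(\beta_\e^2\int_0^\ell V(\sqrt 2\,B_u)\,\dd u\right)\right],
\]
and consequently $\IE\big[(\DE_x[\Phi_\ell^{\beta_\e}]-\DE_0[\Phi_\ell^{\beta_\e}])^2\big]=2\big(\psi_\ell(0)-\psi_\ell(x)\big)\ge 0$. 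So the whole task is to bound $\psi_\ell(0)-\psi_\ell(x)$.

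When $|x|>\sqrt{\log\ell}$ this is immediate: $\psi_\ell(0)-\psi_\ell(x)\le\psi_\ell(0)=\IE[(\DE_0[\Phi_\ell^{\beta_\e}])^2]$, which is non-decreasing in $\ell$ since $V\ge 0$, hence bounded by $\IE[(\DE_0[\Phi_{tT_\e}^{\beta_\e}])^2]$, and the latter stays bounded as $\e\to 0$ by \eqref{eq:L2bdd} (taking $\hat\gamma=\hat\b$). When $|x|\le\sqrt{\log\ell}$, I would expand $\psi_\ell$ into its chaos series as in \eqref{eq:L2decomn}, now with $x_0=z/\sqrt 2$. Only the first heat‑kernel factor $\rho_{s_1}(x_1-x_0)$ depends on the starting point, so in $\psi_\ell(0)-\psi_\ell(x)$ all other links survive and this factor is replaced by $\rho_{s_1}(x_1)-\rho_{s_1}(x_1-x/\sqrt 2)$; integrating out the remaining $n-1$ links with the bound $\sup_{x'}\int_{\R^2}V(\sqrt2\,y)\rho_u(x',y)\,\dd y=r_u$ from \eqref{Def: rs} and the substitution $s_i-s_{i-1}\mapsto u_i$ (so that $\sum u_i\le\ell$) bounds the chain by $(\int_0^\ell r_u\,\dd u)^{n-1}$, uniformly in $x_1$. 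Summing the geometric series gives
\[
\psi_\ell(0)-\psi_\ell(x)\le\frac{\beta_\e^2}{1-\beta_\e^2\int_0^\ell r_u\,\dd u}\int_0^\ell\int_{\R^2}V(\sqrt 2\,z)\,\big|\rho_s(z)-\rho_s(z-x/\sqrt 2)\big|\,\dd z\,\dd s ,
\]
and, since $\int_0^\ell r_u\,\dd u\le\|V\|_\infty+\tfrac1{4\pi}\log(tT_\e)$ by \eqref{eq:rorder} (and $r_u\le\|V\|_\infty$), one has $\beta_\e^2\int_0^\ell r_u\,\dd u\to\hat\b^2<1$ uniformly in $1\le\ell\le tT_\e$, so the prefactor is $O(\beta_\e^2)$ as soon as $\e$ is small enough (depending on $\hat\b,t$).

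It then remains to bound the double integral by $C(1+|x|^2)$, and the only delicate point — which I expect to be the main (minor) obstacle — is the singularity at $s=0$: the naive estimate $|\rho_s(z)-\rho_s(z-x/\sqrt 2)|\le\tfrac{|x|}{\sqrt 2}\sup_w|\nabla\rho_s(w)|=\tfrac{|x|}{\sqrt 2}\cdot\tfrac{e^{-1/2}}{2\pi s^{3/2}}$ is not integrable near $0$. I would handle this by splitting the $s$-integral at $s=1$: on $(0,1)$ simply bound $|\rho_s(z)-\rho_s(z-x/\sqrt 2)|\le\rho_s(z)+\rho_s(z-x/\sqrt 2)$, whose $V(\sqrt 2\,\cdot)$-integral is $\le 2r_s\le 2\|V\|_\infty$, so this range contributes $O(1)$; on $[1,\ell)$ use the gradient estimate together with $\int_{\R^2}V(\sqrt 2\,z)\,\dd z=\tfrac12$ and $\int_1^\infty s^{-3/2}\,\dd s=2$, so this range contributes $O(|x|)$. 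Hence the double integral is $\le C(1+|x|)\le C(1+|x|^2)$, which combined with the $O(\beta_\e^2)$ prefactor gives $\IE\big[(\DE_x[\Phi_\ell^{\beta_\e}]-\DE_0[\Phi_\ell^{\beta_\e}])^2\big]\le C\beta_\e^2(1+|x|^2)$, as claimed. (The same argument actually yields $C\beta_\e^2(1+|x|)$ for \emph{every} $x$; the case distinction in the statement just records that for $|x|$ very large the trivial bound $O(1)$ coming from \eqref{eq:L2bdd} is sharper, and the finitely many $\e$ above some $\e_0(\hat\b,t)$ not covered here are harmless since there $\beta_\e$ and $tT_\e$ are both bounded.)
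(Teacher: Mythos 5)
Your proof is correct and follows essentially the same route as the paper: the same white-noise covariance computation reducing the second moment to $2\left(\psi_\ell(0)-\psi_\ell(x)\right)$, the trivial bound via \eqref{eq:L2bdd} for $|x|>\sqrt{\log \ell}$, and the chaos expansion in which only the first heat-kernel link depends on the starting point, with the remaining links bounded by $\left(\int_0^\ell r_u\,\dd u\right)^{n-1}$ and the series summed to produce the $\b_\e^2$ prefactor. The only (harmless) deviation is the final estimate: you control $\left|\rho_s(z)-\rho_s\left(z-x/\sqrt 2\right)\right|$ for $s\ge 1$ by the gradient bound, giving $O\!\left(\b_\e^2(1+|x|)\right)$, whereas the paper expands the Gaussian exponent to get a $(1+|x|^2)/s$ bound; both yield \eqref{eq:covpart}.
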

\begin{proof}
  For $|x|\geq \sqrt{ \log \ell}$, it is trivial from \eqref{eq:L2bdd}. We suppose $|x|< \sqrt{\log \ell}$. 
  Using the same argument as in \eqref{eq:Best}, with the convention $x_0=0$,
  \begin{align*}
\IE\left[\left(\DE_x\left[\Phi_\ell^{{ \b_\e}}\right]-\DE\left[\Phi_\ell^{{\b_\e}}\right]\right)^2\right]&=E\left[\exp\left(\b^2\int_0^\ell V(\sqrt{2}B_s)ds\right)-\exp\left(\b^2\int_0^\ell V\left(x+\sqrt{2}B_s\right)ds\right)\right]\\
 &=\sum_{n=1}^\infty \b^{2n}\int_{0<t_1<\cdots<t_n<\ell}\int_{{\R^2}^n}\dd \mathbf{s}\dd \mathbf{x}\prod_{i=1}^n V(\sqrt{2}x_i)\\
 &\hspace{12mm}\times \left(\prod_{i=1}^n\rho_{s_i-s_{i-1}}(x_{i}-x_{i-1})-\rho_{s_i-s_{i-1}}\left(x_1-x\right) \prod_{i=2}^n \rho_{s_i-s_{i-1}}\left(x_i-x_{i-1}\right)\right)\\
 &\leq C\b^2\left(1+\sup_{z\in B(0,R_V)} \int_1^\ell \int_{\R^2} V(\sqrt{2}y)\left|\rho_s(y-z)-\rho_s(y-z+x)\right| \dd y \dd s\right).
\end{align*}
Also, we have that for $y,z\in B(0,R_V)$ and $s\geq 1$, \begin{align*}
\left|\rho_s(y-z)-\rho_s(y-z+x)\right|&=\rho_{s}(y-z)\left|1-\exp\left(-\frac{2\langle y-z,x\rangle+|x|^2}{2s}\right)\right|\\
&\leq \frac{C}{s}\rho_s(y-z)\left(\exp\left(\frac{R_V^2}{s}\right)+{|x|+|x|^2}\right)\\
&\leq \frac{C}{s}\rho_s(y-z)\left(1+|x|^2\right).
\end{align*}

Thus, we have 
\begin{align*}
\IE\left[\left(\DE_x\left[\Phi_\ell^{{ \b_\e}}\right]-\DE_0\left[\Phi_\ell^{{ \b_\e}}\right]\right)^2\right]\leq&C \b^2\left(1+|x|^2\right).
\end{align*}
  
\end{proof}

\section{Proofs of Theorem \ref{th:EWlimit}}\label{sec:proofEWKPZ}

For fixed $t>0$ and for $u_0\in \mathfrak{C}$, let us define the martingale
\begin{equation*}
    s \to \kW_{s}(x)= \kW_{s}^{(t,T,\hat{\b},u_0)}(x)=\DE_x\left[ \Phi_{s}^\b(B) \, u_0\left(\frac{B_{t T}}{\sqrt{T}}\right)\right]
\end{equation*}
with respect to the filtration $\{\mathcal{F}_s:0\leq s\leq tT\}$  associated to the white noise $\xi$.  Then, it follows from Feynman-Kac formula (see \eqref{eq:FKform}) that for each $(t,x)\in [0,\infty)\times \R^2$
\begin{equation} \label{eq:hepsWtT}
u_{\e}^{(T,\hat{\b},u_0)}(t,x)\eqlaw {\kW_{t T}^{(t,T,\hat{\b},u_0)}(\sqrt{T}x)}.
\end{equation}
We omit some superscripts $t$, $T$, $\hat{\b}$, and $u_0$  to make notation simple for several notations when it is easily understood from the context. 

  Since both $\|u_0^{-1}\|_{\infty}$ and $\|u_0\|_{\infty}$ are finite, 
  $$\|u_0^{-1}\|^{-1}_{\infty} \sZ_{s}(x)\leq \kW_{s}(x)\leq \|u_0\|_{\infty} \sZ_{s}(x).$$
  Hereafter, we use this without any comment.

It\^o's formula yields that for each $x\in \R^2$
\begin{align}
\kW_s^{(t,T,\hat{\b},u_0)}(x)&=\bar{u}(t,x)+\int_0^s \dd \kW^{(t,T,\hat{\b},u_0)}_u(x)\label{eq:Itozdeco}\\
\kW_s^{(t,T,\hat{\gm},v_0)}(x)&=\bar{v}(t,x)+\int_0^s \dd \kW^{(t,T,\hat{\gm},v_0)}_u(x)\label{eq:Itozdeco2}
\end{align}
with \begin{align}
\langle \kW^{(\hat{\b},u_0)}(x),\kW^{(\hat{\gm},v_0)}(y)\rangle_s=\int_0^s \b\gm\DE_{x}{\otimes }\DE_y\left[V(B_u-\widetilde{B}_u)\Phi^\b_u(B)\Phi^\gm_u(\widetilde{B})u_0\left(\frac{B_{t T}}{\sqrt{T}}\right)v_0\left(\frac{\widetilde{B}_{t T}}{\sqrt{T}}\right)\right]\dd u	\label{eq:QuadVarW}
\end{align}
for each $x,y\in\R^2$, where $\DE_{x}{\otimes }\DE_y$ denotes the expectation in two independent Brownian motions $B$ and $\widetilde{B}$ starting from $x$ and $y$.

Then,  we find by It\^{o}'s formula that for ${F}\in \mathfrak{F}$, $F(\kW_s(x))$ has the following semimartingale representation   \begin{align}
F(\kW^{(\hat{\b},u_0)}_s(x))&=F(\bar{u}(t,x))+\int_0^s F'(\kW^{(\hat{\b},u_0)}_u(x))\dd \kW^{(\hat{\b},u_0)}_u(x)\notag\\
&\hspace{6em}+\frac{1}{2}\int_0^s F''(\kW^{(\hat{\b},u_0)}_u(x))\dd\langle \kW^{(\hat{\b},u_0)}(x)\rangle_u\label{eq:ItoDecoF}
\end{align}
and we denote by \begin{align*}
&G_s^{(t,T,F,\hat{\b},u_0)}(x)=G_s(x)=\int_0^s F'({\kW_u^{(\hat{\b},u_0)}(x)})\dd \kW_u^{(\hat{\b},u_0)}(x)\\
&H^{(t,T,F,\hat{\b},u_0)}_s(x)=H_s(x)=\int_0^s F''(\kW^{(\hat{\b},u_0)}_u(x))\dd\langle \kW^{(\hat{\b},u_0)}(x)\rangle_u.
\end{align*}


First, we will prove the fluctuations of martingale parts converge to centered Gaussian random variables.

\begin{proposition} \label{prop:mainpropKPZ} Suppose $u_0^{(1)},\cdots,u_0^{(n)}\in \mathfrak{C}$, $\hat{\b}^{(1)},\cdots,\hat{\b}^{(n)}\in (0,1)$ and $F_{1},\cdots,F_{n}\in \mathfrak{F}$.

For any test function $f_1,\cdots,f_n\in C_c^\infty(\R^2)$,  as $T\to \infty$
\begin{equation} \label{eq:CLTmartH}
\left\{\frac{1}{\b^{(i)}_\e}\int_{\mathbb{R}^2} f_i(x) G^{\left(F_i,\b^{(i)},u_0^{(i)}\right)}_{{T_\e} t} (\sqrt {T_\e} x) \dd x\right\}_{i=1,\cdots,n} \cvlaw \left\{\mathscr U(t,f_i,F_i,\hat{\b}^{(i)},u_0^{(i)})\right\}_{i=1,\cdots,n},
\end{equation}
where $\left\{\mathscr U(t,f_i,F_i,\hat{\b}^{(i)},u_0^{(i)})\right\}_{i=1,\cdots,n}$ are Gaussian random variables with zero means and covariance 
\begin{align*}
&\mathrm{Cov}\left(\mathscr U(t,f_i,F_i,\hat{\b}^{(i)},u_0^{(i)}),\mathscr U(t,f_j,F_j,\hat{\b}^{(j)},u^{(j)}_0)\right)\\
&={\frac{1}{1-\hat{\b}^{(i)}\hat{\beta}^{(j)}}\int_{0}^t\dd s\int_{(\R^2)^2}\dd x\dd yf_i(x)f_j(y)
I^{\left(t,F_i,\hat{\b}^{(i)},u_0^{(i)}\right)}(x)I^{\left(t,F_j,\hat{\b}^{(j)},u^{(j)}_0\right)}(y)}
\\
&\hspace{14em}{ \times \int_{\R^2}\dd z\rho_\sigma(x,z)\rho_\sigma(y,z)\bar{u}^{(i)}(t-\sigma,z)\bar{u}^{(j)}(t-\sigma,z)}.
\end{align*}
\end{proposition}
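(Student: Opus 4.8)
The plan is to apply the multidimensional martingale central limit theorem (Theorem \ref{thm:JS}) to the vector of continuous $L^2$-martingales
\[
s\longmapsto M^{(i)}_s:=\frac{1}{\b^{(i)}_\e}\int_{\R^2}f_i(x)\,G^{(F_i,\b^{(i)},u_0^{(i)})}_s(\sqrt{T_\e}x)\,\dd x,\qquad i=1,\dots,n,
\]
indexed by $s\in[0,tT_\e]$, and then to pass $\e\to 0$. By the It\^o decomposition \eqref{eq:ItoDecoF} these are genuine martingales (the $G$-part is the stochastic-integral part), so the two things to verify are: (a) convergence of the cross-brackets $\langle M^{(i)},M^{(j)}\rangle_{tT_\e}$ to the deterministic limit given by the claimed covariance, and (b) a conditional Lindeberg/negligibility condition ensuring asymptotic Gaussianity (here it is convenient to check that the jumps are zero — the martingales are continuous — and that the bracket is asymptotically deterministic, e.g.\ by an $L^1$ or $L^2$ bound on its variance; alternatively verify a fourth-moment bound). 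Step (a) is the heart of the matter.

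For step (a), first write out the cross-bracket explicitly using \eqref{eq:QuadVarW}: since $\dd G^{(i)}_u=F_i'(\kW^{(i)}_u)\,\dd\kW^{(i)}_u$,
\[
\langle M^{(i)},M^{(j)}\rangle_{tT_\e}=\frac{1}{\b^{(i)}_\e\b^{(j)}_\e}\int_{(\R^2)^2}\!\!f_i(x)f_j(y)\int_0^{tT_\e}\!\!F_i'(\kW^{(i)}_u(\sqrt{T_\e}x))F_j'(\kW^{(j)}_u(\sqrt{T_\e}y))\,\dd\langle\kW^{(i)}(\sqrt{T_\e}x),\kW^{(j)}(\sqrt{T_\e}y)\rangle_u\,\dd x\,\dd y,
\]
and the innermost differential is $\b^{(i)}_\e\b^{(j)}_\e\,\DE\otimes\DE[V(B_u-\widetilde B_u)\Phi^{(i)}_u\Phi^{(j)}_u u_0^{(i)}(\cdots)u_0^{(j)}(\cdots)]\,\dd u$, which cancels the prefactor $(\b^{(i)}_\e\b^{(j)}_\e)^{-1}$. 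The strategy, following \cite{CNN20}, is then a homogenization argument: change variables $u=\sigma T_\e$; the time-$u$ slice of the bracket is governed by a polymer that has run for time $u$ from two nearby points, and the local limit theorem (Theorem \ref{th:errorTermLLT}) lets one factorize the point-to-point type quantities — an initial polymer of length $\ell=\ell_{T_\e}$ carrying the interaction with $V$, times two independent forward partition functions carrying the evolved initial data. Concretely, the interaction $\int V(B_u-\widetilde B_u)$ near time $u=\sigma T_\e$ contributes the constant $\int_0^\infty r_s\,\dd s$-type sum which, combined with the factor $(\b^{(i)}_\e\b^{(j)}_\e)$ hidden in the expansion and $\log T_\e\to\infty$, produces exactly $\frac{1}{1-\hat\b^{(i)}\hat\b^{(j)}}$ (this is where Lemma \ref{lem:L2bdd} enters); the forward parts, after using the concentration/local-limit estimates and the diffusive rescaling $\e B_{\e^{-2}\cdot}\eqlaw B_\cdot$, converge to $\rho_\sigma(x,z)\rho_\sigma(y,z)\bar u^{(i)}(t-\sigma,z)\bar u^{(j)}(t-\sigma,z)$ integrated over $z$; and the $F_i'$ factors, evaluated at $\kW^{(i)}_u$, converge by Theorem \ref{thm:CSZ17b} (which identifies the limiting law of the normalized partition function as $e^{X_{\hat\b}}$, times the heat-flow initial data $\bar u$) to $I^{(i)}(x)=\IE[F_i'(e^{X_{\hat\b^{(i)}}}\bar u^{(i)}(t,x))e^{X_{\hat\b^{(i)}}}]$. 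Assembling these pieces and integrating $\dd\sigma\,\dd x\,\dd y$ gives the stated covariance; one also needs the uniform $L^2$-bounds (Lemma \ref{lem:L2bdd}, Lemma \ref{lem:lp}) and negative-moment bounds (Lemma \ref{lem:negativemoment}) together with the growth hypothesis \eqref{eq:Fass} on $F$ to justify dominated convergence and to control the error terms produced by the local limit theorem.

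The main obstacle is precisely the homogenization step for the bracket: showing that the random quantity $\langle M^{(i)},M^{(j)}\rangle_{tT_\e}$ concentrates around its deterministic limit. The difficulty is that $F_i'$ is evaluated along the martingale $\kW^{(i)}_u(\sqrt{T_\e}x)$, which is only $L^p$-bounded for $p<p_{\hat\b}$ (Lemma \ref{lem:lp}) and not better, so one cannot simply take higher moments; one must instead decouple the $F_i'$ factor (which depends on the coarse behavior of the polymer up to time $u$, a ``macroscopic'' functional converging to $e^{X_{\hat\b}}$-times-heat-flow) from the local interaction factor near time $u$ (a ``microscopic'' functional producing the $\frac{1}{1-\hat\b^{(i)}\hat\b^{(j)}}$ and the heat kernels), using the local limit theorem to make this decoupling quantitative, and then average over the time variable $\sigma$, where the $\dd\sigma$ integration smooths out the fluctuations. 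Handling the ratios implicit in normalizing the partition functions — and verifying that the errors in Theorem \ref{th:errorTermLLT} are summable after the $\b^{(i)}_\e\b^{(j)}_\e$-weighted chaos expansion — is the technically heaviest part, and is where the negative-moment bounds and the concentration inequality for $\sZ$ are used. Once the bracket is controlled, the martingale CLT and a routine argument that the quadratic variation of each component is asymptotically deterministic (hence the Lindeberg condition holds for continuous martingales) finish the proof.
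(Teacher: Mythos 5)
Your overall route---the multidimensional martingale CLT (Theorem \ref{thm:JS}) applied to $s\mapsto \frac{1}{\b_\e}\int f_i(x)G^{(i)}_s(\sqrt{T_\e}x)\dd x$, with the covariance assembled from the local limit theorem, Lemma \ref{lem:L2bdd} (the factor $\frac{1}{1-\hat{\b}^{(i)}\hat{\b}^{(j)}}$) and Theorem \ref{thm:CSZ17b} (the weighted expectation $I^{(i)}$), and with Lemmas \ref{lem:lp}, \ref{lem:negativemoment} and \eqref{eq:Fass} for integrability---is the paper's route, and your bookkeeping of the limiting covariance is correct. The genuine gap is at the step you defer as ``technically heaviest'': you give no working mechanism for the concentration of the random bracket. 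Saying that $F_i'(\kW^{(i)}_u(\sqrt{T_\e}x))$ ``converges by Theorem \ref{thm:CSZ17b}'' to $I^{(i)}(x)$ conflates one-point convergence in law with a law of large numbers: the bracket contains the \emph{product} $F_i'(\kW^{(i)}_u(\sqrt{T_\e}x))F_j'(\kW^{(j)}_u(\sqrt{T_\e}y))$ multiplied by a kernel that is correlated with both factors, and convergence in law of each factor does not let you replace them by deterministic constants. Likewise, ``the $\dd\sigma$ integration smooths out the fluctuations'' is not the mechanism: the integrand at different times shares the same partition functions, so time-averaging alone does not decorrelate anything. Note also that the weight $e^{X_{\hat{\b}}}$ inside $I$ does not come from $F'(\kW_u)$ itself but from the factor $\sZ^\b_{s\ell(T)}(x_T)$ split off by the LLT factorization of the point-to-point partition function, which is asymptotically the argument of $F'$ divided by $\bar{u}$; the two must be homogenized \emph{jointly}, and the interaction with $V$ is carried by the two time-reversed partition functions at the nearby endpoints $z_1,z_2$, not by ``an initial polymer of length $\ell$ carrying the interaction''.

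What actually makes the homogenization work in the paper is spatial, and it is organized around a surrogate martingale rather than the bracket of $G$ itself: one first replaces $G$ by $\kM$ (Proposition \ref{prop:replaceByM}), in which $F'$ is evaluated at $\sZ^\b_{s\ell(T)}(x_T)\bar{u}(t,x)$ and the point-to-point weight is replaced by $\sZ^\b_{s\ell(T)}\cdot\overleftarrow{\sZ}^\b_{s,s\ell(T)}$, the replacement being controlled in $L^1$ via Burkholder--Davis--Gundy, Theorem \ref{th:errorTermLLT} and the negative-moment bounds; then, in the bracket of $\kM$, the partition functions are localized to balls of radius $\ell'(\sigma,T)$ and truncated at level $\ell(T)^{-1/2}$ (the $\widetilde{\sZ}$ of Lemmas \ref{lem:approxPsi} and \ref{lem:approxFF}), so that quantities at points farther apart than $2(\ell'(\sigma,T)+R_\phi)$ are exactly independent, and concentration follows from averaging in the spatial variables with a variance bound of order $\ell(T)^{1/2}$ together with a uniform-integrability lemma compensating for the lack of moments beyond $p_{\hat{\b}}$. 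In addition, small-time cutoffs are needed and are absent from your plan: the LLT factorization requires $\ell\ll s$ and the one-point limit requires $s\to\infty$, which is why the surrogate starts at $tT_\e m(T_\e)$ and why Lemmas \ref{lem:MtildeTau0negl} and \ref{lem:vanishingSmalltSurgery} are proved separately. If you insist on working with the bracket of $G$ directly, you will be forced to introduce essentially the same surrogate replacement and localization there, so the missing ideas are the same either way.
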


Then, we will prove that the It\^o correction term can be neglected in the limit: 
\begin{proposition}\label{prop:VanishBracket}
 For any $t>0$, $\hat{\b}\in(0,1)$, $u_0\in \mathfrak{C}$ and $F\in \mathfrak{F}$, as $\e\to 0$,
\begin{equation}
\frac{1}{\b_\e} \int_{\mathbb{R}^2} f(x)\,\left( H_{{T_\e}t}^{(F)}(\sqrt {T_\e} x)  - \IE\left[ H_{ {T_\e}t}^{(F)}(\sqrt  {T_\e} x)  \right]\right)  \cvLone 0.
\end{equation}
\end{proposition}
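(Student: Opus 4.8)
\textbf{Proof proposal for Proposition \ref{prop:VanishBracket}.}

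The plan is to show that the rescaled and recentered It\^o correction converges to $0$ in $L^1$ by proving it converges to $0$ in $L^2$, i.e.\ that its variance vanishes as $\e\to 0$; since the mean is subtracted, this amounts to a second-moment estimate on $H_{{T_\e}t}^{(F)}(\sqrt{T_\e}x)$. Recall from \eqref{eq:ItoDecoF} that
\[
H_s(x)=\int_0^s F''\!\left(\kW^{(\hat{\b},u_0)}_u(x)\right)\dd\langle \kW^{(\hat{\b},u_0)}(x)\rangle_u,
\]
and that by \eqref{eq:QuadVarW} the bracket density is $\b^2\DE_x^{\otimes 2}[V(B_u-\widetilde B_u)\Phi_u^\b(B)\Phi_u^\b(\widetilde B)u_0(B_{tT}/\sqrt T)u_0(\widetilde B_{tT}/\sqrt T)]\,\dd u$. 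The key observation is that $\b_\e^{-1}$ times a $\b_\e^2$-weighted time integral of length $\asymp T_\e$ should, after the substitution $u=\sigma T_\e$, produce exactly a surviving $O(1)$ contribution — but \emph{only} in its expectation; the fluctuation around the expectation should be of smaller order because the pair $(B,\widetilde B)$ decorrelates at distinct space points. Concretely, write
\[
\frac{1}{\b_\e}\int f(x)\bigl(H_{T_\e t}(\sqrt{T_\e}x)-\IE H_{T_\e t}(\sqrt{T_\e}x)\bigr)\dd x,
\]
square it, and take expectation; this produces a fourfold Brownian expectation (two independent pairs of Brownian motions started near $\sqrt{T_\e}x$ and $\sqrt{T_\e}y$) with an environment covariance coming from $\IE[\Phi\Phi\Phi\Phi]$. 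I would split the estimate using H\"older's inequality to separate the (bounded, by Lemma \ref{lem:lp} and Lemma \ref{lem:negativemoment}) factors $F''(\kW_u)$ — note $|F''(x)|\le C(x^{-2}+1)$ and $\|u_0^{-1}\|_\infty,\|u_0\|_\infty<\infty$ give control of negative and positive moments of $\kW_u$ — from the bracket densities, so that the problem reduces to bounding
\[
\b_\e^2\iint f(x)f(y)\,\dd x\,\dd y\int_0^{T_\epsilon t}\!\!\int_0^{T_\epsilon t}\IE\Bigl[\text{(bracket density at $x$, time $u$)}\cdot\text{(bracket density at $y$, time $v$)}\Bigr]\dd u\,\dd v,
\]
and showing the analogous quantity with the expectations factored (i.e.\ $\IE[\cdot]\IE[\cdot]$ in place of $\IE[\cdot\,\cdot]$) has the same leading behaviour, so the difference — the actual variance — vanishes.

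The main tool for the last step is the local limit theorem, Theorem \ref{th:errorTermLLT}, together with the homogenization idea from \cite{CNN20}: after the diffusive rescaling $u=\sigma T_\e$, the four Brownian endpoints at macroscopic times become asymptotically independent of the microscopic environment seen near the collision region, so that $\IE[\Phi_u^\b(B)\Phi_u^\b(\widetilde B)\Phi_v^\b(B')\Phi_v^\b(\widetilde B')]$ factorizes up to a negligible error once $x\ne y$, while the diagonal contribution $x\approx y$ has vanishing $\dd x\,\dd y$-measure at the relevant scale $\sqrt{T_\e}$. I would also use the $L^2$-bound \eqref{eq:L2bdd}–\eqref{eq:L2ppbdd} and the $L^p$-bound Lemma \ref{lem:lp} to control the partition-function moments uniformly in $\e$, and the chaos expansion \eqref{eq:L2decomn}/\eqref{white noise exp} to make the $V$-collision structure explicit: the factor $\b_\e^2\int_0^{T_\e t} r_s\,\dd s$ is $O(1)$ and is precisely what produces the surviving $\frac{1}{1-\hat\b^2}$-type constant in the covariance of $\mathscr U$ in Proposition \ref{prop:mainpropKPZ}, so for $H$ (which carries an extra $\b_\e$ relative to $G^2$) the corresponding quantity is $O(\b_\e)\to 0$ in $L^1$ but $O(1)$ in expectation — consistent with the statement.

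The hard part will be controlling the cross term in the second moment: one must show that the covariance of the two bracket integrals (at space points $x$ and $y$, rescaled by $\sqrt{T_\e}$) decays, and this requires a quantitative decorrelation estimate for products of four partition functions whose starting points are macroscopically separated. This is exactly where Theorem \ref{th:errorTermLLT} and the homogenization argument enter: I would approximate each $\Phi^\b_u(B)u_0(B_{tT}/\sqrt T)$ by a product of a ``local'' partition function near the origin of the collision and a ``global'' smooth factor (a heat-kernel average of $u_0$), using the local limit theorem to bound the $L^2$ error by $\ell/L(T)+\b_\e^2\log(L(T)/\ell)$ with a suitable intermediate scale $\ell=\ell_{T_\e}$; then the global factors are deterministic smooth functions that separate the $x$- and $y$-integrals, killing the cross term up to $o(1)$. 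A secondary technical point is justifying the passage from $L^2$ to $L^1$ convergence, which is immediate once the variance bound is in hand, and checking that the recentering removes precisely the $O(1)$ expectation so that what remains is genuinely $o(1)$.
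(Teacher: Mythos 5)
Your overall instinct — reduce to a variance bound and exploit that the integrand at macroscopically separated points $x\neq y$ decorrelates, with Lemmas \ref{lem:lp} and \ref{lem:negativemoment} controlling the $F''(\kW)$ prefactors — points in the right direction, but the plan as written has genuine gaps at precisely the places where the paper has to work. First, you never split the time integral, and your key claim that ``the diagonal contribution $x\approx y$ has vanishing $\dd x\,\dd y$-measure at the relevant scale $\sqrt{T_\e}$'' is false for the part of the bracket coming from macroscopic times $u\asymp T_\e$: there the correlation length of the integrand in $x$ is of order one macroscopically, so no smallness comes from the measure of the near-diagonal. The paper handles this window differently: it cuts at $T_\e m(T_\e)$ and shows (Lemma \ref{lem:firstStep}) that the contribution of $[T_\e m(T_\e),T_\e]$ is negligible in \emph{absolute} expectation after dividing by $\b_\e$, because $\b_\e^{-1}\b_\e^{2}\log(1/m(T_\e))\to 0$ — no centering or decorrelation is needed there. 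Second, your use of H\"older to ``separate'' the $F''(\kW_u)$ factors from the bracket densities destroys the centering: it yields an $O(1)$ bound on the second moment of the uncentered quantity, whereas you need the variance to be $o(\b_\e^{2})=o(1/\log T_\e)$. The prefactors $F''(\kW_u(x_{T_\e}))$ are order-one random variables (note $\mathrm{Var}(H_{T_\e t}(x_{T_\e}))$ itself is $O(1)$, not small), so the decorrelation in $x$ must be proved for the whole integrand, prefactor included. The paper achieves this on the short-time window by localizing the paths (the martingale $\widetilde{\kW}$ with radius $\sqrt{tT_\e n(T_\e)}$, see \eqref{eq:tildeWdef}) and truncating via the stopping time $\tau_{T_\e}$, so that $\widetilde H(x_{T_\e})$ and $\widetilde H(y_{T_\e})$ are \emph{exactly} independent when $|x-y|\gtrsim\sqrt{n(T_\e)}$, giving a variance of order $n(T_\e)m(T_\e)^{-8}/\b_\e^2\to 0$ (Lemmas \ref{lem:step2}, \ref{lem:step2.5}, \ref{lem:step3}), with all replacement errors controlled by the exit estimate \eqref{eq:exit}, Doob/BDG and the moment lemmas — the local limit theorem is not used here at all.

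Third, the decorrelation mechanism you propose cannot do the job even where it is meaningful: Theorem \ref{th:errorTermLLT} is stated only for horizons $L(T_\e)$ with $\log L(T_\e)/\log T_\e\to 1$, so it says nothing about the intermediate scales $u=T_\e^{\a}$, $0<\a<1$, which carry a positive fraction of both the mass and the fluctuation of $H$ (each logarithmic scale of time contributes comparably); and the asymptotic factorization of $\IE[\Phi\Phi\Phi\Phi]$ that you invoke is a four-point estimate which is not supplied by the paper's toolbox (Lemma \ref{lem:lp} only gives $L^{p}$-bounds for some $p<p_{\hat{\b}}$, with no claim that $p_{\hat{\b}}\geq 4$), nor is it accompanied by the quantitative rate $o(1/\log T_\e)$ that the variance bound requires. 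So while an $L^2$ argument in the spirit you describe can be made to work, it needs the time decomposition at $T_\e m(T_\e)$ and an explicit finite-range (or quantitatively mixing) localization of the full integrand on the short-time window — the ingredients of the paper's proof — rather than the LLT/homogenization step you borrow from the analysis of the martingale part $G$.
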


Proposition \ref{prop:mainpropKPZ}  and Proposition \ref{prop:VanishBracket} combined {with \eqref{eq:hepsWtT} and \eqref{eq:ItoDecoF}} imply Theorem~\ref{th:EWlimit} for $1$-dimensional in time.  Thus, Gaussian limit comes from the martingale part of $\dis \int f(x)F(\kW_s(\sqrt{ {  T_\e}}x))\dd x$.




\subsection{Proof of Proposition  \ref{prop:mainpropKPZ} and heuristics}\label{IdeaG}

In the following, we give a heuristic idea of the proof of Proposition  \ref{prop:mainpropKPZ}.
%

First, we introduce  the key theorem to prove the convergence of martingale to Gaussian process in this paper: 
\begin{theorem}\label{thm:JS}{\cite[Theorem 3.11 in Chap.\,8]{JS87}, \cite[Theorem 1.4 in Chap.\,7]{EK86}}

For each $n\geq 1$, let $\mathcal{F}^{n}=\{\mathcal{F}_t^n:t\geq 0\}$ be a filtration and let $X^{(n)}=(X_{t}^{(n,d)},\dots,X_{t}^{(n,d)})$ be an $\R^d$-valued continuous $\mathcal{F}^n$-martingale with $X_0^n=0$. Suppose that there exists a $d\times d$ positive definite matrix-valued  continuous function $c=\{c_{ij}(t)\}_{i,j=1}^d$ such that for each $t\geq 0$, $\langle X^{(n,i)},X^{(n,j)}\rangle_t\to c_{ij}(t)$ in probability. Then, $X^{(n)}\cvlaw X$, where $X=(X_t^{(1)},\cdots,X_t^{(d)})$ is an $\R^d$-valued Gaussian process with $\langle X^{(i)},X^{(j)}\rangle_t=c_{ij}(t)$. 
\end{theorem}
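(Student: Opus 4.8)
This is the continuous martingale central limit theorem, proved along the classical two‑step scheme: establish relative compactness of $\{X^{(n)}\}$, identify every subsequential limit, and conclude by uniqueness of the limiting law. First I would reduce matters to showing that $\{X^{(n)}\}$ is tight in $C([0,\infty),\R^d)$ and that any weak limit point $X$ is a continuous martingale with $\langle X^{(i)},X^{(j)}\rangle_t=c_{ij}(t)$; since a continuous martingale with a deterministic bracket matrix is uniquely distributed, this forces convergence of the whole sequence.

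\emph{Tightness.} For each coordinate $i$ the process $t\mapsto\langle X^{(n,i)},X^{(n,i)}\rangle_t$ is nondecreasing and, by hypothesis, converges in probability for each fixed $t$ to the continuous deterministic function $c_{ii}(t)$. By the classical fact that monotone functions converging pointwise to a continuous limit converge uniformly on compacts (and its stochastic counterpart), $\sup_{t\le \tau}|\langle X^{(n,i)}\rangle_t-c_{ii}(t)|\to0$ in probability for every $\tau>0$, so the bracket processes are tight in $C([0,\infty),\R)$. The Burkholder--Davis--Gundy inequality gives $\E[\sup_{s\le r\le t}|X^{(n,i)}_r-X^{(n,i)}_s|^2]\le C\,\E[\langle X^{(n,i)}\rangle_t-\langle X^{(n,i)}\rangle_s]$, whence tightness of the brackets propagates to tightness of the continuous martingales $X^{(n,i)}$ (Rebolledo's criterion); so $\{X^{(n)}\}$ is tight in $C([0,\infty),\R^d)$.

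\emph{Identification of the limit.} Fix a subsequence along which $X^{(n)}\cvlaw X$; by Skorokhod's representation realise the convergence almost surely, uniformly on compacts, on a single space, with filtration $\cF_t=\sigma(X_r:r\le t)$. Then $X$ is continuous with $X_0=0$. For $0\le s<t$ and any bounded continuous functional $\Phi$ of the path on $[0,s]$, one passes to the limit in
\[
\E\big[(X^{(n,i)}_t-X^{(n,i)}_s)\,\Phi\big]=0,\qquad
\E\big[\big(X^{(n,i)}_tX^{(n,j)}_t-\langle X^{(n,i)},X^{(n,j)}\rangle_t-X^{(n,i)}_sX^{(n,j)}_s+\langle X^{(n,i)},X^{(n,j)}\rangle_s\big)\Phi\big]=0,
\]
which hold since $X^{(n)}$ and $X^{(n,i)}X^{(n,j)}-\langle X^{(n,i)},X^{(n,j)}\rangle$ are martingales. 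The exchange of limit and expectation requires uniform integrability: this is the only delicate point, handled by first stopping at $\sigma_K=\inf\{t:\|X^{(n)}_t\|\ge K\}$ (or truncating the integrands), using the $L^2$ control provided by $\langle X^{(n,i)}\rangle_t\to c_{ii}(t)$, and then letting $K\to\infty$. One concludes that $X$ is a continuous $L^2$-martingale with $\langle X^{(i)},X^{(j)}\rangle_t=c_{ij}(t)$, a deterministic function (positive semidefiniteness of $c(t)-c(s)$ is automatic as a limit of bracket increments; positive definiteness of $c$ plays no role in the argument).

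\emph{Gaussianity and conclusion.} Applying It\^o's formula to $u\mapsto\exp\big(\mathbf i\langle\theta,X_u\rangle+\tfrac12\theta^{\top}c(u)\theta\big)$ (a process bounded on compacts, hence a true martingale) shows that for every $\theta\in\R^d$ and $s<t$,
\[
\E\big[e^{\mathbf i\langle\theta,X_t-X_s\rangle}\,\big|\,\cF_s\big]=\exp\!\big(-\tfrac12\,\theta^{\top}(c(t)-c(s))\theta\big),
\]
so $X$ is the centered Gaussian process with independent increments and $\mathrm{Cov}(X^{(i)}_s,X^{(j)}_t)=c_{ij}(s\wedge t)$, i.e.\ $\langle X^{(i)},X^{(j)}\rangle_t=c_{ij}(t)$. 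This law is uniquely determined, so every subsequential limit is the same and $X^{(n)}\cvlaw X$. The principal obstacle is the uniform‑integrability bookkeeping in the identification step, where one must localize before taking limits since the brackets converge only in probability and need not be uniformly bounded; the tightness and Gaussian‑identification steps are comparatively soft.
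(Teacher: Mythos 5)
The paper offers no proof of this statement: Theorem \ref{thm:JS} is quoted (in simplified form) from Jacod--Shiryaev and Ethier--Kurtz, so there is nothing internal to compare your argument against; what you have written is essentially the standard textbook proof that underlies those citations, and it is sound in outline. The three steps you use --- (i) upgrading pointwise convergence in probability of the monotone brackets to locally uniform convergence via a Dini-type argument and deducing tightness of $X^{(n)}$ from tightness of the brackets, (ii) identifying any subsequential limit as a continuous martingale whose bracket is the deterministic function $c$ by passing to the limit in the martingale relations after localization, and (iii) concluding Gaussianity and uniqueness from the exponential martingale $\exp(\mathbf i\langle\theta,X_u\rangle+\tfrac12\theta^{\top}c(u)\theta)$, which is a true martingale because $c$ is deterministic and continuous, hence bounded on compacts --- are exactly the route of the cited results, and your observation that positive definiteness of $c$ is not needed is correct.

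One technical point needs repair. In the tightness step you invoke Burkholder--Davis--Gundy in expectation, $\E[\sup_{s\le r\le t}|X^{(n,i)}_r-X^{(n,i)}_s|^2]\le C\,\E[\langle X^{(n,i)}\rangle_t-\langle X^{(n,i)}\rangle_s]$, but the hypothesis only gives convergence of $\langle X^{(n,i)}\rangle_t$ in probability; the brackets need not be integrable, and $X^{(n)}$ is a priori only a continuous local martingale, so this expectation bound is not available as stated. The standard fix is the same localization you already use in the identification step (stop when the bracket exceeds a level $K$, which happens with vanishing probability uniformly in $n$ because the stopped brackets converge to the deterministic continuous $c_{ii}$), or equivalently Lenglart's domination inequality, i.e.\ Rebolledo's criterion in its ``in probability'' form: $\P(\sup_{s\le r\le t}|X^{(n,i)}_r-X^{(n,i)}_s|>\varepsilon)\le \P(\langle X^{(n,i)}\rangle_t-\langle X^{(n,i)}\rangle_s>\eta)+\eta/\varepsilon^2$. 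With that adjustment your sketch is a complete and correct proof of the theorem as the paper uses it.
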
 
\begin{rem}
Theorem \ref{thm:JS} is simplified from the original one for our convenience.
\end{rem}

Thus,  we will focus our analysis on  the cross-variation of martingales.

By the local limit theorem (Theorem \ref{th:errorTermLLT}), we may expect that for large $s$
\begin{align}
  \kW_s(x)&= \DE_x\left[ \Phi_s^\b \, u_0\left(\frac{B_{t T}}{\sqrt{T}}\right)\right]\notag\\
  &=\int_{\R^2}\rho_s(z-x) \DE_{0,x}^{s,z}[\Phi^\b_s] \DE_z\left[ u_0\left(\frac{B_{t T-s}}{\sqrt{T}}\right)\right] \dd z		\label{eq:Wexpan}\\  
  &\approx \int_{\R^2}\rho_s(z-x) \sZ^\b_{s{\ell(T)}}(x) \overleftarrow{\sZ}^\b_{s,{ s\ell(T)}}(z) \DE_z\left[ u_0\left(\frac{B_{t T-s}}{\sqrt{T}}\right)\right] \dd z\notag,
  \end{align}
  { where for fixed $\delta\in (0,\frac{1}{100})$, we set
\begin{align}\label{def: ell(T)}
  \ell(T)=\exp\left(-\left(\log T\right)^{\frac{1}{2}-\delta}\right).
  \end{align}}
{   Moreover, we may expect  that the last term is approximated in some sense by }
  \begin{align}
  & \sZ^\b_{{ s\ell(T)}}(x)  \int_{\R^2}\rho_s(z-x)\IE\left[\overleftarrow{\sZ}^\b_{s,{s\ell(T)}}(z)\right] \DE_z\left[ u_0\left(\frac{B_{t T-s}}{\sqrt{T}}\right)\right] \dd z=  \sZ^\b_{{ s\ell(T)}}(x) \bar{u}(t,T^{-\frac{1}{2}}\,x)\notag
\end{align}
since Lemma \ref{lem:covpart} may imply that $(\overleftarrow{\sZ}_{s,{ s\ell(T)}}(x))_{x\in\R^2}$ are asymptotically independent and homogenization occurs.

Therefore, one may observe for $F\in \mathfrak{F}$ that  for large $s$
\begin{align*}
F'(\kW_s(x)){\dd \kW_s(x)} & = {\b}F'({\kW_s(x)})  \int_{\mathbb R^2} \xi(\dd s,\dd b) \DE_x\left[ \phi(B_s-b) \Phi_s^\b   { \, u_0\left(\frac{B_{t T}}{\sqrt{T}}\right)} \right]\\
& = {\b}F'({\kW_s(x)})  \int_{\mathbb R^d} \xi(\dd s,\dd b) \int_{\mathbb R^2} \rho_s(z-x) \phi(z-b) \DE_{0,x}^{ s,z}\left[\Phi^\b_s\right] { \, \DE_z\left[ u_0\left(\frac{B_{t T-s}}{\sqrt{T}}\right)\right]} \dd z\\
&\approx {\b}F'(\sZ^\b_{{ s\ell(T)}}(x)\bar{u}(t,T^{-\frac{1}{2}}\,x)) \sZ^\b_{{ s\ell(T)}}(x)\\
&\hspace{4em}\times  \int_{\R^2} \xi(\dd s, \dd b) \int_{\R^2}\rho_s(z-x) \phi(z-b) \overleftarrow{\sZ}^\b_{s , { s\ell(T)}}(z)
 \, 
 \DE_z\left[ u_0\left(\frac{B_{t T-s}}{\sqrt{T}}\right)\right]\dd z,
\end{align*}
where we have used the local limit theorem in the third line. 
We denote by \begin{align*}
\dis I^{(T)}_s(x)=I_{s}^{(t,T,F,\hat{\b},u_0)}(x)=F'\left(\sZ_{{s\ell(T)}}^{\b}(\sqrt{T}x)\bar{u}(t,x)\right)\sZ_{{ s\ell(T)}}^{\b}(\sqrt{T}x).
\end{align*}
Also, we have for $F_1,F_2\in \mathfrak{F}$ that  for $s=T\sigma$ and $x=\sqrt{T}x'$ and $y=\sqrt{T}y'$, \begin{align*}
&F_1'(\kW^{(\hat{\b},u_0)}_s(x))F_2'(\kW^{(\hat{\gm},v_0)}_s(y)){\dd \langle \kW^{(\hat{\b},u_0)}(x),\kW^{(\hat{\gm},v_0)}(y)\rangle}_s \\
&=\dd s\b\gm F'_1(\kW_s^\b(x))
F'_2(\kW_s^\gm(y))
\int_{(\R^2)^2}\dd z_1\dd z_2 \rho_s(x,z_1)\rho_s(y,{z_2})V(z_1-z_2)
\DE_{0,x}^{s,z_1}\left[\Phi_s^\b\right]
\DE_{0,x}^{s,z_2}\Big[\Phi_s^\gm\Big]\\
&\hspace{15em}\times
\DE_{z_1}\left[ u_0\left(\frac{B_{t T-s}}{\sqrt{T}}\right)\right]\DE_{z_2}\left[ v_0\left(\frac{\widetilde{B}_{t T-s}}{\sqrt{T}}\right)\right]\\
&\approx \dd s{\b\gm} 
F'_1\left(\sZ_{{s\ell(T)}}^\b(x)\bar{u}(t,x')\right)
F'_2\left(\sZ_{{ s\ell(T)}}^\gm(y)\bar{v}(t,y')\right)
\sZ_{{ s\ell(T)}}^\b(x)\sZ_{{s\ell(T)}}^\gm(y)\\
&\times \int_{(\R^2)^2}\dd z_1\dd z_2 \rho_s(x,z_1)\rho_s(y,{z_2})V(z_1-z_2)
\overleftarrow{\sZ}^\b_{s ,{ s\ell(T)}}(z_1)
\overleftarrow{\sZ}^\gm_{s , { s\ell(T)}}(z_2)
\DE_{z_1}\left[ u_0\left(\frac{B_{t T-s}}{\sqrt{T}}\right)\right]\DE_{z_2}\left[ v_0\left(\frac{\widetilde{B}_{t T-s}}{\sqrt{T}}\right)\right]\\
&\approx \dd \sigma{\b\gm} 
I_s^{(t,T,F_1,\hat{\b},u_0)}(x')I_s^{(t,T,F_2,\hat{\gm},v_0)}(y')\\
&\times \int_{(\R^2)^2}\dd z\dd w \rho_\sigma(x',z)\rho_\sigma(y',z-T^{-\frac{1}{2}}w)V(w)
\overleftarrow{\sZ}^\b_{T\sigma ,{ T\sigma\ell(T)}}(z)
\overleftarrow{\sZ}^\gm_{T\sigma , { T\sigma\ell(T)}}(z-T^{-\frac{1}{2}}w)\\
&\hspace{5em}\times \bar{u}(t-\sigma,z)
\bar{v}(t-\sigma,z-T^{-\frac{1}{2}}w)
\end{align*}
and thus by homogenization,  $\overleftarrow{\sZ}^\b\overleftarrow{\sZ}^\gm$ would be replaced by $\IE\left[\overleftarrow{\sZ}^\b\overleftarrow{\sZ}^\gm\right]$, and $F'(\sZ \bar{u}) \sZ$ terms would be replaceed by its expectation so that the cross variation would be approximated by 
\begin{align*}
&\dd \sigma{\b\gm} 
\IE\left[I_s^{(t,T,F_1,\hat{\b},u_0)}(x')\right]
\IE\left[I_s^{(t,T,F_2,\hat{\gm},v_0)}(y')\right]
\\
&\times \int_{\R^2}\dd z\rho_\sigma(x',z)\rho_\sigma(y',z)
\IE\left[\overleftarrow{\sZ}^\b_{T\sigma ,{ T\sigma\ell(T)}}(z)
\overleftarrow{\sZ}^\gm_{T\sigma , { T\sigma\ell(T)}}(z) \right]\bar{u}(t-\sigma,z)
\bar{v}(t-\sigma,z).
\end{align*}
Due to Theorem \ref{thm:CSZ17b} and \eqref{eq:L2bdd}, we have\begin{align}
&\int_{0}^{sT}\int_{(\R^2)^2}f(x)g(y)F_1'(\kW^\b_u(\sqrt{T}x))F_2'(\kW^\gm_u(\sqrt{T}y)){\dd \langle \kW^\b(\sqrt{T}x),\kW^\gm(\sqrt{T}y)\rangle}_{u} \notag\\
&\approx \frac{1}{1-\hat{\b}\hat{\gm}}\IE\left[F'(e^{X_{\hat{\b}}-\frac{1}{2}\sigma^2(\hat{\b})}\bar{u}(t,x))e^{X_{\hat{\b}}-\frac{1}{2}\sigma^2(\hat{\b})}\right]
\IE\left[F'(e^{X_{\hat{\gm}}-\frac{1}{2}\sigma^2(\hat{\gm})}\bar{v}(t,x))e^{X_{\hat{\gm}}-\frac{1}{2}\sigma^2(\hat{\gm})}\right]\notag\\
&\times \int_0^s\dd \sigma\int_{(\R^2)^2}\dd x \dd y f(x)g(y)\int_{\R^2}\dd z\rho_{\sigma}(x,z)\rho_{\sigma}(y,z)\bar{u}(t-\sigma,z)\bar{v}(t-\sigma,z)\label{eq:limitcov}
\end{align}
and Theorem \ref{thm:JS} implies that the limit process is the Gaussian process with covariance function \eqref{eq:limitcov}.

For simplicity of notations in the proof, we will focus on the quadratic variation of $\dis \int_{\mathbb{R}^2} f(x) G^{\left(F,\b,u_0\right)}_{{ T_\e} t} (\sqrt {T_\e} x) \dd x$. The reader can easily recover the proof for the cross-bracket from the above argument.

\vspace{1em}
To make this rough idea rigorous, we introduce a martingale increment $\dd\kM^{(t,T,F,\hat{\b},u_0)}_s(x)$ for fixed $t>0$, $x\in \R^2$, $\hat{\b}\in (0,1)$, $F\in \mathfrak{F}$, and $u_0\in \mathfrak{C}$ as 
\begin{align}
&\dd \kM_s(x)=\dd \kM_s^{(t,T,F,\beta,u_0)}\nonumber \\
&={\b}F'(\sZ^\b_{{ s\ell(T)}}(x)\bar{u}(t,T^{-\frac{1}{2}}\,x)) \sZ^\b_{{ s\ell(T)}}(x)\notag\\
&\hspace{4em}\times  \int_{\R^2} \xi(\dd s, \dd b) \int_{\R^2}\rho_s(z-x) \phi(z-b) \overleftarrow{\sZ}^\b_{s, { s\ell(T)}}(z) \, \DE_z\left[ u_0\left(\frac{B_{t T-s}}{\sqrt{T}}\right)\right]\dd z,\label{eq:defdMbartau}
\end{align}
and set     \[
\kM_s(x)=\kM_{s}^{(t,T,F,\b,u_0)}(x) := \begin{cases}
\displaystyle \int_{t {  T_\e}m( {  T_\e})}^{s} \dd \kM_u(x)		\quad &s\geq t {  T_\e}m( {  T_\e})\\
0&0\leq s\leq t {  T_\e}m( {  T_\e}),
\end{cases}
\]
where { \begin{align*}
m( {  T_\e})=\exp\left(-\left(\log  {  T_\e}\right)^{\frac{1}{2}-\delta}\right).
\end{align*}}

The following proposition computes the covariances of  $\kM_s$:

  
\begin{proposition} \label{prop:MwidetildeIsGaussianFlat}
 Suppose $u_0^{(1)},\cdots,u_0^{(n)}\in \mathfrak{C}$, $\hat{\b}^{(1)},\cdots,\hat{\b}^{(n)}\in (0,1)$ and $F_1,\cdots,F_n\in \mathfrak{F}$.

For any test function $f_1,\cdots,f_n\in C_c^\infty(\R^2)$,  as $\e\to 0$
\begin{equation} \label{eq:cvMtoU3}
 \frac{1}{\b_\e^{(i)}}\int_{\mathbb R^2} f_i(x) \, \kM^{(i)}_{ { T_\e}t}(\sqrt  {  T_\e} x) \dd x \cvlaw \mathscr U(t,f_i,F_i,\hat{\b}^{(i)},u_0^{(i)}).
\end{equation}
\end{proposition}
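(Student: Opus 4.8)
The plan is to verify the hypotheses of the martingale central limit theorem (Theorem~\ref{thm:JS}) for the $\R^{n}$-valued continuous martingale
\[
X^{(T),i}_{\sigma}:=\frac{1}{\b_\e^{(i)}}\int_{\R^{2}}f_i(x)\,\kM^{(i)}_{T\sigma}(\sqrt{T}x)\,\dd x ,\qquad \sigma\in[0,t],\ \ i=1,\dots,n,\quad \kM^{(i)}=\kM^{(t,T,F_i,\b_\e^{(i)},u_0^{(i)})},
\]
which vanishes for $\sigma\le t\,m(T)$ and hence starts at $0$; it therefore suffices to show that for each fixed $\sigma$ the cross-brackets $\langle X^{(T),i},X^{(T),j}\rangle_{\sigma}$ converge in probability, as $\e\to0$, to a continuous deterministic function $c_{ij}(\sigma)$ whose value at $\sigma=t$ is the announced covariance. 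Since $\kM^{(i)}$ is a stochastic integral against $\xi$ with an $\mathcal F$-adapted integrand, the white-noise isometry \eqref{eq:L2white} together with $\int_{\R^{2}}\phi(z_1-b)\phi(z_2-b)\,\dd b=V(z_1-z_2)$ gives, writing $I^{(i)}_{u}:=I^{(t,T,F_i,\b_\e^{(i)},u_0^{(i)})}_{u}$ and $\overleftarrow{\sZ}^{(i)}_{u}:=\overleftarrow{\sZ}^{\b_\e^{(i)}}_{u,\,u\ell(T)}$,
\begin{align*}
&\big\langle\kM^{(i)}(\sqrt{T}x),\kM^{(j)}(\sqrt{T}y)\big\rangle_{T\sigma}
=\int_{t\,m(T)}^{\sigma}T\,\dd r\;\b_\e^{(i)}\b_\e^{(j)}\,I^{(i)}_{Tr}(x)\,I^{(j)}_{Tr}(y)\\
&\qquad\times\int_{\R^{2}\times\R^{2}}\dd z_1\,\dd z_2\;\rho_{Tr}(\sqrt{T}x,z_1)\,\rho_{Tr}(\sqrt{T}y,z_2)\,V(z_1-z_2)\,\overleftarrow{\sZ}^{(i)}_{Tr}(z_1)\,\overleftarrow{\sZ}^{(j)}_{Tr}(z_2)\,\DE_{z_1}\!\Big[u_0^{(i)}\big(\tfrac{B_{(t-r)T}}{\sqrt{T}}\big)\Big]\DE_{z_2}\!\Big[u_0^{(j)}\big(\tfrac{B_{(t-r)T}}{\sqrt{T}}\big)\Big].
\end{align*}

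Next I would rescale: set $z_k=\sqrt{T}w_k$ and use the heat-kernel scaling $\rho_{Tr}(\sqrt{T}x,\sqrt{T}w)=T^{-1}\rho_r(x,w)$ and the Brownian scaling $\DE_{\sqrt{T}w}[u_0^{(i)}(B_{(t-r)T}/\sqrt{T})]=\bar u^{(i)}(t-r,w)$, then substitute $w_2=w_1-T^{-1/2}v$ so that $V(z_1-z_2)=V(v)$ with $\int V=1$, and use the continuity of $x\mapsto\rho_r(x,w)$ and of $\bar u^{(j)}$ together with the bound \eqref{eq:covpart} of Lemma~\ref{lem:covpart} to absorb the microscopic difference between $\overleftarrow{\sZ}^{(j)}_{Tr}(\sqrt{T}w_1-v)$ and $\overleftarrow{\sZ}^{(j)}_{Tr}(\sqrt{T}w_1)$. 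All Jacobians and the prefactor $\b_\e^{(i)}\b_\e^{(j)}$ then cancel against the normalisation $(\b_\e^{(i)}\b_\e^{(j)})^{-1}$, and one obtains
\begin{align*}
\langle X^{(T),i},X^{(T),j}\rangle_{\sigma}=&\int_{\R^{2}\times\R^{2}}\dd x\,\dd y\,f_i(x)f_j(y)\int_{t\,m(T)}^{\sigma}\dd r\int_{\R^{2}}\dd w\;I^{(i)}_{Tr}(x)\,I^{(j)}_{Tr}(y)\,\rho_r(x,w)\rho_r(y,w)\\
&\times\overleftarrow{\sZ}^{(i)}_{Tr}(\sqrt{T}w)\,\overleftarrow{\sZ}^{(j)}_{Tr}(\sqrt{T}w)\,\bar u^{(i)}(t-r,w)\bar u^{(j)}(t-r,w)+o_{L^{1}}(1),
\end{align*}
where this $o_{L^{1}}(1)$ error, and every exchange of limit, expectation and integral used below, is controlled by the uniform $L^{p}$-bounds of Lemma~\ref{lem:lp} and \eqref{eq:L2ppbdd}, the negative-moment bounds of Lemma~\ref{lem:negativemoment}, the growth \eqref{eq:Fass} of $F_i'$, and the spatial decay of $\rho_r$.

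The remaining and decisive step is a homogenisation statement: the displayed random quantity converges in $L^{1}$ to
\[
c_{ij}(\sigma)=\frac{1}{1-\hat{\b}^{(i)}\hat{\b}^{(j)}}\int_{\R^{2}\times\R^{2}}\dd x\,\dd y\,f_i(x)f_j(y)\int_{0}^{\sigma}\dd r\int_{\R^{2}}\dd w\;I^{(t,F_i,\hat{\b}^{(i)},u_0^{(i)})}(x)\,I^{(t,F_j,\hat{\b}^{(j)},u_0^{(j)})}(y)\,\rho_r(x,w)\rho_r(y,w)\,\bar u^{(i)}(t-r,w)\bar u^{(j)}(t-r,w).
\]
The structural fact that drives this is that, for $\ell(T)<\tfrac12$, the field $\overleftarrow{\sZ}^{(i)}_{Tr}$ is measurable with respect to the noise in the window $[Tr-Tr\ell(T),Tr]$, disjoint from the window $[0,Tr\ell(T)]$ on which $I^{(i)}_{Tr}$ and $I^{(j)}_{Tr}$ depend; hence $(I^{(i)}_{Tr},I^{(j)}_{Tr})$ is independent of $\overleftarrow{\sZ}^{(i)}_{Tr}\overleftarrow{\sZ}^{(j)}_{Tr}$. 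I would argue in two steps. First, a conditioning argument based on this independence replaces $\overleftarrow{\sZ}^{(i)}_{Tr}(\sqrt{T}w)\overleftarrow{\sZ}^{(j)}_{Tr}(\sqrt{T}w)$ by $\IE[\overleftarrow{\sZ}^{(i)}_{Tr}(0)\overleftarrow{\sZ}^{(j)}_{Tr}(0)]$, which tends to $(1-\hat{\b}^{(i)}\hat{\b}^{(j)})^{-1}$ (Lemma~\ref{lem:L2bdd}, whose proof applies verbatim to any horizon $\ell'$ with $\log\ell'/\log T\to1$, here $\ell'=Tr\ell(T)$); the $L^{1}$-cost of this replacement vanishes because $w\mapsto\overleftarrow{\sZ}^{(i)}_{Tr}(\sqrt{T}w)\overleftarrow{\sZ}^{(j)}_{Tr}(\sqrt{T}w)$ decorrelates at macroscopic separations (a second-moment computation as in \eqref{eq:L2decomn}--\eqref{eq:L2bdd}), so that its integral against the diffusively spread kernel $\rho_r(x,\cdot)\rho_r(y,\cdot)$ self-averages. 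Second, in the resulting expression $(1-\hat{\b}^{(i)}\hat{\b}^{(j)})^{-1}\int f_if_j\int\dd r\int\dd w\,I^{(i)}_{Tr}(x)I^{(j)}_{Tr}(y)\rho_r(x,w)\rho_r(y,w)\bar u^{(i)}\bar u^{(j)}$ one replaces $I^{(i)}_{Tr}(x)I^{(j)}_{Tr}(y)$ by $\IE[I^{(i)}_{Tr}(x)]\,\IE[I^{(j)}_{Tr}(y)]$: for distinct spatial points $\sZ^{\b_\e^{(i)}}_{Tr\ell(T)}(\sqrt{T}x)$ and $\sZ^{\b_\e^{(j)}}_{Tr\ell(T)}(\sqrt{T}y)$ are built from Brownian motions that on the time scale $Tr\ell(T)=o(T)$ stay at distance $\gg\sqrt{Tr\ell(T)}$ and hence decorrelate, while $\IE[I^{(i)}_{Tr}(x)]=\IE[F'_i(\sZ^{\b_\e^{(i)}}_{Tr\ell(T)}(\sqrt{T}x)\bar u^{(i)}(t,x))\sZ^{\b_\e^{(i)}}_{Tr\ell(T)}(\sqrt{T}x)]\to\IE[F'_i(e^{X_{\hat{\b}^{(i)}}}\bar u^{(i)}(t,x))e^{X_{\hat{\b}^{(i)}}}]=I^{(t,F_i,\hat{\b}^{(i)},u_0^{(i)})}(x)$ by the weak convergence $\sZ^{\b_\e}_{Tr\ell(T)}(\sqrt{T}x)\Rightarrow e^{X_{\hat{\b}}}$ (Theorem~\ref{thm:CSZ17b}, valid because $\log(Tr\ell(T))/\log T\to1$) upgraded to convergence of expectations by the uniform integrability from Lemmas~\ref{lem:lp} and~\ref{lem:negativemoment} and \eqref{eq:Fass}. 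To make these decorrelation estimates quantitative without running into the higher-moment blow-up of $\sZ^{\b_\e}_{Tr\ell(T)}$ as $\hat{\b}\uparrow1$, I would first truncate the partition functions at a level $M$ (the truncation error being $o(1)$ uniformly in $T$ by the $L^{p}$-bounds, and then letting $M\to\infty$), reducing everything to bounded functionals whose variances are handled by the local limit theorem and the second-moment estimates of Section~\ref{sec2}. This homogenisation is the main obstacle and the source of the lengthy (``onerous'') calculations.

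Finally, the contribution of $r\in[0,t\,m(T)]$ and of the negligible boundary layer in $r$ where the two noise windows above are not yet disjoint is $o_{L^{1}}(1)$, since $m(T)\to0$ and the integrand is dominated; thus $c_{ij}$ is continuous in $\sigma$ and $c_{ij}(t)$ equals \eqref{eq:CovSt} with $t_i=t_j=t$, and the matrix $\big(c_{ij}(\sigma)\big)_{i,j}$ is positive semidefinite, as is transparent from the explicit Gaussian representation in the Remark following Theorem~\ref{th:EWlimit}, which is all that the martingale central limit theorem requires. Theorem~\ref{thm:JS} then yields $X^{(T),i}_{t}\cvlaw\mathscr U(t,f_i,F_i,\hat{\b}^{(i)},u_0^{(i)})$ jointly in $i=1,\dots,n$, i.e.\ \eqref{eq:cvMtoU3}.
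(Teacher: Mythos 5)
Your plan follows the same route as the paper: martingale CLT after computing the cross‑bracket via the white‑noise isometry, diffusive rescaling, a two–step homogenisation (first the backward partition functions $\overleftarrow{\sZ}$, then the $I^{(\cdot)}$ factors), and elimination of the small‑time layer $[0,tm(T)]$. This corresponds exactly to the paper's Lemma~\ref{lem:CLTmainpart} (with Lemmas~\ref{lem:approxPsi} and~\ref{lem:approxFF}) together with Lemma~\ref{lem:MtildeTau0negl}. The place where your write‑up is vaguer than the actual argument is the mechanism for the spatial decorrelation that makes the homogenisation quantitative. You invoke a ``second‑moment computation'' and a truncation at a fixed level $M\to\infty$; but after only a cap at $M$, the random fields $w\mapsto(\sZ_{T\sigma\ell(T)}(\sqrt T w)\wedge M)^2$ are still correlated at all distances (their covariance merely decays like a Gaussian tail), and summing these tails against $\rho_r\rho_r$ while controlling the $M\to\infty$ error is not obviously uniform in $\hat\beta<1$. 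The paper instead replaces $\sZ$ by its localised version $\widetilde{\sZ}_{\cdot,\ell'(\sigma,T)}$, obtained by restricting the Brownian path to a ball of radius $\ell'(\sigma,T)=\sqrt{T\sigma}\,\ell(T)^{1/4}$ (see~\eqref{eq:barZ}); this makes the field \emph{exactly} independent at distance $>2(\ell'(\sigma,T)+R_\phi)=o(\sqrt T)$, so the self‑averaging is immediate, with the truncation level $(\ell(T))^{-1/2}$ chosen $T$‑dependently and controlled via the uniform‑integrability lemma from \cite{CN19}. The same device underlies Lemma~\ref{lem:approxFF}. So your approach is the right one and your two homogenisation steps match the paper's, but you should make the path localisation explicit rather than resting the decorrelation on a moment estimate that does not by itself give finite range of dependence.
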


 The following proposition states that $\dd G_s(x)$ can be replaced by $\dd \kM_s(x)$, which  concludes the proof of Proposition \ref{prop:mainpropKPZ}: 
\begin{proposition} \label{prop:replaceByM}
 For any test function $f$ and $s>0$, 
  \al{
  \frac{1}{\b_\e}\IE\left[ \left|\int_{\R^2} f(x) \left(G_{s {  T_\e}}^{(t,T,F,\b,u_0)}(\sqrt  {  T_\e} x)-\kM^{(t,T,F,\b,u_0)}_{s {  T_\e}}(\sqrt{ {  T_\e}}x)\right)\dd x\right| \right] \to 0
  }
as $\e \to 0$.
\end{proposition}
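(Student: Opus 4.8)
The plan is to reduce the statement to a second‑moment estimate and then to a quadratic‑variation computation. Since $\IE|Y|\le(\IE[Y^2])^{1/2}$ for $Y:=\int_{\R^2}f(x)\big(G_{sT}^{(t,T,F,\b,u_0)}(\sqrt{T}x)-\kM_{sT}^{(t,T,F,\b,u_0)}(\sqrt{T}x)\big)\dd x$ (writing $T$ for $T_\e$), it suffices to prove $\b_\e^{-2}\IE[Y^2]\to0$. By \eqref{eq:ItoDecoF} and \eqref{eq:defdMbartau}, $s\mapsto\int f(x)\big(G_{sT}(\sqrt{T}x)-\kM_{sT}(\sqrt{T}x)\big)\dd x$ is a continuous $L^2$‑martingale --- the polymer‑measure representation used below reduces its bracket to moments of $\sZ$, finite by Lemma~\ref{lem:lp} --- so
\[
\IE[Y^2]=\int_{(\R^2)^2}f(x)f(y)\,\IE\!\left[\big\langle G(\sqrt{T}x)-\kM(\sqrt{T}x),\ G(\sqrt{T}y)-\kM(\sqrt{T}y)\big\rangle_{sT}\right]\dd x\,\dd y ,
\]
and I would split the time integral defining this bracket at the cutoff $tTm(T)$, below which $\kM\equiv0$; the ranges $[0,tTm(T)]$ and $[tTm(T),sT]$ are treated separately.

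For the pre‑cutoff part --- the bracket of $G(\sqrt{T}x)$ with $G(\sqrt{T}y)$ on $[0,tTm(T)]$ --- the density at time $u$ is, up to the factor $\b_\e^2$ (see \eqref{eq:QuadVarW}),
\[
F'(\kW_u(\sqrt{T}x))\,F'(\kW_u(\sqrt{T}y))\ \DE_{\sqrt{T}x}\otimes\DE_{\sqrt{T}y}\!\left[V(B_u-\widetilde B_u)\,\Phi_u^\b(B)\,\Phi_u^\b(\widetilde B)\,u_0\big(B_{tT}/\sqrt{T}\big)\,u_0\big(\widetilde B_{tT}/\sqrt{T}\big)\right].
\]
The key observation is that $V(B_u-\widetilde B_u)$ forces $|B_u-\widetilde B_u|\le R_V$; since $B_u,\widetilde B_u$ are Brownian motions at time $u\le tTm(T)=o(T)$ started at $\sqrt{T}x,\sqrt{T}y$, carrying out the $\dd x\,\dd y$ integration confines the effective pair $(x,y)$ to a set of Lebesgue measure $O(T^{-1})$, uniformly in $u$. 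A bound on the noise expectation of the bracket density uniform in $u$ then produces a total contribution of order $\b_\e^2\int_0^{tTm(T)}T^{-1}\dd u=O(\b_\e^2\,m(T))$, which, divided by $\b_\e^2$, tends to $0$ since $m(T)=\exp(-(\log T)^{1/2-\delta})\to0$ --- this is precisely why the cutoff is at $tTm(T)$ and not at $tT$. The delicate input is the uniform control of that noise expectation: one cannot apply H\"older's inequality to $\Phi_u^\b$ with an exponent $q>1$, because $\IE[\Phi_u^\b(B)^q\Phi_u^\b(\widetilde B)^q]=e^{(q^2-q)\b_\e^2V(0)u}e^{q^2\b_\e^2\int_0^uV(B_s-\widetilde B_s)\dd s}$ blows up when $\b_\e^2u\to\infty$, which happens near the cutoff. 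Instead I would pass to the polymer measures of $B$ and $\widetilde B$, absorbing the two factors $\Phi_u^\b$ into $\sZ_u(\sqrt{T}x)\sZ_u(\sqrt{T}y)$, so that $F'(\kW_u)\sZ_u$ appears --- a quantity with moments bounded by Lemma~\ref{lem:lp}, \eqref{eq:Fass} and $\kW_u\ge\|u_0^{-1}\|_\infty^{-1}\sZ_u$ --- while the remaining polymer expectation is controlled by the point‑to‑point $L^2$‑bound \eqref{eq:L2ppbdd}, the second‑moment bound \eqref{eq:L2bdd} and the negative‑moment bound of Lemma~\ref{lem:negativemoment}, every $\Phi$‑factor kept at first order.

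For the post‑cutoff part --- the bracket of $G(\sqrt{T}x)-\kM(\sqrt{T}x)$ on $[tTm(T),sT]$ --- I would show that the stochastic integrand of $\dd G_u$ is $L^2$‑close to that of $\dd\kM_u$. On this range the polymer length $\ell=u\ell(T)$ satisfies the hypotheses of Theorem~\ref{th:errorTermLLT} with $L(T)=u$, so $\DE_{0,\cdot}^{u,\cdot}[\Phi_u^\b]$ may be replaced by the factorised product $\sZ^\b_{u\ell(T)}(\cdot)\,\overleftarrow{\sZ}^\b_{u,u\ell(T)}(\cdot)$, with $L^2$‑error controlled by Theorem~\ref{th:errorTermLLT} --- this is exactly the replacement turning $\dd G_u$ into $\dd\kM_u$ at the level of the integrand. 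One must also replace $F'(\kW_u(x))$ by $F'\big(\sZ^\b_{u\ell(T)}(x)\bar{u}(t,T^{-1/2}x)\big)$: by the same local limit theorem $\kW_u(x)$ is approximated by $\sZ^\b_{u\ell(T)}(x)\int_{\R^2}\rho_u(z-x)\,\overleftarrow{\sZ}^\b_{u,u\ell(T)}(z)\,\DE_z[u_0(B_{tT-u}/\sqrt{T})]\dd z$, and the last integral is then replaced by $\bar{u}(t,T^{-1/2}x)$ by a homogenisation argument, Lemma~\ref{lem:covpart} showing that $\big(\overleftarrow{\sZ}^\b_{u,u\ell(T)}(z)\big)_{z}$ decorrelates and self‑averages against the heat kernel; the resulting mismatch in $F'$ is bounded via the local Lipschitz estimate $|F'(a)-F'(b)|\le C(a^{-2}\wedge b^{-2}+1)|a-b|$ from \eqref{eq:Fass}. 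As in the pre‑cutoff step one integrates in $x$ and $y$ before taking expectations, so that the localisation forced by $V(B_u-\widetilde B_u)$ again gains a factor $T^{-1}$ compensating the length of the time interval; since every error term carries an explicit $\b_\e^2$ factor from Theorem~\ref{th:errorTermLLT} multiplying a quantity that vanishes (like $\ell(T)$) or is itself $O(\b_\e^2(\log T)^{1/2-\delta})$, the post‑cutoff part divided by $\b_\e^2$ tends to $0$ as well.

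The main obstacle will be the simultaneous presence of the unbounded weight $F'(\kW_u)$ and of the partition‑function exponentials $\Phi_u^\b$ over time windows of length comparable to $T$, where $\b_\e^2u\to\infty$ so that no moment of $\Phi_u^\b$ above the first is available. The systematic remedy is the polymer‑measure representation --- which keeps the $\Phi$'s at first order while absorbing $F'(\kW_u)$ into the bounded‑moment quantity $F'(\kW_u)\sZ_u$, estimated via Lemma~\ref{lem:lp}, \eqref{eq:Fass} and the negative‑moment bound of Lemma~\ref{lem:negativemoment} --- together with the rule of always integrating in the spatial variables before taking expectations, so as to exploit the gain of a factor $T^{-1}$ produced by $V(B_u-\widetilde B_u)$ in the quadratic variation.
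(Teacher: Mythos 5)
Your overall skeleton coincides with the paper's: split at the cutoff $tT_\e m(T_\e)$, show the pre-cutoff contribution of $G$ is negligible, and on $[tT_\e m(T_\e),tT_\e]$ replace the integrand of $\dd G$ by that of $\dd \kM$ via the local limit theorem and then replace $F'(\kW_u)$ by $F'(\sZ_{u\ell(T)}\bar u)$ (this is the content of Lemmas \ref{lem:vanishingSmalltSurgery}, \ref{lem:G-M}, \ref{Diff W and L}, \ref{LAST LEMMA}). However, the execution you propose has a genuine gap, located exactly at what you call ``the delicate input''. The bracket density carries the unbounded weight $F'(\kW_u(\sqrt T x))F'(\kW_u(\sqrt T y))$, built from the same noise as the $\Phi$-factors, and the two mechanisms you invoke to control its expectation are mutually exclusive. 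If you keep the kernel structure (writing $x_T=\sqrt T x$)
\begin{align*}
\int \dd z_1\dd z_2\,\rho_u(z_1-x_T)\rho_u(z_2-y_T)V(z_1-z_2)\,\DE_{0,x_T}^{u,z_1}\!\left[\Phi_u^\b\right]\DE_{0,y_T}^{u,z_2}\!\left[\Phi_u^\b\right]\cdots
\end{align*}
so as to gain the factor $O(T^{-1})$ from the $\dd y$ integration, then a uniform bound on $\IE\big[F'(\kW_u(x_T))F'(\kW_u(y_T))\,\DE_{0,x_T}^{u,z_1}[\Phi_u^\b]\,\DE_{0,y_T}^{u,z_2}[\Phi_u^\b]\big]$ requires, after Cauchy--Schwarz, fourth moments of point-to-point partition functions at scale $u\le tTm(T)$, which are not among the paper's estimates (\eqref{eq:L2ppbdd} and \eqref{eq:L2bdd} are second moments, Lemma \ref{lem:lp} is point-to-plane). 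If instead you absorb the $\Phi$'s into $\sZ_u(x_T)\sZ_u(y_T)$ through the polymer measure, so that the bounded-moment quantity $F'(\kW_u)\sZ_u$ appears, the remaining polymer expectation of $V(B_u-\widetilde B_u)$ is only bounded by $\|V\|_\infty$: the heat-kernel localization you still need --- that the polymer endpoint pair density is comparable to the free one, robustly enough to survive the expectation against the weights --- is precisely the quantitative statement that is neither asserted in the paper nor implied by \eqref{eq:L2ppbdd}, \eqref{eq:L2bdd} or Lemma \ref{lem:negativemoment}. The same decoupling problem reappears in the $(x,y)$ cross terms of your post-cutoff second-moment computation.

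The paper circumvents this with two devices that are missing from your proposal. For the pre-cutoff part it first replaces $\dd\kW$ by $\dd\widetilde{\kW}$, the ball-restricted martingale \eqref{eq:tildeWdef}, at a cost controlled by the exit estimate \eqref{eq:exit} and H\"older with \eqref{eq:collisionLp} (Lemma \ref{lem:GGtilde}); then $\widetilde{\kW}(x_T)$ and $\widetilde{\kW}(y_T)$ are exactly independent for $|x-y|\gtrsim \sqrt{n(T)}$, so integrating in $(x,y)$ gains a factor $n(T)$ with no two-point moment input, while the coupling of the weight with the bracket is handled by applying It\^o's formula to $\log\kW$ and $\kW^2$, which yields $\IE\big[\int_0^{tTm(T)}(\kW_u(x)^{-2}+1)\,\dd\langle\kW(x)\rangle_u\big]\le C$ without any H\"older on $\Phi$ (Lemma \ref{lem:tildeG}). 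For the main part the paper works in $L^1$ pointwise in $x$: Burkholder--Davis--Gundy together with Doob's inequality and Lemma \ref{lem:negativemoment} peels off $\sup_u F'(\kW_u)^2$, after which only $\IE\big[\int\dd\langle \kW(x_T)-\cL(x_T)\rangle_u\big]\to 0$ is needed, and that is exactly what Theorem \ref{th:errorTermLLT} delivers (Lemmas \ref{Diff W and L} and \ref{LAST LEMMA}); no $(x,y)$-localization gain is required there. Unless you either establish higher moments of point-to-point partition functions or supply substitutes for these devices, your second-moment argument does not close.
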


The proof of  Proposition \ref{prop:MwidetildeIsGaussianFlat}  is given in the following subsection and the proof of Proposition \ref{prop:replaceByM} is given in subsection \ref{sub:4.3}.


\subsection{Proof of Proposition \ref{prop:MwidetildeIsGaussianFlat}}\label{sub:4.2}

We will focus on only the quadratic variation of $G_{sT}(x\sqrt{T})$  to make the argument simple. Readers can easily replace the quadratic variation by the cross variation.

To prove Proposition \ref{prop:MwidetildeIsGaussianFlat}, we  will show the following two lemmas:

\begin{lemma}\label{lem:CLTmainpart}
Let $0<\tau_0\leq \tau\leq t$. Then, as $\e\to 0$,
\begin{align} \label{eq:CVbracketMt}
&\frac{1}{\b_\e^2} \int_{ { T_\e}\tau_0}^{ {  T_\e}\tau} \int_{\left(\R^2\right)^2}\dd x\dd yf(x)f(y)\dd \left\langle \kM(x_{ {  T_\e}}), \kM(y_{ { T_\e}}) \right\rangle_{s}\dd s\notag\\
& \cvLone \frac{1}{1-\hat{\b}^2}\int_{\tau_0}^\tau\dd s\int_{(\R^2)^2}\dd x\dd yf(x)f(y)
I(x)I(y)
\int_{\R^2}\dd z\rho_\sigma(x-z)\rho_\sigma(y-z)\bar{u}(t-\sigma,z)^2,
\end{align} 
where  we set $x_{{T_\e}}=x\sqrt{ { T_\e}}$ for $x\in \R^2$  and
\begin{align*}\dis I(x)=I^{(t,F,\hat{\b},u_0)}(x):=
\IE\left[
F'\left(e^{X_{\hat{\b}}}
\bar{u}(t,x)\right)
e^{X_{\hat{\b}}}\right]=
\IE\left[F'\left(e^{X_{\hat{\b}}+{\sigma^2(\hat{\b})}}
\bar{u}(t,x)\right)
\right]
\end{align*} 
 for $t>0$, $x\in\R^2$, $F\in \mathfrak{F}$, $\hat{\b}\in (0,1)$, and $u_0\in \mathfrak{C}$.
\end{lemma}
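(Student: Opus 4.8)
The plan is to make the heuristics of Section~\ref{IdeaG} rigorous for the quadratic variation: after writing the cross‑bracket of $\kM$ explicitly and rescaling space by $\sqrt{T_\e}$ and time by $T_\e$, I would prove the stated $L^1$‑convergence by showing that the first moment of the left‑hand side of \eqref{eq:CVbracketMt} converges to its right‑hand side and that the variance vanishes, both through a homogenization argument resting on a time‑window independence hidden in \eqref{eq:defdMbartau}. Using $\int_{\R^2}\phi(z-b)\phi(w-b)\,\dd b=V(z-w)$, the density of $\dd\langle\kM(x_{T}),\kM(y_{T})\rangle_s$ (with $x_{T}=\sqrt Tx$) is $\b^2\,I^{(T)}_s(x)\,I^{(T)}_s(y)\,\mathcal R^{(T)}_s(\sqrt Tx,\sqrt Ty)$, with $I^{(T)}_s(x)=F'(\sZ^\b_{s\ell(T)}(\sqrt Tx)\bar u(t,x))\sZ^\b_{s\ell(T)}(\sqrt Tx)$ and $\mathcal R^{(T)}_s(\sqrt Tx,\sqrt Ty)=\int_{(\R^2)^2}\rho_s(z-\sqrt Tx)\rho_s(w-\sqrt Ty)V(z-w)\overleftarrow{\sZ}^{\b}_{s,s\ell(T)}(z)\overleftarrow{\sZ}^{\b}_{s,s\ell(T)}(w)\DE_z[u_0(B_{tT-s}/\sqrt T)]\DE_w[u_0(B_{tT-s}/\sqrt T)]\,\dd z\,\dd w$. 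Substituting $s=T\sigma$ (so $\dd s=T\,\dd\sigma$), and in $\mathcal R^{(T)}$ writing $z=\sqrt Tz'$, $w=\sqrt Tz'+v$ with $v$ on the support of $V$, the scaling identity $\rho_{T\sigma}(\sqrt Ta)=T^{-1}\rho_\sigma(a)$ and the fact that under $\DE_{\sqrt Tz'}$ the process $(B_{T\cdot}/\sqrt T)$ is a Brownian motion started at $z'$ show that the left‑hand side of \eqref{eq:CVbracketMt}, call it $\Xi_T$, equals
\be{
\Xi_T=\int_{\tau_0}^{\tau}\dd\sigma\int_{(\R^2)^2}\dd x\,\dd y\,f(x)f(y)\,I^{(T)}_{T\sigma}(x)\,I^{(T)}_{T\sigma}(y)\,\widetilde{\mathcal R}^{(T)}_\sigma(x,y),
}
where $\widetilde{\mathcal R}^{(T)}_\sigma(x,y)$, up to a vanishing error produced by the $O(1)$ shift $v$ and by replacing $\DE_{\sqrt Tz'+v}[u_0(B_{T(t-\sigma)}/\sqrt T)]$ with $\bar u(t-\sigma,z')$ (uniformly, by continuity of $u_0$ and of $\bar u(t-\sigma,\cdot)$), equals $\int\rho_\sigma(z'-x)\rho_\sigma(z'-y)V(v)\,\overleftarrow{\sZ}^\b_{T\sigma,T\sigma\ell(T)}(\sqrt Tz')\overleftarrow{\sZ}^\b_{T\sigma,T\sigma\ell(T)}(\sqrt Tz'+v)\,\bar u(t-\sigma,z')^2\,\dd v\,\dd z'$. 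The decisive observation is that $I^{(T)}_{T\sigma}(\cdot)$ is measurable with respect to the noise on $[0,T\sigma\ell(T)]$ while $\widetilde{\mathcal R}^{(T)}_\sigma$ depends only on the time‑reversed noise on $[T\sigma(1-\ell(T)),T\sigma]$; since $\ell(T)\to0$, for $T$ large these windows are disjoint for every $\sigma\in[\tau_0,\tau]$, so the two families are independent.

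For the first moment this independence gives $\IE[\Xi_T]=\int_{\tau_0}^\tau\dd\sigma\int\dd x\,\dd y\,f(x)f(y)\,\IE[I^{(T)}_{T\sigma}(x)I^{(T)}_{T\sigma}(y)]\,\IE[\widetilde{\mathcal R}^{(T)}_\sigma(x,y)]$. By time reversal, $\IE[\overleftarrow{\sZ}^\b_{T\sigma,T\sigma\ell(T)}(\sqrt Tz')\overleftarrow{\sZ}^\b_{T\sigma,T\sigma\ell(T)}(\sqrt Tz'+v)]=\DE_{v/\sqrt2}[\exp(\b^2\int_0^{T\sigma\ell(T)}V(\sqrt2 B_s)\,\dd s)]$, which is uniformly bounded and converges to $(1-\hat\b^2)^{-1}$ by the computation behind \eqref{eq:L2bdd} (it applies verbatim to any horizon $N$ with $\log N/\log T\to1$, which holds here since $\log(T\sigma\ell(T))/\log T\to1$); as $\int V=1$ this yields $\IE[\widetilde{\mathcal R}^{(T)}_\sigma(x,y)]\to(1-\hat\b^2)^{-1}\int\rho_\sigma(x-z)\rho_\sigma(y-z)\bar u(t-\sigma,z)^2\,\dd z$. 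For the other factor, $|I^{(T)}_{T\sigma}(x)|\le C(1+\sZ^\b_{T\sigma\ell(T)}(\sqrt Tx))$ by \eqref{eq:Fass} and $\inf u_0>0$, so Lemma~\ref{lem:lp} provides a uniform $L^p$‑bound for some $p>2$; together with the one‑point limit $\sZ^\b_N(\cdot)\Rightarrow e^{X_{\hat\b}}$ (valid for horizons $N$ with $\log N/\log T\to1$, by Theorem~\ref{thm:CSZ17b}) and the asymptotic independence of $\sZ^\b_N(\sqrt Tx),\sZ^\b_N(\sqrt Ty)$ for $x\ne y$ (their $L^2$‑correlation vanishes — compute it as in \eqref{eq:L2decomn} with $x_0=\sqrt T(x-y)/\sqrt2$ — and each partition function can be localized to a ball around its starting point), this gives $\IE[I^{(T)}_{T\sigma}(x)I^{(T)}_{T\sigma}(y)]\to I(x)I(y)$ for a.e.\ $(x,y)$. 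Dominated convergence, with majorant the product of the uniform $L^p$‑bounds and $f(x)f(y)\rho_\sigma(x-z)\rho_\sigma(y-z)\bar u(t-\sigma,z)^2$ (integrable on $[\tau_0,\tau]\times(\R^2)^3$), then gives $\IE[\Xi_T]\to$ the right‑hand side of \eqref{eq:CVbracketMt}.

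For the variance one runs the same scheme on $\IE[\Xi_T^2]$, an integral over $(\sigma,\sigma',x,y,x',y')$ of $\IE[I^{(T)}_{T\sigma}(x)I^{(T)}_{T\sigma}(y)I^{(T)}_{T\sigma'}(x')I^{(T)}_{T\sigma'}(y')\,\widetilde{\mathcal R}^{(T)}_\sigma(x,y)\widetilde{\mathcal R}^{(T)}_{\sigma'}(x',y')]$. To cope with the possible lack of high moments — Lemma~\ref{lem:lp} only gives $L^p$ for $p<p_{\hat\b}$, and $p_{\hat\b}$ may be $<4$ — one first replaces each $I^{(T)}$ by its truncation at a level $M$; the truncation error is controlled uniformly in $T$ by the uniform $L^{1+\eta}$‑bound on the product $I^{(T)}(x)I^{(T)}(y)$, so it suffices to treat the truncated quantity $\Xi_T^{M}$ and then let $M\to\infty$. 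For $\Xi_T^M$ everything is $\le M^4$ times the densities; when $|\sigma-\sigma'|>\tau\ell(T)$ the two densities $\widetilde{\mathcal R}^{(T)}_\sigma,\widetilde{\mathcal R}^{(T)}_{\sigma'}$ are built from disjoint time windows, which are also disjoint from all the $I$‑windows, so the expectation factors; and for macroscopically distinct $x,y,x',y'$ the (truncated) partition functions are asymptotically independent, forcing convergence to the product of the corresponding limits. The set $\{|\sigma-\sigma'|\le\tau\ell(T)\}$ has measure $O(\ell(T))\to0$ and contributes negligibly, while coinciding space points form a null set. Dominated convergence then gives $\IE[(\Xi_T^M)^2]\to(\lim_T\IE[\Xi_T^M])^2$, hence $\mathrm{Var}(\Xi_T^M)\to0$ for each $M$; letting $M\to\infty$ and combining with the first‑moment convergence yields $\Xi_T\cvLone$ the right‑hand side of \eqref{eq:CVbracketMt}, which is \eqref{eq:CVbracketMt}.

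The step I expect to be the main obstacle is precisely this homogenization, i.e.\ making the dominated‑convergence arguments uniform in $T$: building locally‑uniform integrable majorants for the spatial integrals out of Lemma~\ref{lem:lp} and \eqref{eq:L2ppbdd}, handling the degeneracy of the kernels $\rho_\sigma$ as $\sigma\downarrow0$ and the loss of regularity of $\bar u(t-\sigma,\cdot)$ as $\sigma\uparrow t$, implementing the truncation to bypass the missing high moments, and — the most delicate ingredient — establishing the asymptotic independence of the partition functions $\sZ^\b_N(\sqrt Tx)$ at macroscopically separated points together with the identification of their common one‑point limit $e^{X_{\hat\b}}$ for the non‑standard horizons $N=T\sigma\ell(T)$, which rely on the $L^2$‑estimates of Section~\ref{sec2} and the chaos‑expansion analysis behind Theorem~\ref{thm:CSZ17b}. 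A further technical point is the uniform control, on the support of $V$, of the error from replacing $\DE_{\sqrt Tz+v}[u_0(B_{T(t-\sigma)}/\sqrt T)]$ with $\bar u(t-\sigma,z)$.
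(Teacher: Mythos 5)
Your reduction of the bracket, the rescaling, and the first-moment analysis follow essentially the paper's route: the same explicit density, the same crucial observation that the factors $I^{(T)}_{T\sigma}$ (noise on $[0,T\sigma\ell(T)]$) are independent of the backward partition functions inside your $\widetilde{\mathcal R}^{(T)}_\sigma$ (noise on $[T\sigma(1-\ell(T)),T\sigma]$), the same identification of $\IE[\overleftarrow{\sZ}\,\overleftarrow{\sZ}]\to (1-\hat\b^2)^{-1}$ via the computation behind \eqref{eq:L2bdd}, and the one-point limit of Theorem \ref{thm:CSZ17b} to identify $I(x)$. Where you diverge is the variance: you compute $\mathrm{Var}$ of the whole rescaled bracket, truncating only the $I$-factors and using temporal decorrelation of the $\widetilde{\mathcal R}$'s for $|\sigma-\sigma'|>\tau\ell(T)$, whereas the paper never takes a global second moment; it homogenizes in two stages (Lemmas \ref{lem:approxPsi} and \ref{lem:approxFF}), replacing every partition function by the localized version $\widetilde{\sZ}_{t,r}$ of \eqref{eq:barZ} capped at $\ell(T)^{-1/2}$, using \emph{exact} independence beyond distance $2(\ell'(\sigma,T)+R_\phi)$, and killing the cap by the uniform-integrability lemma quoted from \cite{CN19}.

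This difference is where your argument has a genuine gap. On the near-diagonal set $\{|\sigma-\sigma'|\le\tau\ell(T)\}$ you dismiss the contribution because its measure is $O(\ell(T))$, but to do so you need a bound, uniform in $T$, on $\IE\bigl[\widetilde{\mathcal R}^{(T)}_\sigma(x,y)\,\widetilde{\mathcal R}^{(T)}_{\sigma'}(x',y')\bigr]$, i.e.\ on expectations of products of \emph{four} backward partition functions with overlapping noise windows and spatial points that the $z$-integrals allow to be arbitrarily close. The only moment input available is Lemma \ref{lem:lp}, which gives $L^p$ only for $p<p_{\hat\b}$ with possibly $p_{\hat\b}<4$; and the naive decoupling of the cross-interactions by Cauchy--Schwarz doubles $\b^2$ and leaves the $L^2$ region once $\hat\b>1/\sqrt{2}$, so no easy four-point bound is at hand for the whole range $\hat\b<1$. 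Since you truncated the $I$'s but not the $\overleftarrow{\sZ}$'s inside $\widetilde{\mathcal R}$, this step does not go through as written; the same caveat applies to the asserted factorization $\IE[I^{(T)}(x)I^{(T)}(y)I^{(T)}(x')I^{(T)}(y')]\to I(x)I(y)I(x')I(y')$, which must be made exact by localization rather than ``asymptotic independence''. The fix is precisely the paper's device: localize (event $\mathtt{F}_{t,r}$) and cap each partition function before any second-moment computation, bound the replacement error by Lemma \ref{lem:lp}, the exit estimate and Lemma \ref{lem:covpart}, and let the cap level disappear through the \cite{CN19} lemma --- either grafted onto your global-variance scheme or, as in the paper, applied pointwise in $(\sigma,x,y)$ so that only second moments of capped quantities ever appear.
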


Lemma \ref{lem:CLTmainpart} with Theorem \ref{thm:JS} implies  that the centered martingale $\displaystyle \left(\int_{\R^2}\frac{1}{\b}f(x)\left(\kM_{T \tau} (x_T)- \kM_{T \tau_0} (x_T)\right)\dd x\right)_{\tau_0\leq \tau\leq t}$ converges in distribution to a Gaussian process with covariance given by the RHS of \eqref{eq:CVbracketMt}. 

\begin{lemma}\label{lem:MtildeTau0negl}
\begin{equation} \label{eq:MtildeTau0negl}
 \lim_{\tau_0\to 0}\varlimsup_{\e\to 0}\IE\left[\frac{1}{\b_\e^2}\left(\int f(x) \kM_{ {  T_\e}\tau_0}(x_{ { T_\e}})\dd x\right)^2\right]=0.
\end{equation}
\end{lemma}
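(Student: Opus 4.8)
The plan is to compute the second moment on the left-hand side directly, exploiting the fact that $\kM_{T\tau_0}(x_T)$ is an It\^o integral whose predictable bracket we can estimate exactly as in the heuristic computation preceding the lemma (and as in Lemma \ref{lem:CLTmainpart}). Writing $\kM_{T\tau_0}(x_T)=\int_{tTm(T)}^{T\tau_0}\dd\kM_s(x_T)$ and using It\^o's isometry together with Fubini, we get
\begin{align*}
\IE\left[\frac{1}{\b_\e^2}\left(\int f(x)\,\kM_{T\tau_0}(x_T)\,\dd x\right)^2\right]
=\frac{1}{\b_\e^2}\int_{(\R^2)^2}\dd x\,\dd y\, f(x)f(y)\,\IE\left[\left\langle \kM(x_T),\kM(y_T)\right\rangle_{T\tau_0}\right],
\end{align*}
so the task reduces to bounding $\b_\e^{-2}\IE[\langle\kM(x_T),\kM(y_T)\rangle_{T\tau_0}]$, integrated against $f\otimes f$, uniformly in $\e$, by a quantity that vanishes as $\tau_0\to 0$.

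Next I would expand the bracket using the explicit form \eqref{eq:defdMbartau} of $\dd\kM_s$. The cross-bracket at time $s$ is $\b_\e^2$ times a product of two ``$I$''-type prefactors, $F'(\sZ^\b_{s\ell(T)}(x_T)\bar u(t,x))\sZ^\b_{s\ell(T)}(x_T)$ and the analogous term at $y_T$, multiplied by the Gaussian-smoothed overlap
\begin{align*}
\int_{(\R^2)^2}\rho_s(z_1-x_T)\rho_s(z_2-y_T)V(z_1-z_2)\,\overleftarrow{\sZ}^\b_{s,s\ell(T)}(z_1)\overleftarrow{\sZ}^\b_{s,s\ell(T)}(z_2)\,\DE_{z_1}\!\big[u_0(\tfrac{B_{tT-s}}{\sqrt T})\big]\DE_{z_2}\!\big[u_0(\tfrac{B_{tT-s}}{\sqrt T})\big]\,\dd z_1\dd z_2.
\end{align*}
Taking expectations and using Cauchy--Schwarz in $\xi$ to decouple, together with the uniform $L^p$-bounds on $\sZ$ (Lemma \ref{lem:lp}), the bound $\|u_0^{-1}\|_\infty^{-1}\sZ\le\kW\le\|u_0\|_\infty\sZ$, and the second-moment estimate $\IE[\overleftarrow{\sZ}^\b_{s,s\ell(T)}(z_1)\overleftarrow{\sZ}^\b_{s,s\ell(T)}(z_2)]\le C\cdot\IE[\DE^{s,0}_{0,0}[\Phi^\b_s]^2]$-type control from \eqref{eq:L2bdd}--\eqref{eq:L2ppbdd}, the $s$-integrand is dominated, after the change of variables $s=T\sigma$, $x_T=\sqrt T x$, $y_T=\sqrt T y$, $z_i=\sqrt T z_i$, by $C$ times $\int_{\R^2}\rho_\sigma(x-z)\rho_\sigma(y-z)\bar u(t-\sigma,z)^2\,\dd z$ uniformly in $\e$ (this is exactly the estimate that makes the limit in Lemma \ref{lem:CLTmainpart} finite; here we only need an upper bound, not the precise constant). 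Integrating in $\sigma\in(0,\tau_0)$ and against $f(x)f(y)$, and using $\|\bar u\|_\infty\le\|u_0\|_\infty$ together with $\int_{\R^2}\rho_\sigma(x-z)\rho_\sigma(y-z)\,\dd z=\rho_{2\sigma}(x-y)$, yields a bound of the form $C\,\|f\|_\infty\|f\|_{L^1}\int_0^{\tau_0}\dd\sigma=C'\tau_0$, which tends to $0$.

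The main obstacle is the presence of the reversed partition functions $\overleftarrow{\sZ}^\b_{s,s\ell(T)}(z_i)$ and the ratio-free $\sZ^\b_{s\ell(T)}$ prefactors simultaneously inside a single expectation: one cannot simply pull out a worst-case pointwise bound on $\sZ_{s\ell(T)}$ since its argument wanders over all of $\R^2$, and the $F'$-prefactor is only controlled through the moment assumptions \eqref{eq:Fass} which involve negative powers of $\kW$. I would handle this by the same device already used elsewhere in the paper: bound $|F'(\kW)\kW|\le C(1+\kW)$ by \eqref{eq:Fass}, then use H\"older with exponents chosen below $p_{\hat\b}$ (Lemma \ref{lem:lp}) to split off the $\sZ$ moments from the overlap term, and invoke Lemma \ref{lem:negativemoment} only if a negative moment genuinely appears. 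Since we are in the regime $s\ge tTm(T)$ with $m(T)\to 0$ slowly, $s\ell(T)\to\infty$, so all the $L^2$ and $L^p$ asymptotics of Section \ref{sec2} apply. Once the uniform-in-$\e$ domination is in place, the $\tau_0\to0$ limit is immediate by dominated convergence, so no delicate cancellation is needed — the lemma is purely a ``small time interval gives small contribution'' estimate.
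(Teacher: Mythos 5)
Your overall skeleton is the paper's: reduce the second moment to $\b_\e^{-2}\IE\left[\langle \kM(x_{T_\e}),\kM(y_{T_\e})\rangle_{T_\e\tau_0}\right]$ via the bracket, expand it with \eqref{eq:defdMbartau}, rescale $s=T\sigma$, and use the heat-kernel identity $\int\rho_\sigma(w-x)\rho_\sigma(w-y)\dd w=\rho_{2\sigma}(x-y)$ together with $\|\bar u\|_\infty\le\|u_0\|_\infty$ to get a bound of order $\tau_0$ (in fact $C(\tau_0-t\,m(T))$, since the bracket only starts at $tT_\e m(T_\e)$, which you correctly record). The gap is in the step where you control the expectation of the product $I^{(T)}_s(x)\,I^{(T)}_s(y)\,\overleftarrow{\sZ}^{\b}_{s,s\ell(T)}(z_1)\,\overleftarrow{\sZ}^{\b}_{s,s\ell(T)}(z_2)$. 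You propose ``Cauchy--Schwarz in $\xi$'' and, more explicitly, ``H\"older with exponents chosen below $p_{\hat\b}$'' to split the prefactors from the overlap term. Each of the two groups is itself a product of two partition functions, so after H\"older with conjugate exponents $p',q'$ you need moments of $\sZ$ of order $2p'$ for one group and $2q'$ for the other; keeping both below $p_{\hat\b}$ forces $p_{\hat\b}>4$, whereas Lemma \ref{lem:lp} only guarantees some $p_{\hat\b}>2$. Hence, as written, your decoupling does not close on the whole range $\hat\b\in(0,1)$ that the lemma must cover.

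The paper avoids this by a structural fact your plan never invokes: $I^{(T)}_s(x)=F'\bigl(\sZ^{\b}_{s\ell(T)}(x_T)\bar u(t,x)\bigr)\sZ^{\b}_{s\ell(T)}(x_T)$ is measurable with respect to the noise on the time interval $[0,s\ell(T)]$, while $\overleftarrow{\sZ}^{\b}_{s,s\ell(T)}(z_i)$ depends only on the noise on $[s-s\ell(T),s]$; since $\ell(T)\to0$ these intervals are disjoint, so the expectation factorizes exactly as $\IE\bigl[I^{(T)}_s(x)I^{(T)}_s(y)\bigr]\cdot\IE\bigl[\overleftarrow{\sZ}^{\b}_{s,s\ell(T)}(z_1)\overleftarrow{\sZ}^{\b}_{s,s\ell(T)}(z_2)\bigr]$. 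Each factor then needs only second moments: by \eqref{eq:Fass} and $\bar u\ge \|u_0^{-1}\|_\infty^{-1}$ one has $|I^{(T)}_s|\le C(1+\sZ^{\b}_{s\ell(T)})$, and Cauchy--Schwarz plus \eqref{eq:L2bdd} (using $\overleftarrow{\sZ}\eqlaw\sZ$) gives uniform bounds, after which your heat-kernel computation yields the desired $C\tau_0$. So the plan is salvageable, but the key missing idea is this independence/factorization; the H\"older splitting you substitute for it would fail whenever $p_{\hat\b}<4$, i.e.\ precisely in part of the subcritical regime the statement is about.
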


Thus, letting $\tau_0\to 0$ and $\tau=t$,  the RHS of \eqref{eq:CVbracketMt} is exactly the covariance function of the Gaussian process { $\kU_t(f,F,\hat{\b},u_0)$.}


%
%
%
%


\subsubsection{Proof of Lemma \ref{lem:CLTmainpart} and Lemma \ref{lem:MtildeTau0negl}}

The proof of Lemma \ref{lem:CLTmainpart} is divided into several steps.

Recall that $x_T=\sqrt{T}\,x$. First of all, we can easily find by Markov property and \eqref{eq:QuadVarW} that 
   \begin{align*}
   &\frac{1}{\b^2} \int_{T\tau_0}^{T\tau} \int_{\left(\R^2\right)^2}\dd x\dd yf(x)f(y)\dd \left\langle \kM(x_T), \kM(y_T) \right\rangle_{s}\dd x\dd y\dd s\notag\\
& = \int_{T\tau_0}^{T\tau}\dd s \int_{(\R^2)^2}
f(x)f(y)
I_s^{(T)}(x)
I_s^{(T)}(y)\\
&\hspace{4em}\int_{(\R^2)^2}\dd z_1\dd z_2\rho_s(z_1-x_T)\rho_s(z_2-y_T)V(z_1-z_2)\notag\\
&{\hspace{7em} \times \overleftarrow{\sZ}^{\b}_{s,{ s\ell(T)}}(z_1)
\overleftarrow{\sZ}^{\b}_{s,{s\ell(T)}}(z_2)  
\DE_{z_1}\left[u_0\left(\frac{B_{tT-s}}{\sqrt{T}}\right)\right]
\DE_{z_2}\left[u_0\left(\frac{B_{tT-s}}{\sqrt{T}}\right)\right]}\\
&=T \int_{\tau_0}^{\tau}\dd \sigma \int_{(\R^2)^2}\dd x\dd y
f(x)f(y)
I_{T\sigma}^{(T)}(x)
I_{T\sigma }^{(T)}(y)\\
&\hspace{4em}\int_{(\R^2)^2}\dd z\dd w \rho_\sigma(z-x)\rho_\sigma(w-y)V(z_T-w_T)\notag\\
&{\hspace{7em} \times
 \overleftarrow{\sZ}^{\b}_{T\sigma,{T\sigma\ell(T)}}(z_T)
\overleftarrow{\sZ}^{\b}_{T\sigma,{T\sigma\ell(T)}}(w_T)
\bar{u}(t-\sigma,z)\bar{u}(t-\sigma,w)
}.
   \end{align*}
We define for $x,y\in \R^2$ and  $\tau_0\leq \sigma \leq \tau$\begin{align*}
\Psi_{{ \sigma}}^T(x,y)
&=T\int_{(\R^2)^2}\dd z\dd w\rho_\sigma(z-x)\rho_\sigma(w-y)V(z_T-w_T)\notag\\
&{\hspace{4em}\times 
\overleftarrow{\sZ}^{\b}_{T\sigma,{T\sigma\ell(T)}}(z_T)
\overleftarrow{\sZ}^{\b}_{T\sigma,{T\sigma\ell(T)}}(w_T) 
\bar{u}(t-\sigma,z)\bar{u}(t-\sigma,w)
}.
\end{align*}   

\begin{lemma}\label{lem:approxPsi}
  For each $x,y\in\R^2$, $\sigma \in [\tau_0,\tau]$ 
  \begin{align*}
\lim_{T\to\infty}\IE\left[\left|\Psi^T_\sigma(x,y)-\Psi_\sigma(x,y)\right|\right]=0,
\end{align*}
where \begin{align*}
\Psi_{{\sigma}}(x,y)=\frac{1}{1-\hat{\b}^2}\int_{\R^2}\dd w\rho_\sigma(w-x)\rho_\sigma(w-y)\bar{u}(t-\sigma,z)^2.
\end{align*}
\end{lemma}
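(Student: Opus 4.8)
The statement of Lemma~\ref{lem:approxPsi} asserts an $L^1$-convergence of the random field $\Psi^T_\sigma(x,y)$ to its expected (deterministic) limit $\Psi_\sigma(x,y)$, for fixed $x,y$ and fixed $\sigma\in[\tau_0,\tau]$. This is precisely a homogenization statement: the product of time-reversed partition functions $\overleftarrow{\sZ}^\b_{T\sigma,T\sigma\ell(T)}(z_T)\overleftarrow{\sZ}^\b_{T\sigma,T\sigma\ell(T)}(w_T)$ appearing inside the spatial integral should self-average against the smooth kernels $\rho_\sigma$ and $\bar u(t-\sigma,\cdot)$, and the localized scale $V(z_T-w_T)$ forces $z_T$ and $w_T$ to be within $O(1)$ of each other.

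\textbf{Plan.} The proof splits into a first-moment (mean) computation and a second-moment (variance) estimate. First I would compute $\IE[\Psi^T_\sigma(x,y)]$. By the shear invariance and translation invariance noted after Lemma~\ref{lem:p2pge}, $\IE[\overleftarrow{\sZ}^\b_{T\sigma,T\sigma\ell(T)}(z_T)\overleftarrow{\sZ}^\b_{T\sigma,T\sigma\ell(T)}(w_T)]$ depends only on $z_T-w_T$; using the chaos expansion \eqref{eq:L2decomn} together with the asymptotics \eqref{eq:L2bdd} from Lemma~\ref{lem:L2bdd} (applied at horizon $T\sigma\ell(T)$, which still has $\log(T\sigma\ell(T))/\log T\to 1$ since $\ell(T)=\exp(-(\log T)^{1/2-\delta})$), one gets that this covariance converges to $\tfrac1{1-\hat\b^2}$ uniformly after integrating the localized weight $V(z_T-w_T)$ against $dz_T\,dw_T$; the change of variables $z_T=\sqrt T z$, $w_T=\sqrt T w$ together with $\int V=1$ collapses the $w$-integral onto $w=z$, and the continuity of $\bar u(t-\sigma,\cdot)$ and of $\rho_\sigma$ gives $\IE[\Psi^T_\sigma(x,y)]\to\Psi_\sigma(x,y)$. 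Second, I would show $\mathrm{Var}(\Psi^T_\sigma(x,y))\to 0$. Expanding the square produces a four-fold spatial integral against $\IE[\overleftarrow{\sZ}\,\overleftarrow{\sZ}\,\overleftarrow{\sZ}\,\overleftarrow{\sZ}] - \IE[\overleftarrow{\sZ}\,\overleftarrow{\sZ}]\IE[\overleftarrow{\sZ}\,\overleftarrow{\sZ}]$; here the point is that two of the four reversed partition functions are anchored near $\sqrt T x$ and two near $\sqrt T y$, at macroscopic separation $\sqrt T|x-y|$, so the correlation between the $\{x\text{-pair}\}$ and the $\{y\text{-pair}\}$ decays. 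This decorrelation is quantified by Lemma~\ref{lem:covpart} (the estimate \eqref{eq:covpart} controls $\IE[(\DE_{x}[\Phi^\b_\ell]-\DE_0[\Phi^\b_\ell])^2]$) combined with the uniform $L^2$- and $L^p$-bounds of Lemma~\ref{lem:L2bdd} and Lemma~\ref{lem:lp} and the local limit theorem, Theorem~\ref{th:errorTermLLT}, which lets one replace the reversed partition functions of horizon $T\sigma\ell(T)$ by the point-to-point object and decouple the two clusters up to an $L^2$-error that vanishes as $T\to\infty$. Putting the mean convergence and the vanishing variance together gives the claimed $L^1$-convergence.

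\textbf{Main obstacle.} The delicate point is the variance bound: one must show that the four reversed partition functions split into an $x$-cluster and a $y$-cluster that become asymptotically independent, uniformly over the region where $V(z_T-w_T)\ne 0$ and $V(z'_T-w'_T)\ne 0$ (so the clusters are each of diameter $O(1)$ but the clusters are $\Theta(\sqrt T)$ apart). This requires (i) using Theorem~\ref{th:errorTermLLT} to peel off the common ``bulk'' of each reversed partition function and isolate a short-time factor of length $\sim T\sigma\ell(T)$ that still carries all the correlation, then (ii) showing that the two short-time factors, being localized around points at distance $\Theta(\sqrt T)\gg\sqrt{T\sigma\ell(T)}$, decorrelate — which follows because the chaos expansions \eqref{eq:L2decomn} of the two clusters share no space-time support (the noise increments driving the $x$-cluster and the $y$-cluster live in disjoint spatial neighbourhoods on the relevant time scale, with overwhelming probability), controlled via \eqref{eq:covpart} and the moment bounds. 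Integrability in $\sigma$ and the dominated-convergence step to upgrade pointwise-in-$(x,y,\sigma)$ convergence to the use made of it later are routine given the uniform bounds, so I would not dwell on them. A secondary technical nuisance is keeping track of the various horizons $T\sigma$, $T\sigma\ell(T)$, and $tT-s$ and checking that all the asymptotic inputs (Lemmas~\ref{lem:L2bdd}, \ref{lem:lp}, \ref{lem:p2pge}, \ref{lem:negativemoment} and Theorem~\ref{th:errorTermLLT}) apply with constants uniform in $\sigma\in[\tau_0,\tau]$; this is where the assumption $\tau_0>0$ is used.
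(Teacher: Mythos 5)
Your high-level strategy (mean converges to the deterministic limit, variance goes to zero) is the right structure, and your treatment of the mean is essentially what the paper does: shift $z=w+v/\sqrt T$, use Lemma~\ref{lem:covpart} to control the error of that shift, and invoke $\int V=1$ together with the covariance asymptotics of Lemma~\ref{lem:L2bdd} to collapse the $v$-integral and produce the factor $(1-\hat\beta^2)^{-1}$.

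Where your proposal has a genuine gap is the variance estimate, i.e. the homogenization of $\overleftarrow{\sZ}^{\b}_{T\sigma,T\sigma\ell(T)}(w_T)^2$. You propose to deduce decorrelation of the $x$-cluster and the $y$-cluster ``because the chaos expansions share no space-time support,'' invoking Lemma~\ref{lem:covpart}, $L^p$ bounds, and the local limit theorem. Two issues. First, without modification the reversed partition functions at distinct points are \emph{not} independent and their chaos expansions are supported on overlapping regions of noise (the Brownian paths can wander), so the decorrelation has to be manufactured, not observed; you do not say how. The paper's device is to replace $\overleftarrow{\sZ}^{\b}_{T\sigma,T\sigma\ell(T)}(w_T)$ by a truncated version $\widetilde{\sZ}^{\b}_{T\sigma\ell(T),\ell'(\sigma,T)}(w_T)$ in which the Brownian path is forced to stay inside the ball $B(w_T,\ell'(\sigma,T))$ with $\ell'(\sigma,T)=\sqrt{T\sigma}\,\ell(T)^{1/4}$ (see~\eqref{eq:barZ} and~\eqref{eq:approxT}). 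This truncation makes $\widetilde{\sZ}(w_T)$ and $\widetilde{\sZ}(w'_T)$ \emph{exactly} independent when $|w_T-w'_T|>2(\ell'(\sigma,T)+R_\phi)$, so the covariance kernel is supported on a macroscopic set of measure $O(\ell(T)^{1/2})$; the truncation error is controlled by H\"older's inequality, the collision $L^p$-bound~\eqref{eq:collisionLp}, and an escape-probability estimate, and the remaining variance is closed via the cap $\wedge\,\ell(T)^{-1/2}$ together with the uniform-integrability lemma quoted from~[CN19]. That ball-truncation idea is the engine of the proof and is absent from your plan. Second, the local limit theorem (Theorem~\ref{th:errorTermLLT}) plays no role here, and your step ``peel off the common bulk of each reversed partition function'' misreads the notation: $\overleftarrow{\sZ}^{\b}_{T\sigma,T\sigma\ell(T)}(\cdot)$ already has the \emph{short} horizon $T\sigma\ell(T)$ — there is no long-horizon bulk to peel. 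The LLT enters only later, in the replacement of the quadratic variation of $G$ by that of $\mathcal M$ (Lemma~\ref{Diff W and L}).
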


{ Combining this with Lemma \ref{lem:negativemoment} and \eqref{eq:Fass}, it is easy to see by the dominated convergence theorem that \begin{align*}
&\IE\left[\int_{\tau_0}^{\tau}\dd \sigma \int_{(\R^2)^2}
\left|f(x)f(y)
I_{\sigma T}^{(T)}(x)
I_{\sigma T}^{(T)}(y)\right|\left|
\Psi_\sigma^T(x,y)-\Psi_\sigma(x,y)\right|\right]\\
&=\int_{\tau_0}^{\tau}\dd \sigma \int_{(\R^2)^2}
\left|f(x)f(y)\right|
\IE\left[\left|I_{\sigma T}^{(T)}(x)
I_{\sigma T}^{(T)}(y)\right|\right]\IE\left[\left|
\Psi_\sigma^T(x,y)-\Psi_\sigma(x,y)\right|\right]\to 0.
\end{align*}
}

\begin{lemma}\label{lem:approxFF}
For any test function $f\in C_c^\infty(\R^2)$ \begin{align*}
{ \int_{\tau_0}^\tau \dd \sigma}\int_{(\R^2)^2}\dd x\dd y f(x)f(y)I_{\sigma T}^{(T)}(x)I_{\sigma T}^{(T)}(y)	\Psi_\sigma(x,y)
&\approx_{L^1} { \int_{\tau_0}^\tau\dd \sigma }\int_{(\R^2)^2}\dd x\dd y f(x)f(y)
I_{\sigma T}(x)I_{\sigma T}(y)
\Psi(x,y)
\end{align*}
as $T\to\infty$, where the $\approx_{L^1}$ sign means that the difference between the left and right sides goes to $0$ in $L^1$-sense.
\end{lemma}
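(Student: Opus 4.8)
The plan is to treat Lemma~\ref{lem:approxFF} as a homogenization (law of large numbers) statement and to prove it along the lines of \cite{CNN20}. Since $f$ and $\Psi_\sigma$ are deterministic, I would first reduce to a fixed $\sigma$: it suffices to show that for each $\sigma\in[\tau_0,\tau]$ the random variable
\[
\mathrm R_T(\sigma):=\int_{(\R^2)^2}\dd x\,\dd y\,f(x)f(y)\,I_{\sigma T}^{(T)}(x)\,I_{\sigma T}^{(T)}(y)\,\Psi_\sigma(x,y)
\]
has expectation converging to $\int_{(\R^2)^2}\dd x\,\dd y\,f(x)f(y)\,I(x)I(y)\,\Psi_\sigma(x,y)$ and variance tending to $0$, where $I(x)=\IE[F'(e^{X_{\hat\b}}\bar{u}(t,x))e^{X_{\hat\b}}]$ is the deterministic limit from Lemma~\ref{lem:CLTmainpart} (the factor written $I_{\sigma T}(x)$ in the statement). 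The $L^1$-convergence of the $\sigma$-integral then follows by dominated convergence, the $T$-uniform integrability in $\sigma$ coming from the crude bound $|I_{\sigma T}^{(T)}(x)|\le C(1+\sZ^\b_{\sigma T\ell(T)}(\sqrt Tx))$ — a consequence of $\inf_x u_0(x)>0$ and \eqref{eq:Fass} — combined with the $L^2$-bound of Lemma~\ref{lem:L2bdd}, the boundedness of $\Psi_\sigma$ on $[\tau_0,\tau]$, and the compact support of $f$.

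For the fixed-$\sigma$ estimate I would use three properties of the field $x\mapsto I_{\sigma T}^{(T)}(x)=F'(\sZ^\b_{\sigma T\ell(T)}(\sqrt Tx)\bar{u}(t,x))\,\sZ^\b_{\sigma T\ell(T)}(\sqrt Tx)$: (i) the one-point convergence in law $\sZ^\b_{\sigma T\ell(T)}(\sqrt Tx)\Rightarrow e^{X_{\hat\b}}$, which I get from Theorem~\ref{thm:CSZ17b}, the identity $u_\e\eqlaw\sZ$, and $\log(\sigma T\ell(T))/\log T\to 1$; (ii) asymptotic spatial independence — for distinct $x_1,\dots,x_k$ the variables $\sZ^\b_{\sigma T\ell(T)}(\sqrt Tx_1),\dots,\sZ^\b_{\sigma T\ell(T)}(\sqrt Tx_k)$ become independent as $T\to\infty$, because each depends, up to an $L^2$-negligible truncation error, only on the white noise in a space-time region of spatial diameter $O(\sqrt{\sigma T\ell(T)\log T})=o(\sqrt T)$ around $\sqrt Tx_i$, and these regions are eventually disjoint; (iii) uniform integrability of $\{I_{\sigma T}^{(T)}(x)\}_T$ and of products of such, from Lemma~\ref{lem:lp} and the crude bound above. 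Combining these, $\IE[I_{\sigma T}^{(T)}(x)]\to I(x)$ and, for $x\neq y$, $\IE[I_{\sigma T}^{(T)}(x)I_{\sigma T}^{(T)}(y)]\to I(x)I(y)$; since $\{x=y\}$ is Lebesgue-null and the integrand is $T$-uniformly dominated (Cauchy--Schwarz and Lemma~\ref{lem:L2bdd}), dominated convergence gives the convergence of $\IE[\mathrm R_T(\sigma)]$.

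The variance $\mathrm{Var}(\mathrm R_T(\sigma))$ is a four-fold spatial integral of $\mathrm{Cov}(I_{\sigma T}^{(T)}(x)I_{\sigma T}^{(T)}(y),I_{\sigma T}^{(T)}(x')I_{\sigma T}^{(T)}(y'))$ against $f^{\otimes4}\Psi_\sigma^{\otimes2}$; when $\{x,y\}$ and $\{x',y'\}$ are at positive distance this covariance tends to $0$ by (ii)--(iii), so the bulk contribution vanishes. I expect the main obstacle to be the $T$-uniform control near the ``collision'' set where two or more of $x,y,x',y'$ lie within the microscopic scale $\sqrt{\sigma\ell(T)}$ of one another: there the covariance is governed by third and fourth moments of a single partition function $\sZ^\b_{\sigma T\ell(T)}$, which Lemma~\ref{lem:lp} bounds only for exponents $p<p_{\hat\b}$, with $p_{\hat\b}$ possibly close to $2$. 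My way around this would be a two-scale reorganisation: first average $x\mapsto I_{\sigma T}^{(T)}(x)$ over a grid of boxes of fixed macroscopic side $\delta$, where the within-box fluctuation has variance $O(\sigma\ell(T)/\delta^2)\to 0$ — this uses only that $\mathrm{Cov}(I_{\sigma T}^{(T)}(x),I_{\sigma T}^{(T)}(y))$ is $O(1)$ on $\{|x-y|\lesssim\sqrt{\sigma\ell(T)}\}$ and $o(1)$ off it, hence only the $L^2$-bound and (ii) — and then handle the box-averaged field, whose entries in distinct boxes are asymptotically independent, by an elementary law of large numbers, sending $\delta\to 0$ at the end. In this scheme every error term involves at most two coinciding spatial points, so Lemmas~\ref{lem:L2bdd}, \ref{lem:negativemoment} and \ref{lem:covpart} suffice and no moment of the partition function beyond $L^2$ is ever needed.
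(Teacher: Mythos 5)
Your proposal is correct in outline, but at the decisive step it takes a genuinely different route from the paper. The paper proves Lemma \ref{lem:approxFF} by declaring it "the same argument as (Step 2) and (Step 3)" of Lemma \ref{lem:approxPsi}: replace $I^{(T)}_{\sigma T}(x)$ by the localized quantity built from $\widetilde{\sZ}_{T\sigma\ell(T),\ell'(\sigma,T)}(x_T)$, truncate it at level $\ell(T)^{-1/2}$, use the exact independence of the localized fields at distance $>2(\ell'(\sigma,T)+R_\phi)$ so that covariances vanish off a set of relative measure $O(\ell(T)^{1/2})$, and kill the near-diagonal contribution by the uniform-integrability lemma of \cite{CN19} (i.e.\ $\ell(T)^{1/2}\,\IE[(X\wedge\ell(T)^{-1/2})^2]\to0$), with the expectation identified via Theorem \ref{thm:CSZ17b}. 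You share the same backbone (localization to an $o(\sqrt T)$ window, one-point convergence in law, uniform integrability, dominated convergence in $\sigma$), and you correctly diagnose that a naive variance bound would demand third/fourth moments of the partition function near the collision set; but instead of the truncation-plus-\cite{CN19} device you propose a coarse-graining: average $I^{(T)}$ over macroscopic $\delta$-boxes, use that $\iint_{B\times B'}I^{(T)}(x)I^{(T)}(y)\,\dd x\dd y$ factorizes into a product of box averages whose variances are $O(r_T^2/\delta^2)$ with only pairwise second moments appearing, run an elementary LLN across boxes, and send $\delta\to0$ last (the continuity of $f$ and $\Psi_\sigma$ absorbing the box-discretization error uniformly in $T$). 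This works, and it even buys something: for this lemma you never need Lemma \ref{lem:lp} (moments of order $>2$), whereas the paper's truncation step does; the price is an extra scale $\delta$ and a Riemann-sum argument that the paper avoids by recycling machinery already set up for Lemma \ref{lem:approxPsi}. Two caveats, both of the "omitted detail" type that the paper itself glosses over: your claim that "no moment beyond $L^2$ is ever needed" is slightly overstated, since the $L^2$ control of the localization error still uses the exponential moment \eqref{eq:collisionLp} with some $p>1$ (exactly as in the paper), and replacing $F'(\sZ\bar u)\sZ$ by its localized version requires handling the $F'$-difference via the negative-moment bound of Lemma \ref{lem:negativemoment} and H\"older, a step you cite but do not carry out.
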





\begin{proof}[Proof of Lemma \ref{lem:approxPsi}]

  (Step 1) Letting $z=w+\frac{v}{\sqrt{T}}$, 
  \begin{align*}
\Psi^T_{ \sigma }(x,y)&=\int_{(\R^2)^2}\dd w\dd v\, \rho_\sigma(w-y)V(v) \overleftarrow{\sZ}^{\b}_{T\sigma,{T\sigma\ell(T)}}(w_T) \bar{u}(t-\sigma,w)  \notag\\
&{\hspace{4em}\times 
\rho_\sigma(w+\frac{v}{\sqrt{T}}-x)\overleftarrow{\sZ}^{\b}_{T\sigma,{T\sigma\ell(T)}}(w_T+v)\bar{u}\left(t-\sigma,w+\frac{v}{\sqrt{T}}\right)
}.
  \end{align*}
 { We note that
  \al{
  &\rho_\sigma\left(w+\frac{v}{\sqrt{T}}-x\right)
\overleftarrow{\sZ}^{\b}_{T\sigma,{T\sigma\ell(T)}}(w_T+v)  
\bar{u}\left(t-\sigma,w+\frac{v}{\sqrt{T}}\right)
 - \rho_\sigma(w-x)\overleftarrow{\sZ}^{\b}_{T\sigma,{ T\sigma\ell(T)}}(w_T)\bar{u}(t-\sigma,w) \\
    &\leq   \left|\rho_\sigma\left(w+\frac{v}{\sqrt{T}}-x\right)-\rho_\sigma(w-x)\right|
\overleftarrow{\sZ}^{\b}_{T\sigma,{T\sigma\ell(T)}}(w_T+v)  
\bar{u}\left(t-\sigma,w+\frac{v}{\sqrt{T}}\right)\\
&\qquad+ \rho_\sigma(w-x)\left|\overleftarrow{\sZ}^{\b}_{T\sigma,{T\sigma\ell(T)}}(w_T+v)-\overleftarrow{\sZ}^{\b}_{T\sigma,{ T\sigma\ell(T)}}(w_T)\right|\bar{u}(t-\sigma,w)\\
&\qquad\qquad+\rho_\sigma(w-x)\overleftarrow{\sZ}^{\b}_{T\sigma,{ T\sigma\ell(T)}}(w_T)\left|\bar{u}\left(t-\sigma,w+\frac{v}{\sqrt{T}}\right)-\bar{u}(t-\sigma,w)\right|,
  }
  all of which converge to $0$ as $T\to\infty$ by Lemma~\ref{lem:covpart}.  Since $\bar{u}$ and $\rho_\sigma(x)$  is bounded for $\sigma\in[\tau_0,\tau]$ and $x\in\R^2$, using \eqref{eq:L2bdd} and $\int V(v) \dd v=1$, we have by the dominated convergence theorem,

  \begin{align*}
\Psi_\sigma^T(x,y)&\approx_{L^1}  
\int_{\R^2}\dd w \dd v\,\rho_\sigma(w-x)\rho_\sigma(w-y)V(v)
\overleftarrow{\sZ}^{\b}_{T\sigma,\ell(T\sigma)}(w_T)^2
\bar{u}(t-\sigma,w)^2\\&=  
\int_{\R^2}\dd w\rho_\sigma(w-x)\rho_\sigma(w-y)
\overleftarrow{\sZ}^{\b}_{T\sigma,\ell(T\sigma)}(w_T)^2
\bar{u}(t-\sigma,w)^2.
  \end{align*}
  }
(Step 2) Since we have \begin{align*}
\overleftarrow{\sZ}^\b_{s,{ s\ell(T)}}(w)\eqlaw {\sZ}^\b_{{ s\ell(T)}}(w)
\end{align*}
for each $w\in \R^2$ and $s>0$, it is enough from \eqref{eq:L2bdd} to show that for each $\sigma\in [\tau_0,\tau]$ and $x,y\in \R^2$\begin{align}
&\int_{\R^2}\dd w\rho_\sigma(w-x)\rho_\sigma(w-y)
{\sZ}^{\b}_{{ T\sigma\ell(T)}}(w_T)^2
\bar{u}(t-\sigma,w)^2\notag\\
&\approx_{L^1} \int_{\R^2}\dd w\rho_\sigma(w-x)\rho_\sigma(w-y)
\IE\left[{\sZ}^{\b}_{{ T\sigma\ell(T)}}(w_T)^2\right]
\bar{u}(t-\sigma,w)^2.\label{eq:approxex}
\end{align}
It follows from the approximations: \begin{align}
&\int_{\R^2}\dd w\rho_\sigma(w-x)\rho_\sigma(w-y)\left({\sZ}^\b_{{ T\sigma\ell(T)}}(w_T) \right)^2\bar{u}(t-\sigma,w)^2\notag\\
&\approx_{L^1} \int_{\R^2}\dd w\rho_\sigma(w-x)\rho_\sigma(w-y)\left(\left(\widetilde{\sZ}^\b_{{ T\sigma\ell(T)},{ \ell'(\sigma,T)}}(w_T) \right)^2\wedge \left({{ \ell(T)}}\right)^{-\frac{1}{2}}\right)\bar{u}(t-\sigma,w)^2\label{eq:approxT}\\
&\approx_{L^1}\int_{\R^2}\dd w\rho_\sigma(w-x)\rho_\sigma(w-y)
\IE\left[\left(\widetilde{\sZ}^\b_{{ T\sigma\ell(T)},{ \ell'(\sigma,T)}}(w_T) \right)^2\wedge \left({{ \ell(T)}}\right)^{-\frac{1}{2}}\right]\bar{u}(t-\sigma,w)^2,\label{eq:approxTL}
\end{align}
{ where   we define for $t\geq 0$, $r>0$, $z\in \R^2$ \begin{align}
\widetilde{\sZ}_{t,r}(z)&={\widetilde{\sZ}_{t,r}^{\b}(z)}=\DE_{z}\left[\Phi_{t}^\b(B):\mathtt{F}_{t,r}(B,z) \right],\label{eq:barZ}
  \end{align}
and $\mathtt{F}_{t,r}(B,z)$ is the event that Brownian motion $B$ does not escape from the open ball $B(z,r)=\{x\in \R^2:|x-y|<r\}$  up to times $t$: \begin{align*}
\mathtt{F}_{t,r}(B,z)=\{B_s\in B(z,r)\ \text{for any }s\in [0,t]\}
\end{align*}
and  }we set { \begin{align*}
\ell'(\sigma,T)=\sqrt{T\sigma}\ell(T)^{\frac{1}{4}}
\end{align*}}
and we denote by $\mathcal{V}_{{ T,\sigma}}^\b(w)
=\left(\widetilde{\sZ}^\b_{({ T\sigma\ell(T)},{ \ell'(\sigma,T)})}(w_T) \right)^2\wedge \left({\ell(T)}\right)^{-\frac{1}{2}}$ for simplicity. 
The following argument yields \eqref{eq:approxT}: We find
\begin{align*}
&\IE\left[\left({\sZ}^\b_{T\sigma \ell(T)}(w_T) \right)^2-\left(\left(\widetilde{\sZ}^\b_{{ T\sigma\ell(T)},{ \ell'(\sigma,T)}}(w_T) \right)^2\wedge \left({{ \ell(T)}}\right)^{-\frac{1}{2}}\right)\right]\\
&\leq  \IE\left[\left({\sZ}^\b_{T\sigma\ell(T)}(w_T) \right)^2-\left(\widetilde{\sZ}^\b_{{ T\sigma\ell(T)},{ \ell'(\sigma,T)}}(w_T) \right)^2\right]\\
&+\IE\left[\left(\widetilde{\sZ}^\b_{{ T\sigma\ell(T)},{ \ell'(\sigma,T)}}(w_T) \right)^2: \left(\widetilde{\sZ}^\b_{{ T\sigma\ell(T)},{ \ell'(\sigma,T)}}(w_T) \right)^2\geq  \left({{ \ell(T)}}\right)^{-\frac{1}{2}}\right]
\end{align*}
and the last term tends to $0$ as $T\to\infty$ by Lemma \ref{lem:lp}. Furthermore,
\begin{align*}
&\IE\left[\left({\sZ}^\b_{T\sigma\ell(T)}(w_T) \right)^2-\left(\widetilde{\sZ}^\b_{{T\sigma\ell(T)},{ \ell'(\sigma,T)}}(w_T) \right)^2\right]\\
&\leq 4\IE\left[\left({\sZ}^\b_{{T\sigma\ell(T)}}(w_T) \right)^2\right]
\IE\left[\DE_{w_T}\left[\Phi_{{ T\sigma \ell(T)}}(B):\mathtt{F}_{{T\sigma\ell(T)},{ \ell'(\sigma,T)}}(B,w_T)^c\right]^2\right]\\
&\leq C\DE_{w_T}\otimes \DE_{w_T}\left[\exp\left(\b^2\int_0^{{ T\sigma \ell(T)}}V(B_s-B_s')\dd s\right): \mathtt{F}_{{ T\sigma\ell(T)},{ \ell'(\sigma,T)}}(B,w_T)^c\right]\\
&\leq C\DE_{w_T}\otimes \DE_{w_T}\left[\exp\left(\b^2p\int_0^{
{ T\sigma\ell(T)}}V(B_s-B_s')\dd s\right)\right]^{1/p}\DP_{w_T}\left(\mathtt{F}_{{T\sigma\ell(T)},{ \ell'(\sigma,T)}}(B,w_T)^c\right)^{1/q}\\
&\leq C\DP_{w_T}\left(\mathtt{F}_{{ T\sigma\ell(T)},{ \ell'(\sigma,T)}}(B,w_T)^c\right)^{1/q}\to 0
\end{align*}
where $B$ and $B'$ are independent Brownian motions starting from $w_T$ and we have used the Cauchy-Schwarz inequality in the first line and the  H\"{o}lder inequality for $p,q>1$ with $\frac{1}{p}+\frac{1}{q}=1$ and the fact that there exists a { constant} $p>1$ such that \begin{align}
\varlimsup_{T\to\infty}\DE_{x}\otimes \DE_x\left[\exp\left(\b^2p\int_0^{{ T\sigma \ell(T)}}V(B_s-B_s')\dd s\right)\right]<\infty. \label{eq:collisionLp}
\end{align}
(Step 3)
We end the proof by  showing \eqref{eq:approxTL}. First, we remark that  if  $|w_T-w_T'|>2({ \ell'(\sigma,T)}+R_\phi)$, then
\begin{align*}
\textrm{Cov}\left(\mathcal{V}_{{ T,\sigma}}^\b(w_T),\mathcal{V}_{{ T,\sigma}}^\b(w'_T)\right)=0,
\end{align*}
 where $R_\phi$ is a constant with $\textrm{supp}\phi\subset B(0,R_\phi)$.

Therefore, \begin{align*}
&\IE\left[\left(\int_{\R^2}\dd w\rho_\sigma(w-x)\rho_\sigma(w-y)\left(\mathcal{V}_{{ T,\sigma}}^\b(w_T)-\IE\left[\mathcal{V}_{{ T,\sigma}}^\b(w_T)\right]\right)\bar{u}(t-\sigma,w)^2\right)^2\right]\\
&= \int_{(\R^2)^2}\dd w\dd w'\rho_\sigma(w-x)\rho_\sigma(w-y)\rho_\sigma(w'-x)\rho_\sigma(w'-y)\\
&\hspace{3em}\times \textrm{Cov}\left(\mathcal{V}_{{ T,\sigma}}^\b(w_T),\mathcal{V}_{{ T,\sigma}}^\b(w'_T)\right)
\bar{u}(t-\sigma,w)^2\bar{u}(t-\sigma,w')^2\\
&\leq \int_{|w_T-w_T'|\leq 2(\ell'(\sigma,T)+R_V)}\dd w\dd w'\rho_\sigma(w-x)\rho_\sigma(w-y)\rho_\sigma(w'-x)\rho_\sigma(w'-y)\\
&\hspace{3em}\times \IE\left[\mathcal{V}_{{ T,\sigma}}^\b(0)^2\right]
\bar{u}(t-\sigma,w)^2\bar{u}(t-\sigma,w')^2\\
&\leq C{ {\ell(T)}}^{\frac{1}{2}} \IE\left[\mathcal{V}_{{ T,\sigma}}^\b(0)^2\right].
\end{align*}

Thus, it is enough to show that 
\begin{align*}
\lim_{T\to \infty}\ell(T)^{\frac{1}{2}}\IE\left[\mathcal{V}_{{T,\sigma}}(0)^2\right]=0,
\end{align*}
which follows from Lemma \ref{lem:lp} and the following:
\begin{lemma}{\cite[Lemma 3.3]{CN19}}
  Let $(X_k)_{k\in\N}$ be a non-negative, uniformly integrable family of random variables. Then, for any sequence $a_k\to\infty$, $a_k^{-1} \IE[(X_k\land a_k)^2]\to 0$  as $k\to\infty$.
  \end{lemma}
\end{proof}

\begin{proof}[Proof of Lemma \ref{lem:approxFF}]
The proof is essentially the same as  in Lemma \ref{lem:approxPsi}. 
Indeed, we can approximate $I_{\sigma T}(x)$ by \begin{align*}
\left(F'\left({\widetilde{\sZ}_{{ T\sigma\ell(T)},{ \ell'(\sigma,T)}}(x_T)}{\bar{u}(t,x)}\right){\widetilde{\sZ}_{{T\sigma\ell(T)},{ \ell'(\sigma,T)}}(x_T)}\right)\wedge {\ell(T)}^{-\frac{1}{2}}
\end{align*}
due to the same argument as (Step 2) and (Step 3) in the proof of Lemma \ref{lem:approxPsi}. In particular, we remark that its expectation converges to $I(x)$ due to Theorem \ref{thm:CSZ17b} and assumption of $F'$. We omit the detail.
\end{proof}

\begin{proof}[Proof of Lemma \ref{lem:MtildeTau0negl}]
 {By \eqref{eq:Fass}, for $s\leq t T_\e,$
  $$\IE \left[(I_s^{(T)}(x))^2\right] \leq C \IE\left[\left(\left|\log \sZ_{{s\ell (T)}}^{\b}(x_T)\right|+\sZ_{{s\ell(T)}}^{\b}(x_T)\right)^2\right]\leq C_t,$$
  with some constants $C_t>0$ independent of $x,s,\e$. Hence,  we have:}  
  \begin{align*}
&\IE\left[\frac{1}{\b^2}\left(\int f(x) \kM_{T\tau_0}(x_T)\dd x\right)^2\right]\\
&= \int_{t{m(T)}}^{\tau_0}\dd \sigma \int_{(\R^2)^2}\dd x\dd y
f(x)f(y)
\IE\left[I_s^{(T)}(x)I_s^{(T)}(y)
\right]
\int_{\R^2}\dd w\int _{|v|\leq R_V}\dd v \rho_\sigma\left(w+\frac{v}{\sqrt{T}}-x\right)\rho_\sigma(w-y)V(v)\notag\\
&{\hspace{6em} \times \IE\left[\overleftarrow{\sZ}^\b_{T\sigma,{ T\sigma\ell(T)}}(w_T+v)\overleftarrow{\sZ}^\b_{T\sigma,{ T\sigma\ell(T)}}(w_T) \right] 
\bar{u}\left(t-\sigma,w+\frac{v}{\sqrt{T}}\right)\bar{u}(t-\sigma,w)
   }\\
&\leq C \int_{t{m(T)}}^{\tau_0}\dd \sigma \int_{(\R^2)^2}\dd x\dd y
|f(x)f(y)|
\int_{\R^2}\dd w \int_{|v|\leq R_V}\dd v\rho_\sigma\left(w+\frac{v}{\sqrt{T}}-x\right)\rho_\sigma(w-y)\notag\\
&\leq C\left(\tau_0-t{m(T)}\right),
  \end{align*}
  where we have used  {  $\overleftarrow{\sZ}^\b_{T\sigma,{ T\sigma\ell(T)}}(x)\eqlaw {\sZ}^\b_{T\sigma,{ T\sigma\ell(T)}}(x)$ with \eqref{eq:L2bdd}
  and $\|u_0\|_\infty<\infty$ in the second line, and
    \begin{align*}
 &   \int_{(\R^2)^2}\dd x\dd y
|f(x)f(y)|
\int_{\R^2}\dd w \int_{|v|\leq R_V}\dd v\rho_\sigma\left(w+\frac{v}{\sqrt{T}}-x\right)\rho_\sigma(w-y)\\
&=\int_{(\R^2)^2}\dd x\dd y |f(x)f(y)|\int_{|v|\leq R_V}\dd v \rho_{2\sigma}\left(y-x+\frac{v}{\sqrt{T}}\right)\leq \|f\|_1^2
\end{align*}
    in the last line.
  }
\end{proof}


\subsection{Proof of Proposition \ref{prop:replaceByM}}\label{sub:4.3}


First, we will show that the fluctuation of martingale term is negligible at short time regime.
\begin{lemma} \label{lem:vanishingSmalltSurgery}
\begin{equation*}
 \lim_{\ve\to 0} \frac{1}{\b_\e^2}\IE\left[\left(\int f(x) G_{{ tT_\e m(T_\e)}}(x_{T_\e})\dd x \right)^2\right]=0.
 \end{equation*}
\end{lemma}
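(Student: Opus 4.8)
The plan is to exploit that, for fixed $t>0$,
$$N_s:=\int_{\R^2}f(x)\,G_{s}^{(t,T_\e,F,\b_\e,u_0)}(\sqrt{T_\e}x)\,\dd x$$
is a continuous $L^2(\IP)$-martingale for $(\mathcal F_s)_{0\le s\le tT_\e}$ (its bracket has finite expectation by the estimates of Section~\ref{sec2}, using $\|u_0^{\pm1}\|_\infty<\infty$ and \eqref{eq:Fass}), so that $\IE[N_{tT_\e m(T_\e)}^2]=\IE[\langle N\rangle_{tT_\e m(T_\e)}]$. Writing $x_T=\sqrt{T_\e}x$ and omitting the subscript $\e$, from $\dd\kW_s(x)={\b}\int_{\R^2}\xi(\dd s,\dd b)\,\DE_x[\phi(B_s-b)\Phi_s^{\b}u_0(\tfrac{B_{tT}}{\sqrt{T}})]$ and $\dd G_s(x)=F'(\kW_s(x))\,\dd\kW_s(x)$ one gets $\dd N_s={\b}\int_{\R^2}\xi(\dd s,\dd b)\,\Theta_s(b)$ with $\Theta_s(b)=\int f(x)F'(\kW_s(x_T))\DE_{x_T}[\phi(B_s-b)\Phi_s^{\b}u_0(\tfrac{B_{tT}}{\sqrt{T}})]\dd x$; since $\int_{\R^2}\phi(b-z)\phi(b-z')\dd b=V(z-z')$, this yields
$$\frac{1}{\b^2}\IE\!\left[N_{tTm(T)}^2\right]=\int_0^{tTm(T)}\!\!\!\dd s\int_{(\R^2)^2}\!\!\dd x\,\dd y\,f(x)f(y)\,\IE\!\left[F'(\kW_s(x_T))F'(\kW_s(y_T))\,D_s(x_T,y_T)\right],$$
where $D_s(x,y):=\DE_{x}\otimes\DE_{y}\big[V(B_s-B'_s)\Phi_s^{\b}(B)\Phi_s^{\b}(B')u_0(\tfrac{B_{tT}}{\sqrt{T}})u_0(\tfrac{B'_{tT}}{\sqrt{T}})\big]\ge0$ is the cross-bracket density \eqref{eq:QuadVarW} dressed with the $F'$-factors.

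The heart of the proof is the uniform pointwise bound: there is $C_0<\infty$, independent of $\e\le1$, $s\le tT$ and $x,y\in\R^2$, with
$$\IE\!\left[\,|F'(\kW_s(x_T))|\,|F'(\kW_s(y_T))|\,D_s(x_T,y_T)\,\right]\ \le\ C_0\,\DE_{x_T}\otimes\DE_{y_T}\!\left[V(B_s-B'_s)\right].$$
To prove it, bound $|u_0|\le\|u_0\|_\infty$ and, by \eqref{eq:Fass} together with $\|u_0^{-1}\|_\infty^{-1}\sZ_s\le\kW_s\le\|u_0\|_\infty\sZ_s$, replace $|F'(\kW_s(z))|$ by $C(\sZ_s(z)^{-1}+1)$; then take the $\DE_{x_T}\otimes\DE_{y_T}$ expectation outside (Fubini). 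The key estimate is then, for \emph{fixed} Brownian paths $B,B'$,
$$\IE\!\left[(\sZ_s(x_T)^{-1}+1)(\sZ_s(y_T)^{-1}+1)\,\Phi_s^{\b}(B)\Phi_s^{\b}(B')\right]\ \le\ C_1\,e^{\b^2\int_0^sV(B_r-B'_r)\,\dd r},$$
which decouples the noise-dependent negative moments of $\sZ_s$ from the polymer weights: by the Cameron--Martin theorem for white noise, $\Phi_s^{\b}(B)\Phi_s^{\b}(B')e^{-\b^2\int_0^sV(B_r-B'_r)\dd r}$ is the Radon--Nikodym density of the shift $\xi\mapsto\xi+\b\phi(\cdot-B_\cdot)+\b\phi(\cdot-B'_\cdot)$, under which $\sZ_s(z)=\DE_z[\Phi_s^{\b}(\widetilde B)]$ becomes $\DE_z[\Phi_s^{\b}(\widetilde B)\,e^{\b^2\int_0^s(V(\widetilde B_r-B_r)+V(\widetilde B_r-B'_r))\dd r}]\ge\sZ_s(z)$ since $V\ge0$; hence $(\sZ_s(x_T)^{-1}+1)(\sZ_s(y_T)^{-1}+1)$ can only decrease, and Cauchy--Schwarz with Lemma~\ref{lem:negativemoment} ($p=2$) and translation invariance give a finite $C_1$. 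Feeding this back, $\IE[|F'F'|\,D_s(x_T,y_T)]$ is dominated by the purely Brownian quantity $C\,C_1\,\DE_{x_T}\otimes\DE_{y_T}[V(B_s-B'_s)e^{\b^2\int_0^sV(B_r-B'_r)\dd r}]$; conditioning on $B_s-B'_s$, whose path is a $\sqrt2$-scaled Brownian bridge, and using that the collision exponential along a Brownian bridge is bounded uniformly over $s\le tT$ and $\e\le1$ (this follows from the series/$r_s$ bounds underlying Lemma~\ref{lem:L2bdd}, \eqref{eq:L2ppbdd} and Lemma~\ref{lem:p2pge}) completes the pointwise bound.

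Given the pointwise bound, the conclusion is a space-time count: for every $u>0$, using the $2$-dimensional scaling $\rho_{2u}(\sqrt{T}\,\cdot)=T^{-1}\rho_{2u/T}(\cdot)$, Fubini and $\int V=\tfrac12$,
$$\int_{(\R^2)^2}|f(x)f(y)|\,\DE_{x_T}\otimes\DE_{y_T}[V(B_u-B'_u)]\,\dd x\,\dd y=\int V(w)\!\int_{(\R^2)^2}|f(x)f(y)|\,\rho_{2u}\!\big(\sqrt{T}(x-y)-w\big)\dd x\,\dd y\,\dd w\le\tfrac12\,T^{-1}\|f\|_\infty\|f\|_1,$$
and, crucially, this bound is \emph{uniform in $u$}. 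Integrating $u$ over the short window $[0,tTm(T)]$ then gives
$$\frac{1}{\b^2}\IE\!\left[N_{tTm(T)}^2\right]\ \le\ C_0\cdot\tfrac12\,T^{-1}\|f\|_\infty\|f\|_1\cdot tTm(T)\ =\ \tfrac{C_0 t}{2}\,\|f\|_\infty\|f\|_1\,m(T)\ \longrightarrow\ 0$$
as $\e\to0$, since $m(T_\e)=\exp(-(\log T_\e)^{1/2-\delta})\to0$; this is exactly the assertion. The main obstacle is the pointwise estimate, and within it the Cameron--Martin/monotonicity step that uncouples the negative moments of $\sZ_s$ from the bracket weights while keeping the factor $V(B_s-B'_s)$ intact: cruder bounds (replacing $V$ by $\|V\|_\infty$, or splitting the two $F'$-factors by H\"older and invoking high moments of $\sZ_s$, which are not available up to $\hat\b<1$) would destroy the $T^{-1}$ gain and the estimate would fail.
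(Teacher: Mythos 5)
Your proof is correct, and it follows a genuinely different route from the paper's. The paper introduces the spatially localized martingale $\widetilde{\kW}_s(x)$ (paths confined to $B(x,\sqrt{tTn(T)})$ up to time $tTm(T)$), replaces $G$ by $\int F'(\kW)\,\dd\widetilde{\kW}$ in Lemma \ref{lem:GGtilde} using Brownian escape estimates, and then in Lemma \ref{lem:tildeG} exploits that $\widetilde{\kW}(x)$ and $\widetilde{\kW}(y)$ are \emph{independent} once $|x-y|>2(\sqrt{tTn(T)}+R_\phi)$, which after the diffusive rescaling kills a factor of $n(T)$; a separate It\^o application to $\log\kW$ and $\kW^2$ bounds the remaining bracket. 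You instead work directly with $\IE[N^2]=\IE[\langle N\rangle]$ and prove a uniform pointwise bound on the dressed bracket density by a Cameron--Martin shift $\xi\mapsto\xi+\b\phi(\cdot-B_\cdot)+\b\phi(\cdot-B'_\cdot)$ for fixed paths $B,B'$; the shift multiplies each $\sZ_s(z)$ by $e^{\b^2\int_0^s(V(\widetilde B-B)+V(\widetilde B-B'))}\ge 1$ inside the Brownian expectation, so the negative-moment factors can only decrease, and after dividing out the Girsanov density the collision weight $e^{\b^2\int V(B-B')}$ is left intact. This gives the clean domination
\[
\IE\big[|F'(\kW_s(x_T))F'(\kW_s(y_T))|\,D_s(x_T,y_T)\big]\le C_0\,\DE_{x_T}\otimes\DE_{y_T}[V(B_s-B'_s)],
\]
uniformly in $s\le tT_\e$ and $\e\le1$, and the conclusion then follows from the diffusive scaling: $\int|f(x)f(y)|\DE_{x_T}\otimes\DE_{y_T}[V(B_u-B'_u)]\,\dd x\,\dd y\le T^{-1}\|f\|_\infty\|f\|_1$ for all $u$, and integrating over $u\in[0,tT_\e m(T_\e)]$ produces $Ctm(T_\e)\to0$ (a slightly better rate than the paper's $n(T_\e)$). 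Two small points you should make explicit: first, $\int_{\R^2}V=1$ (not $\tfrac12$; the paper's $\tfrac12$ is for $\int V(\sqrt2\,y)\,\dd y$), which only affects the constant; second, the uniform bridge bound you invoke is $\sup_{\e\le1}\sup_{a,z}\sup_{s\le tT_\e}\DE_{0,a}^{s,z}\big[e^{\b_\e^2\int_0^s V(\sqrt2\hat B_r)\dd r}\big]<\infty$, with both endpoints free, which is slightly more general than Lemma \ref{lem:p2pge} as stated; it does follow from the same recursive estimate because \eqref{pp recursive inequality} generalizes, by the same completing-the-square, to $\int\rho_s(x-y)\rho_t(y-z)V(\sqrt2\,y)\,\dd y\le\tfrac{1}{4\pi}\bigl(C\wedge\tfrac{s+t}{st}\bigr)\rho_{s+t}(x-z)$ for all $x,z$, and then the chain of bounds in the proof of \eqref{eq:L2ppbdd} goes through verbatim. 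The trade-off between the two approaches: the paper's $\widetilde{\kW}$ construction is reused in the proof of Proposition \ref{prop:VanishBracket}, so introducing it once is economical; your Cameron--Martin/monotonicity decoupling is a tighter, single-lemma argument for the present statement that sidesteps the localization machinery entirely.
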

Then, we will prove that the remainder of martingale  can be comparable to $\mathcal{M}$ in the sense:
\begin{lemma}\label{lem:G-M}
\begin{align*}
\lim_{\e\to 0}\frac{1}{\b_{\e}^2}\IE\left[\left(\int f(x) \left(\left(G_{tT_\e}(x_{T_\e})-G_{{ tT_\e m(T_\e)}}(x_{T_\e})\right)-\mathcal{M}_{tT_\e}(x_{T_\e})\right)\dd x \right)^2\right]=0.
\end{align*}
\end{lemma}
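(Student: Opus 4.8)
\emph{Proof strategy.} Write $T=T_\e$, $\b=\b_\e$, $m=m(T)$ ($=\ell(T)$), and $x_T=\sqrt Tx$. For $s\ge tTm$ the process $D_s(x):=\big(G_s(x)-G_{tTm}(x)\big)-\kM_s(x)$ is a continuous $\mathcal F$-martingale with $D_{tTm}(x)=0$, so by It\^o's isometry the quantity in the lemma equals $\b^{-2}\,\IE$ of the predictable bracket at time $tT$ of $s\mapsto\int f(x)D_s(x_T)\,\dd x$. From \eqref{eq:ItoDecoF}, \eqref{eq:defdMbartau} and the Markov property in the Feynman--Kac formula, the increment $\dd G_s(x_T)-\dd\kM_s(x_T)$ is $\b\,\xi(\dd s,\dd b)$ integrated against $\int_{\R^2}\rho_s(z-x_T)\phi(z-b)\DE_z[u_0(B_{tT-s}/\sqrt T)]\,c_s(x_T,z)\,\dd z$, where
\[
c_s(x_T,z):=F'(\kW_s(x_T))\,\DE_{0,x_T}^{s,z}[\Phi_s^\b]-F'\!\big(\sZ^\b_{s\ell(T)}(x_T)\,\bar u(t,x)\big)\,\sZ^\b_{s\ell(T)}(x_T)\,\overleftarrow{\sZ}^\b_{s,s\ell(T)}(z).
\]
Squaring, integrating $b$ out via $\int\phi(z_1-b)\phi(z_2-b)\,\dd b=V(z_1-z_2)$, bounding $|V(z_1-z_2)|\le\|V\|_\infty\mathbf 1_{|z_1-z_2|\le R_V}$ and $0<\DE_z[u_0(\cdot)]\le\|u_0\|_\infty$, and applying Cauchy--Schwarz to $\IE[c_s(x_T,z_1)c_s(y_T,z_2)]$, the factor $\b^{-2}$ cancels, and it remains to prove that, as $\e\to0$,
\[
\int_{tTm}^{tT}\!\!\dd s\iint|f(x)f(y)|\,\dd x\,\dd y\iint_{|z_1-z_2|\le R_V}\!\!\rho_s(z_1-x_T)\rho_s(z_2-y_T)\,\IE[c_s(x_T,z_1)^2]^{1/2}\IE[c_s(y_T,z_2)^2]^{1/2}\,\dd z_1\,\dd z_2\longrightarrow0.
\]
Since $\iint_{|z_1-z_2|\le R_V}\rho_s(z_1-x_T)\rho_s(z_2-y_T)\,\dd z_1\,\dd z_2\le Cs^{-1}$ for $s\ge1$, while $\int_{tTm}^{tT}s^{-1}\,\dd s=\log(1/m)=(\log T)^{1/2-\delta}$, the whole problem reduces to the \emph{uniform} bound $\sup_{tTm\le s\le tT}\IE[c_s(x_T,z)^2]=o\big((\log T)^{-(1/2-\delta)}\big)$, for $x$ in the support of $f$ and $z$ weighted by $\rho_s(\,\cdot-x_T)$; the Gaussian tail $|z-x_T|\ge\sqrt{s\log s}$ contributes negligibly because there $\IE[c_s(x_T,z)^2]\le C$ by the moment estimates of Section~\ref{sec2} (Lemmas~\ref{lem:L2bdd}, \ref{lem:lp}, \ref{lem:negativemoment}) and $\rho_s$ has Gaussian decay.

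\emph{Splitting the error factor.} Write $\DE_{0,x_T}^{s,z}[\Phi_s^\b]=\sZ^\b_{s\ell(T)}(x_T)\overleftarrow{\sZ}^\b_{s,s\ell(T)}(z)+E_s(x_T,z)$ and decompose $c_s(x_T,z)=\Delta_1+\Delta_2$ with the \emph{local-limit-theorem error} $\Delta_1:=F'(\kW_s(x_T))\,E_s(x_T,z)$ and the \emph{drift-replacement error} $\Delta_2:=\big(F'(\kW_s(x_T))-F'(\sZ^\b_{s\ell(T)}(x_T)\bar u(t,x))\big)\sZ^\b_{s\ell(T)}(x_T)\overleftarrow{\sZ}^\b_{s,s\ell(T)}(z)$. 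For $\Delta_1$, H\"older's inequality with exponents $(a,r)$, $a^{-1}+r^{-1}=1$, gives $\IE[\Delta_1^2]\le\IE[F'(\kW_s(x_T))^{2a}]^{1/a}\,\IE[E_s(x_T,z)^{2r}]^{1/r}$; the first factor is bounded uniformly in $s\le tT$ and $x_T$ via $|F'(w)|\le C(w^{-1}+1)$, $\kW_s(x_T)\ge\|u_0^{-1}\|_\infty^{-1}\sZ_s(x_T)$ and Lemma~\ref{lem:negativemoment}. For the second, shear invariance of the noise identifies $E_s(x_T,z)$ in law with $\DE_{0,0}^{s,z-x_T}[\Phi_s^\b]-\sZ^\b_{s\ell(T)}(0)\overleftarrow{\sZ}^\b_{s,s\ell(T)}(z-x_T)$, so Theorem~\ref{th:errorTermLLT} applied with $L(T)=s$ and $\ell=s\ell(T)$ — admissible since on $[tTm,tT]$ one has $\log s/\log T\to1$ uniformly, $\ell/L=\ell(T)\to0$ and $s\ell(T)\ge tTm^2\to\infty$ — yields, after integrating the heat kernel in $z$, $\IE[E_s^2]\le C\ell(T)+C\b^2\log(1/\ell(T))+o(1)=O\big((\log T)^{-1/2-\delta}\big)$ (indeed $\b^2\log(1/\ell(T))=4\pi\hat\b^2(\log T)^{-1/2-\delta}$, and the $|z-x_T|$-dependent terms of the theorem decay much faster). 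Interpolating between this and the crude bound $\IE[E_s^{2b}]\le C$ for some $b>1$ (from $|E_s|\le\DE_{0,x_T}^{s,z}[\Phi_s^\b]+\sZ^\b_{s\ell(T)}(x_T)\overleftarrow{\sZ}^\b_{s,s\ell(T)}(z)$, the $L^p$-bounds of Section~\ref{sec2}, and the independence of $\sZ^\b_{s\ell(T)}(x_T)$ and $\overleftarrow{\sZ}^\b_{s,s\ell(T)}(z)$ once $\ell(T)<1/2$, their time-supports $[0,s\ell(T)]$ and $[s-s\ell(T),s]$ being then disjoint) gives $\IE[E_s^{2r}]^{1/r}\le C\,\IE[E_s^2]^{1-\theta}$ with $\theta=\theta(r)\to0$ as $r\to1$; choosing $r$ close enough to $1$ makes $\IE[\Delta_1^2]=o\big((\log T)^{-(1/2-\delta)}\big)$.

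\emph{The drift-replacement error.} Using $|F'(\alpha)-F'(\beta)|\le|\alpha-\beta|\sup_{[\alpha\wedge\beta,\alpha\vee\beta]}|F''|$, $|F''(w)|\le C(w^{-2}+1)$ and $\bar u(t,\cdot)\ge\inf u_0>0$,
\[
|\Delta_2|\le C\,\big|\kW_s(x_T)-\sZ^\b_{s\ell(T)}(x_T)\bar u(t,x)\big|\,\big(\kW_s(x_T)^{-2}+\sZ^\b_{s\ell(T)}(x_T)^{-2}+1\big)\,\sZ^\b_{s\ell(T)}(x_T)\,\overleftarrow{\sZ}^\b_{s,s\ell(T)}(z),
\]
and H\"older together with Lemma~\ref{lem:negativemoment}, the $L^p$-bounds of Section~\ref{sec2} (again via the time-disjoint independence) and one further interpolation reduce the task to showing $\IE\big[(\kW_s(x_T)-\sZ^\b_{s\ell(T)}(x_T)\bar u(t,x))^2\big]=o\big((\log T)^{-(1/2-\delta)}\big)$ uniformly on $[tTm,tT]$. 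By \eqref{eq:Wexpan} and the Markov-property identity $\int\rho_s(z'-x_T)\DE_{z'}[u_0(B_{tT-s}/\sqrt T)]\,\dd z'=\DE_{x_T}[u_0(B_{tT}/\sqrt T)]=\bar u(t,x)$, this difference equals
\[
\int\rho_s(z'-x_T)E_s(x_T,z')\,\DE_{z'}[u_0(\cdot)]\,\dd z'\ +\ \sZ^\b_{s\ell(T)}(x_T)\int\rho_s(z'-x_T)\big(\overleftarrow{\sZ}^\b_{s,s\ell(T)}(z')-1\big)\DE_{z'}[u_0(\cdot)]\,\dd z'.
\]
The first term is $O\big((\log T)^{-1/2-\delta}\big)$ in $L^2$ by the estimate on $E_s$ just obtained; the second is a homogenization term of size $O(\ell(T)^{1/2})$ in $L^2$, obtained by splitting $\overleftarrow{\sZ}^\b_{s,s\ell(T)}$ into its restriction to the event that the underlying Brownian path stays in a ball of radius $\sqrt s\,\ell(T)^{1/4}$ (which makes that field finite-range-dependent, so its contribution is controlled by a covariance computation) and a negligible escape contribution handled by a Gaussian tail estimate and the $L^p$-bounds — exactly as in Steps~2--3 of the proof of Lemma~\ref{lem:approxPsi}. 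Both are $o\big((\log T)^{-(1/2-\delta)}\big)$, which completes the proof.

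\emph{The main obstacle.} The crux is the arithmetic of powers of $\log T$: integrating the collision kernel $s^{-1}$ over the short-time window $[tTm,tT]$ produces the divergent factor $(\log T)^{1/2-\delta}$, so \emph{every} error estimate must decay like a strictly larger negative power of $\log T$. This forces one to use Theorem~\ref{th:errorTermLLT} with the genuinely $s$-dependent window $\ell=s\ell(T)$ rather than a fixed one, to track the prefactors $\b^2=O(1/\log T)$ precisely, and to run the $L^p$-interpolations with exponents tuned arbitrarily close to $1$ (possible because the $L^p$-thresholds for the partition functions exceed $2$ and Lemma~\ref{lem:negativemoment} gives negative moments of all orders); the $z$-tail truncation and the homogenization estimate for $\Delta_2$ must likewise be quantitatively strong enough to survive the division by $\b_\e^2$.
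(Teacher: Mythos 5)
Your decomposition is, in substance, the one the paper implements through Lemmas \ref{Diff W and L} and \ref{LAST LEMMA}: first replace the point-to-point kernel $\DE_{0,x_T}^{s,z}[\Phi_s^\b]$ by $\sZ^\b_{s\ell(T)}(x_T)\overleftarrow{\sZ}^\b_{s,s\ell(T)}(z)$ via Theorem \ref{th:errorTermLLT} with the window $\ell=s\ell(T)$, then replace $F'(\kW_s(x_T))$ by $F'(\sZ^\b_{s\ell(T)}(x_T)\bar u(t,x))$ using the $L^2$ closeness of $\kW_s$ and $\sZ^\b_{s\ell(T)}\bar u$ (LLT plus the homogenization of $\overleftarrow{\sZ}$ as in Steps 2--3 of Lemma \ref{lem:approxPsi}); your bookkeeping of the logarithmic powers ($\int_{tTm}^{tT}s^{-1}\dd s=(\log T)^{1/2-\delta}$ against $\b_\e^2\log(1/\ell(T))=O((\log T)^{-1/2-\delta})$) is correct. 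The genuine gap is the moment input you need for the term $\Delta_1=F'(\kW_s(x_T))E_s(x_T,z)$ and for the crude bound $\IE[c_s(x_T,z)^2]\le C$ in the tail region $|z-x_T|\ge\sqrt{s\log s}$. Because you work with the $L^2$ norm of the difference martingale via It\^o's isometry, after Cauchy--Schwarz you must control $\IE[F'(\kW_s)^2E_s^2]$ pointwise in $(s,z)$, and your H\"older/interpolation step rests on the ``crude bound'' $\IE[E_s^{2b}]\le C$ for some $b>1$, i.e.\ on uniform $L^{2b}$ bounds for the point-to-point partition function $\DE_{0,x_T}^{s,z}[\Phi_s^\b]$. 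The paper's Section \ref{sec2} only provides second moments for point-to-point quantities (\eqref{eq:L2ppbdd}, Lemma \ref{lem:p2pge}); Lemma \ref{lem:lp} is a point-to-plane statement. Uniform $L^{2+\epsilon}$ point-to-point bounds in the whole subcritical regime are plausibly true (by a hypercontractivity argument in the spirit of \cite{CSZ20}), but they are neither stated in the paper nor proved by you, so as written this step has no justification.

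This is precisely the difficulty the paper's arrangement is designed to avoid: in Lemma \ref{Diff W and L} the comparison is done in $L^1$ via the Burkholder--Davis--Gundy inequality, and Cauchy--Schwarz then separates $\sup_s F'(\kW_s)^2\le C(\sup_s\sZ_s^{-2}+1)$, handled by Doob's inequality and Lemma \ref{lem:negativemoment}, from $\IE\bigl[\int\dd\langle\kW-\mathcal{L}\rangle_s\bigr]$, so that only second moments of the local-limit-theorem error enter and Theorem \ref{th:errorTermLLT} suffices as stated. Your treatment of the second error $\Delta_2$ does not suffer from this problem (the interpolation endpoints there are point-to-plane objects covered by Lemma \ref{lem:lp} and Lemma \ref{lem:negativemoment}, and the homogenization term of size $O(\ell(T)^{1/2})$ is handled exactly as in Lemma \ref{lem:approxPsi}), and it matches the paper's Lemma \ref{LAST LEMMA}. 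To close the gap, either prove the $L^{2+\epsilon}$ bound for the point-to-point partition function, or rearrange the $\Delta_1$ estimate as the paper does, extracting the $F'(\kW)$ prefactor at the level of the time-integrated bracket (BDG plus Doob) before applying the local limit theorem in $L^2$.
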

Lemma \ref{lem:vanishingSmalltSurgery} and Lemma \ref{lem:G-M} comclude Proposition \ref{prop:replaceByM}.

\subsubsection{Proof of Lemma  \ref{lem:vanishingSmalltSurgery}}

To prove Lemma \ref{lem:vanishingSmalltSurgery}, we will  introduce a new martingale: Let \begin{align}
&n(T_\ve)=\exp\left(-(\log T_\ve)^{\frac{1}{2}-\frac{\delta}{2}}\right)\notag
\intertext{ and} 
&\widetilde{\kW}_s(x)=\DE_x\left[\Phi_s(B)u_0\left(\frac{B_{tT}}{\sqrt{T}}\right):\mathtt{F}_{{ tTm(T)},{ \sqrt{tTn(T)}}}(B,x)\right].\label{eq:tildeWdef}
\end{align}

Lemma \ref{lem:vanishingSmalltSurgery} is concluded by the following two  lemmas.
\begin{lemma}\label{lem:GGtilde}
For any $f\in C_c^\infty(\R^2)$, $t>0$, $x\in\R^2$, and $\hat{\b}\in (0,1)$, \begin{align*}
\lim_{\e\to 0}\frac{1}{\b_\e}\IE\left[\left|G_{{ tT_\e m(T_\e)}}(x)-\int_0^{{ tT_\e m(T_\e)}}F'(
{ {\kW}_u(x))}\dd \widetilde{\kW}_u(x) \right|\right]=0.
\end{align*} 
\end{lemma}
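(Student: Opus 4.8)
The plan is to write the left–hand side as a single It\^o integral, bound it in $L^{2}$, and use that the $\b_\e^{-1}$ exactly cancels the $\b_\e^{2}=\b^{2}$ appearing inside the quadratic variation. Since $u_0\!\left(\tfrac{B_{tT}}{\sqrt T}\right)$ and the indicator $\mathtt F_{tTm(T),\sqrt{tTn(T)}}(B,x)$ do not involve the white noise, both $\kW_s(x)$ and $\widetilde\kW_s(x)$ are continuous $\{\mathcal F_s\}$–martingales, and their difference is the non-negative martingale
\[
\kW^{c}_s(x):=\kW_s(x)-\widetilde\kW_s(x)=\DE_x\!\left[\Phi^{\b}_s(B)\,u_0\!\Big(\tfrac{B_{tT}}{\sqrt T}\Big):\mathtt F_{tTm(T),\sqrt{tTn(T)}}(B,x)^{c}\right],
\]
so that $G_{tTm(T)}(x)-\int_0^{tTm(T)}F'(\kW_u(x))\,\dd\widetilde\kW_u(x)=\int_0^{tTm(T)}F'(\kW_u(x))\,\dd\kW^{c}_u(x)$. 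By Jensen's inequality, It\^o's isometry, and (the analogue of) \eqref{eq:QuadVarW} for $\kW^{c}$,
\[
\frac1{\b_\e}\IE\!\left[\Big|\int_0^{tTm(T)}F'(\kW_u(x))\,\dd\kW^{c}_u(x)\Big|\right]
\le\left(\IE\!\left[\int_0^{tTm(T)}F'(\kW_u(x))^{2}\,\DE_x\!\otimes\!\DE_x\!\big[V(B_u-\widetilde B_u)\Phi^{\b}_u(B)\Phi^{\b}_u(\widetilde B)u_0u_0;\ \mathtt F^{c}(B,x)\cap\mathtt F^{c}(\widetilde B,x)\big]\dd u\right]\right)^{1/2},
\]
where $\mathtt F^c$ abbreviates $\mathtt F_{tTm(T),\sqrt{tTn(T)}}(\,\cdot\,,x)^c$. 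Thus it suffices to show the quantity under the square root tends to $0$.

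To decouple $F'$ from the time integral, use \eqref{eq:Fass} and $\|u_0^{-1}\|_\infty^{-1}\sZ_u\le\kW_u\le\|u_0\|_\infty\sZ_u$ to get $\sup_{u\le tT}F'(\kW_u(x))^{2}\le C\big(1+\sup_{u\le tT}\sZ_u(x)^{-2}\big)$; since $s\mapsto\sZ_s(x)^{-1}$ is a non-negative submartingale, Doob's maximal inequality together with Lemma~\ref{lem:negativemoment} gives $\sup_{\e\le1}\IE\big[(\sup_{u\le tT}\sZ_u(x)^{-1})^{p}\big]<\infty$ for every $p<\infty$. Pulling $\sup_u F'(\kW_u(x))^{2}$ out of the integral and applying H\"older (large exponent on the supremum, exponent $1+\eta$ with $\eta>0$ small on the remaining factor, which is $\b^{-2}\langle\kW^{c}(x)\rangle_{tTm(T)}$), the claim reduces to
\[
\frac1{\b_\e^{2}}\,\IE\!\left[\langle\kW^{c}(x)\rangle_{tTm(T)}^{\,1+\eta}\right]^{1/(1+\eta)}\xrightarrow[\e\to0]{}0 .
\]
By the Burkholder--Davis--Gundy and Doob inequalities and $\kW^{c}_s(x)\le\|u_0\|_\infty\,\DE_x[\Phi^{\b}_s(B);\mathtt F^{c}(B,x)]$, and interpolating the second moment against a slightly higher moment (controlled by $\IE[\sZ_{tTm(T)}(x)^{2(1+\eta_0)}]<\infty$ via Lemma~\ref{lem:lp}), this is bounded, up to a harmless power of $\DP_x(\mathtt F^{c}(B,x))$, by $\b_\e^{-2}\,\IE\big[\DE_x[\Phi^{\b}_{tTm(T)}(B);\mathtt F^{c}(B,x)]^{2}\big]^{\,1-\eta/\eta_0}$.

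The heart of the matter is therefore the estimate that, by \eqref{eq:L2white},
\[
\IE\big[\DE_x[\Phi^{\b}_{tTm(T)}(B);\mathtt F^{c}(B,x)]^{2}\big]
=\DE_x\!\otimes\!\DE_x\!\left[e^{\b^{2}\int_0^{tTm(T)}V(B_s-\widetilde B_s)\,\dd s};\ \mathtt F^{c}(B,x)\cap\mathtt F^{c}(\widetilde B,x)\right]
\]
vanishes super-polynomially in $1/\log T$, so as to beat the diverging factor $\b_\e^{-2}\sim c\log T$. One applies H\"older, discarding one indicator: for $r>1$ close to $1$ the factor $\DE_x\!\otimes\!\DE_x[e^{r\b^{2}\int V}]^{1/r}$ stays bounded because $\hat\b<1$ — this is precisely the $L^{2}$-expansion underlying Lemma~\ref{lem:L2bdd} applied with $r\hat\b^{2}<1$ — while $\DP_x(\mathtt F^{c}(B,x))$ is the probability that a planar Brownian motion started at $x$ leaves the ball $B(x,\sqrt{tTn(T)})$ before time $tTm(T)$; by the reflection principle this is at most $4\,\DP\!\big(|\mathcal N(0,I_2)|\ge\sqrt{tTn(T)}/\sqrt{2\,tTm(T)}\big)$, and since the surgery scales $m(T),n(T)$ are chosen so that the radius $\sqrt{tTn(T)}$ is super-diffusively large with respect to the horizon $tTm(T)$, this probability decays faster than any power of $1/\log T$. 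Combining the two factors, the displayed expectation times $\b_\e^{-2}$ is $o(1)$, and the required $L^{1}$–bound tends to $0$.

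The step I expect to be the main obstacle is exactly the last one. One cannot crudely bound $V\le\|V\|_\infty$, because the resulting integral $\int_0^{tTm(T)}\!\dd u$ is of order $tT$; the time integration must be absorbed through $\IE[\Phi^{\b}_u(B)\Phi^{\b}_u(\widetilde B)]=e^{\b^{2}\int_0^u V}$, which is incompatible with keeping the $\sZ_u^{-2}$ coming from $F'$ coupled to it — hence the detour through the supremum bound and the negative-moment estimate. The remaining subtlety is quantitative: the prefactor $\b_\e^{-2}$ and the collision local time $\int_0^{tTm(T)}\!V$ each contribute a factor of order $\log T$, and only the super-polynomial decay of the exit probability $\DP_x(\mathtt F^{c}(B,x))$, which relies on the precise separation between the scales $m(T)$ and $n(T)$, is strong enough to absorb both.
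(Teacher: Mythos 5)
Your proposal is correct and follows the same skeleton as the paper's proof: you rewrite the difference as the stochastic integral $\int_0^{tT_\e m(T_\e)}F'(\kW_u(x))\,\dd(\kW_u(x)-\widetilde{\kW}_u(x))$, pass to $L^2$ via the bracket (the analogue of \eqref{eq:QuadVarW}), control $F'(\kW_u)^2$ through \eqref{eq:Fass} together with negative moments of $\sZ$, and reduce everything to the exponential moment of the collision time (H\"older with exponent close to $1$, valid since $\hat{\b}<1$) multiplied by a power of the exit probability $\DP_x\bigl(\mathtt{F}_{tTm(T),\sqrt{tTn(T)}}(B,x)^c\bigr)$, i.e.\ \eqref{eq:exit}. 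Where you genuinely deviate is in the middle step. The paper bounds $V\leq\|V\|_\infty$ so that the bracket density is $\leq C(\kW_u-\widetilde{\kW}_u)^2$, uses the pointwise inequality $(\kW_u^{-1}+1)^2(\kW_u-\widetilde{\kW}_u)^2\leq C\bigl[\tfrac{\kW_u-\widetilde{\kW}_u}{\kW_u}+(\kW_u-\widetilde{\kW}_u)+(\kW_u-\widetilde{\kW}_u)^2\bigr]$, and estimates each expectation term by term with Cauchy--Schwarz and Lemma~\ref{lem:negativemoment}; this keeps the time integral $\int_0^{tTm(T)}\dd u$, so the factor of order $Tm(T)$ must be absorbed by the superpolynomial-in-$T$ decay asserted in \eqref{eq:exit}. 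You instead decouple by pulling out $\sup_u F'(\kW_u)^2$ (Doob plus Lemma~\ref{lem:negativemoment}) and then bound a slightly-higher moment of the total bracket via Burkholder--Davis--Gundy, Doob and interpolation against Lemma~\ref{lem:lp}, reducing to the terminal quantity $\IE\bigl[\DE_x[\Phi^{\b}_{tTm(T)};\mathtt{F}^c]^2\bigr]$; this eliminates the $Tm(T)$ factor entirely, so your route only needs the exit probability (raised to a fixed positive power) to beat the single $\log T$ coming from $\b_\e^{-2}$, a strictly weaker quantitative demand than the paper's. Both arguments ultimately rest on the same two external inputs ($L^p$/negative-moment bounds for the partition function and the exit estimate \eqref{eq:exit}), and your exponent bookkeeping in the interpolation step, while stated a little loosely, is easily made precise by choosing $\eta<\eta_0$ with $2(1+\eta_0)<p_{\hat{\b}}$.
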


\begin{lemma}\label{lem:tildeG}
For any $f\in C_c^\infty(\R^2)$, $t>0$, and $\hat{\b}\in (0,1)$, \begin{align*}
\lim_{\e\to 0}\frac{1}{\b_\e}\IE\left[\left|\int_{\R^2}\dd xf(x)\int_0^{{ tT_\e m(T_\e)}}F'(
{ {\kW}_u(x_{T_\e}))}\dd \widetilde{\kW}_u(x_{T_\e}) \right|\right]=0.
\end{align*} 
\end{lemma}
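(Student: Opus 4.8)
The plan is to control the $L^1$--norm by the $L^2$--norm via Jensen's inequality, and then to exploit two facts: the stochastic integral against $\widetilde{\kW}$ is a martingale, so its $L^2$--norm is governed by its bracket; and the confinement event in the definition \eqref{eq:tildeWdef} of $\widetilde{\kW}$, together with the short horizon $tT_\e m(T_\e)$, forces the relevant cross-brackets to be supported in a shrinking neighbourhood of the diagonal.

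First I would note that, for fixed $t>0$ and $x\in\R^2$, the process $s\mapsto\widetilde{\kW}_s(x)$ is a non-negative $\{\mathcal F_s\}$-martingale on $[0,tTm(T)]$: the confinement indicator $\mathtt F_{tTm(T),\sqrt{tTn(T)}}(B,x)$ depends only on $B$, hence factors out of conditional expectations with respect to $\mathcal F_r$. Exactly as in \eqref{eq:QuadVarW}, its cross-bracket is
\[
\dd\left\langle\widetilde{\kW}(x),\widetilde{\kW}(y)\right\rangle_s=\b^2\,\DE_x\otimes\DE_y\left[V(B_s-\widetilde B_s)\Phi^\b_s(B)\Phi^\b_s(\widetilde B)\,u_0\left(\tfrac{B_{tT}}{\sqrt T}\right)u_0\left(\tfrac{\widetilde B_{tT}}{\sqrt T}\right)\mathbf 1_{\mathtt F}(B)\mathbf 1_{\mathtt F}(\widetilde B)\right]\dd s.
\]
Since $F'(\kW_u(x))$ is $\mathcal F_u$-adapted, $s\mapsto\int_0^{s}F'(\kW_u(x))\,\dd\widetilde{\kW}_u(x)$ is a martingale, so with $S=tT_\e m(T_\e)$ and $x_T=\sqrt{T_\e}\,x$, Jensen's inequality, Fubini's theorem and the It\^o isometry yield
\[
\left(\frac1{\b_\e}\,\IE\left[\left|\int_{\R^2} f(x)\int_0^{S}F'(\kW_u(x_T))\,\dd\widetilde{\kW}_u(x_T)\,\dd x\right|\right]\right)^2\le\frac1{\b_\e^2}\int_{(\R^2)^2} f(x)f(y)\,G_\e(x,y)\,\dd x\,\dd y,
\]
where $G_\e(x,y)=\IE\left[\int_0^{S}F'(\kW_u(x_T))F'(\kW_u(y_T))\,\dd\langle\widetilde{\kW}(x_T),\widetilde{\kW}(y_T)\rangle_u\right]$.

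The crucial observation is that $G_\e(x,y)=0$ unless $|x-y|\le\e_T:=2\sqrt{t\,n(T)}+R_V/\sqrt T$, which tends to $0$ as $\e\to0$: the integrand of the bracket is non-zero only if some $B$ confined to $B(x_T,\sqrt{tTn(T)})$ and some $\widetilde B$ confined to $B(y_T,\sqrt{tTn(T)})$ satisfy $B_s-\widetilde B_s\in\operatorname{supp}V\subset B(0,R_V)$, forcing $|x_T-y_T|\le 2\sqrt{tTn(T)}+R_V$. Next I would prove the uniform bound $\sup_{x,y}G_\e(x,y)\le C$ for small $\e$. By the Kunita--Watanabe inequality and Cauchy--Schwarz this reduces to $\sup_x\IE\left[\int_0^{S}F'(\kW_u(x_T))^2\,\dd\langle\widetilde{\kW}(x_T)\rangle_u\right]\le C$. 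Dropping the confinement indicators gives $\dd\langle\widetilde{\kW}(x_T)\rangle_u\le\dd\langle\kW(x_T)\rangle_u$, so this quantity is at most $\IE\left[\sup_{u\le S}F'(\kW_u(x_T))^2\cdot\langle\kW(x_T)\rangle_S\right]$, to which I would apply H\"older with exponents $q\in(1,p_{\hat\b}/2)$ and $p=q/(q-1)$. For the first factor, \eqref{eq:Fass} gives $F'(y)^{2p}\le C(y^{-2p}+1)$, and $\kW_u(x_T)\ge\|u_0^{-1}\|_\infty^{-1}\sZ_u(x_T)$, so Doob's $L^{2p}$ maximal inequality for the non-negative submartingale $u\mapsto 1/\sZ_u(x_T)$ together with Lemma~\ref{lem:negativemoment} bound $\IE\left[\sup_{u\le S}\kW_u(x_T)^{-2p}\right]$ uniformly in $\e$ and in $x$ (uniformity in $x$ follows from the shear invariance of the environment). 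For the second factor, the Burkholder--Davis--Gundy and Doob inequalities applied to $\kW(x_T)-\bar u(t,x)$ give $\IE[\langle\kW(x_T)\rangle_S^{q}]\le C\,\IE\left[\left(\sup_{u\le S}\kW_u(x_T)+\bar u(t,x)\right)^{2q}\right]\le C'\left(\IE[\sZ_S(x_T)^{2q}]+1\right)$, and since $s\mapsto\IE[\DE_x[\Phi^\b_s]^{2q}]$ is non-decreasing and $2\le 2q<p_{\hat\b}$, Lemma~\ref{lem:lp} (and again shear invariance) bounds the right-hand side uniformly.

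Combining these, $\int_{(\R^2)^2}f(x)f(y)G_\e(x,y)\,\dd x\,\dd y\le C\|f\|_\infty^2\,|\operatorname{supp}f|\,\pi\e_T^2$, so the displayed squared quantity is at most $C\e_T^2/\b_\e^2$. Since $\b_\e^2=4\pi\hat\b^2/\log T$ and $\e_T^2\le 8t\,n(T)+2R_V^2/T$ with $n(T)=\exp(-(\log T)^{1/2-\delta/2})$, we get
\[
\frac{\e_T^2}{\b_\e^2}\le C\left(n(T)\log T+T^{-1}\log T\right)=C\exp\left(\log\log T-(\log T)^{1/2-\delta/2}\right)+C\,T^{-1}\log T\longrightarrow 0,
\]
because $\delta<1/2$ makes $(\log T)^{1/2-\delta/2}$ dominate $\log\log T$; this proves the lemma. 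The main obstacle is the uniform-in-$\e$ bound on $G_\e$: the unbounded factor $F'$ forces one to combine the negative moments of the partition function (Lemma~\ref{lem:negativemoment}) with the super-$L^2$ integrability $p_{\hat\b}>2$ of Lemma~\ref{lem:lp}, and every estimate must be kept uniform in the spatial variable, which is precisely where shear invariance enters.
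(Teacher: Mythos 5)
Your proposal is correct and follows essentially the same route as the paper: pass to the second moment, use that $\dd\langle\widetilde{\kW}(x_T),\widetilde{\kW}(y_T)\rangle$ vanishes unless $|x-y|\lesssim\sqrt{n(T_\e)}$ so the double integral picks up a factor of order $n(T_\e)$, bound the diagonal term uniformly after dropping the confinement indicators, and conclude from $n(T_\e)\log T_\e\to0$. The only deviation is in the uniform diagonal bound, where you invoke H\"older with exponents tied to $p_{\hat\b}$, Doob's maximal inequality, BDG and Lemmas \ref{lem:lp}, \ref{lem:negativemoment}, while the paper gets the same bound more cheaply by applying It\^o's formula to $\log\kW$ and $\kW^2$ to identify $\IE\bigl[\int_0^{\cdot}\kW^{-2}\,\dd\langle\kW\rangle\bigr]$ and $\IE\bigl[\int_0^{\cdot}\dd\langle\kW\rangle\bigr]$ directly; both arguments are valid.
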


\begin{proof}[Proof of Lemma \ref{lem:GGtilde}]
We have \begin{align*}
&\IE\left[\left|G_{ tTm(T)}(x)-\int_0^{{ tTm(T)}}F'({\kW}_u(x))\dd \widetilde{\kW}_u(x) \right|^2\right]\\
&=\IE\left[\left(\int_0^{tTm(T)}F'\left(\kW_u(x)\right)\dd \kW_u(x)-\int_0^{tTm(T)}F'\left(\kW_u(x)\right)\dd \widetilde{\kW}_u(x)\right)^2\right]\\
&=\IE\left[\int_0^{tTm(T)}F'\left(\kW_u(x)\right)^2\dd \left\langle \kW(x)-\widetilde{\kW}(x)\right\rangle_u\right]
\end{align*}
Then, the last expectation is written by 
\begin{align*}
&\IE\left[\int_0^{tTm(T)}\dd u F'\left(\kW_u(x)\right)^2
\DE_{x}\otimes \DE_{x}\left[V(B_u-\widetilde{B}_u)\Phi_u(B_u)\Phi_s(\widetilde{B}_u)u_0\left(\frac{B_{tT}}{\sqrt{T}}\right)u_0\left(\frac{\widetilde{B}_{tT}}{\sqrt{T}}\right):A_T(B,\widetilde{B},x)
\right]\right],
\end{align*}
where we set \begin{align*}
A_T(B,\widetilde{B},x):=\mathtt{F}_{{ tTm(T)},{ \sqrt{tTn(T)}}}(B,x)^c\cap \mathtt{F}_{{ tTm(T)},{ \sqrt{tTn(T)}}}(\widetilde{B},x)^c.
\end{align*}

{By using $|V(B_s-\widetilde{B}_s)|\leq \|V\|_\infty$ and \eqref{eq:Fass}, it is estimated  by }\begin{align}
&C\IE\left[\int_0^{tTm(T)}\dd uF'\left(\kW_u(x)\right)^2(\kW_u(x)-\widetilde{\kW}_u(x))^2\right]\notag\\
&\leq C\IE\left[\int_0^{tTm(T)}\dd u\left(\frac{1}{\kW_u(x)}+1\right)^2(\kW_u(x)-\widetilde{\kW}_u(x))^2\right]\notag\\
&\leq C\int_0^{tTm(T)}\dd u
\IE\left[\frac{\kW_u(x)-\widetilde{\kW}_u(x)}{\kW_u(x)}+\kW_u(x)-\widetilde{\kW}_u(x)+\left(\kW_u(x)-\widetilde{\kW}_u(x)\right)^2\right].\label{eq:FWtWbdd}
\end{align}
It is easy to see that \begin{align}
\IE\left[\left(\kW_u(x)-\widetilde{\kW}_u(x)\right)^2\right]&=\DE_{x}\otimes \DE_{x}\left[\exp\left(\int_0^u \b^2V(B_s-\widetilde{B}_s)\dd s\right)u_0\left(\frac{B_{tT}}{\sqrt{T}}\right)u_0\left(\frac{\widetilde{B}_{tT}}{\sqrt{T}}\right):A_T(B,\widetilde{B},x)
\right]\notag\\
&\leq C\DE_{x}\otimes \DE_{x}\left[\exp\left(\int_0^u \b^2V(B_s-\widetilde{B}_s)\dd s\right):A_T(B,\widetilde{B},x)
\right].\label{eq:FWtW}
\end{align}
Then, H\"older's inequality yields that \begin{align}
&\DE_{x}\otimes \DE_{x}\left[\exp\left(\int_0^u \b^2V(B_s-\widetilde{B}_s)\dd s\right):A_T(B,\widetilde{B},x)\right]\notag\\
&\leq \DE_{x}\otimes \DE_{x}\left[\exp\left(\int_0^u p\b^2V(B_s-\widetilde{B}_s)\dd s\right)\right]^{\frac{1}{p}}\DP_x\left(\mathtt{F}_{{ tTm(T)},{ \sqrt{tTn(T)}}}(B,x)^c\right)^{\frac{2}{q}},\label{eq:LW}
\end{align}
where $p,q>1$ with $\frac{1}{p}+\frac{1}{q}=1$ are chosen such that \begin{align*}
\varlimsup_{T\to\infty}\DE_{x}\otimes \DE_{x}\left[\exp\left(p\b^2\int_0^{Tt} V(B_u-\widetilde{B}_u)\dd u\right)\right]<\infty.
\end{align*}

\eqref{eq:FWtW} tends  to $0$ as $T\to\infty$ since we know \begin{align}
\DP_x\left(F_{{ tTm(T)},\sqrt{tTn(T)}}(B,x)^c\right)\leq C\exp\left(-\frac{n(T)}{4m(T)}\right),\label{eq:exit}
\end{align}
which decays faster than any polynomial of $T$. 
By using the Cauchy-Schwarz inequality with Lemma \ref{lem:negativemoment}, we find that \eqref{eq:FWtWbdd} converges to $0$ as $T\to \infty$. 
\end{proof}

\begin{proof}[Proof of Lemma \ref{lem:tildeG}]
It is enough to show that \begin{align*}
&\frac{1}{\b^2}\IE\left[\left(\int_{\R^2}\dd xf(x)\int_0^{{{ tTm(T)}}}F'({\kW}_s(x_T))\dd \widetilde{\kW}_s(x_T)\right)^2\right]\\
&=\frac{1}{\b^2}	\int_{\R^2\times \R^2}\dd x\dd yf(x)f(y)\IE\left[\int_0^{{ tTm(T)}}F'({\kW}_s(x_T))F'({\kW}_s(y_T))\dd \langle \widetilde{\kW}(x_T),\widetilde{\kW}(y_T)\rangle_s\right]	\to 0,
\end{align*}
as $T\to \infty$.
Since $\widetilde{\kW}_s(x)$ and $\widetilde{\kW}_s(y)$  is independent and hence~$\dd \langle \widetilde{\kW}(x),\widetilde{\kW}(y)\rangle_u=0 $ if $|x-y|>2(\sqrt{tTn(T)}+R_\phi)$, we have \begin{align}
\IE\left[\left(\int_{\R^2}\dd xf(x)\int_0^{{ tTm(T)}}F'({\kW}_s(x_T))\dd \widetilde{\kW}_s(x_T)\right)^2\right]
&\leq Cn(T)\IE\left[\int_0^{{ tTm(T)}}F'({\kW}_s(x))^2\dd \langle \widetilde{\kW}(x)\rangle_s\right]\notag\\
&\leq Cn(T)\IE\left[\int_0^{{ tTm(T)}}\left(\frac{1}{{\kW}_s(x)^2}+1\right)\dd \langle \widetilde{\kW}(x)\rangle_s\right].\label{eq:L2error1}
\end{align}
By construction of $\widetilde{W}_s(x)$, we have \begin{align*}
&Cn(T)\IE\left[\int_0^{{ tTm(T)}}\left(\frac{1}{{\kW}_s(x)^2}+1\right)\dd \langle \widetilde{\kW}(x)\rangle_s\right]\leq Cn(T)\IE\left[\int_0^{{ tTm(T)}}\left(\frac{1}{{\kW}_s(x)^2}+1\right)\dd \langle {\kW}(x)\rangle_s\right]
\intertext{and furthermore, by applying It\^{o}'s lemma to $\log {\kW}(x)$ and ${\kW}(x)^2$, it is bounded by}
& Cn(T)\IE\left[\log \bar{u}(t,x)-\log {\kW}_{{ tTm(T)}}+{\kW}_{{ tTm(T)}}(x)^2 \right]\leq Cn(T).
\end{align*}
Thus, Lemma \ref{lem:tildeG} is concluded.

\end{proof}

\subsubsection{Proof of Lemma \ref{lem:G-M}}
Define
\begin{align*} 
&\dd \mathcal{L}_s(x)={\b_\e} \sZ^\b_{{ s\ell(T)}}(x)\int_{\R^d}\dd z \xi(\dd s, \dd b) \int_{\R^d}\rho_s(z-x) \phi(z-b) \overleftarrow{\sZ}^\b_{s , { s\ell(T)}}(z) \, \DE_z\left[ u_0\left(\frac{B_{t T-s}}{\sqrt{T}}\right)\right].
\end{align*}
Then, we remark that \begin{align}
\dd \mathcal{M}_s(x_T)=F'\left(\sZ^\b_{{ s\ell(T)}}\bar{u}(t,x)\right)\dd \mathcal{L}_s(x_T)\label{eq:relLM}
\end{align}
for $s\geq tTm(T)$ and $x\in\R^2$.

Lemma \ref{lem:G-M} follows when the next two lemmas are proved.

\begin{lemma}\label{Diff W and L}
  For all $t>0$,
  \al{
    \lim_{\e\to 0}\frac{1}{\b_\e}\IE\left[\left| \int \dd xf(x) \left(\int_{{ tT_\e m(T_\e)}}^{tT_\e}F'(\kW_s(x_{T_\e}))\dd \kW_{s}(x_{T_\e})-\int_{{tT_\e m(T_\e)}}^{tT_\e}F'(\kW_s(x_{T_\e}))\dd \mathcal{L}_s(x_{T_\e})\right) \right|\right] =0.
  }

\end{lemma}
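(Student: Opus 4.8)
\emph{Proof plan.} Write $\mathcal N_s(x):=\int_{tT_\e m(T_\e)}^{s}F'(\kW_u(x_{T_\e}))\big(\dd\kW_u(x_{T_\e})-\dd\mathcal L_u(x_{T_\e})\big)$, with $x_{T_\e}=\sqrt{T_\e}\,x$, so that the quantity in the statement is $\int_{\R^2}f(x)\,\mathcal N_{tT_\e}(x)\,\dd x$; by the Cauchy--Schwarz inequality it suffices to prove $\b_\e^{-2}\IE\big[(\int f(x)\mathcal N_{tT_\e}(x)\dd x)^2\big]\to0$. The starting point is the white-noise representation of the two martingale differentials: decomposing the Brownian expectation defining $\kW$ on the time-$u$ position,
\[
\dd\kW_u(x)=\b_\e\!\int_{\R^2}\!\xi(\dd u,\dd b)\int_{\R^2}\rho_u(z-x)\phi(z-b)\,\DE_{0,x}^{u,z}[\Phi_u^{\b_\e}]\,g_u(z)\,\dd z,\qquad g_u(z):=\DE_z\Big[u_0\Big(\tfrac{B_{tT_\e-u}}{\sqrt{T_\e}}\Big)\Big],
\]
while $\dd\mathcal L_u(x)$ is the same expression with $\DE_{0,x}^{u,z}[\Phi_u^{\b_\e}]$ replaced by $\sZ_{u\ell(T_\e)}^{\b_\e}(x)\,\overleftarrow{\sZ}^{\b_\e}_{u,u\ell(T_\e)}(z)$. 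Hence $\dd\kW_u(x)-\dd\mathcal L_u(x)=\b_\e\int\xi(\dd u,\dd b)\int\rho_u(z-x)\phi(z-b)\,\Delta_u(x,z)\,g_u(z)\,\dd z$ with
\[
\Delta_u(x,z):=\DE_{0,x}^{u,z}[\Phi_u^{\b_\e}]-\sZ_{u\ell(T_\e)}^{\b_\e}(x)\,\overleftarrow{\sZ}^{\b_\e}_{u,u\ell(T_\e)}(z),
\]
which is exactly the error bounded by the local limit theorem, Theorem~\ref{th:errorTermLLT}, applied with total time $L=u$ and length $\ell=u\ell(T_\e)$ (for $u\in[tT_\e m(T_\e),tT_\e]$ one has $u\ell(T_\e)\to\infty$, $u\ell(T_\e)/u=\ell(T_\e)\to0$ and $\log u/\log T_\e\to1$ uniformly, so the hypotheses hold).

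By the isometry for stochastic integrals against white noise, and using $\int_{\R^2}\phi(z-b)\phi(z'-b)\dd b=V(z-z')$,
\[
\IE\Big[\Big(\int f\,\mathcal N_{tT_\e}\Big)^2\Big]=\int\!\!\int f(x)f(y)\!\int_{tT_\e m(T_\e)}^{tT_\e}\!\!\IE\big[F'(\kW_u(x_{T_\e}))F'(\kW_u(y_{T_\e}))\,\mathcal D_u(x_{T_\e},y_{T_\e})\big]\dd u\,\dd x\,\dd y,
\]
where $\mathcal D_u(x,y)=\b_\e^2\int_{(\R^2)^2}\rho_u(z-x)\rho_u(z'-y)V(z-z')\,\Delta_u(x,z)\Delta_u(y,z')\,g_u(z)g_u(z')\,\dd z\,\dd z'$. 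The core estimate is the bound on $\IE[F'(\kW_u(x_{T_\e}))F'(\kW_u(y_{T_\e}))\Delta_u(x_{T_\e},z)\Delta_u(y_{T_\e},z')]$. I would use $|F'(\kW_u(w))|\le C(\kW_u(w)^{-1}+1)\le C(\|u_0^{-1}\|_\infty\sZ_u(w)^{-1}+1)$ and then H\"older's inequality with a large exponent on the two $F'$--factors (which costs nothing: by Lemma~\ref{lem:negativemoment} $\kW_u(w)$ has finite negative moments of every order, uniformly in $u\le tT_\e$, $w$ and $\e$) and a conjugate exponent $2q$ slightly above $2$ on the two $\Delta_u$--factors, interpolating $L^{2q}$ between $L^2$ and $L^r$ with a fixed $r\in(2q,p_{\hat{\b}})$: the $L^r$--norm of $\Delta_u(x,z)$ is bounded uniformly by the triangle inequality, shear invariance $\DE_{0,x}^{u,z}[\Phi_u^{\b_\e}]\eqlaw\DE_{0,0}^{u,0}[\Phi_u^{\b_\e}]$ and the moment estimates of Section~\ref{sec2} (for the second summand one uses that $\sZ_{u\ell(T_\e)}^{\b_\e}(x)$ and $\overleftarrow{\sZ}^{\b_\e}_{u,u\ell(T_\e)}(z)$ are independent, being measurable with respect to $\xi$ on the disjoint windows $[0,u\ell(T_\e)]$ and $[u-u\ell(T_\e),u]$), while its $L^2$--norm is \emph{small}: Theorem~\ref{th:errorTermLLT} gives $\IE[\Delta_u(x,z)^2]\le C\ell(T_\e)+C\b_\e^2\big((\log T_\e)^{\frac12-\delta}+\tfrac{|z-x|\log u}{u}+\tfrac{|z-x|^2\ell(T_\e)}{u}\big)$ on $\{|z-x|\le\sqrt{u\log u}\}$, the complementary region being negligible by the Gaussian decay of $\rho_u$ there. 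Carrying out the (Gaussian) $z,z'$--integrals, using $\int V=\int\phi=1$, $\|u_0\|_\infty<\infty$, $\operatorname{supp}V$ compact, that $x,y$ range over the compact $\operatorname{supp}f$, and Jensen's inequality for $a\mapsto a^{1-\lambda}$ with the interpolation exponent $\lambda=\lambda(q)\in(0,1)$, this should yield
\[
\big|\IE\big[F'(\kW_u(x_{T_\e}))F'(\kW_u(y_{T_\e}))\,\mathcal D_u(x_{T_\e},y_{T_\e})\big]\big|\le C\,\b_\e^2\,\rho_{2u}\big(\sqrt{T_\e}(x-y)\big)\,\eta(T_\e)^{1-\lambda},
\]
with $\eta(T_\e):=\ell(T_\e)+\b_\e^2(\log T_\e)^{\frac12-\delta}$.

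Finally, $\int_{tT_\e m(T_\e)}^{tT_\e}\rho_{2u}(\sqrt{T_\e}(x-y))\dd u\le\frac1{4\pi}\log\frac1{m(T_\e)}=\frac1{4\pi}(\log T_\e)^{\frac12-\delta}$ and $\int\!\int|f(x)f(y)|\dd x\dd y=\|f\|_1^2$, so
\[
\b_\e^{-2}\,\IE\Big[\Big(\int f\,\mathcal N_{tT_\e}\Big)^2\Big]\le C\|f\|_1^2\,(\log T_\e)^{\frac12-\delta}\,\eta(T_\e)^{1-\lambda}.
\]
Since $\b_\e^2(\log T_\e)^{\frac12-\delta}=4\pi\hat{\b}^2(\log T_\e)^{-\frac12-\delta}$ and $\ell(T_\e)=e^{-(\log T_\e)^{1/2-\delta}}$, we have $\eta(T_\e)\le C(\log T_\e)^{-\frac12-\delta}$, so the right-hand side is $\le C\|f\|_1^2(\log T_\e)^{-2\delta+\lambda(\frac12+\delta)}$, which tends to $0$ once $\lambda<\frac{2\delta}{1/2+\delta}$ (possible since $\delta>0$), the only price being that the exponent on the $F'$--factors must be large, which is harmless. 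Together with Cauchy--Schwarz this proves the lemma. The main obstacle is the H\"older balancing in the third paragraph: one must absorb the possibly large factors $F'(\kW_u)$ against the local-limit error $\Delta_u$, which works only because $\kW_u$ has negative moments of \emph{all} orders (so $\Delta_u$ may be placed in $L^{2+}$ and interpolated against its small $L^2$--norm); quantitatively one needs the poly-logarithmically small gain $\b_\e^2(\log T_\e)^{\frac12-\delta}$ of Theorem~\ref{th:errorTermLLT}, raised to a power close to $1$, to beat the poly-logarithmically large factor $\log(1/m(T_\e))=(\log T_\e)^{\frac12-\delta}$ produced by the time integral $\int\dd u/u$.
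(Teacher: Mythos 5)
Your route is genuinely different from the paper's: you pass to the second moment of $\int f(x)\,\mathcal N_{tT_\e}(x)\,\dd x$ via the It\^o isometry, which forces you to estimate the joint expectation $\IE\big[F'(\kW_u(x_{T_\e}))F'(\kW_u(y_{T_\e}))\,\Delta_u(x_{T_\e},z)\Delta_u(y_{T_\e},z')\big]$, and you then H\"older the $F'$--factors away (all negative moments, Lemma \ref{lem:negativemoment}) at the price of measuring $\Delta_u$ in $L^{2q}$ with $2q>2$, interpolated between its small $L^2$--norm (Theorem \ref{th:errorTermLLT}) and a supposedly bounded $L^r$--norm, $r>2$. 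Your quantitative bookkeeping at the end (the factor $\log(1/m(T_\e))=(\log T_\e)^{1/2-\delta}$ from $\int \dd u/u$ beaten by $\eta(T_\e)^{1-\lambda}$ with $\lambda$ small) is consistent. The paper never forms this joint expectation: it applies the Burkholder--Davis--Gundy inequality to the $L^1$--norm, bounds pathwise $\big(\int F'(\kW_s)^2\,\dd\langle\kW-\mathcal L\rangle_s\big)^{1/2}\le \sup_s|F'(\kW_s)|\,\big(\int \dd\langle\kW-\mathcal L\rangle_s\big)^{1/2}$, and separates the two factors by one Cauchy--Schwarz in expectation; the sup is handled by Doob plus Lemma \ref{lem:negativemoment}, and what remains is $\IE\big[\int\dd\langle\kW-\mathcal L\rangle_s\big]$, whose estimate uses nothing beyond second moments of the local-limit error.

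The genuine gap is exactly the input your interpolation needs: a bound $\sup_{\e}\sup_{u\le tT_\e}\IE\big[|\Delta_u(x,z)|^{r}\big]<\infty$ for some fixed $r>2$, which you claim follows ``from the moment estimates of Section \ref{sec2}''. It does not. For the point-to-point factor $\DE_{0,x}^{u,z}[\Phi_u^{\b_\e}]$ inside $\Delta_u$ the paper proves only the second-moment bound \eqref{eq:L2ppbdd}; Lemma \ref{lem:lp} is a statement about the point-to-plane partition function, and your independence remark only covers the product $\sZ^{\b_\e}_{u\ell(T_\e)}(x)\,\overleftarrow{\sZ}^{\b_\e}_{u,u\ell(T_\e)}(z)$. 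A uniform $L^{2+\epsilon}$ bound for point-to-point partition functions over the whole subcritical range $\hat\b<1$ is a nontrivial additional estimate (it would have to come from a hypercontractivity argument applied to the point-to-point chaos expansion, with a check that the admissible exponent stays above $2$ as $\hat\b\uparrow1$), and it is precisely the kind of higher-moment input that the paper's BDG/Cauchy--Schwarz structure is designed to avoid. As written, the H\"older--interpolation step therefore cannot be carried out with the tools available in the paper; either supply such a point-to-point moment lemma, or restructure the argument as the paper does so that only $\IE[\Delta_u^2]$, i.e.\ Theorem \ref{th:errorTermLLT} itself, is ever required.
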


  
\begin{lemma}\label{LAST LEMMA}
For $t>0$,
\al{  \lim_{\e\to 0}\frac{1}{\b_\e}\IE\left[\left| \int f(x) \left(\int_{{ tT_\e m(T_\e)}}^{tT_\e}F'(\kW_s(x_{T_\e}))\dd \mathcal{L}_s(x_{T_\e})- \mathcal{M}_{tT_\e}(x_{T_\e})\right) \dd x \right|\right] =0.}
\end{lemma}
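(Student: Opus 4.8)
The plan is to recognize, using \eqref{eq:relLM}, that with
\[
\Delta_u(x):=F'\bigl(\kW_u(x)\bigr)-F'\bigl(\sZ^\b_{u\ell(T)}(x)\,\bar u(t,T^{-1/2}x)\bigr),
\]
the expression inside the absolute value in Lemma~\ref{LAST LEMMA} is $N^{(T)}_{tT}$, where
\[
N^{(T)}_s\ :=\ \int_{\R^2}f(x)\int_{tTm(T)}^{s}\Delta_u(x_T)\,\dd\mathcal L_u(x_T)\,\dd x,\qquad s\ge tTm(T),\qquad x_T:=\sqrt T\,x,
\]
is a continuous $\mathcal F_s$-martingale (both $\kW_u(x)$ and $\sZ^\b_{u\ell(T)}(x)$ are $\mathcal F_u$-measurable). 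Since $\tfrac1{\b_\e}\IE|N^{(T)}_{tT}|\le\bigl(\tfrac1{\b_\e^2}\IE[\langle N^{(T)}\rangle_{tT}]\bigr)^{1/2}$, it suffices to prove $\b_\e^{-2}\IE[\langle N^{(T)}\rangle_{tT}]\to0$. First I would collapse the double space integral: two Cauchy--Schwarz inequalities (one for the cross-bracket $\langle\mathcal L^{(x)},\mathcal L^{(y)}\rangle\le\langle\mathcal L^{(x)}\rangle^{1/2}\langle\mathcal L^{(y)}\rangle^{1/2}$, one in the $x$-integration) give
\[
\IE\bigl[\langle N^{(T)}\rangle_{tT}\bigr]\ \le\ \|f\|_1\int_{\R^2}|f(x)|\,\IE\Bigl[\int_{tTm(T)}^{tT}\Delta_s(x_T)^2\,\dd\langle\mathcal L(x_T)\rangle_s\Bigr]\dd x,
\]
so the task reduces to showing $\b_\e^{-2}\IE\bigl[\int_{tTm(T)}^{tT}\Delta_s(x_T)^2\,\dd\langle\mathcal L(x_T)\rangle_s\bigr]\to0$ uniformly for $x$ in compact sets; dominated convergence in $x$ then finishes.

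For the single-variable estimate, the idea is that $\Delta_s(x_T)$ is small because $\kW_s(x_T)\approx\sZ^\b_{s\ell(T)}(x_T)\,\bar u(t,x)$. Computing $\langle\mathcal L(x_T)\rangle$ as in \eqref{eq:QuadVarW} yields $\dd\langle\mathcal L(x_T)\rangle_s=\b_\e^2\,\sZ^\b_{s\ell(T)}(x_T)^2\,J_s(x_T)\,\dd s$, where $J_s(x_T)$ is the $(z_1,z_2)$-integral of $\rho_s(z_1-x_T)\rho_s(z_2-x_T)V(z_1-z_2)\,\overleftarrow{\sZ}^\b_{s,s\ell(T)}(z_1)\overleftarrow{\sZ}^\b_{s,s\ell(T)}(z_2)\,\DE_{z_1}[u_0(\tfrac{B_{tT-s}}{\sqrt T})]\DE_{z_2}[u_0(\tfrac{B_{tT-s}}{\sqrt T})]$; the factor $\b_\e^2$ cancels the prefactor $\b_\e^{-2}$, and after $s=T\sigma$, $z_i=\sqrt T w_i$ the quantity $T\,J_{T\sigma}(x_T)$ is precisely of the type controlled in Lemma~\ref{lem:approxPsi}, with $\IE[T J_{T\sigma}(x_T)]=O(\sigma^{-1})$ uniformly. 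For $\Delta_s(x_T)^2$ I would use the mean value theorem, \eqref{eq:Fass}, $u_0\in\mathfrak C$ (so $c\le\bar u\le C$) and $\kW_s\ge\|u_0^{-1}\|_\infty^{-1}\sZ_s$ to get
\[
\Delta_s(x_T)^2\ \le\ C\Bigl(1+\sZ_s(x_T)^{-4}+\sZ^\b_{s\ell(T)}(x_T)^{-4}\Bigr)\,\bigl(\kW_s(x_T)-\sZ^\b_{s\ell(T)}(x_T)\,\bar u(t,x)\bigr)^2.
\]
The crucial input is the smallness of the last factor: writing $\kW_s(x_T)$ via \eqref{eq:Wexpan}, the local limit theorem (Theorem~\ref{th:errorTermLLT}) replaces $\DE_{0,x_T}^{s,z}[\Phi^\b_s]$ by $\sZ^\b_{s\ell(T)}(x_T)\,\overleftarrow{\sZ}^\b_{s,s\ell(T)}(z)$ with an $L^2$-error uniform in $z$ (here $s\ge tTm(T)$ forces $s\ell(T)=o(s)$ and $\log(s\ell(T))/\log s\to1$, so the dominant term is $\b_\e^2\log(1/\ell(T))$), and the homogenization argument of Step~3 of Lemma~\ref{lem:approxPsi} (decorrelation via Lemma~\ref{lem:covpart} and $\IE[\overleftarrow{\sZ}^\b_{s,s\ell(T)}]=1$) replaces $\int\rho_s(z-x_T)\,\overleftarrow{\sZ}^\b_{s,s\ell(T)}(z)\,\DE_z[u_0(\tfrac{B_{tT-s}}{\sqrt T})]\,\dd z$ by $\bar u(t,x)$. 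This gives
\[
\sup_{tTm(T)\le s\le tT}\ \sup_{x}\ \IE\Bigl[\bigl(\kW_s(x_T)-\sZ^\b_{s\ell(T)}(x_T)\,\bar u(t,x)\bigr)^{2}\Bigr]\ =\ O\bigl(\b_\e^2\log(1/\ell(T))+\ell(T)\bigr)\ =\ O\bigl((\log T)^{-1/2-\delta}\bigr),
\]
the choice $\ell(T)=\exp(-(\log T)^{1/2-\delta})$ being tuned so that $\b_\e^2\log(1/\ell(T))\asymp(\log T)^{-1/2-\delta}$; in addition $\kW_s(x_T)-\sZ^\b_{s\ell(T)}(x_T)\bar u(t,x)$ has an a priori uniform $L^{2p_0}$-bound for some $p_0\in(1,\tfrac12 p_{\hat\b})$, by Lemma~\ref{lem:lp}.

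Putting this together, I would rescale $s=T\sigma$, cancel $\b_\e^2$, and apply H\"older's inequality to detach the small factor $(\kW_{T\sigma}(x_T)-\sZ^\b_{T\sigma\ell(T)}(x_T)\bar u(t,x))^2$ — whose $L^r$-norm, $r\in[1,p_0)$, tends to $0$ by interpolation between the $L^2$-smallness above and the $L^{2p_0}$-bound — from the negative moments of $\sZ_s,\sZ^\b_{s\ell(T)}$ (bounded uniformly in $s\le tT$, $x$, $\e$, of all orders, by Lemma~\ref{lem:negativemoment}; note $(\sZ^\b_{s\ell(T)})^{-4}\cdot(\sZ^\b_{s\ell(T)})^{2}=(\sZ^\b_{s\ell(T)})^{-2}$) and from the remaining partition-function factors of $T J_{T\sigma}$ (moments of order $<p_{\hat\b}$ bounded by Lemmas~\ref{lem:L2bdd}, \ref{lem:lp}, \ref{lem:p2pge}); as in the proofs of Lemma~\ref{lem:approxPsi} (Steps~2--3) and Lemma~\ref{lem:approxFF}, this is carried out after first replacing $\sZ^\b_{s\ell(T)}$ and $\overleftarrow{\sZ}^\b_{s,s\ell(T)}$ by the spatially restricted, truncated surrogates $\widetilde\sZ^\b_{\cdot,\ell'(\sigma,T)}\wedge\ell(T)^{-1/2}$ of Section~\ref{sub:4.2} so that the quantities involved are bounded. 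This leads to
\[
\frac1{\b_\e^2}\IE\Bigl[\int_{tTm(T)}^{tT}\Delta_s(x_T)^2\,\dd\langle\mathcal L(x_T)\rangle_s\Bigr]\ \le\ C\,(\log T)^{-1/2-\delta}\!\int_{tm(T)}^{t}\!\sigma^{-1}\,\dd\sigma\ \le\ C\,(\log T)^{-2\delta}\ \longrightarrow\ 0
\]
uniformly for $x$ on compacts, with a bound dominated by $C|f(x)|$, and dominated convergence concludes. The step I expect to be the main obstacle is exactly this H\"older bookkeeping: the negative moments of partition functions entering through $F'$ must be separated from the small local-limit/homogenization error while all the $\overleftarrow{\sZ}$-factors carried by the bracket density stay strictly below the integrability threshold $p_{\hat\b}$ — which is why $\hat\b<1$ (hence $p_{\hat\b}>2$) is used and why the surgery/truncation device of Section~\ref{sub:4.2} is needed — and one must verify that the error rate $O((\log T)^{-1/2-\delta})$ genuinely beats the logarithmic loss $\int_{tm(T)}^{t}\sigma^{-1}\dd\sigma\asymp(\log T)^{1/2-\delta}$ coming from integrating the bracket over the long window $[tTm(T),tT]$; this is precisely the reason for the stretched-exponential choices of $\ell(T)$ and $m(T)$.
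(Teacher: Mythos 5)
Your proposal follows essentially the same route as the paper's proof: identify the difference as the stochastic integral of $F'(\kW_s)-F'(\sZ^\b_{s\ell(T)}\bar u)$ against $\dd\mathcal L$, reduce via Cauchy--Schwarz/BDG to the expected quadratic variation, write the bracket density as $\b_\e^2\sZ_{s\ell(T)}^2$ times the $\rho\rho V\,\overleftarrow{\sZ}\,\overleftarrow{\sZ}$ kernel, extract a factor $\kW_s-\sZ^\b_{s\ell(T)}\bar u$ through \eqref{eq:Fass}, and control it by Theorem \ref{th:errorTermLLT} together with the homogenization/decorrelation argument, with negative moments (Lemma \ref{lem:negativemoment}) and $L^p$-bounds (Lemma \ref{lem:lp}) absorbing the rest. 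The only substantive difference is in how the small factor is detached: the paper first applies H\"older with exponents $p<p_{\hat\b}/2$, $q=p/(p-1)$ so that \emph{all} positive-moment partition-function factors $\sZ^{2p}\overleftarrow{\sZ}^p\overleftarrow{\sZ}^p$ sit in one slot, and then, inside $\IE[|F'(\kW)-F'(\sZ\bar u)|^{2q}]$, peels off a single factor $|F'(\kW)-F'(\sZ\bar u)|\leq C(\kW^{-2}+(\sZ\bar u)^{-2}+1)|\kW-\sZ\bar u|$, so that the small quantity is accompanied only by negative moments (which have all orders); your direct bound $\Delta_s^2\leq C(1+\sZ_s^{-4}+\sZ_{s\ell(T)}^{-4})(\kW_s-\sZ_{s\ell(T)}\bar u)^2$ forces $(\kW-\sZ\bar u)^2$ and the $\overleftarrow{\sZ}$-factors into conjugate H\"older slots, which is more constrained since both have integrability limited by $p_{\hat\b}$. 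Relatedly, your final display overstates what survives: after the unavoidable H\"older detachment only a fractional power of the rate $\IE[(\kW_s-\sZ_{s\ell(T)}\bar u)^2]=O((\log T)^{-1/2-\delta})$ remains (roughly $(\log T)^{-(1/2+\delta)\lambda/q}$ with $\lambda<1$ from interpolation and $q>p_{\hat\b}/(p_{\hat\b}-2)$), not the full $(\log T)^{-1/2-\delta}$, so the comparison with the loss $\log(1/m(T))\asymp(\log T)^{1/2-\delta}$ is exactly the delicate exponent bookkeeping you flagged as the main obstacle --- and it is also the step the paper itself compresses into its final sentence (``H\"older's inequality and Doob's inequality guarantee \dots''); the truncation device of Section \ref{sub:4.2} does not help here, since the cutoff level $\ell(T)^{-1/2}$ is super-logarithmic. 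Apart from this shared soft spot, your argument is the paper's argument.
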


\begin{proof}[Proof of Lemma \ref{Diff W and L}] By the Burkholder-Davis-Gundy inequality, we have
  \al{
    &\frac{1}{\b}\IE\left[\left| \int \dd xf(x) \left(\int_{{ tTm(T)}}^{tT}F'(\kW_s(x_T))\dd \kW_{s}(x_T)-\int_{{ tTm(T)}}^{tT}F'(\kW_s(x_T))\dd \mathcal{L}_s(x_T)\right) \right|\right]  \\
&\leq \frac{C}{\b}\int \dd x|f(x)|\IE\left[\left(\int_{{ tTm(T)}}^{tT} F'(\kW_s(x_T))^2\dd \left\langle\kW(x_T)-\mathcal{L}(x_T)\right\rangle_s\right)^{\frac{1}{2}}\right]\\
&\leq \frac{C}{\b}\int \dd x|f(x)|\IE\left[\sup_{{{ tTm(T)}} \leq s\leq T}{\sZ_s(0)^
{-2}}+1\right]^{\frac{1}{2}}\IE\left[\int_{{ tTm(T)}}^{tT} \dd \left\langle\kW(x_T)-\mathcal{L}(x_T)\right\rangle_s\right]^{\frac{1}{2}}\\
&\leq  \frac{C}{\b}\int \dd x |f(x)|\IE\left[\int_{{ tTm(T)}}^{tT} \dd \left\langle\kW(x_T)-\mathcal{L}(x_T)\right\rangle_s\right]^{\frac{1}{2}},
}
  where we have used  Doob's inequality and Lemma \ref{lem:negativemoment} in the last inequality.

By Cauchy-Schwarz inequality and boundedness of $\bar{u}$, we have from definition of $\kW$ and $\mathcal{L}$ that for $x\in \R^2$\begin{align*}
&\frac{1}{\b_\e^2}\IE\left[\int_{{ tTm(T)}}^{tT} \dd \left\langle\kW(x_T)-\mathcal{L}(x_T)\right\rangle_s\right]\\
&= \int_{{tTm(T)}}^{tT}\dd s\int_{(\R^2)^2}\dd z_1\dd z_2 \rho_s(z_1)\rho_s(z_2)V(z_1-z_2)\bar{u}(tT-s,z_1)\bar{u}(tT-s,z_2)\\
&\hspace{2em}\times \IE\left[\left(\DE_{0,0}^{s,z_1}{\left[\Phi_s^\b\right]}-\sZ_{{ s\ell(T)}}(0)\overleftarrow{\sZ}_{s,{ s\ell(T)}}(z_1)\right)\left(\DE_{0,0}^{s,z_2}\left[\Phi_s^\b\right]-\sZ_{{ s\ell(T)}}(0)\overleftarrow{\sZ}_{s,{ s\ell(T)}}(z_2)\right)\right]\\
&\leq C\int_{{ tTm(T)}}^{tT}\dd s\int_{(\R^2)^2}\dd z_1\dd z_2 \rho_s(z_1)\rho_s(z_2)V(z_1-z_2)\\
&\hspace{2em}\times \IE\left[\left|\DE_{0,0}^{s,z_1}\left[\Phi_s^\b\right]-\sZ_{{ s\ell(T)}}(0)\overleftarrow{\sZ}_{s,{ s\ell(T)}}(z_1)\right|\left|\DE_{0,0}^{s,z_2}\left[\Phi_s^\b\right]-\sZ_{{ s\ell(T)}}(0)\overleftarrow{\sZ}_{s,{ s\ell(T)}}(z_2)\right|\right]\\
&\leq C\int_{{{{ tm(T)}}}}^t\dd \sigma\int_{(\R^2)^2}\dd w\dd v
\rho_{\sigma}(w)\rho_\sigma(w+\frac{v}{\sqrt{T}})V(v)\\
&\hspace{2em}\times  \IE\left[\left(\DE_{0,0}^{T\sigma,w_T}\left[\Phi_{T\sigma}^\b\right]-\sZ_{{ s\ell(T)}}(0)\overleftarrow{\sZ}_{T\sigma,{ T\sigma\ell(T)}}(w_T)\right)^2\right]^{\frac{1}{2}}\\
&\hspace{2em}\times \IE\left[\left(\DE_{0,0}^{T\sigma,w_T+v}\left[\Phi_{T\sigma}^\b\right]-\sZ_{{ T\sigma\ell(T)}}(0)\overleftarrow{\sZ}_{T\sigma,{ T\sigma\ell(T)}}(w_T+v)\right)^2\right]^\frac{1}{2}\\
&\leq C\int_{{{{ tm(T)}}}}^t\dd \sigma\int_{\R^2}\dd w
\rho_{\sigma}(w)
 \IE\left[\left(\DE_{0,0}^{T\sigma,w_T}\left[\Phi_{T\sigma}^\b\right]-\sZ_{{ s\ell(T)}}(0)\overleftarrow{\sZ}_{T\sigma,{ T\sigma\ell(T)}}(w_T)\right)^2\right]^{\frac{1}{2}}
\intertext{and furthermore Lemma \ref{th:errorTermLLT} allows us to  bound it  from above by }
&C\int_{tm(T)}^t\dd \sigma \int_{|w_T|\leq \sqrt{\sigma T\log (\sigma T)}}\dd w\rho_\sigma(w)\left(\ell(T)-\frac{\log \ell(T)}{\log T}+\frac{\sqrt{\log (\sigma T)}\log( \sigma T\log \ell(T))}{\sqrt{\sigma T}\log T}+\frac{\ell(T)\log (\sigma T)}{\log T}\right)\\
&\hspace{3em}+C\int_{tm(T)}^t\dd \sigma \int_{|w_T|\geq \sqrt{\sigma T\log( \sigma T) }}\dd w\rho_{\sigma}(w)\\
&\leq C\left(\ell(T)-\frac{\log \ell(T)}{\log T}+\frac{\sqrt{\log (\sigma T)}\log( \sigma T\log \ell(T))}{\sqrt{\sigma T}\log T}+\frac{\ell(T)\log (\sigma T)}{\log T}\right)+C\int_{t{m(T)}}^t \frac{1}{\sqrt{\sigma T}}\dd \sigma.
\end{align*}
Since both terms in the last line tend to $0$ as $T\to \infty$ from   {definition of $m(T)$ and $\ell(T)$}, Lemma {\ref{Diff W and L}} is concluded. 
\end{proof}

\begin{proof}[Proof of Lemma \ref{LAST LEMMA}]
We have from \eqref{eq:relLM} 
\begin{align}
&\frac{1}{\b}\IE\left[\left| \int f(x) \left(\int_{{ tTm(T)}}^{tT}F'(\kW_s(x_T))\dd \mathcal{L}_s(x_T)- \mathcal{M}_{tT}(x_T)\right) \dd x \right|\right]\notag\\
&\leq \frac{1}{\b}\int |f(x)| \IE\left[\left| \int_{{ tTm(T)}}^{tT}F'(\kW_s(x_T))\dd \mathcal{L}_s(x_T)- \mathcal{M}_{tT}(x_T)\right|\right] \dd x \notag\\
&\leq \frac{1}{\b}\int \dd x|f(x)|\IE\left[\int_{{ tTm(T)}}^{tT}\left(F'(\kW_s(x_T))-F'(\sZ_{{ s\ell(T)}}(x_T)\bar{u}(t,x))\right)^2\dd \langle \mathcal{L}(x_T)\rangle_s\right]^\frac{1}{2}\notag
\end{align}
and \begin{align*}
&\IE\left[\int_{tTm(T)}^{tT}\left(F'(\kW_s(x_T))-F'(\sZ_{{ s\ell(T)}}(x_T)\bar{u}(t,x))\right)^2\dd \langle \mathcal{L}(x_T)\rangle_s\right]\\
&= \b^2\int_{tm(T)}^{t}\dd \sigma \int_{(\R^2)^2}\dd z\dd v\rho_\sigma (z)\rho_\sigma(z+\frac{v}{\sqrt{T}})V(v)\DE_{z_T}\left[u_0\left(\frac{B_{tT-\sigma T}}{\sqrt{T}}\right)\right]\DE_{z_T+v}\left[u_0\left(\frac{B_{tT-\sigma T}}{\sqrt{T}}\right)\right]\\
&\hspace{5em}\times \IE\left[\left(F'(\kW_{T\sigma }(x_T))-F'(\sZ_{{ T\sigma \ell(T)}}(x_T)\bar{u}(t,x))\right)^2\sZ_{{ T\sigma \ell(T)}}(x_T)^2\overleftarrow{\sZ}_{T\sigma,T\sigma \ell(T)}(z_T)\overleftarrow{\sZ}_{T\sigma,T\sigma \ell(T)}(z_T+v)\right].
\end{align*}
By \eqref{eq:Fass} and Lemma \ref{lem:negativemoment}, we may choose  $p,q>1$ such that $\frac{1}{p}+\frac{1}{q}=1$ and there exists a constant $C$ such that\begin{align*}
&\IE\left[\left|F'(\kW_{T\sigma }(x_T))-F'(\sZ_{{ T\sigma \ell(T)}}(x_T)\bar{u}(t,x))\right|^{2q}\right]^{\frac{1}{q}}\IE\left[\sZ_{{ T\sigma \ell(T)}}(x_T)^{2p}\overleftarrow{\sZ}_{T\sigma,T\sigma \ell(T)}(z_T)^p\overleftarrow{\sZ}_{T\sigma,T\sigma \ell(T)}(z_T+v)^p\right]^{\frac{1}{p}}\\
&\leq C
\end{align*}
uniformly in $tm(T)\leq \sigma \leq t$, $x\in \R^2$ and in $0<\e<\frac{1}{2}$.

Also, \eqref{eq:Fass} and  $\dis \left|F'(\kW_{T\sigma }(x_T))-F'(\sZ_{{ T\sigma \ell(T)}}(x_T)\bar{u}(t,x))\right|=\left|\int_{\sZ_{T\sigma \ell(T)}(x_T)}^{\kW_{T\sigma}(x_T)}F''(w)\dd w\right|$ yield
\begin{align*}
&\IE\left[\left|F'(\kW_{T\sigma }(x_T))-F'(\sZ_{{ T\sigma \ell(T)}}(x_T)\bar{u}(t,x))\right|^{2q}\right]\\
&\leq C\IE\left[\left|\frac{1}{\kW_{T\sigma }(x_T)}+\frac{1}{\sZ_{{ T\sigma \ell(T)}}(x_T)\bar{u}(t,x)}+1\right|^{2q-1}\left|F'(\kW_{T\sigma }(x_T))-F'(\sZ_{{ T\sigma \ell(T)}}(x_T)\bar{u}(t,x))\right|\right]\\
&\leq C\IE\left[\sup_{tm(T)\leq \sigma\leq t}\left\{\left|\frac{1}{\sZ_{T\sigma }(x_T)}+\frac{1}{\sZ_{{ T\sigma \ell(T)}}(x_T)}+1\right|^{2q-1}\left|\frac{1}{\sZ_{T\sigma }(x_T)^2}+\frac{1}{\sZ_{{ T\sigma \ell(T)}}(x_T)^2}+1\right|\right\}\right.\\
&\hspace{5em}\times 	\left|\kW_{T\sigma }(x_T)-\sZ_{{ T\sigma \ell(T)}}(x_T)\bar{u}(t,x)	\right|	\Bigg].
\end{align*}
Then, H\"older's inequality and Doob's inequality guarantee with Lemma \ref{lem:negativemoment} and Theorem \ref{th:errorTermLLT} that the last term converges to $0$ as $T\to \infty$ and thus Lemma \ref{LAST LEMMA} follows by the dominated convergence theorem.
\end{proof}

\subsection{Proof of Proposition \ref{prop:VanishBracket}}

Our goal is to prove that: 
\begin{equation} \label{eq:goalOfReduction}
\frac{1}{\b_\e}  \int_{\R^2}\dd xf(x) \left( \int^{tT_\e}_0F''(\kW_s(x_{T_\e})){\dd \lan \kW(x_{T_\e}) \ran_s}-\IE \left[ \int^{tT_\e}_0F''(\kW_s(x_{T_\e})){\dd \lan \kW(x_{T_\e}) \ran_s}\right]\right)  \cvLone 0.
\end{equation}

For simplicity of notation, we set $t=1$ hereafter. 

The proof is composed of four steps. 
In the first step, we will  investigate that the influence at large time is negligible in the following sense:

\begin{lemma}[Step 1]\label{lem:firstStep}
  \aln{
 \lim_{\e\to 0} \frac{1}{\b_\e}  \IE\left[\int_{\R^2} \dd xf(x) \int^{T_\e}_{T_\e m(T_\e)} F''(\kW_s(x_{T_\e})) \dd \lan \kW (x_{T_\e})\ran_s\right]=0.
  }
  \end{lemma}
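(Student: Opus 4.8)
The goal is to show that the quadratic-variation contribution coming from the ``short-time surgery window'' $[T_\e m(T_\e), T_\e]$ is negligible after dividing by $\b_\e$. The plan is to reduce everything to the bound $|F''(x)| \le C(x^{-2}+1)$ from \eqref{eq:Fass}, so that it suffices to control
\[
\frac{1}{\b_\e}\,\IE\left[\int_{\R^2}\dd x\,|f(x)|\int_{T_\e m(T_\e)}^{T_\e}\left(\frac{1}{\kW_s(x_{T_\e})^2}+1\right)\dd\langle\kW(x_{T_\e})\rangle_s\right].
\]
By \eqref{eq:QuadVarW}, $\dd\langle\kW(x)\rangle_s = \b_\e^2\,\DE_x^{\otimes 2}[V(B_s-\widetilde B_s)\Phi_s^\b(B)\Phi_s^\b(\widetilde B)\,u_0(\tfrac{B_{tT}}{\sqrt T})u_0(\tfrac{\widetilde B_{tT}}{\sqrt T})]\dd s$, and since $\kW_s(x)\ge\|u_0^{-1}\|_\infty^{-1}\sZ_s(x)$ and $u_0$ is bounded above, the integrand is $\le C\,\b_\e^2(\sZ_s(x)^{-2}+1)\,\DE_x^{\otimes2}[V(B_s-\widetilde B_s)\Phi_s^\b(B)\Phi_s^\b(\widetilde B)]$. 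One factor of $\b_\e$ cancels, leaving a factor $\b_\e$ out front which tends to $0$; so it is enough to show the remaining expectation over the window $[T_\e m(T_\e), T_\e]$ grows at most polynomially in $\log T_\e$ (indeed, logarithmically).

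The term without the $\sZ^{-2}$ is the easy one: using $|V(B_s-\widetilde B_s)|\le\|V\|_\infty$ and then \eqref{eq:L2decomn}/\eqref{eq:L2bdd}, $\IE[\DE_x^{\otimes2}[V(B_s-\widetilde B_s)\Phi_s^\b(B)\Phi_s^\b(\widetilde B)]]$ is, after integrating $\dd s$ over $[T_\e m(T_\e),T_\e]$, comparable to the $L^2$ expansion of $\IE[\DE[\Phi^\b_{T_\e}]^2]$, which is $O(1)$; more carefully, $\int_{T_\e m(T_\e)}^{T_\e} \IE[\DE_0[V(\sqrt2 B_s)\exp(\b^2\int_0^s V(\sqrt 2 B_u)\dd u)]]\dd s$ can be bounded by expanding as in Lemma~\ref{lem:L2bdd} and using $r_s\le\frac{1}{4\pi s}$, giving a bound of order $\log(1/m(T_\e))\cdot\frac{1}{1-\hat\b^2}$-type expression, which is $o(\b_\e^{-1})$ since $\log(1/m(T_\e)) = (\log T_\e)^{1/2-\delta} = o((\log T_\e)^{1/2}) = o(\b_\e^{-1})$. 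Multiplying by the leftover $\b_\e$ gives $o(1)$.

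For the term with $\sZ_s(x)^{-2}$, I would introduce the events $A_s(x)=\{\sZ_s(x_{T_\e})\le L(T_\e)\}$ for a suitable slowly growing $L(T_\e)$ (e.g.\ a small power of $\log T_\e$, or simply use Lemma~\ref{lem:negativemoment} directly with Hölder). On $A_s(x)^c$ we bound $\sZ_s^{-2}\le L(T_\e)^{-2}$ and reduce to the easy term treated above; on $A_s(x)$ we use $\frac{\DE_x^{\otimes2}[V(B_s-\widetilde B_s)\Phi^\b_s\Phi^\b_s]}{\sZ_s(x)^2}\le\|V\|_\infty$ (this is the elementary pointwise bound, valid because $\phi$ and hence $V$ is bounded and the numerator is dominated by $\|V\|_\infty$ times the product $\sZ_s(x)^2$ up to the $u_0$ factors) and pay the probability $\IP(A_s(x)^c)$, which by Lemma~\ref{lem:negativemoment} (all negative moments of $\sZ_s$ are bounded) and Markov's inequality is $\le C L(T_\e)^{-p}$ for every $p$. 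Integrating over $s\in[T_\e m(T_\e),T_\e]$ costs a factor $T_\e$, but $T_\e\cdot L(T_\e)^{-p}\to0$ if $L(T_\e)$ is, say, $T_\e^{1/100}$ — alternatively keep $L(T_\e)$ polylog and instead apply Hölder with Lemma~\ref{lem:negativemoment} to get a genuinely integrable-in-$s$ bound. Either way the contribution is $o(\b_\e^{-1})$, hence $o(1)$ after multiplying by $\b_\e$.

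The main obstacle is the bookkeeping in the $\sZ_s^{-2}$ term: one must trade off the size of the truncation level $L(T_\e)$ against the length $T_\e$ of the time window, and make sure the concentration/negative-moment input from Lemma~\ref{lem:negativemoment} is strong enough (which it is, since it gives moments of all orders) to beat the factor $T_\e$. Everything else is a routine repetition of the $L^2$-expansion estimates of Section~\ref{sec2} together with the observation $\log(1/m(T_\e)) = o(\b_\e^{-1})$.
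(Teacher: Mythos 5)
Your reduction to $|F''(x)|\le C(x^{-2}+1)$ and your treatment of the ``$+1$'' part are fine: using the uniform point-to-point bound (Lemma \ref{lem:p2pge}) one indeed gets $\IE\bigl[\DE_x^{\otimes 2}[V(B_s-\widetilde B_s)\Phi^\b_s\Phi^\b_s]\bigr]\le C/s$, so the window contributes $C\log(1/m(T_\e))=C(\log T_\e)^{1/2-\delta}$, which after the factors $\b_\e^2/\b_\e$ is $o(1)$. (Your framing ``polynomial in $\log T_\e$ suffices'' is not the right criterion — you need $o(\b_\e^{-1})=o((\log T_\e)^{1/2})$ — but the bound you actually compute meets it.)

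The genuine gap is in the $\sZ_s^{-2}$ term. Leaving aside that your event $A_s(x)=\{\sZ_s\le L\}$ is written with the inequality in the wrong direction, the pointwise-in-$s$ truncation you propose cannot be balanced for any choice of level $L(T_\e)$: on the bad event $\{\sZ_s^{-1}>L\}$ you bound the integrand by $\b_\e^2\|V\|_\infty$ and must integrate $\IP(\sZ_s^{-1}>L)$ over a window of length of order $T_\e$, so you need $L^{-p}\ll T_\e^{-1}$, i.e.\ $L$ at least a power of $T_\e$; but then on the good event your bound $\sZ_s^{-2}\le L^2$ multiplies the bracket estimate $\b_\e^2(\log T_\e)^{1/2-\delta}$ by $T_\e^{c}$, which diverges. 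Conversely a polylog level keeps the good event under control but leaves $\b_\e\,T_\e\,(\log T_\e)^{-p}\to\infty$ on the bad one. Your fallback ``Hölder with Lemma \ref{lem:negativemoment} to get an integrable-in-$s$ bound'' would require an $L^{p}$ bound, $p>1$, with $1/s$ decay for the collision density $\DE_x^{\otimes2}[V(B_s-\widetilde B_s)\Phi^\b_s\Phi^\b_s]$, which is neither in the paper nor sketched by you. The paper avoids the factor $T_\e$ altogether: it pulls $\sup_{T_\e m(T_\e)\le s\le T_\e}\sZ_s^{-2}$ out of the time integral, splits on whether this sup exceeds $(\log T_\e)^{\delta/4}$, bounds the whole bracket $\int_{T_\e m(T_\e)}^{T_\e}\dd\lan\sZ(x_{T_\e})\ran_s$ via Burkholder--Davis--Gundy and Doob by $\IE[\sZ_{T_\e}^{2q}]^{1/q}=O(1)$ (Lemma \ref{lem:lp}), and kills the bad event with Doob plus arbitrarily high negative moments (Lemma \ref{lem:negativemoment}), so only a polylog-small probability is ever needed. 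This sup-plus-BDG step is the missing idea; without it (or a new moment estimate on the collision term) your argument for the $\sZ_s^{-2}$ part does not close.
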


Before going to the step 2, we introduce a stopping time\begin{align*}
\tau_{T_\e}=\tau_{T_\e}(x):=\inf\left\{s\geq 0:W_s(x)+\widetilde{\kW}_s(x)^{-1}>\frac{1}{m(T_\e)}\right\}\land T_\e m(T_\e).
\end{align*}
Let us define the event:
$$A_{T_\e}(x)=\{\tau_{T_\e}(x)= T_\e m(T_\e)\}=\left\{\kW_s(x)+\widetilde{\kW}_s(x)^{-1} \leq  m(T_\e)^{-1}\text{ for all }s\leq T_\e m(T_\e), \right\}.$$

In the step 2, we will find that the contribution of ``large" $\kW(x)$ and ``small" $\widetilde{\kW}(x)$ can be negligible.  
\begin{lemma}[Step 2]\label{lem:step2}
  \begin{align*}
\lim_{\e\to 0}\frac{1}{\b_\e}\IE\left[\int_{\tau_{T_{\e}}(x)}^{{  T_\e m(T_\e})} |F''(\kW_s(x))| \dd \lan \kW(x)\ran_s\right]=0.
\end{align*}
\end{lemma}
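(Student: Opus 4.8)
\textbf{Proof strategy for Lemma~\ref{lem:step2}.} On the event $A_{T_\e}(x)$ we have $\tau_{T_\e}(x)=T_\e m(T_\e)$, so the interval $[\tau_{T_\e}(x),T_\e m(T_\e)]$ is empty and the integral vanishes; hence it suffices to estimate the expectation restricted to $A_{T_\e}(x)^c$. By \eqref{eq:Fass}, $|F''(\kW_s(x))|\le C(\kW_s(x)^{-2}+1)$, and since $s\mapsto\lan\kW(x)\ran_s$ is nondecreasing and $\tau_{T_\e}(x)\ge 0$, the displayed integral is bounded on $A_{T_\e}(x)^c$ by
\be{
\int_{\tau_{T_\e}(x)}^{T_\e m(T_\e)}|F''(\kW_s(x))|\,\dd\lan\kW(x)\ran_s\ \le\ C\,\lan N(x)\ran_{T_\e m(T_\e)}+C\,\lan\kW(x)\ran_{T_\e m(T_\e)},
}
where $N_s(x):=\int_0^s\kW_u(x)^{-1}\,\dd\kW_u(x)$ is a continuous martingale with $\lan N(x)\ran_s=\int_0^s\kW_u(x)^{-2}\,\dd\lan\kW(x)\ran_u$ (on $A_{T_\e}(x)$ the left side is $0$, so the bound holds unconditionally). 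Thus the lemma reduces to proving $\b_\e^{-1}\,\IE\big[(\lan N(x)\ran_{T_\e m(T_\e)}+\lan\kW(x)\ran_{T_\e m(T_\e)})\,\mathbf 1_{A_{T_\e}(x)^c}\big]\to 0$.

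Next I would show that both brackets have $L^q$-moments bounded uniformly in $\e$ and $x$ (for some $q>1$ in the case of $\lan\kW\ran$, and for every $q\ge 1$ in the case of $\lan N\ran$). Itô's formula applied to $\kW(x)^2$, and \eqref{eq:ItoDecoF} with $F=\log$, give for $s\le tT_\e$ the identities $\lan\kW(x)\ran_s=\kW_s(x)^2-\kW_0(x)^2-2\int_0^s\kW_u(x)\dd\kW_u(x)$ and $\lan N(x)\ran_s=2\big(N_s(x)-\log\kW_s(x)+\log\kW_0(x)\big)$, where $\kW_0(x)=\DE_x[u_0(B_{tT_\e}/\sqrt{T_\e})]$ lies deterministically between $\|u_0^{-1}\|_\infty^{-1}$ and $\|u_0\|_\infty$. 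Combining these with the Burkholder--Davis--Gundy inequality (applied to $\int_0^\cdot\kW_u\dd\kW_u$ and to $N$, using $\int_0^s\kW_u^2\,\dd\lan\kW\ran_u\le(\sup_{u\le s}\kW_u)^2\lan\kW\ran_s$ and Doob's inequality for $\sup_{u\le s}\kW_u$), one gets recursive bounds $\IE[\lan\kW(x)\ran_s^q]\le C+C\,\IE[\lan\kW(x)\ran_s^q]^{1/2}$ and $\IE[\lan N(x)\ran_s^q]\le C+C\,\IE[|\log\kW_s(x)|^q]+C\,\IE[\lan N(x)\ran_s^{q/2}]$, which, starting from $q=1$ (where $\IE[\lan\kW\ran_s]\le\IE[\kW_s^2]$ and $\IE[\lan N\ran_s]=2\log\kW_0(x)-2\IE[\log\kW_s(x)]\ge 0$ are bounded), bootstrap to the claimed moment bounds. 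Here one uses $\kW_s(x)\le\|u_0\|_\infty\sZ_s(x)$ with Lemma~\ref{lem:lp} to control $\IE[\kW_s(x)^p]$ for $2<p<p_{\hat\b}$, Lemma~\ref{lem:negativemoment} to control $\IE[\kW_s(x)^{-p}]$, and the elementary bounds $\log_+y\le C_\epsilon y^\epsilon$, $\log_+y^{-1}\le C_\epsilon y^{-\epsilon}$ to deduce $\IE[|\log\kW_s(x)|^q]\le C_q$ for all $q$.

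Finally I would prove that $\P(A_{T_\e}(x)^c)$ decays faster than any power of $(\log T_\e)^{-1}$. On $A_{T_\e}(x)^c$ one has $\sup_{s\le T_\e m(T_\e)}\big(\kW_s(x)+\widetilde\kW_s(x)^{-1}\big)>m(T_\e)^{-1}$, hence
\be{
A_{T_\e}(x)^c\ \subseteq\ \Big\{\sup_{s\le T_\e m(T_\e)}\kW_s(x)>\tfrac1{2m(T_\e)}\Big\}\ \cup\ \Big\{\sup_{s\le T_\e m(T_\e)}\widetilde\kW_s(x)^{-1}>\tfrac1{2m(T_\e)}\Big\}.
}
Since $\kW_s(x)\le\|u_0\|_\infty\sZ_s(x)$ is a positive martingale, Doob's $L^p$-maximal inequality and Lemma~\ref{lem:lp} bound the first probability by $C\,m(T_\e)^p$ with $2<p<p_{\hat\b}$. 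For the second, $\widetilde\kW_s(x)^{-1}$ is a positive submartingale, so Doob's inequality bounds it by $2m(T_\e)\,\IE[\widetilde\kW_{T_\e m(T_\e)}(x)^{-1}]$, and the needed uniform negative-moment bound for the surgered martingale follows from $\widetilde\kW_s(x)\ge\|u_0^{-1}\|_\infty^{-1}\widetilde\sZ_{s,\sqrt{T_\e n(T_\e)}}(x)$, the confinement estimate \eqref{eq:exit} (so the survival event $\mathtt F$ is overwhelmingly likely and $\widetilde\sZ$ agrees with $\sZ$ off a negligible event), together with the negative moments of $\sZ$ from Lemma~\ref{lem:negativemoment}. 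Because $m(T_\e)=\exp(-(\log T_\e)^{1/2-\delta})$, both bounds are super-polynomially small in $\log T_\e$. Plugging this into the reduced statement via Hölder's inequality and the moment bounds of the previous paragraph gives
\be{
\frac1{\b_\e}\,\IE\Big[\big(\lan N(x)\ran_{T_\e m(T_\e)}+\lan\kW(x)\ran_{T_\e m(T_\e)}\big)\mathbf 1_{A_{T_\e}(x)^c}\Big]\ \le\ \frac{C}{\b_\e}\Big(\P\big(A_{T_\e}(x)^c\big)^{1/2}+\P\big(A_{T_\e}(x)^c\big)^{1-1/q}\Big),
}
and since $\b_\e^{-1}=O((\log T_\e)^{1/2})$ while $\P(A_{T_\e}(x)^c)$ decays faster than any power of $(\log T_\e)^{-1}$, the right-hand side tends to $0$. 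The main obstacle is the uniform control of the negative moments $\IE[\widetilde\kW_s(x)^{-q}]$ of the surgered martingale, which is not covered by Lemma~\ref{lem:negativemoment} and must be reduced to it through \eqref{eq:exit}; a secondary, routine point is that the moment orders $q$ available for $\lan\kW\ran$ are limited by $p_{\hat\b}$, which is harmless here because any fixed positive power of $\P(A_{T_\e}(x)^c)$ already beats $\b_\e^{-1}$.
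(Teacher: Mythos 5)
Your skeleton is essentially the paper's: on $A_{T_\e}(x)$ the stopped integral is empty, so everything is reduced to the event $A_{T_\e}(x)^c$; then $|F''(\kW_s(x))|\leq C(\kW_s(x)^{-2}+1)$ splits the quantity into $\lan \kW(x)\ran_{T_\e m(T_\e)}$ and your $\lan N(x)\ran_{T_\e m(T_\e)}$, which is exactly the paper's It\^o-correction $H'$ for $\log\kW$; the moment bounds come from Burkholder--Davis--Gundy together with Lemma~\ref{lem:lp} (for $\IE[\lan\kW(x)\ran^p]$ with $2p<p_{\hat\b}$) and from It\^o's formula for $\log\kW$ together with Lemma~\ref{lem:negativemoment} (for the other bracket); and the conclusion follows by H\"older because a fixed positive power of $\IP(A_{T_\e}(x)^c)$ beats $\b_\e^{-1}\sim C(\log T_\e)^{1/2}$. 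The paper's bound $\IP(A_{T_\e}(x)^c)\leq Cm(T_\e)$ plays the role of your ``faster than any power of $(\log T_\e)^{-1}$'' claim, and indeed suffices.

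The genuine gap is in how you estimate $\IP\bigl(\sup_{s\le T_\e m(T_\e)}\widetilde\kW_s(x)^{-1}>\tfrac{1}{2m(T_\e)}\bigr)$. Doob's inequality for the positive submartingale $\widetilde\kW^{-1}$ requires $\sup_{\e}\IE\bigl[\widetilde\kW_{T_\e m(T_\e)}(x)^{-1}\bigr]<\infty$, i.e. uniform negative moments of the \emph{surgered} partition function, and your proposed derivation of this from \eqref{eq:exit} plus Lemma~\ref{lem:negativemoment} ``off a negligible event'' does not go through: on the exceptional event where $\widetilde\sZ$ and $\sZ$ differ appreciably there is no a priori control of $\widetilde\sZ^{-1}$ at all, so to integrate there you already need some moment of $\widetilde\sZ^{-1}$, which is circular; and the crude a priori bound (conditional Jensen over the confined paths) gives constants of order $\exp\bigl(c\,\b_\e^2 T_\e m(T_\e)\bigr)$, exponentially large in $T_\e^{1-o(1)}$, which cannot be compensated by the exit probability in \eqref{eq:exit}. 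The fix --- and what the paper actually does --- is to avoid expectations of $\widetilde\kW^{-1}$ altogether: use the inclusion $\{\widetilde\kW_s<2m(T_\e)\}\subseteq\{\kW_s<4m(T_\e)\}\cup\{\kW_s-\widetilde\kW_s>2m(T_\e)\}$, apply Doob to the submartingale $\kW^{-1}$ (covered by Lemma~\ref{lem:negativemoment}, since $\kW\geq \|u_0^{-1}\|_\infty^{-1}\sZ$), and apply Doob/Markov to the nonnegative martingale $\kW-\widetilde\kW$, whose mean is at most $\|u_0\|_\infty\,\DP_x\bigl(\mathtt{F}_{tT_\e m(T_\e),\sqrt{tT_\e n(T_\e)}}(B,x)^c\bigr)$ and hence, by \eqref{eq:exit}, negligible compared with $m(T_\e)^{2}$. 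This yields $\IP(A_{T_\e}(x)^c)\leq Cm(T_\e)$, which is all your final H\"older step needs; with that substitution your argument is complete.
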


In the step 3, we will prove that the contributions by  $\kW(x)$ and $\widetilde{\kW}(x)$ are asymptotically identified.

\begin{lemma}[Step 3]\label{lem:step2.5}
For any $x\in\R^2$, \begin{align*}
\lim_{\e\to 0}\frac{1}{\b_\e}\IE\left[\left|\int_0^{\tau_{T_\e}(x)} F''(\kW_s(x)){\dd \langle \kW(x)\rangle_s}-\int_0^{\tau_{T_\e}(x)} F''(\widetilde{\kW}_s(x)){\dd \langle \widetilde{\kW}(x)\rangle_s}\right| \right]=0.
\end{align*}
\end{lemma}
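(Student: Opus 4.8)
The plan is to bound the difference of the two bracket integrals by first splitting the difference of the brackets themselves, and then using the pathwise control provided by the stopping time $\tau_{T_\e}(x)$ together with the estimates from Section \ref{sec2}. Write, for $s\le \tau_{T_\e}(x)$,
\[
F''(\kW_s(x))\dd\langle\kW(x)\rangle_s - F''(\widetilde\kW_s(x))\dd\langle\widetilde\kW(x)\rangle_s
= F''(\kW_s(x))\big(\dd\langle\kW(x)\rangle_s-\dd\langle\widetilde\kW(x)\rangle_s\big)
+ \big(F''(\kW_s(x))-F''(\widetilde\kW_s(x))\big)\dd\langle\widetilde\kW(x)\rangle_s.
\]
For the first term I would use \eqref{eq:QuadVarW} to write $\dd\langle\kW(x)\rangle_s-\dd\langle\widetilde\kW(x)\rangle_s$ as a Brownian expectation of $V(B_s-\widetilde B_s)\Phi^\b_s(B)\Phi^\b_s(\widetilde B)$ restricted to the event that at least one of $B,\widetilde B$ exits $B(x,\sqrt{tT n(T)})$ before time $tTm(T)$; since $V$ is bounded this is $\le C\b_\e^2(\kW_s(x)-\widetilde\kW_s(x))\kW_s(x)$ (or the symmetric bound), and on $\{s\le\tau_{T_\e}(x)\}$ we have $\kW_s(x)\le m(T_\e)^{-1}$. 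For the second term, \eqref{eq:Fass} gives $|F''(\kW_s(x))-F''(\widetilde\kW_s(x))|\le C(\kW_s(x)^{-3}+\widetilde\kW_s(x)^{-3}+1)|\kW_s(x)-\widetilde\kW_s(x)|$, and $\dd\langle\widetilde\kW(x)\rangle_s\le C\b_\e^2\widetilde\kW_s(x)^2\,\dd s\le C\b_\e^2 m(T_\e)^{-2}\,\dd s$ again on $\{s\le\tau_{T_\e}(x)\}$, while $\widetilde\kW_s(x)^{-1}\le m(T_\e)^{-1}$ there as well. Thus, up to the $\frac1{\b_\e}$ prefactor, everything reduces to bounding a time integral up to $T_\e m(T_\e)$ of expectations of $|\kW_s(x)-\widetilde\kW_s(x)|$ (or $\kW_s(x)^{-p}|\kW_s(x)-\widetilde\kW_s(x)|$) times explicit powers of $m(T_\e)^{-1}$, $\b_\e$.

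The key input is then the estimate, already used in the proof of Lemma \ref{lem:GGtilde}, that
\[
\IE\big[(\kW_s(x)-\widetilde\kW_s(x))^2\big]\le C\,\DP_x\big(\mathtt F_{tT_\e m(T_\e),\sqrt{tT_\e n(T_\e)}}(B,x)^c\big)^{1/q}
\le C\exp\!\Big(-\tfrac{c\,n(T_\e)}{m(T_\e)}\cdot\tfrac1q\Big),
\]
via \eqref{eq:exit}, \eqref{eq:FWtW}, \eqref{eq:LW} and \eqref{eq:collisionLp}; together with Lemma \ref{lem:negativemoment} (to control the negative moments $\IE[\kW_s(x)^{-p}]$ and, via Cauchy--Schwarz, the mixed terms $\IE[\kW_s(x)^{-p}|\kW_s(x)-\widetilde\kW_s(x)|]$) and Lemma \ref{lem:L2bdd}/Lemma \ref{lem:lp} (for positive moments of $\kW_s,\widetilde\kW_s$). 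Since $n(T_\e)/m(T_\e)=\exp\big((\log T_\e)^{1/2-\delta}-(\log T_\e)^{1/2-\delta/2}\big)^{-1}\!$— more precisely $n(T_\e)/m(T_\e)=\exp\big((\log T_\e)^{1/2-\delta/2}-(\log T_\e)^{1/2-\delta}\big)\to\infty$ faster than any power of $\log T_\e$ — this exit probability decays faster than any polynomial in $T_\e$. Meanwhile all the compensating factors ($\b_\e^{-1}$, $m(T_\e)^{-k}$ for fixed $k$, and the length $T_\e m(T_\e)$ of the time interval) are at most polynomial in $T_\e$: indeed $\b_\e^{-1}=O(\sqrt{\log T_\e})$ and $m(T_\e)^{-1}=\exp((\log T_\e)^{1/2-\delta})=T_\e^{o(1)}$. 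Hence the product tends to $0$ as $\e\to0$, which is the claim.

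Concretely, the steps I would carry out are: (i) perform the algebraic split of the integrand above; (ii) on $\{s\le\tau_{T_\e}(x)\}$, bound each bracket differential using \eqref{eq:QuadVarW} and \eqref{eq:Fass}, inserting the deterministic bounds $\kW_s(x)\le m(T_\e)^{-1}$, $\widetilde\kW_s(x)^{-1}\le m(T_\e)^{-1}$ coming from the definition of $\tau_{T_\e}(x)$; (iii) take expectations, applying H\"older's inequality to separate the (bounded, by Lemma \ref{lem:negativemoment} and Lemma \ref{lem:lp}) moment factors from the small factor $\IE[(\kW_s(x)-\widetilde\kW_s(x))^2]^{1/2}$; (iv) integrate in $s$ over $[0,T_\e m(T_\e)]$ and multiply by $\b_\e^{-1}$, and conclude using the superpolynomial decay \eqref{eq:exit} against the subpolynomial growth of the remaining factors. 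The main obstacle — and the reason the stopping time $\tau_{T_\e}(x)$ was introduced rather than working directly on $[0,T_\e m(T_\e)]$ — is step (ii): without the a priori bound $\kW_s(x)+\widetilde\kW_s(x)^{-1}\le m(T_\e)^{-1}$ the factors $\widetilde\kW_s(x)^{-2}$ from $F''$ and from $\dd\langle\widetilde\kW(x)\rangle_s/\widetilde\kW_s(x)^2$-type terms are not integrable enough against only an $L^2$ bound on $\kW_s-\widetilde\kW_s$, so one must either truncate (as here) or produce much sharper joint control of negative moments of $\widetilde\kW$ on the exit event, which is exactly what the localization via $\tau_{T_\e}$ circumvents.
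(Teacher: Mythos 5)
Your proposal follows essentially the same route as the paper's own proof: the identical splitting of the integrand into $F''(\kW)\,(\dd\langle\kW\rangle-\dd\langle\widetilde{\kW}\rangle)$ plus $(F''(\kW)-F''(\widetilde{\kW}))\,\dd\langle\widetilde{\kW}\rangle$, the same use of the a priori bounds $\kW_s(x)\le m(T_\e)^{-1}$ and $\widetilde{\kW}_s(x)^{-1}\le m(T_\e)^{-1}$ supplied by $\tau_{T_\e}(x)$ together with \eqref{eq:Fass}, the bound $\dd\langle\widetilde{\kW}(x)\rangle_s\le \b_\e^2\|V\|_\infty\widetilde{\kW}_s(x)^2\,\dd s$, and the final reduction to the exit probability \eqref{eq:exit}, whose superpolynomial decay beats the subpolynomial factors $\b_\e^{-1}m(T_\e)^{-k}\,T_\e m(T_\e)$. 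The only deviations are cosmetic — you control $\kW_s-\widetilde{\kW}_s$ in $L^2$ via \eqref{eq:FWtW}--\eqref{eq:LW} and \eqref{eq:collisionLp}, where the paper uses its first moment (and a H\"older step on the Brownian expectation) directly — so your argument is correct and matches the paper's.
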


In the last, we will prove the remainder is also negligible.

\begin{lemma}[Step 4]\label{lem:step3} 
\begin{align*}
\lim_{\e\to 0}\frac{1}{\b_\e^2}\IE\left[\left(\int_{\R^2}\dd x f(x)\left(\int_0^{{{\tau_{T_\e}}}} F''(\widetilde{\kW}_u(x_{T_\e})) \dd \langle \widetilde{\kW}(x_{T_\e})\rangle_u-\IE\left[\int_0^{\tau_{T_\e}}  F''(\widetilde{\kW}_u(x_{T_\e})) \dd \langle \widetilde{\kW}(x_{T_\e})\rangle_u\right)\right] \right)^2\right]=0.
\end{align*}
\end{lemma}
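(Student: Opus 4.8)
The plan is to prove this \emph{variance} bound through a locality (homogenization) argument: once the integrand is both stopped and spatially localized, the integral over $x$ becomes a superposition of weakly dependent contributions, whose variance is therefore small. Throughout write $K(x):=\int_0^{\tau_{T_\e}(x)}F''(\widetilde{\kW}_u(x))\,\dd\langle\widetilde{\kW}(x)\rangle_u$, and recall $m(T_\e)=\exp(-(\log T_\e)^{1/2-\delta})$, $n(T_\e)=\exp(-(\log T_\e)^{1/2-\delta/2})$; since $1/2-\delta<1/2-\delta/2$, the quantity $n(T_\e)$ vanishes faster than any fixed power of $m(T_\e)$ times any fixed power of $\log T_\e$, and $\exp(-c(\log T_\e)^{1-2\delta})$ vanishes faster than any fixed power of $m(T_\e)$ (here $1-2\delta>1/2-\delta$ since $\delta<1/100$).

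The first step is to replace the stopping time $\tau_{T_\e}(x)$, which involves the non-localized martingale $\kW(x)$, by the localized stopping time $\widetilde\tau(x):=\inf\{s\ge 0:\widetilde{\kW}_s(x)+\widetilde{\kW}_s(x)^{-1}>m(T_\e)^{-1}\}\wedge T_\e m(T_\e)$. Since $\widetilde{\kW}_s(x)\le\kW_s(x)$ one has $\tau_{T_\e}(x)\le\widetilde\tau(x)$, so $0\le K^\sharp(x)-K(x)=\int_{\tau_{T_\e}(x)}^{\widetilde\tau(x)}F''(\widetilde{\kW}_u(x))\,\dd\langle\widetilde{\kW}(x)\rangle_u$, where $K^\sharp(x):=\int_0^{\widetilde\tau(x)}F''(\widetilde{\kW}_u(x))\,\dd\langle\widetilde{\kW}(x)\rangle_u$; this difference is supported on $\{\tau_{T_\e}(x)<\widetilde\tau(x)\}$, which, up to a negligible event coming from exit probabilities, is contained in $\{\sup_{s\le T_\e m(T_\e)}(\kW_s(x)+\widetilde{\kW}_s(x)^{-1})>m(T_\e)^{-1}/2\}$. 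By the concentration inequality for $\log\sZ$ from \cite{CCM20,CH02} (used via $\|u_0\|_\infty^{-1}\sZ_s\le\kW_s$ and $\widetilde{\kW}_s\ge\|u_0^{-1}\|_\infty^{-1}(\sZ_s-\text{(an }L^2\text{-exponentially small error)})$), combined with Doob's maximal inequality for the submartingales $\sZ_s^p$ and $\sZ_s^{-p}$ to pass to the supremum in $s$, this event has probability at most $C\exp(-c(\log T_\e)^{1-2\delta})$. On $[0,\widetilde\tau(x)]$ we have $m(T_\e)\le\widetilde{\kW}_u(x)\le m(T_\e)^{-1}$, so $|F''(\widetilde{\kW}_u(x))|\le Cm(T_\e)^{-2}$ by \eqref{eq:Fass}, whence $|K^\sharp(x)-K(x)|\le Cm(T_\e)^{-2}\langle\widetilde{\kW}(x)\rangle_{\widetilde\tau(x)}$. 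Applying It\^o's formula to $\widetilde{\kW}^2$ stopped at $\widetilde\tau$ (as in the proof of Lemma~\ref{lem:G-M}), using $\langle\widetilde{\kW}(x)\rangle_s\le\langle\kW(x)\rangle_s$, the $L^2$-bound $\sup_s\IE[\kW_s(x)^2]<\infty$ (Lemma~\ref{lem:L2bdd}), and $\widetilde{\kW}_{\widetilde\tau}\le m(T_\e)^{-1}$, one gets $\IE[\langle\widetilde{\kW}(x)\rangle_{\widetilde\tau}^4]\le Cm(T_\e)^{-c}$; with Cauchy--Schwarz this gives $\IE[(K^\sharp(x_{T_\e})-K(x_{T_\e}))^2]\le Cm(T_\e)^{-c}\exp(-c(\log T_\e)^{1-2\delta})$, so $\b_\e^{-2}\|f\|_1\int|f(x)|\,\IE[(K^\sharp(x_{T_\e})-K(x_{T_\e}))^2]\,\dd x\to 0$.

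It then remains to bound $\b_\e^{-2}\IE[(\int f(x)(K^\sharp(x_{T_\e})-\IE[K^\sharp(x_{T_\e})])\,\dd x)^2]$. Here the key point is that $K^\sharp(x_{T_\e})$ is measurable with respect to $\xi$ restricted to $[0,T_\e m(T_\e)]\times B(x_{T_\e},\sqrt{tT_\e n(T_\e)}+R_\phi)$, because $\widetilde{\kW}_u(x_{T_\e})$, its quadratic variation, and $\widetilde\tau(x_{T_\e})$ all involve only Brownian paths confined to $B(x_{T_\e},\sqrt{tT_\e n(T_\e)})$ (the weight $u_0(B_{tT}/\sqrt T)$ carries no noise). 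Hence $K^\sharp(x_{T_\e})$ and $K^\sharp(y_{T_\e})$ are independent once $|x-y|>2\sqrt{tn(T_\e)}(1+o(1))$, exactly the decorrelation used in Step~3 of the proof of Lemma~\ref{lem:approxPsi} and in the proof of Lemma~\ref{lem:tildeG}. Therefore the variance equals $\int_{|x-y|\le C\sqrt{n(T_\e)}}f(x)f(y)\,\mathrm{Cov}(K^\sharp(x_{T_\e}),K^\sharp(y_{T_\e}))\,\dd x\,\dd y$, which by Cauchy--Schwarz is at most $C\|f\|_\infty\|f\|_1\,n(T_\e)\sup_x\IE[K^\sharp(x_{T_\e})^2]$. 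Using $|F''(\widetilde{\kW}_u)|\le Cm(T_\e)^{-2}$ on $[0,\widetilde\tau]$ and the It\^o computation above (which gives $\IE[\langle\widetilde{\kW}(x_{T_\e})\rangle_{\widetilde\tau}^2]\le Cm(T_\e)^{-c}$) yields $\IE[K^\sharp(x_{T_\e})^2]\le Cm(T_\e)^{-c}$, so $\b_\e^{-2}n(T_\e)\sup_x\IE[K^\sharp(x_{T_\e})^2]\le C(\log T_\e)\,m(T_\e)^{-c}\,n(T_\e)\to 0$ by the choice of $m$ and $n$. Combining with the previous paragraph proves the lemma.

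The main obstacle is the first step. The stopping time $\tau_{T_\e}$ in the statement is defined through $\kW$, which is \emph{not} a localized functional of the noise, so $K(x)$ is not localized and the decorrelation argument cannot be applied to it directly; one is forced to replace $\tau_{T_\e}$ by the localized $\widetilde\tau$. The error in this replacement can only be absorbed because the upper tail of $\sup_s\kW_s(x)$ (equivalently the lower tail of $\inf_s\widetilde{\kW}_s(x)$) is stretched-exponentially small in $\log T_\e$ via the partition-function concentration inequality — just strong enough to dominate the \emph{polynomial-in-$m(T_\e)^{-1}$} growth of the stopped bracket moments that comes from the crude bound $|F''|\le Cm(T_\e)^{-2}$. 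A secondary technical point, already implicit in the proofs of Lemmas~\ref{lem:tildeG} and~\ref{lem:G-M}, is that one must keep all moments of the stopped brackets $\langle\widetilde{\kW}\rangle_{\widetilde\tau}$ bounded by a fixed power of $m(T_\e)^{-1}$; this is achieved by the It\^o-on-$\widetilde{\kW}^2$ bootstrap (using $\widetilde{\kW}_{\widetilde\tau}\le m(T_\e)^{-1}$) rather than by invoking higher $L^p$-moments of $\sZ$, which are unavailable when $\hat\b$ is close to $1$.
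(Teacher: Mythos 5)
Your proposal takes essentially the route of the paper's proof (decorrelation via spatial localization of $\widetilde{\kW}$ plus the crude $m(T_\e)^{-2}$ bound on $F''$ and BDG), but you insert an extra step — replacing $\tau_{T_\e}$ by the localized stopping time $\widetilde\tau$ and estimating the replacement error — because you correctly observe that the paper's literal definition of $\tau_{T_\e}$ involves the \emph{non}-localized martingale $\kW_s(x)$. This observation is astute: the paper states that $\mathrm{Cov}(\widetilde{H}_{T_\e m(T_\e)}(x),\widetilde{H}_{T_\e m(T_\e)}(y))=0$ for $|x-y|\geq 3\sqrt{T_\e n(T_\e)}$ (deterministic time), but then applies it to $\widetilde{H}_{\tau_{T_\e}}$, and with $\tau_{T_\e}$ defined through $\kW$ that application is not justified; the stopping time correlates $\widetilde{H}_{\tau_{T_\e}(x)}(x)$ and $\widetilde{H}_{\tau_{T_\e}(y)}(y)$ even for distant $x,y$. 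The paper's proof appears to implicitly treat $\tau_{T_\e}$ as if it were $\widetilde\kW$-based (indeed, every quantitative estimate in Steps 2--4 only ever uses the bounds $m(T_\e)\leq\widetilde{\kW}_s\leq m(T_\e)^{-1}$ on $[0,\tau_{T_\e}]$, all of which hold under a $\widetilde\kW$-based definition), and under that reading the paper's Step 4 goes through with $\IP(A_{T_\e}(x)^c)\leq Cm(T_\e)$ alone.

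However, your fix as written does not close: the error bound $\IE[(K^\sharp(x_{T_\e})-K(x_{T_\e}))^2]\lesssim m(T_\e)^{-8}\IP(A_{T_\e}(x)^c)^{1/2}$ only vanishes if $\IP(A_{T_\e}(x)^c)$ decays faster than $m(T_\e)^{16}$, and you supply this by asserting $\IP(A_{T_\e}(x)^c)\lesssim\exp(-c(\log T_\e)^{1-2\delta})$, which you attribute to a concentration inequality for $\log\sZ$ from \cite{CCM20,CH02}. That claim is not established in the paper, and the cited references do not cover this regime: \cite{CCM20} proves concentration for $d\geq 3$ with fixed $\beta$ in the $L^2$-region, and \cite{CH02} is for fixed $\beta$; neither treats $d=2$ with $\beta_\e\to 0$ on the logarithmic scale up to $\hat\beta<1$. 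The only tail bound the paper actually proves is the Doob weak-$(1,1)$ bound $\IP(A_{T_\e}(x)^c)\leq Cm(T_\e)$, and upgrading it via $L^p$-maximal inequalities gives at best $\IP(A_{T_\e}(x)^c)\lesssim m(T_\e)^{p}$ for $p<p_{\hat\beta}$, where Lemma~\ref{lem:lp} only guarantees $p_{\hat\beta}>2$ (and $p_{\hat\beta}$ may be close to $2$ as $\hat\beta\uparrow 1$); this is far short of the exponent $16$ your Cauchy--Schwarz step requires, and the negative-moment bound of Lemma~\ref{lem:negativemoment}, which holds for all $p$, controls only the $\{\inf\widetilde{\kW}_s<2m\}$ half of $A_{T_\e}^c$, not the $\{\sup\kW_s>m^{-1}/2\}$ half. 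So there is a genuine gap: either the stretched-exponential upper-tail bound for $\sZ_s$ in the $d=2$ intermediate-disorder scaling must be proved, or the replacement $\tau_{T_\e}\to\widetilde\tau$ must be avoided altogether by taking $\tau_{T_\e}$ to be $\widetilde\kW$-based from the outset (which requires re-reading the paper's definition, and then your first step is unnecessary). As stated, the assertion ``this event has probability at most $C\exp(-c(\log T_\e)^{1-2\delta})$'' is the step that would fail without further justification.
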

 
 Putting these lemmas together, Proposition \ref{prop:VanishBracket} is concluded.
  
\begin{proof}[Proof of Lemma \ref{lem:firstStep}]
With $\delta $ from \eqref{def: ell(T)}, for $C>(\|u_0\|_\infty+\|u_0^{-1}\|_\infty)^4$,
\begin{align*}
&\IE\left[\int^{T}_{Tm(T)} \left|F''(\kW_s(x_{T_\e}))\right| \dd \lan \kW (x_T)\ran_s \right]\\
&\leq C \IE\left[\sup_{{ Tm(T)}\leq s\leq T}\left(1+\sZ_s(x)^{-2}\right)\int^{T}_{{ Tm(T)}} \dd \lan \sZ (x_T)\ran_s \right]	\\
&\leq 2C\IE\left[\sup_{{ Tm(T)}\leq s\leq T}\sZ_s(x)^{-2}\mathbf{1}\left\{\sup_{{ Tm(T)}\leq s\leq T}\sZ_s(x)^{-1}> (\log T)^{\delta/4}\right\}\int^{T}_{{ Tm(T)}} \dd \lan \sZ (x_T)\ran_s \right]\\
&\qquad\qquad+2C(\log{T})^{\delta/2}\IE\left[\mathbf{1}\left\{\sup_{{ Tm(T)}\leq s\leq T}\sZ_s(x)^{-1}\leq (\log T)^{\delta/4}\right\} \int^{T}_{{ Tm(T)}}\dd \lan \sZ (x_T)\ran_s \right].
\end{align*}
By the Burkholder-Davis-Gundy inequality, Doob's inequality and H\"older's inequality,
the first expectation is bounded from above by 
\begin{align*}
&\IP\left(\sup_{{ Tm(T)}\leq s\leq T}\sZ_s(x)^{-1}> (\log T)^{\delta/4}\right)^{\frac{1}{2p}} \IE\left[\frac{1}{\sZ_{T}(x)^{4p}}\right]^{\frac{1}{2p}}\IE\left[\left(\int^{T}_{{ Tm(T)}} \dd \lan \sZ (x_T)\ran_s\right)^{q}\right]^{\frac{1}{q}}\\
&{ \leq \frac{1}{(\log T)^{2}}\IE\left[\sZ_{T}(x)^{-\frac{16 p}{\delta}}\right]^{\frac{1}{2p}}\IE\left[\frac{1}{\sZ_{T}(x)^{4p}}\right]^{\frac{1}{2p}}\IE\left[\sZ_{T}(x)^{2q}\right]^{\frac{1}{q}}}\\
&\leq (\log{T})^{-2},
\end{align*}
where $p,q>1$ with $\frac{1}{p}+\frac{1}{q}=1$ and $\sup_{\e<1} \IE\left[\sZ_{T}(x)^{2q}\right]<\infty$ from Lemma~\ref{lem:lp}. On the other hand, the second expectation can be bounded from above by
\begin{align*}
(\log{T})^{\delta/2}\IE\left[\int^{T}_{{ Tm(T)}} \dd \lan \sZ (x_T)\ran_s \right]
&=(\log T)^{\delta/2}\int_{Tm(T)}^{T}\beta^2 \DE_{0}\left[V(\sqrt{2}B_s)\exp\left(\int_0^s\beta^2  V(\sqrt{2}B_u)\dd u\right)\right]\dd s\\
&=(\log T)^{\delta/2}\int_{Tm(T)}^{T}\beta^2 \int_{\R^2}\dd x V(\sqrt{2}x)\rho_s(x)\DE_{0}^x\left[\exp\left(\int_0^s\beta^2  V(\sqrt{2}B_u)\dd u\right)\right]\dd s\\
&\leq \b^2(\log{T})^{\delta/2} (\log (T)-\log ( { Tm(T)}))\\
&\leq -\b^2 (\log{T})^{\delta/2} \log m(T),
\end{align*}
where we have used Lemma \ref{lem:p2pge} in the third line and $\beta^2 (\log{T})^{\delta/2} \log m(T)\to 0$ as $\e\to 0$ as desired.
  \end{proof}

Before the proof of Lemma \ref{lem:step2}, we give an estimate of the probability of $A_T^c(x)$.

\begin{lemma}
  There exists a constant $C>0$ such that for $T_\e>0$ and { for $x\in\R^2$},
  \al{
  \IP\left(A_{T_\e}(x)^c\right)\leq C m(T_\e).
}
\end{lemma}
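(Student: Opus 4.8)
The plan is to split the complement of $A_{T_\e}(x)$ according to whether $\kW_\cdot(x)$ grows too large or $\widetilde\kW_\cdot(x)$ becomes too small, and to dominate each piece by Doob's maximal inequality applied to a suitable non-negative (sub)martingale. Since $\kW_\cdot(x)$ and $\widetilde\kW_\cdot(x)$ have continuous paths, the definition of $A_{T_\e}(x)$ gives
\[
A_{T_\e}(x)^c\subseteq\Bigl\{\sup_{s\le T_\e m(T_\e)}\kW_s(x)>\tfrac1{2m(T_\e)}\Bigr\}\cup\Bigl\{\inf_{s\le T_\e m(T_\e)}\widetilde\kW_s(x)<2m(T_\e)\Bigr\}.
\]
For the first event I would use that $s\mapsto\kW_s(x)$ is a non-negative continuous $\{\mathcal F_s\}$-martingale (exponential martingale property of $\Phi^{\b}$ together with Fubini), so that $\IE[\kW_{T_\e m(T_\e)}(x)]=\kW_0(x)=\DE_x[u_0(T^{-1/2}B_{tT})]\le\|u_0\|_\infty$, and Doob's maximal inequality yields $\IP(\sup_{s\le T_\e m(T_\e)}\kW_s(x)>\tfrac1{2m(T_\e)})\le 2\|u_0\|_\infty m(T_\e)$.

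For the small-$\widetilde\kW$ event I would introduce the decomposition $\widetilde\kW_s(x)=\kW_s(x)-D_s(x)$, where $D_s(x)=\DE_x\bigl[\Phi_s^{\b}(B)\,u_0(T^{-1/2}B_{tT}):\mathtt F_{T_\e m(T_\e),\sqrt{T_\e n(T_\e)}}(B,x)^c\bigr]\ge0$ is again a non-negative continuous martingale (the exit event and the factor $u_0(\cdot)$ do not involve $\xi$), with $D_0(x)\le\|u_0\|_\infty\,\DP_x(\mathtt F_{T_\e m(T_\e),\sqrt{T_\e n(T_\e)}}(B,x)^c)$. A deterministic splitting (if $\kW_s(x)-D_s(x)<2m(T_\e)$ and $\kW_s(x)\ge 4m(T_\e)$ then $D_s(x)>2m(T_\e)$) gives
\[
\Bigl\{\inf_{s\le T_\e m(T_\e)}\widetilde\kW_s(x)<2m(T_\e)\Bigr\}\subseteq\Bigl\{\inf_{s\le T_\e m(T_\e)}\kW_s(x)<4m(T_\e)\Bigr\}\cup\Bigl\{\sup_{s\le T_\e m(T_\e)}D_s(x)>2m(T_\e)\Bigr\}.
\]
For the first set on the right I would use $\kW_s(x)\ge\|u_0^{-1}\|_\infty^{-1}\sZ_s(x)$ and that $\sZ_\cdot(x)^{-1}$ is a non-negative submartingale with $\sup_{\e\le1}\IE[\sZ_{T_\e m(T_\e)}(x)^{-1}]<\infty$ by Lemma~\ref{lem:negativemoment} (here $m(T_\e)\le t$ for $\e$ small, so the time lies in the range covered by that lemma), whence Doob's submartingale inequality gives $\IP(\inf_{s\le T_\e m(T_\e)}\kW_s(x)<4m(T_\e))\le C m(T_\e)$. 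For the second set, Doob applied to the martingale $D$ yields $\IP(\sup_{s\le T_\e m(T_\e)}D_s(x)>2m(T_\e))\le\frac{\|u_0\|_\infty}{2m(T_\e)}\DP_x(\mathtt F_{T_\e m(T_\e),\sqrt{T_\e n(T_\e)}}(B,x)^c)$.

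To finish I would invoke that the exit probability $\DP_x(\mathtt F^c)$ is smaller than any power of $T_\e$ — this is exactly the estimate already used in the proof of Lemma~\ref{lem:GGtilde}, cf.\ \eqref{eq:exit} — while $m(T_\e)=\exp(-(\log T_\e)^{1/2-\delta})$ decays more slowly than any power of $T_\e$; hence $\DP_x(\mathtt F^c)/m(T_\e)^2\to0$, so the last term is $\le m(T_\e)$ for $\e$ small and is bounded for all $\e\le1$, i.e.\ $\le C m(T_\e)$. Summing the three contributions proves $\IP(A_{T_\e}(x)^c)\le C m(T_\e)$, with the constant uniform in $x$ by shift invariance of $\xi$. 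All steps are routine once the decomposition $\widetilde\kW=\kW-D$ is in place; the only genuinely delicate inputs are the uniformity in $\e$ of $\IE[\sZ_{T_\e m(T_\e)}(x)^{-1}]$, which is supplied by Lemma~\ref{lem:negativemoment}, and the fact that the exit probability of the spatial constraint beats $m(T_\e)$, which is precisely what the choice of the two scales $m(T_\e)$ and $n(T_\e)$ is designed to guarantee.
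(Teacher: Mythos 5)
Your proof is correct and follows essentially the same route as the paper's: the same splitting of $A_{T_\e}(x)^c$ into the events $\{\sup\kW>(2m)^{-1}\}$, $\{\inf\kW<4m\}$ and $\{\sup(\kW-\widetilde\kW)>2m\}$, with Doob's inequality applied to the martingale $\kW$, to a reciprocal submartingale (you use $\sZ^{-1}$ via Lemma \ref{lem:negativemoment}, the paper uses $\kW^{-1}$, which is the same bound), and to the nonnegative martingale $D=\kW-\widetilde\kW$. Your only addition is to spell out explicitly that $\E[D_s]\le\|u_0\|_\infty\DP_x(\mathtt F^c)$ and to invoke \eqref{eq:exit} to beat $m(T_\e)^2$, a step the paper leaves implicit in its final ``$\le Cm(T)$''.
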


\begin{proof}
We have 
\al{
  \IP\left( A_T(x)^c\right)\leq \P(\kW_s(x) > (2m(T))^{-1}\text{ for some }  s\in [0, Tm(T)])+\P(\widetilde{\kW}_s(x) < 2m(T)\text{ for some }s\in [0, Tm(T)]).
}
The first term is bounded from above by $2\|u_0\|_\infty m(T)$ using Doob's inequality and $\IE \left[\kW_{Tm(T)}(x)\right]\leq \|u_0\|_\infty$. Using the fact that $B<x$ implies $A<2x$ or $A-B>x$ for $A\geq B>0$ and $x>0$, by Doob's inequality with (sub-)martingales $\kW_s(x)^{-1},\,\kW_s(x)-\widetilde{\kW}_s(x)$ the second term is bounded from above by
\al{
  & \P(\kW_s(x) < 4m(T)\text{ for some } s\in [0, Tm(T)])+\P(\kW_s(x)-\widetilde{\kW}_s(x) > 2m(T)\text{ for some } s\in [0, Tm(T)])\\
  &\leq 4m(T) \E[\kW_{Tm(T)}(x)^{-1}]+  m(T)^{-1} \E[\kW_{Tm(T)}(x)-\widetilde{\kW}_{Tm(T)}(x)]\leq Cm(T).
}
\end{proof}


\begin{proof}[Proof of Lemma \ref{lem:step2}]
  By H\"older's inequality and Minkowski's inequality, the expectation is bounded from above by
\begin{align*}
&C \IE\left[\int_{\tau_T}^{{ Tm( T)}}\left(\frac{1}{\kW_s(x)^2}+1\right)\dd \lan \kW(x)\ran_s\right]\\
&\leq C \IE\left[\int_{0}^{{  Tm(T)}}\left(\frac{1}{\kW_s(x)^2}+1\right)\dd \lan \kW(x)\ran_s;~A_T(x)^c\right]\\
&\leq C' \IE\left[\left(\int_{0}^{{ Tm(T)}}\left(\frac{1}{\kW_s(x)^2}+1\right)\dd \lan \kW(x)\ran_s \right)^p\right]^{\frac{1}{p}} \IP\left(A_T(x)^c\right)^{\frac{1}{q}}\\
&\leq C' \left(\IE\left[\left(\int_{0}^{{ Tm(T)}}\frac{\dd \lan \kW(x)\ran_s}{\kW_s(x)^2} \right)^p\right]^{\frac{1}{p}}+\IE\left[\left(\int_{0}^{{ Tm(T)}}\dd \lan \kW(x)\ran_s \right)^p\right]^{\frac{1}{p}} \right)\IP\left(A_T(x)^c\right)^{\frac{1}{q}},
\end{align*}
where $p,q>1$ with $\frac{1}{p}+\frac{1}{q}=1$ are constant with $2p<p_{\hat{\b}}$.
Then, by applying the Burkholder-Davis-Gundy inequality to the  martingales $\dis \int_0^{s} \dd \kW_u(x)=\kW_s(x)-\bar{u}(1,x)$,
we obtain that 
\begin{align*}
\IE\left[\left(\int_0^{Tm(T)}\dd \langle \kW(x)\rangle_s\right)^p\right]\leq C\IE\left[\left(\kW_{Tm(T)}(x)-\bar{u}(1,x)\right)^{2p}\right]\leq C.
\end{align*}

We remark that when we apply It\^{o}'s lemma  to $\log {\kW}_s(x)$, we have \begin{align*}
\log  {\kW}_s(x)&=\log \bar{u}(t,x)+\int_0^s \frac{\dd {\kW}_u(x)}{{\kW}_u(x)}-\frac{1}{2}\int_0^s\frac{\dd \langle {\kW}(x)\ran_u}{{\kW}_u(x)^2}\\
&:=\log \bar{u}(t,x)+{G}'_s(x)-\frac{1}{2}{H}'_s(x),
\end{align*} 
with  \begin{align*}
\left\langle {G}'(x)\right\rangle_s={H}'_s(x).
\end{align*}
In particular, we have\begin{align*}
\IE\left[{H}'_s(x)^2\right]&\leq 12\left(\log\bar{u}(t,x)\right)^2+12\IE\left[{G}'_s(x)^2\right]+12\IE\left[\left(\log {\kW}_s(x)\right)^2\right]\\
&=12\left(\log{u}(t,x)\right)^2+24\IE\left[H'(x)\right]+12 \IE\left[(\log {\kW}_s)^2\right]\\
&\leq C
\end{align*}
for  some constant $C>0$.
Putting things together with \eqref{eq:exit}, we have\begin{align*}
\frac{1}{\b_\e}\IE\left[\int_{\tau_T(x)}^{{  Tm(T)}} |F''(\kW_s(x))| \dd \lan \kW(x)\ran_s\right]\leq \frac{C}{\b}\IP\left(A_T(x)^c\right)^{\frac{1}{q}}\to 0.
\end{align*}
%
%
\end{proof}

\begin{proof}[Proof of Lemma \ref{lem:step2.5}]
 Since $\kW_s(x)+\widetilde{\kW}_s(x)^{-1}\leq m(T)^{-1}$ for $s\leq \tau_T(x)$, we have that
  \begin{align}
&\left|\int_0^{{\tau_T}} |F''(\kW_s(x))| {\dd \langle \kW(x)\rangle_s}-\int_0^{{\tau_T}}F''({\widetilde{\kW}_s(x)}){\dd \langle \widetilde{\kW}(x)\rangle_s}\right|\notag\\
&\leq \left|\int_0^{{\tau_T}} F''(\kW_s(x))\dd \langle \kW(x)\rangle_s-\int_0^{{\tau_T}} F''(\kW_s(x))\dd \langle \widetilde{\kW}(x)\rangle_s\right|\notag\\
&+\left|\int_0^{{\tau_T}} \left(F''(\kW_s(x))-F''(\widetilde{\kW}_s(x))\right)\dd \langle \widetilde{\kW}(x)\rangle_s\right|\notag\\
&\leq Cm(T)^2 \int_0^{{\tau_T}}\dd s
 \DE_{x}{\otimes }\DE_x\left[V(B_s-\widetilde{B}_s)\Phi_s^\b(B)\Phi_s^\b(\widetilde{B}): \mathtt{F}_{Tm(T),\sqrt{Tn(T)}}(B,x)^c\cup \mathtt{F}_{Tm(T),\sqrt{Tn(T)}}(\widetilde{B},x)^c
\right]
\notag\\
&+\b_\e^2\int_0^{{\tau_T}}\dd s
\left|F''(\kW_s(x))-F''(\widetilde{\kW}_s(x))\right|
 \DE_{x}{\otimes }\DE_x\left[V(B_s-\widetilde{B}_s)\Phi_s^\b(B)\Phi_s^\b(\widetilde{B})\right]. \notag
\end{align}
  Using H\"older's inequality, there exists a $p>2$ such that the first term is bounded from above by
  \begin{align*}
&\IE\left[\int_0^{Tm(T)}\dd s {\DE_{x}{\otimes }\DE_x\left[V(B_s-\widetilde{B}_s)\Phi_s^\b(B)\Phi_s^\b(\widetilde{B}): 
\mathtt{F}_{{Tm(T)},\sqrt{Tn(T)}}(B,x)^c\cup
\mathtt{F}_{{Tm(T)},\sqrt{Tn(T)}}(\widetilde{B},x)^c
\right]}\right]\\
&\leq \|V\|_\infty \int_0^{{Tm(T)}}\dd s\IE\left[{\DE_{x}\left[\Phi_s^\b(B)\Phi_s^\b(\widetilde{B}): 
\mathtt{F}_{{Tm(T)},\sqrt{Tn(T)}}(B,x)^c\right]}\right]\\
&\leq \|V\|_\infty C \int_0^{{Tm(T)}} \dd s \DP_x\left(\mathtt{F}_{{Tm(T)},\sqrt{Tn(T)}}(B,x)^c\right)^{\frac{1}{p}}.
\end{align*}

For the second term,  we first note that for each $s\leq \tau_T$,
$$|F''(\kW_s(x))-F''(\widetilde{\kW}_s(x))|=\left|\int^{\kW_s(x)}_{\widetilde{\kW}_s(x)}F'''(r)\dd r\right|\leq C(1+m(T)^{-3}) (\kW_s(x)-\widetilde{\kW}_s(x)),$$  and $\dis \frac{\dd \langle \widetilde{\kW}(x)\rangle_s}{ \dd s} \leq  \beta^2 \|V\|_\infty \widetilde{\kW}_s(x)^2 \leq \|V\|_{\infty} m(T)^{-2}$. Hence,
\al{
&\qquad \IE\left[ \left|\int_0^{{ \tau_T}}|F''(\kW_s(x))-F''(\widetilde{\kW}_s(x))|\dd \langle \widetilde{\kW}(x)\rangle_s\right|\right]\\
  &\leq C\|V\|_\infty (1+m(T)^{-3}) m(T)^{-2}\int_0^{{Tm(T)}} \IE\left[\left|\kW_s(x)-\widetilde{\kW}_s(x)\right|\right] \dd s\\
  & =  C\|V\|_\infty (1+m(T)^{-3}) m(T)^{-2}\int_0^{{ Tm(T)}} \DP_x\left(\mathtt{F}_{{Tm(T)},\sqrt{Tn(T)}}(B,x)^c\right) \dd s.
}
By \eqref{eq:exit}, the statement holds.
\end{proof}



\begin{proof}[Proof of Lemma \ref{lem:step3}]
  We define
  \al{
    \widetilde{H}_s(x)=\int_0^{s}  F''(\widetilde{\kW}_s(x)) \dd \langle \widetilde{\kW}(x)\rangle_s.
    }

We remark that for $|x-y|\geq 3\sqrt{Tn(T)}$\begin{align*}
\textrm{Cov}\left(\widetilde{H}_{{ Tm(T)}}(x),\widetilde{H}_{{ Tm(T)}}(y)\right)=0
\end{align*}
so that \begin{align*}
&\IE\left[\left(\int_{\R^2}\dd xf(x)\left([\widetilde{H}_{{ \tau_T}}(x_T)-\IE\left[\widetilde{H}_{{ \tau_T}}(x_T)\right]\right) \right)^2\right]\\
&=\int_{|x-y|\leq 3\sqrt{n(T)}}\dd x\dd yf(x)f(y)\textrm{Cov}\left(\widetilde{H}_{{ \tau_T}}(x_T),\widetilde{H}_{{ \tau_T}}(y_T)\right)\\
&\leq \int_{|x-y|\leq 3\sqrt{n(T)}}\dd x\dd y|f(x)f(y)|\,\E\left[\widetilde{H}_{{ \tau_T}}(x_T)^2\right]^{\frac{1}{2}}\E\left[\widetilde{H}_{{ \tau_T}}(y_T)^2\right]^{\frac{1}{2}}.
\end{align*}
Since 
\al{
  |\widetilde{H}_{\tau_T}(x)|\leq C \int^{\tau_T}_0 (1+\widetilde{\kW}_{s}(x)^{-1})^2 \dd \langle \widetilde{\kW}(x)\rangle_s\leq  C(1+m(T)^{-2})\int^{\tau_T}_0 \dd \langle \widetilde{\kW}(x)\rangle_{s},
}
by the Burkholder-Davis-Gundy inequality, we have
\al{
 \E \left[\widetilde{H}_{\tau_T}(x)^2\right]&\leq C (1+m(T)^{-2})^2 \E\left[\sup_{0\leq s\leq \tau_T}\left(\kW_{s}(x)-\bar{u}(1,x)\right)^4\right]\leq C (1+m(T)^{-2})^2 m(T)^{-4}.  
  }
Putting things together,
we have \begin{align*}
&\frac{1}{\b^2}\IE\left[\left(\int_{\R^2}\dd xf(x)\left(\int_0^{\tau_T} F''(\widetilde{\kW}_s(x_T))\dd \lan \widetilde{\kW}(x_T)\ran_s-\IE\left[\int_0^{\tau_{T}} F''(\widetilde{\kW}_s(x_T))\dd \lan \widetilde{\kW}(x_T)\ran_s\right]\right) \right)^2\right]\\
&\leq C\frac{n(T)}{\b^2m(T)^8}.
\end{align*} 
\end{proof}


\subsection{Multidimensional convergence in the EW limits}
\label{sec:KPZmultidim}
 To ease the presentation, we restrict ourselves to the case where $F(x)=x$, and $\hat{\b}\in (0,1)$ is fixed, although a repetition of the argument would lead to the result for the general initial conditions and the function $F$ that we have been considering. 

Also, we note that for all $0\leq t_1\leq \cdots\leq t_n= t$, $u_0^{(1)},\cdots,u_0^{(n)}\in C_b(\R^2)$, and $f_1,\cdots,f_n\in C_c^\infty(\R^2)$\begin{align*}
&(u_\e(t_1,u_0^{(1)},f_1),\cdots, u_\e(t_n,u_0^{(n)},f_n))\\
&\eqlaw \left(\kW^{\left(t,T,u_0^{(1)}\right)}_{T(t-t_1)}(Tt,f_1),\cdots,\kW^{\left(t,T,u_0^{(n-1)}\right)}_{T(t-t_{n-1})}(Tt,f_{n-1}),\kW^{\left(t,T,u_0^{(n)}\right)}_{0}(Tt,f_n)\right),
\end{align*}
where we define for fixed $t>0$ that  for $ u, s\geq 0$ and $x\in\R^2$ \begin{align*}
&\kW_{u}(s,x)=\kW_{u}^{(t,T,u_0)}(s,x)=\begin{cases}
\dis \DE_x\left[\Phi_{u,s}(B)u_0\left(\frac{B_{Tt-u}}{\sqrt{T}}\right)\right],\quad &0\leq u\leq s\\
u_0(x),&0\leq s\leq u.
\end{cases}
\intertext{and}
&\kW_{u}(s,f)=\kW_{u}^{(t,T,u_0)}(s,f)=\int_{\R^2}f(x)\kW_{u}^{(t,T,u_0)}(s,x_T)\dd x.
\end{align*}

Thus, it suffices to show is that jointly for finitely many $u\in[0,t]$, $u_0\in C_b(\R^2)$, and $f\in\mathcal C^\infty_c$, as $\e\to 0$, 
\begin{equation}
\frac{1}{\b_\e}  \int f(x) \left( \kW_{T u,T t}^{\left(t,T,u_0\right)}(x_T)- \bar{u}(t-u,x) \right)\dd x 
 \cvlaw \kU_{u}^{(t,u_0)}(t,f),
 \label{desired convergence}
\end{equation}
where $\dis \left\{\kU_{u}^{(t,u_0)}(s,f):f\in C_c^\infty(\R^2),u_0\in C_b(\R), 0\leq u\leq s\leq t\right\}$ is centered Gaussian field with covariance  \begin{align*}
&\mathrm{Cov}\left(\kU_{u}^{(t,u_0)}(s,f),\kU_{u'}^{(t,u_0')}(s,f')\right)\\
&=\frac{1}{1-\hat{\b}^2}\int_{u\vee u'}^s \dd \sigma \int \dd x\dd y  f(x)f'(y)\int \dd z\rho_{\sigma-u}(x,z)\rho_{\sigma-u'}(y,z) \bar{u}(t-\sigma,z)\bar{u}'(t-\sigma,z).
\end{align*}
Following the same strategy as in Subsection \ref{IdeaG}, we are reduced to showing that
\begin{equation}\label{eq:finalGoalKPZmuli}
\frac{1}{\b_\e} \mathcal{M}^{\ssup {t,T,u_0}}_{u}(\tau,f)\cvlaw
 \kU_{u}^{(t,u_0)}(\tau,f)
\quad \text{jointly in } u\in[0,\tau], f\in\mathcal C^\infty_c,
\end{equation}
where (see \eqref{eq:defdMbartau})
\begin{align*}
&\mathcal{M}^{\ssup {t,T,u_0}}_{u}(\tau,f) := \begin{cases}
\dis \int_{\R^2} f(x) \int_{Tu+T(t-u)m(T)}^{T \tau} \dd \mathcal{M}_{u}^{\ssup{t,T,u_0}}(s,x_T)\dd x,\quad &\tau\geq Tu+T(t-u)m(T)\\
0,\quad &\tau \leq Tu+T(t-u)m(T)
\end{cases}
\intertext{and }
&\dd \mathcal{M}_{u}^{\ssup{t,T,u_0}}(s,x)
\\
&:=
\b_\e \sZ_{Tu,Tu+(s-Tu)\ell(T)}(x)\int \xi(\dd s,\dd b)\int \rho_{s-Tu}(x,z)\phi(z-b)\overleftarrow {\sZ}_{s,(s-Tu)\ell(T)}(z)\DE_z\left[u_0\left(\frac{B_{Tt-s}}{\sqrt{T}}\right)\right]\dd z.
\end{align*}

Then, for all $u\geq 0$ and $f\in\mathcal C^\infty_c$, $\tau \to \mathcal{M}^{(t,T)}_{u}(\tau,f)$ is a continuous martingale. In view of the desired convergence \eqref{desired convergence}, we have again in mind the functional CLT for martingales Theorem \ref{thm:JS}, so we are interested in the limit of the cross-bracket $\langle \mathcal{M}^{(t,T)}_{u_1}(\cdot,f_{{ 1}}),\mathcal{M}_{u_2}^{\ssup {t,T}}(\cdot,f_{{ 2}})\rangle_\tau$.
We have:
\begin{proposition}\label{lm:CVLone}
For all test functions $f$ and $f'$ in $\mathcal{C}_c^\infty$, $u_0,u_0'\in C_b(\R^2)$, and $0\leq u_2\leq u_1\leq t$, for all $\tau \geq u_1$,
\begin{align} 
&
\frac{1}{\beta_\e^2} \langle \mathcal{M}^{(t,T,u_0)}_{u_1}(\cdot,f_{{ 1}}),\mathcal{M}_{u_2}^{\ssup {t,T,u'_0}}(\cdot,f_{{ 2}})\rangle_\tau \nonumber\\
&\overset{L^1}{\longrightarrow}\frac{1}{1-\hat{\b}^2}\int_{u_1}^\tau \dd \sigma \int \dd x\dd y  f_1(x)f_2(y)\int \dd z\rho_{\sigma-u_1}(x,z)\rho_{\sigma-u_2}(y,z) \bar{u}(t-\sigma,z)\bar{u}'(t-\sigma,z), \label{eq:LimcovStruc}
\end{align}
as $\e\to 0$. 
\end{proposition}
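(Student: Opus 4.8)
The plan is to reduce the cross--bracket, step by step, to the single--time computation of Subsection~\ref{sub:4.2}, the only genuinely new feature being that in the limit the two heat kernels carry the distinct time parameters $\sigma-u_1$ and $\sigma-u_2$ rather than a common $\sigma$. Assume throughout $u_1\ge u_2$, write $T=T_\e$, and recall $\mathcal{M}^{(t,T,u_0)}_{u_1}(s,\cdot)\equiv0$ for $s\le Tu_1+T(t-u_1)m(T)$, so the cross--bracket only accumulates on $s\in[Tu_1+T(t-u_1)m(T),T\tau]$. First I would compute the bracket explicitly: for $s$ in that range each $\dd\mathcal{M}_{u_i}$ is a white--noise stochastic integral $\int\xi(\dd s,\dd b)(\cdots)$, so the cross--bracket increment is $\dd s$ times the $b$--integral of the two integrands, which by $\int\phi(z_1-b)\phi(z_2-b)\dd b=V(z_1-z_2)$ and the diffusive rescaling ($s=T\sigma$, positions scaled by $\sqrt T$, $\rho_{T(\sigma-u_i)}(\sqrt T a)=T^{-1}\rho_{\sigma-u_i}(a)$, $\DE_{\sqrt T z}[u_0(B_{T(t-\sigma)}/\sqrt T)]=\bar u(t-\sigma,z)$, and the substitution $w=z+v/\sqrt T$ with $\int V=1$) yields the exact identity
\[
\frac{1}{\b_\e^2}\big\langle \mathcal{M}^{(t,T,u_0)}_{u_1}(\cdot,f_1),\mathcal{M}^{(t,T,u_0')}_{u_2}(\cdot,f_2)\big\rangle_\tau=\int_{u_1+(t-u_1)m(T)}^{\tau}\dd\sigma\int\dd x\,\dd y\, f_1(x)f_2(y)\, I^{(1)}_{T\sigma}(x)\,I^{(2)}_{T\sigma}(y)\,\Psi^T_\sigma(x,y),
\]
where $I^{(i)}_{T\sigma}(x)=\sZ^{\b}_{Tu_i,\,Tu_i+T(\sigma-u_i)\ell(T)}(x_T)$ (the $F'\equiv1$ instance of the quantity $I^{(T)}_s$ of Subsection~\ref{IdeaG}) and $\Psi^T_\sigma(x,y)$ is the $\Psi^T_\sigma$ of Subsection~\ref{sub:4.2} with the two heat kernels $\rho_\sigma$ replaced by $\rho_{\sigma-u_1}(z-x)$ and $\rho_{\sigma-u_2}(w-y)$ and each backward factor $\overleftarrow{\sZ}^{\b}_{T\sigma,T\sigma\ell(T)}$ replaced by $\overleftarrow{\sZ}^{\b}_{T\sigma,T(\sigma-u_i)\ell(T)}$.

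Next, since $u_1+(t-u_1)m(T)\to u_1$ and the integrand is $L^1$--bounded uniformly in $\sigma$ (by Lemma~\ref{lem:L2bdd}, which gives $\IE[(\sZ^{\b})^2]$ and $\IE[\overleftarrow{\sZ}^{\b}\overleftarrow{\sZ}^{\b}]$ bounded, together with $\|u_0\|_\infty,\|u_0'\|_\infty<\infty$), I would replace the lower limit by $u_1$ at the cost of an $L^1$--error of order $m(T)\to0$, arguing exactly as in the proof of Lemma~\ref{lem:MtildeTau0negl}. It then remains to show that $\int_{u_1}^{\tau}\dd\sigma\int f_1f_2\,I^{(1)}_{T\sigma}I^{(2)}_{T\sigma}\,\Psi^T_\sigma$ converges in $L^1$ to the right--hand side of \eqref{eq:LimcovStruc}, i.e.\ to $\tfrac1{1-\hat\b^2}\int_{u_1}^{\tau}\dd\sigma\int f_1(x)f_2(y)\int\dd z\,\rho_{\sigma-u_1}(x,z)\rho_{\sigma-u_2}(y,z)\bar u(t-\sigma,z)\bar u'(t-\sigma,z)$. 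I would get this from the two approximation lemmas already in hand. On the one hand, for fixed $x,y,\sigma$, $\Psi^T_\sigma(x,y)\to\tfrac1{1-\hat\b^2}\int\dd z\,\rho_{\sigma-u_1}(z-x)\rho_{\sigma-u_2}(z-y)\bar u(t-\sigma,z)\bar u'(t-\sigma,z)$ in $L^1$: by Step~1 of Lemma~\ref{lem:approxPsi} (continuity of $\rho,\bar u'$, Lemma~\ref{lem:covpart}, dominated convergence via Lemma~\ref{lem:L2bdd}) one freezes $w=z$ in $\rho_{\sigma-u_2}$ and $\bar u'$, and by Steps~2--3 of Lemma~\ref{lem:approxPsi} one truncates $\overleftarrow{\sZ}$ to $\widetilde{\sZ}_{\,\cdot\,,\ell'(\sigma,T)}$ and replaces it by its expectation; the only modification is that the two backward partition functions now have unequal horizons $T(\sigma-u_i)\ell(T)$, so their joint second moment reduces (time reversal, $B-\widetilde B\eqlaw\sqrt2\,B$) to $\DE_0[\exp(\b_\e^2\int_0^{L}V(\sqrt2\,B_u)\dd u)]$ over the overlap length $L=T\min_i(\sigma-u_i)\ell(T)$, which tends to $\tfrac1{1-\hat\b^2}$ for a.e.\ $\sigma$ because $\log L/\log T\to1$ --- precisely what the choice $\ell(T)=m(T)=\exp(-(\log T)^{1/2-\delta})$ guarantees. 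On the other hand, $I^{(1)}_{T\sigma}(x)I^{(2)}_{T\sigma}(y)$ is replaced by $1$: since $F'\equiv1$ and $\IE[\sZ^{\b}]=1$ one has $\IE[I^{(i)}_{T\sigma}(x)]=1$, and the product concentrates around $1$ after integration against $f_1(x)f_2(y)\Psi_\sigma(x,y)$, by the $F'\equiv1$ case of Lemma~\ref{lem:approxFF}; the independence required is in fact easier here than in the single--time case, because for large $T$ the three noise time--intervals $[Tu_1,Tu_1+T(\sigma-u_1)\ell(T)]$, $[Tu_2,Tu_2+T(\sigma-u_2)\ell(T)]$ and $[T\sigma-T\max_i(\sigma-u_i)\ell(T),T\sigma]$ are pairwise disjoint (every gap exceeding twice a length), so $I^{(1)},I^{(2)}$ are independent of the backward factors and, when $u_1\ne u_2$, of one another, while for $u_1=u_2$ they decouple by the truncation argument of Step~3 of Lemma~\ref{lem:approxPsi} (using Lemma~\ref{lem:covpart} and Lemma~\ref{lem:lp}). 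A concluding dominated--convergence step in $(\sigma,x,y)$ --- all expectations being uniformly bounded, and $\int_{u_1}^{u_1+h}\dd\sigma\int\dd z\,\rho_{\sigma-u_1}(\cdot)\lesssim h$ controlling the endpoint $\sigma=u_1$ where $\rho_{\sigma-u_1}$ degenerates --- turns the pointwise convergence into $L^1$ convergence of the full integral, which together with the previous step is \eqref{eq:LimcovStruc}.

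The step I expect to be the main obstacle is the homogenization just described: showing that the four microscopic partition functions $I^{(1)},I^{(2)},\overleftarrow{\sZ}^{\b}_{T\sigma,T(\sigma-u_1)\ell(T)},\overleftarrow{\sZ}^{\b}_{T\sigma,T(\sigma-u_2)\ell(T)}$ jointly decouple and that each may be replaced by its mean. This is exactly the difficulty already overcome in the single--time analysis, and the multi--time version should be no harder, since for large $T$ the relevant noise time--intervals are pairwise disjoint whenever $u_1,u_2,\sigma$ are distinct, leaving the two backward partition functions from $T\sigma$ as the only correlated pair; the two extra technical points --- their unequal horizons $T(\sigma-u_i)\ell(T)$, handled by $\log L/\log T\to1$, and the mild degeneracy of $\rho_{\sigma-u_1}$ at $\sigma=u_1$, which is integrable --- do not change the picture.
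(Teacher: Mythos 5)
Your proposal is correct and follows essentially the same route as the paper: an exact computation of the cross-bracket via the white-noise covariance and diffusive rescaling, followed by ``a repetition of the arguments'' of the single-time homogenization (Lemmas \ref{lem:approxPsi} and \ref{lem:approxFF}, with $F'\equiv 1$ so $\IE[\sZ]=1$), yielding the limit $\frac{1}{1-\hat\b^2}\int\rho_{\sigma-u_1}\rho_{\sigma-u_2}\bar u\,\bar u'$. Your additional observations --- that the two backward partition functions with unequal horizons correlate only over the overlap length $L=T(\sigma-u_1)\ell(T)$ with $\log L/\log T\to 1$, and that the forward and backward noise windows are eventually disjoint --- are exactly the points the paper leaves implicit in its terse proof.
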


\begin{proof}
For all $\tau \geq u_1+(t-u_1)m(T)$,
\begin{align*}
&\frac{1}{\b^2}\langle \mathcal{M}_{u_1}^{(t,T,u_0)}(\cdot,f_{{ 1}}),\mathcal{M}_{u_2}^{\ssup {t,T,u_0'}}(\cdot,f_{{ 2}})\rangle_\tau\\
&= 
\int_{\mathbb{R}^2\times \mathbb R^2} f_1(x)f_2(y) \dd x \dd y \int_{Tu_1+T(t-u_1)m(T)}^{T\tau}\dd  s \sZ_{Tu_1,Tu_1+(s-Tu_1)\ell(T)}(x_T) \sZ_{Tu_2,Tu_2+(s-Tu_2)\ell(T)}(y_T)\\
&\hspace{3em}\times \int_{\R^2\times \R^2}\dd z_1\dd z_2\rho_{s-Tu_1}(z_1-x_T)\rho_{s-Tu_2}(z_2-y_T)V(z_1-z_2)\overleftarrow{\sZ}_{s,(s-Tu_1)\ell(T)}(z_1)\overleftarrow{\sZ}_{s,(s-Tu_2)\ell(T)}(z_2)\\
&\hspace{5em}\times \DE_{z_1}\left[u_0\left(\frac{B_{Tt-s}}{\sqrt{T}}\right)\right]\DE_{z_2}\left[u_0'\left(\frac{B_{Tt-s}}{\sqrt{T}}\right)\right]
\end{align*}
By a repetition of the arguments that lead to \eqref{eq:CVbracketMt}, 
 we find that
\begin{align*} 
&\frac{1}{\b^2}\langle \mathcal{M}_{u_1}^{(t,T,u_0)}(\cdot,f_{{ 1}}),\mathcal{M}_{u_2}^{\ssup {t,T,u_0'}}(\cdot,f_{{ 2}})\rangle_\tau\\
&\approx_{L^1}\int f_1(x)f_2(y) \dd x \dd y \int_{u_1+(t-u_1)m(T)}^{\tau}  \dd \sigma\IE\left[\sZ_{Tu,Tu+T(\sigma-u)\ell(T)}(x_T)\right] \IE\left[ \sZ_{Tu_2,Tu_2+(s-Tu_2)\ell(T)}(y_T)\right] \Theta_T(x,y),
\end{align*}
where
\begin{align*}
\Theta_T(x,y) & =  \int \dd z\dd v\rho_{\sigma-u_1}(z-x)\rho_{\sigma -u_2}(z-\frac{v}{\sqrt{T}}-y)V(v)\IE\left[\overleftarrow{\sZ}_{\sigma,(\sigma-u_1)\ell(T)}(z_T)\overleftarrow{\sZ}_{\sigma,(\sigma-u_2)\ell(T)}(z_T+v)\right]\\
&\hspace{5em}\times \DE_{z_T}\left[u_0\left(\frac{B_{Tt-T\sigma}}{\sqrt{T}}\right)\right]\DE_{z_T+v}\left[u'_0\left(\frac{B_{Tt-T\sigma}}{\sqrt{T}}\right)\right]\\
& \to  \frac{1}{1-\hat{\b}^2}  \int \dd z\rho_{\sigma -u_1}(x-z)\rho_{\sigma-u_2}(y-z)\bar{u}(t-\sigma,z)\bar{u}'(t-\sigma,z).
\end{align*}

\end{proof}

\appendix

\section*{Acknowledgments}
The work of S. Nakajima is supported by  SNSF grant 176918. M.~Nakashima is supported by JSPS KAKENHI Grant Numbers JP18H01123, JP18K13423.

\end{document}